\DeclareSymbolFontAlphabet{\mathbb}{AMSb} 
\DeclareSymbolFontAlphabet{\mathbbl}{bbold}
\newcommand*\bigcdot{\mathpalette\bigcdot@{1.2}}
\newcommand*\bigcdot@[2]{\mathbin{\vcenter{\hbox{\scalebox{#2}{$\m@th#1\bullet$}}}}}
\tikzset{ symbol/.style={draw=none, every to/.append style={edge node={node [sloped, allow upside down, auto=false]{$#1$}}}}}
\numberwithin{equation}{section}
\newcommand{\mylabel}[2]{#2\def\@currentlabel{#2}\label{#1}}
\newtheorem{introtheorem}{Theorem} 
\newtheorem{conj}{Conjecture}
\newtheorem{thm}{Theorem}[section]
\newtheorem{cor}[thm]{Corollary}
\newtheorem{prop}[thm]{Proposition}
\newtheorem{lemma}[thm]{Lemma}
\theoremstyle{definition}
\newtheorem{defn}[thm]{Definition}
\newtheorem{exmp}[thm]{Example}
\theoremstyle{remark} 
\newtheorem{rmk}[thm]{Remark}
\newtheorem*{acknowledgements}{Acknowledgments}
\newcommand{\bb}{\mathbb}
\newcommand{\bbl}{\mathbbl}
\newcommand{\bs}{\boldsymbol}
\newcommand{\mbf}{\mathbf}
\newcommand{\msf}{\mathsf}
\newcommand{\scr}{\mathscr}
\newcommand{\mrm}{\mathrm}
\newcommand{\cal}{\mathcal}
\newcommand{\diam}[1]{\left\langle#1\right\rangle}
\newcommand{\ba}{\backslash}
\newcommand{\abs}[1]{\left\lvert#1\right\rvert}
\newcommand{\bbox}[1]{\left\llbracket#1\right\rrbracket}
\newcommand{\ad}{\operatorname{ad}}
\newcommand{\As}{\operatorname{As}}
\DeclareMathOperator{\Char}{char}
\newcommand{\Exp}{\operatorname{Exp}}
\newcommand{\EXP}{\operatorname{EXP}}
\newcommand{\Ext}{\operatorname{Ext}}
\newcommand{\Fitt}{\operatorname{Fitt}}
\newcommand{\Fil}{\mathrm{Fil}}
\newcommand{\fr}{\mathrm{Fr}}
\newcommand{\Gal}{\operatorname{Gal}}
\newcommand{\GL}[1]{\operatorname{GL}_{#1}}	
\newcommand{\Hom}{\operatorname{Hom}}
\newcommand{\id}{\operatorname{id}}
\newcommand{\Ind}{\operatorname{Ind}}
\DeclareMathOperator{\im}{im}
\newcommand{\Log}{\operatorname{Log}}
\DeclareMathOperator{\LOG}{LOG}
\DeclareMathOperator{\ord}{ord}
\newcommand{\pan}{\mathrm{Pan}}
\DeclareMathOperator{\rank}{rank}
\DeclareMathOperator{\res}{res}
\DeclareMathOperator{\Tor}{Tor}
\DeclareMathOperator{\tr}{{tr}^*}
\renewcommand{\b}{\mathrm{bal}}
\newcommand{\f}{\mathbf{f}}
\newcommand{\scrF}{\mathscr{F}}
\newcommand{\g}{\mathbf{g}}
\newcommand{\G}{G_{F,\Sigma}}
\newcommand{\Oo}{\mathcal{O}}
\newcommand{\p}{\mathfrak{p}}
\newcommand{\bP}{\mathbbl{P}}
\newcommand{\fp}{\mathfrak{P}}
\newcommand{\Q}{\mathbb{Q}}
\newcommand{\R}{\mathcal{R}}
\newcommand{\T}{\mathbf{T}}
\newcommand{\uu}{\breve{u}}
\newcommand{\W}{\mathcal{W}}
\newcommand{\Z}{\mathbb{Z}}
\newcommand{\Zp}{\mathbb{Z}_p}
\newcommand{\dcom}{\widetilde{R\Gamma}_{\mathrm{f}}}
\newcommand{\cohom}{\widetilde{H}_{\mathrm{f}}}
\newcommand{\hotimes}{\widehat{\otimes}}
\begin{document} 
	
	\title[$ p $-adic Artin formalism of twisted triple product Galois representations]{(Algebraic) $ p $-adic Artin formalism of twisted triple product Galois representations over real quadratic fields} 
	
	\author{Bhargab Das} 
	\address{(Bhargab Das) Harish-Chandra Research Institute, A CI of Homi Bhabha National Institute, Chhatnag Road, Jhunsi, Prayagraj - 211019, India}
	\email{bhargabdas@hri.res.in}
	
	\author{Aprameyo Pal} 
	\address{(Aprameyo Pal) Harish-Chandra Research Institute, A CI of Homi Bhabha National Institute, Chhatnag Road, Jhunsi, Prayagraj - 211019, India}
	\email{aprameyopal@hri.res.in}

	\date{\today }
	\keywords{Selmer complexes, $p$-adic $L$-functions, characteristic ideals}
	\subjclass[2020]{Primary 11R23, Secondary 11F41, 11G40, 14G10}
	
	\begin{abstract}
		In this article, we investigate factorization problems for twisted triple product Galois representations over real quadratic fields, arising from families of Hilbert cusp forms. Specifically, we address the factorization in two distinct settings determined by the order of vanishing of associated $L$-functions at their central critical values-namely, the rank (1,1) and rank (0,2) cases. Our results generalize the algebraic factorization framework developed by Büyükboduk et al. in higher rank scenario to the setting of real quadratic fields.
		Notably, our work yields the first known factorization result in the higher rank (0,2) case, marking a significant advancement in the study of triple product motives over totally real fields.

	\end{abstract}
	
	\maketitle

	\tableofcontents

	\section{Introduction}
	\label{section:intro}
	The foremost purpose of this article is to study the factorization problem for certain twisted triple product (algebraic) $ p $-adic $ L $-functions. We explore this problem in an irregular setup in the sense of Perrin-Riou's $ p $-adic $ L $-functions and \enquote*{regular} submodule. Our work is an real quadratic extension of the results obtained in \cite{BCS23} in two cases namely rank (1,1) (cf. \cite[\textsection 4.5.2]{Darmon16GenralisedKatoClasses}) and rank (0,2) (cf. \cite[\textsection 4.5.3]{Darmon16GenralisedKatoClasses}). Additionally we highlight significant differences/distinctions between our problem and those in \cite{Greenberg1982,Palva18} in \textsection \ref{sec:comparison}, which makes our cases particularly challenging. 
	
	Let us fix a prime $ p \ge 5 $ once and for all. We also choose $p$-adic embedding $ \overline{\Q} \hookrightarrow \overline{\Q}_p \hookrightarrow \bb{C}_p$, and a complex embedding $ \overline{\Q} \hookrightarrow \mathbb{C}$. We also fix an isomorphism $\iota_p \colon \bb{C} \rightarrow \bb{C}_p$, such that the following diagram commutes with our choices for a fixed real quadratic $ F/\Q $:
	\begin{equation*}
		\begin{tikzcd}[row sep=tiny]
			&&& \bb{C} \arrow[dd, "\iota_p"] \\
			\Q \arrow[r, hookrightarrow] &F \arrow[r, hookrightarrow]  &\overline{\Q} \arrow[ur, hookrightarrow] \arrow[dr, hookrightarrow] & \\
			&&& \bb{C}_p 
		\end{tikzcd} 
	\end{equation*}	
	We also assume that $ p $ splits in $ F $ and set $ \Gal(F/\Q) = \{1,\theta\} $. Let $ \f $ and $ \g $ be cuspidal primitive Hida families of Hilbert modular forms (see \cref{section : the_setup} for more details) and $ \g^\theta $ is the $ \theta $-twist of $ \g $. Suppose $ \g^\theta \cong \g \otimes \omega $ for a quadratic character $ \omega $. The first task we undertook is to formulate precise conjectures about factorization in the following cases: 
	\begin{itemize}
		\item[\textbf{(1,1)}] The $ (\g,\b)  $-unbalanced $ p $-adic triple product $ L $-functions $ \scr{L}_p^{(\g,\b)}(\f \otimes \As(\g) ) $ in \cref{conjecture_1} which predicts that 
		\begin{equation*}
			\scr{L}_p^{(\g,\b)}(\f \otimes \As(\g) )(P,Q)  = \scr{L}_p^{(\g,\b)}(\f \otimes \ad^0{\g}(\lambda) )(P,Q)\cdot i^*_\f\left(\Log_{\f \otimes \lambda}^1(\mathbf{BK}_{\f\otimes \lambda}^{\rm LZ}) \right)
		\end{equation*} 
		for a quadratic Galois character $ \lambda $ related to $ \omega $. Here the factor $ \Log_{\f \otimes \lambda}^1(\mathbf{BK}_{\f\otimes \lambda}^{\rm LZ}) $ is related to the Euler system constructed in \cite{loeffler2020iwasawatheoryquadratichilbert} which is closely associated to the derivative of $ p $-adic $ L $-function of $ \f \otimes \lambda $ (see \textsection \ref{6.1.2}).
		\item[\textbf{(0,2)}] The $ \g  $-unbalanced $ p $-adic triple product $ L $-functions $ \scr{L}_p^{\g}(\f \otimes \As(\g) ) $ in \cref{conjecture:3} predicting that
		\[	\scr{L}_p^{\g}(\f \otimes \As(\g) )  = \scr{L}_p^{\b}(\f \otimes \ad^0{\g}(\lambda) )\cdot i^*_\f\left(\Log^2_{\f \otimes \lambda}(\mathbf{BK}^{\rm Plec}_{\f\otimes \lambda,1} \wedge \mathbf{BK}^{\rm Plec}_{\f\otimes \lambda,2} )\right), \]
		for a quadratic Galois character $ \lambda $ related to $ \omega $. Here we consider the conjectural existence of rank 2 Euler system $ \mathbf{BK}^{\rm Plec}_{\f\otimes \lambda,1} \wedge \mathbf{BK}^{\rm Plec}_{\f\otimes \lambda,2} $ by \ref{plectic} which has evidence in some recent works of Henri Darmon, Michele Fornea, Lennart Gehrmann and many more (cf. \cite{fornea23plectic, Darmon_Fornea_2025}, also see \textsection \ref{7.1.2}).   
	\end{itemize}
	In both cases we consider the situation when the interpolation range is empty and sign conditions such that analogues problem for $ \f $-dominant and balanced triple product $ p $-adic $ L $-functions reduces to $ 0=0 $.
	
	The general theory of Iwasawa-Greenberg main conjecture would suggest an algebraic counterpart of these two cases of factorization conjectures. The author gave some evidence towards these versions of the main conjectures in \cite{myself2}. These (algebraic factorizations) are formulated in terms of \enquote{modules of leading terms}. Inspired from \cite{BCS23} we defined and study these objects in \textsection \ref{sec : module_leading} in a much general context and produced some useful results for \enquote{rank 2} scenario, using the general theory of exterior power by dual modules. The results we got are as follows:
	\begin{itemize}
		\item[\textbf{(1,1)}] The module of $ (\g,\b)  $-unbalanced (algebraic) $ p $-adic triple product $ L $-functions $ \scr{L}_p^{\rm alg}(\f \otimes \As(\g), \Delta_{(\g, \b)} ) $ can be written as
		\begin{equation*}
			\scr{L}_p^{\rm alg}(\f \otimes \As(\g), \Delta_{(\g, \b)} )  = 	\scr{L}_p^{\rm alg}(\f \otimes \ad^0{\g}(\lambda), \tr \Delta_{(\g, \b)} )\cdot i^*_\f\left(\Log_{\f \otimes \lambda}^1(\mathbf{BK}_{\f\otimes \lambda}^{\rm LZ}) \right),
		\end{equation*} 
		for a quadratic Galois character $ \lambda $ related to $ \omega $. We urge the reader to check \cref{theoremA} and \cref{main_theorem_1} for this result written in the modules of leading terms language. 
		\item[\textbf{(0,2)}] The module of $ \g  $-unbalanced (algebraic) $ p $-adic triple product $ L $-functions $ \scr{L}_p^{\rm alg}(\f \otimes \As(\g), \Delta_{\g} ) $ factors as
		\[	\scr{L}_p^{\rm alg}(\f \otimes \As(\g), \Delta_{\g} )  = \scr{L}_p^{\rm alg}(\f \otimes \ad^0{\g}(\lambda), \tr \Delta_{\b} )\cdot i^*_\f\left(\Log^2_{\f \otimes \lambda}(\mathbf{BK}^{\rm Plec}_{\f\otimes \lambda,1} \wedge \mathbf{BK}^{\rm Plec}_{\f\otimes \lambda,2} )\right), \]
		for a quadratic Galois character $ \lambda $ related to $ \omega $. Again we request the reader to see \cref{theoremB} and \cref{main_theorem_2} for the modules of leading terms version.    
	\end{itemize}
	We also remark that these are unlike the scenario considered in \cite{Palva18} (see \textsection \ref{sec:comparison} for more discussion) and reflect of the \emph{irregularity} of the factorization problems discussed in this paper. We also state the difference between our work and that in the algebraic part of \cite{BCS23} in terms of complications arising from Greenberg-local conditions (see \textsection \ref{sec:comparison} and \textsection \ref{sec:greenberg} for more discussion on Greenberg-local conditions).   
	\subsection{Comparison with earlier work}
	\label{sec:comparison}
	We explain in this subsection about a comparison of the setting of our present work with that of \cite{Greenberg1982, Palva18, BCS23} and highlight the key technical differences and difficulties.
	\subsubsection{}
	We have considered the representations over $ G_F $ instead of $ G_\Q $ for a real quadratic field $ F/\Q $. In order to do so we have to consider local conditions in two different places associated to $ \p_1 $ and $ \p_2 $ separately along with the bad places. This is a stark contrast between our work and results in \cite{Greenberg1982, Palva18, BCS23} as in these previous works the authors only considered representations of $ G_\Q $. \\
	In fact we hope that we can further pursue our factorization results in more \emph{analytic} way extending the results of \cite{Gross80,Dasgupta16,2024artinformalismtripleproduct} towards real quadratic field along with shedding some lights about BDP philosophy for Hilbert modular forms.
	\subsubsection{}
	The authors in \cite{Greenberg1982, Palva18} considered only Selmer complexes with torsion cohomology group (cf. \cite[Apendix A]{BCS23} and \cite[\textsection2.4.3]{BCS23}). But the complexes in exact triangles \eqref{comtriangle1} and \eqref{comtriangle} in our consideration have non-torsion cohomology groups like the situation in \cite{BCS23}. Additionally we have shown that we may have rank two cohomology groups via the Panchiskin defect (see \textsection \ref{sec_pan}) which is not the case of \cite{BCS23}. Thus we have extracted more complicated formulas regarding higher rank situations in \textsection \ref{sec : module_leading} accounts for	the appearance of $p$-adic regulators and derivatives of $p$-adic $L$-functions in our setup. This is the first time such a result has been established for the higher rank (0,2) case, highlighting the novelty of our approach.
	\subsubsection{Twisted triple product over $ \Q $}
	We conclude this section discussing the triple product when $ \f $ is a family of cuspforms on $ \GL{2,\Q} $ and $ \g $ is as usual. Then the $ 8 $-dimensional representation $ T_\f \hotimes_{\Zp} \As(\g) $ (cf. \cite[Definition 6.4]{ForneaJin}) has similar decomposition like \eqref{mainses}. In this situation the factorization problem of (algebraic) $p$-adic $L$-functions (which are conjecturally related with the $ p $-adic $ L $-functions constructed in \cite{BCF}) has almost similar flavor like the one in \cite{BCS23} with a twist by quadratic character. We will discuss this scenario more precisely in an imminent survey article about the $ p $-adic Artin formalism phenomena.     
	
	\subsection{Layout}
	We sketch an outline of the structure of our article. We briefly introduce the basic notions about Hilbert cuspforms in \textsection \ref{sec2.1}, \textsection \ref{2.2}, and \textsection \ref{2.3}. Thereafter we define the Asai representation in \textsection \ref{sec2.4} and use this to explain our set-up in \textsection \ref{subsection:Decomposition of Big Galois Modules}. Then we record our main results about \emph{rank (1,1) case} (resp. \emph{rank (0,2) case}) in \textsection \ref{rank1_scenarion} (resp. \textsection \ref{rank2_scenario}). 
	
	In \cref{section : Selmer_complexes}, we recall the machinery of Selmer complexes constructed in \cite{Nek06}. Using this we define the module of (algebraic) $p$-adic $L$-functions of Galois representations of our concern. We make a choice of suitable Greenberg-local conditions to study the factorization problems. We also computed necessary Tamagawa factors in \textsection \ref{sec:tamagawa} which is useful to show the perfectness of Selmer complexes.
	
	In \cref{sec : module_leading}, we introduce a canonical submodule of an extended Selmer module, which is called the module of leading terms and discuss its relation with Kolyvagin systems in \cref{koly_ralation}.  
	
	Based on this construction, we prove Theorem \ref{theoremA} in \cref{section : factorization_1} and Theorem \ref{theoremB} in \cref{section : factorization_2}.
	
	\begin{acknowledgements}
		The first author acknowledges the support of the HRI institute fellowship for research scholars and the second author acknowledges the SERB SRG grant (SRG/2022/000647). The authors would also like to thank Dr. K\^{a}zim B\"{u}y\"{u}kboduk for listening to the doubts with patience and sharing his insights in TIFR Mumbai. 
	\end{acknowledgements}
	\section{The Setup}
	\label{section : the_setup}
	As mentioned in \cref{section:intro}	Let us fix a prime $ p \ge 5 $ once and for all. We also choose $p$-adic embedding $ \overline{\Q} \hookrightarrow \overline{\Q}_p \hookrightarrow \bb{C}_p$, and a complex embedding $ \overline{\Q} \hookrightarrow \mathbb{C}$. We also fix an isomorphism $\iota_p \colon \bb{C} \rightarrow \bb{C}_p$, such that the following diagram commutes with our choices for a fixed real quadratic $ F/\Q $:
	\begin{equation*}
		\begin{tikzcd}[row sep=tiny]
			&&& \bb{C} \arrow[dd, "\iota_p"] \\
			\Q \arrow[r, hookrightarrow] &F \arrow[r, hookrightarrow]  &\overline{\Q} \arrow[ur, hookrightarrow] \arrow[dr, hookrightarrow] & \\
			&&& \bb{C}_p 
		\end{tikzcd} 
	\end{equation*}	
	In this section at first, we will recall some preliminary results about families of Hilbert cuspforms.
	
	\subsection{Hilbert Cuspforms}
	\label{sec2.1}
	Suppose $ F $ be a real quadratic number field with Galois group $ \Gal(F/\Q) = \{1, \theta\} $ and ring of integers $ \Oo_F $. Let us denote the set of embedding $ F \hookrightarrow \overline{\Q} $ as $ I_F $ and put $ t_F = \sum_{\tau \in I_F} [\tau] \in \Z[I_F]$. We also assume that $ p $ splits in $ \Oo_F $. We denote $ \frak{d}_{F/\Q} $ as the discriminant of $ F $ and $ \frak{D}_{F/\Q} = N_{F/\Q}(\frak{d}_{F/\Q}) $. Let $ \theta_F $ be the adelic character associated with $ \theta $.
	\begin{defn}
		We define a power map for every $ s = \sum_{\tau \in I_F} s_\tau[\tau] $ as
		$$ (\cdot)^s \colon F \otimes_\Q \overline{\Q} \rightarrow \overline{\Q}, \quad x \otimes c \mapsto \prod_{\tau}(c \tau(x))^{s_\tau} . $$
	\end{defn}  
	Let $ Z_F(1) = F^\times \backslash \bb{A}_F^\times/\widehat{\Oo}^{p,\times}_F F^\times_{\infty,+} $, then there is a group homomorphism induced via the cyclotomic character $ \chi_{\mrm{cycl},F} $,
	$$ \check{\chi}_{\mrm{cycl},F} \colon Z_F(1) \rightarrow \Z_p^\times, \quad y \mapsto y_p^{-t_F}\mid y^\infty \mid^{-1}_{\bb{A}_F}. $$ 
	Moreover, the isomorphism $ \Z_p^\times \cong (1 + p\Zp) \times (\Z/p\Z)^\times$ gives the factorization 
	$$ \check{\chi}_{\mrm{cycl},F} = \eta_F \cdot \omega_F. $$

	Let $ i \in \mathbb{C} $ be the square root of unity and consider the Poincare half-plane $ \mathfrak{H} $ with respect to $ i $. Let us define the algebraic group $ \operatorname{Res}_{F/\Q}\GL{2,F} $ as $ \cal G $.  There is a transitive group action of $\cal G(R)^+ \cong \prod_{I_F} \GL{2} (\mathbb{R})^+ $ on the complex manifold $ \mathfrak{H}^{I_F} $ and $ C_\infty^+ \coloneqq \operatorname{Stab}_{\cal G(R)^+}(\mathbf{i}) $ where $ \mathbf{i} = (i,...,i) \in \mathfrak{H}^{I_F}$.  \\
	For a non zero integral ideal $ \mathfrak{N} \subset \Oo_F $ lets define the following open compact subgroups of $ \cal G(\widehat{\Z}) $
	\begin{itemize}
		\item $ U_0(\mathfrak{N}) \coloneqq \biggl\{ 
		\begin{pmatrix}
			a &b \\
			c &d
		\end{pmatrix}
		\in \cal G(\widehat{\Z}) \mid c \in \mathfrak{N}\widehat{\Oo}_F
		\biggl\} $,
		\item $ U_1(\mathfrak{N}) \coloneqq \biggl\{ 
		\begin{pmatrix}
			a &b \\
			c &d
		\end{pmatrix}
		\in U_0(\mathfrak{N}) \mid d-1 \in \mathfrak{N}\widehat{\Oo}_F
		\biggl\} $,
		\item $ U^1(\mathfrak{N}) \coloneqq \biggl\{ 
		\begin{pmatrix}
			a &b \\
			c &d
		\end{pmatrix}
		\in U_0(\mathfrak{N}) \mid a-1 \in \mathfrak{N}\widehat{\Oo}_F
		\biggl\} $,
		\item $V(\mathfrak{N})=U_1(\mathfrak{N})\cap U^1(\mathfrak{N})$.
	\end{itemize}
	Let $ K \le \cal G(\mathbb{A}^\infty) $ be an open compact subgroup,
	\begin{defn}
		A holomorphic Hilbert cuspform of weight $ (k,w)\in \Z[I_F]^2,\, (k-2w = m \cdot t_F,\, m \in \Z_{>0}) $, level $K$ is a continuous function $ f \colon \cal G(\mathbb{A}) \rightarrow \mathbb{C} $ such that the following properties hold,
		\begin{itemize}
			\item $f(\alpha x u) = f(x) j_k(u_\infty,\mathbf{i})^{-1}  $ where $ \alpha \in \cal G(\Q) $, $ u \in K \cdot C_\infty^+$ and the automorphy factor is $$ j_k\Bigg(\begin{pmatrix}
				a &b \\
				c &d
			\end{pmatrix},z\Bigg) = (ad-bc)^w(cz +d)^k $$ for $ \begin{pmatrix}
				a &b \\
				c &d
			\end{pmatrix} \in \cal G(\mathbb{R}) $, $ z \in \mathfrak{H}^{I_F} $.
			
			\item for every finite adelic point $x \in \cal G( \bb{A}^\infty )$ the well-defined function $ f_x \colon \mathfrak{H}^{I_F} \rightarrow \mathbb{C} $ given by $ f_x(z) = f(xu_\infty)j_k(u_\infty, \mathbf{i}) $ is holomorphic, where for each $ z \in \mathfrak{H}^{I_F} $ we choose $ u_\infty \in \cal G(\mathbb{R})^+ $ such that $ u_\infty \mathbf{i} = z $.
			\item for all adelic points $x \in \cal G ( \mathbb{A} )$ and for all additive measures on $F \backslash \mathbb{A}_F$ we have
			$$ \int_{F \backslash \mathbb{A}_F}\Bigg(\begin{pmatrix}
				1 &a \\
				0 &1
			\end{pmatrix}x\Bigg)da= 0 $$
			\item for all finite adelic point $x \in \cal G( A^\infty )$ the function $\mid \Im(z)^{k/2}f_x(z)\mid$ is uniformly bounded on $\mathfrak{H}^{I_F}$.
		\end{itemize}
		We denote the space of such Hilbert cuspforms as $ S_{k,w}(K;\mathbb{C}) $ and when $ K = U_1(\mathfrak{N}) $ it will be $ S_{k,w}(\mathfrak{N};\mathbb{C}) $.
	\end{defn}
	Moreover, \cite[Theorem 1.1]{Hida91} states that each cuspform in $ S_{k,w}(V(\mathfrak{N});\mathbb{C}) $ has an adelic $ q $-expansion. Let $\bb{A}_{F,+}^{\times}\coloneqq (\bb{A}_{F}^{\infty})^\times F^\times_{\infty,+} $, and we denote the following
	\begin{equation*}
		\msf{a}(\cdot,f)\colon \bb{A}_{F,+}^{\times} \to \bb{C}, \quad \msf{a}_p(\cdot,f)\colon \bb{A}_{F,+}^{\times} \to \overline{\bb{Q}}_p
	\end{equation*}
	as the Fourier coefficient functions for every $ f \in S_{k,w}(V(\frak{N});\mathbb{C}) $. Then for any $ \Oo_F $-subalgebra $ A \subset \bb{C} $, we denote
	$$S_{k,w}(K;A) \coloneqq \{ f \in S_{k,w}(K;\mathbb{C}) \mid \msf{a}(y,f) \in A \text{ for all } y \in \bb{A}_{F,+}^{\times} \}.$$ 
	\begin{rmk}
		$ \msf{a}(y,f) = 0 = \msf{a}_p(y,f) $ if $ y \notin \widehat{\Oo}_F^\times F^\times_{\infty, +} $. 	
	\end{rmk}
	\subsubsection{Hecke Theory}
	Let $V(\frak{N}) \le K \le U_0(\frak{N}) $ be an open compact subgroup of $ \cal G(\bb{A}^\infty) $. For any $ g \in \cal G(\bb{A})$, one can consider the following double coset operator $ [KgK] $, acting on a Hilbert cuspform $ f $ of level $ K $ 
	\begin{equation*}
		([KgK]f)(x) = \coprod_i f(x\gamma_i)
	\end{equation*} 
	where $ \gamma_i $'s are coming from the decomposition $ [KgK]=\coprod_i\gamma_iK $.
	\begin{defn}
		\label{defn : Hecke_operator}
		\hfill
		\begin{itemize}
			\item For every prime ideal $ \frak{q} $ of $ \Oo_F $ and a choice of uniformizer $ \varpi_{\frak{q}} $ of $ \Oo_{F,\frak{q}} $, the Hecke operators of $ S_{k,w}(K;\mathbb{C}) $ are defined as $$ T(\varpi_{\frak{q}}) = \{\varpi_{\frak{q}}^{w-t_F}\}\left[K \begin{pmatrix}
				\varpi &0\\
				0 &1
			\end{pmatrix}K \right] . $$  
			\item For every $ a \in \Oo_{F,\frak{N}}^{\times}= \prod_{\frak{q} \mid \frak{N}}\Oo_{F,\frak{q}}^{\times} $, there is a Hecke operator 
			$$ T(a,1)=\left[ K \begin{pmatrix}
				a &0\\
				0 &1
			\end{pmatrix}K \right] .$$
			\item For any $ z \in Z_{\cal G(\bb{A}^\infty)} $ in the center of $ G(\bb{A}^\infty) $, the diamond operator $ \left\langle z \right\rangle  $ acts via 
			$$ ( \left\langle z \right\rangle f)(x)=f(xz) .$$
		\end{itemize}
	\end{defn}
	The Hecke algebra  $ H_{k,w}(K;\Oo_F) $ is defined to be the $ \Oo_F $-subalgebra of $ \mrm{End}_{\bb{C}}\left( S_{k,w}(K;\mathbb{C})\right)  $ generated by the Hecke operators in the Definition \ref{defn : Hecke_operator}. For any $ \Oo_F $-algebra $ A $ contained in $ \bb{C} $, we have $$ H_{k,w}(K;A)\coloneqq H_{k,w}(K;\Oo_F) \otimes_{\Oo_F} A .$$ 
	For every idele $ y \in \widehat{\Oo}_F \cap \bb{A}_F^\times $, let \( y = a \prod_{\frak{q}} \varpi_{\frak{q}}^{e_\frak{q}}u \) such that $ u \in \det(V(\frak{N})) $ and $ a \in \Oo_{F,\frak{N}}^\times $. The Hecke operator 
	\begin{equation}
		\label{big_T_operator}
		T_0(y) = T(a,1)\prod_{\frak{q}} T(\varpi_{\frak{q}}^{e_\frak{q}})
	\end{equation}
	depends only on $ y $.
	\begin{rmk}\cite[Theorem 2.2]{Hida91}
		\label{cuspform_pairing}
		If $ A $ is integrally closed domain and finite flat over $ \Zp $. then $ S_{k,w}(K;A) $ is stable under $ H_{k,w}(K;A) $ and we have a nondegenerate bilinear pairing
		\begin{equation*}
			S_{k,w}(K;A) \times H_{k,w}(K;A) \rightarrow A, \quad (f,h) \mapsto \msf{a}(1,h(f)).
		\end{equation*}
	\end{rmk}
	\subsection{Hida Families of Hilbert Cuspforms}
	\label{2.2}
	Let $O$ be a valuation ring  in $\overline{\bb{Q}}_p$ finite flat over $\bb{Z}_p$ containing $ \Oo_F $. Let $\frak{N}$ be an $\mathcal{O}_F$-ideal prime to $p$. For a open compact subgroup $ K $ such that $U_1(\frak{N})\le K\le U_0(\frak{N})$, we set
	$K(p^r) = K \cap V(p^r)$ and $K(p^\infty) = \cap_{r\ge1} K(p^r)$. Lets define the projective limit of $p$-adic Hecke algebras as
	\[
	\mbf{H}_F(K;O):=\varprojlim_r\ H_{k,w}(K(p^r);O)
	\]
	which is independent of the weight $(k,w)$. Since $\mathbf{H}_F(K;O)$ is compact, we have a decomposition of algebras
	\[
	\mathbf{H}_F(K;O) = \mathbf{H}^{\mrm{n.o.}}_F(K;O) \oplus \mathbf{H}_F^{\mrm{ss}}(K;O)
	\] 
	such that $\msf{T}(\varpi_p) = \varprojlim_r T_0(\varpi_p) $ is a unit in $\mathbf{H}_F^{\mrm{n.o.}}(K;O)$ and topologically nilpotent in $\mathbf{H}_F^{\mrm{ss}}(K;O)$. We denote by 
	\begin{equation}\label{eno}
		e_{\mrm{n.o.}} = \underset{n \to \infty}{\lim} \msf{T}(\varpi_p)^{n!}
	\end{equation}
	the idempotent corresponding to the nearly ordinary part $\mathbf{H}^{\mrm{n.o.}}_F(K;O)$. 
	For any $r\ge1$ we set
	\begin{equation}
		Z^r_F(K) := \bb{A}_F^\times/F^\times (\bb{A}_{F}^\infty \cap \det K(p^r)) F_{\infty,+}^\times\quad\text{and}\quad \bb{G}_F^r(K):= Z^r_F(K)\times(\mathcal{O}_F/p^r\mathcal{O}_F)^\times.
	\end{equation}
	If we denote the projective limits by
	\begin{equation}
		Z_F(K):= \varprojlim_r\ Z^r_F(K),\qquad \bb{G}_F(K) :=\varprojlim_r\ \bb{G}^r_F(K),
	\end{equation}
	then $\bb{G}_F(K) = Z_F(K)\times\mathcal{O}_{F,p}^\times$ and we have a group homomorphism 
	\begin{equation}
		\bb{G}_F(K)\to \mathbf{H}_F(K;O)^\times,\qquad (z,a)\mapsto \langle z,a\rangle:=\diam{z} T(a^{-1},1)
	\end{equation}
	that gives  $\mbf{H}_F(K;O)$ a $O\llbracket\bb{G}_F(K)\rrbracket$-algebra structure.

	\noindent Let $\mrm{cl}_F^+(\frak{N}p)$ be the strict ray class group of modulus $\frak{N}p$. We denote $\overline{\mathcal{E}}^+_{\frak{N}p}$ as the closure of the totally positive units of $\mathcal{O}_F$ congruent to $1\pmod{\frak{N}p}$ inside $\mathcal{O}_{F,p}^\times$. Hence we have an exact sequence
	$$	1 \rightarrow \overline{\mathcal{E}}^+_{\frak{N}p}\backslash\big(1+p\mathcal{O}_{F,p}\big) \rightarrow Z_F(K) \rightarrow \mrm{cl}_F^+(\frak{N}p) \rightarrow 1,	$$
	which splits for large enough $p$ since the order of the group $\mrm{cl}_F^+(\frak{N}p)$ is prime to $p$. We denote the canonical decomposition by
	\begin{equation}\label{GaloisDecomposition}
		Z_F(K) \overset{\sim}{\to} \overline{\mathcal{E}}^+_{\frak{N}p}\backslash\big(1+p\mathcal{O}_{F,p}\big) \times \mrm{cl}_F^+(\frak{N}p),\qquad z\mapsto \big(\xi_z,\bar{z}\big).	
	\end{equation} 
	If we set
	$\mathfrak{J}_F = \overline{\mathcal{E}}^+_{\frak{N}p}\backslash\big(1+p\mathcal{O}_{F,p}\big) \times (1+p \mathcal{O}_{F,p})$, then the exact sequence
	$$ 1 \rightarrow \mathfrak{J}_F \rightarrow \bb{G}_F(K) \rightarrow \mrm{cl}_F^+(\frak{N}p)\times (\mathcal{O}_F/p)^\times \rightarrow 1 $$	
	splits canonically for large enough $p$. The group $\mathfrak{J}_F$ is a finitely generated $\bb{Z}_p$-module of $\bb{Z}_p$-rank $[F:\bb{Q}]+1+\delta$, where $\delta$ is Leopoldt's defect for $F$. Let $W$ be the torsion-free part of $\mathfrak{J}_F$ and denote by
	$\Lambda_F = O\llbracket W \rrbracket$ the associated completed group ring.

	\begin{thm}\cite[Theorem 2.4]{hida89nearly}
		The nearly ordinary Hecke algebra $\mathbf{H}_F^{\mrm{n.o.}}(K;O)$ is torsion-free and finite over $\Lambda_F$.
	\end{thm}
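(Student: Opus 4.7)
My proposal follows Hida's original strategy: reduce to finite level via a control theorem, deduce finiteness through topological Nakayama's lemma over $\Lambda_F$, and obtain torsion-freeness via a $\Lambda_F$-adic Hecke/cuspforms duality. First I would establish the vertical control statement: for every arithmetic character $\phi\colon\Lambda_F\twoheadrightarrow O$ coming from a classical weight $(k,w)$ and a finite-order piece of conductor $p^r$, there is a natural surjection
\[
\mbf{H}_F^{\mrm{n.o.}}(K;O)\otimes_{\Lambda_F,\,\phi} O \twoheadrightarrow e_{\mrm{n.o.}}\cdot H_{k,w}(K(p^r);O).
\]
The target is finite over $O$ because $S_{k,w}(K(p^r);\bb{C})$ is finite-dimensional (hence so is its $O$-lattice), and the pairing of Remark~\ref{cuspform_pairing} identifies the Hecke algebra at finite level with an $O$-dual of this space.

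To upgrade this to finiteness over $\Lambda_F$, the essential input is Hida's vertical control in the present setting: the $O$-rank of $e_{\mrm{n.o.}}\cdot S_{k,w}(K(p^r);O)$ stabilizes as $r\to\infty$ and is locally constant in $(k,w)$. Fixing one arithmetic $\phi_0$ for which the quotient is thereby finite over $O$, topological Nakayama applied to the complete semi-local ring $\Lambda_F = O\llbracket W\rrbracket$ forces $\mbf{H}_F^{\mrm{n.o.}}(K;O)$ itself to be finitely generated over $\Lambda_F$. For torsion-freeness I would introduce the $\Lambda_F$-adic module of nearly ordinary cuspforms,
\[
\mbf{S}_F^{\mrm{n.o.}}(K;O) \ :=\ \varprojlim_r\ e_{\mrm{n.o.}}\cdot S_{k_0,w_0}(K(p^r);O),
\]
and promote the finite-level perfect pairing to a $\Lambda_F$-bilinear pairing
\[
\mbf{S}_F^{\mrm{n.o.}}(K;O)\times \mbf{H}_F^{\mrm{n.o.}}(K;O)\longrightarrow \Lambda_F,
\]
which exhibits $\mbf{H}_F^{\mrm{n.o.}}(K;O)$ as a submodule of $\Hom_{\Lambda_F}\bigl(\mbf{S}_F^{\mrm{n.o.}}(K;O),\Lambda_F\bigr)$. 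Since the latter is automatically torsion-free over $\Lambda_F$, so is $\mbf{H}_F^{\mrm{n.o.}}(K;O)$.

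The hard part is the vertical control statement itself: one must show that, after applying the idempotent $e_{\mrm{n.o.}}$ from \eqref{eno}, the $O$-rank of $S_{k,w}(K(p^r);O)$ becomes independent of $r$. This rests on the unit-root property of $\msf{T}(\varpi_p)$ on the ordinary part together with an analysis of how genuinely $p$-new vectors interact with the ordinary projector: $p$-new vectors in weight $\geq 2$ have $U_p$-eigenvalue of strictly positive slope, hence are annihilated by $e_{\mrm{n.o.}}$. Because $p$ splits in $F$, the local analyses at the two places $\p_1,\p_2$ above $p$ decouple, and Hida's original $\GL{2,\bb{Q}}$ argument applies place-by-place with only minor bookkeeping for the weight datum $(k,w)$ and for the canonical decomposition \eqref{GaloisDecomposition} that isolates the torsion-free part $W\subset\mathfrak{J}_F$.
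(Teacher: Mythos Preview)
The paper does not supply a proof of this theorem: it is stated with a bare citation to \cite[Theorem 2.4]{hida89nearly} and used as a black box. So there is no ``paper's own proof'' to compare against; your proposal is a sketch of Hida's original argument, and as such its three-step architecture (control at arithmetic specializations $\Rightarrow$ topological Nakayama for finiteness $\Rightarrow$ $\Lambda_F$-adic duality for torsion-freeness) is the standard one.

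Two remarks on the sketch itself. First, your invocation of the splitting hypothesis on $p$ in the final paragraph is unnecessary: Hida's control theorem in \cite{hida89nearly} is proved for arbitrary totally real $F$ without any splitting assumption, and the argument does not reduce to a place-by-place $\GL{2,\Q}$ analysis. The paper's standing hypothesis that $p$ splits in $\Oo_F$ is imposed for later constructions (Greenberg local conditions, Asai decompositions), not for this foundational result. Second, your definition of $\mbf{S}_F^{\mrm{n.o.}}(K;O)$ as a projective limit at a fixed weight is nonstandard; Hida works with an inductive limit of $p$-adic cuspforms (or equivalently with the Pontryagin dual side), and the passage to a $\Lambda_F$-valued pairing requires some care with completions that your outline elides. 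Neither point is a fatal gap, but both would need attention in a full write-up.
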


	\noindent The completed group ring $O\llbracket\bb{G}_F(\frak{N})\rrbracket$ naturally decomposes as the direct sum $\bigoplus_\Psi \Lambda_{F,\Psi}$ ranging over all the characters of the torsion subgroup
	$\bb{G}_F(\frak{N})_\mrm{tor} = \mathfrak{I}_{F,\mrm{tor}}\times \mrm{cl}_F^+(\frak{N}p)\times (\mathcal{O}_F/p)^\times$.
	It induces a decomposition of the nearly ordinary Hecke algebra $
	\mbf{H}_F^{\mrm{n.o.}}(K;O) = \bigoplus_{\Psi}\mathbf{H}_F^{\mrm{n.o.}}(K;O)_\Psi$.
	
	\begin{defn}\label{def I-adic cuspforms}
		Let $\Psi: \bb{G}_F(K)_\mrm{tor} \rightarrow O^\times$ be a character. For any $\Lambda_{F,\Psi}$-algebra  $\R$, the space of nearly ordinary $\R$-adic cuspforms of tame level $K$ and character $\Psi$ is 
		\[
		\overline{\mbf{S}}_F^\mrm{n.o.}(K,\Psi;\R) :=
		\mrm{Hom}_{\bs{\Lambda}_{F,\Psi}\mbox{-}\mrm{mod}} \big( \mbf{H}_F^\mrm{n.o.}(K(p^\infty);O)_\Psi, \R\big).
		\]
		When an $\R$-adic cuspform is also a $\Lambda_{F,\Psi}$-algebra homomorphism, we call it a Hida family.
	\end{defn}

	Let
	$\psi: \mrm{cl}_F^+(\frak{N}p^\alpha) \rightarrow O^\times$,
	$\psi': (\mathcal{O}_F/p^\alpha)^\times \rightarrow O^\times$  be a pair of characters 
	and  $(k,w)$ a weight satisfying $k-2w = mt_F$. The group homomorphism
	\[
	\bb{G}_F(K)\to O^\times,\qquad (z,a) \mapsto \psi(z)\psi'(a)\check{\chi}_{\mrm{cycl},F}(z)^ma^{t_F-w}
	\]
	determines a $O$-algebra homomorphism $P_{k,w,\psi,\psi'}:O\llbracket\bb{G}_F(K) \rrbracket\rightarrow O$.

	\begin{defn}
		For  a $\Lambda_{F,\Psi}$-algebra $\R$ the set of arithmetic points, denoted by $\W_{\mrm{cl}}(\R)$, is defined as the subset of $\W(\R) = \operatorname{Spf}(\R)(\bb{C}_p)$ consisting of $ O $-algebra homomorphisms those are equivalent with some $P_{k,w,\psi,\psi'}$ when restricted to $\Lambda_{F,\Psi}$. For any such $ P \in \W_{\mrm{cl}}(\R) $ we denote $ k $ as $ k(P) $, $ w $ as $ w(P) $ and $ m(P) \in \Z_{>0} $ such that $ k(P)-2w(P)=m(P).t_F $. 
	\end{defn}
	By the \cite[Theorem 2.4]{hida89nearly}, for any such $ P \in \W_{\mrm{cl}}(\R) $ and a Hida family $ \f \in \overline{\mbf{S}}_F^\mrm{n.o.}(K;\R) $, the composite $ \f(P) \coloneqq P \circ \f $ is a $ \overline{\Q}_p $-linear map $ \f(P) \colon  H_{k,w}(K(p^r); \overline{\Q}_p ) \rightarrow \overline{\Q}_p $. Hence by the Remark \ref{cuspform_pairing} $ \f(P) $ represents a Hilbert cuspform $ \f_P \in S_{k,w}(K(p^r),\overline{\Q}_p) $ such that $ \msf{a}_p(y,\f_P) = \f(P)(\msf{T}(y^\infty)) $ for all $ y \in \widehat{\Oo}_F^\times F^\times_{\infty, +} $. In this article we use both $ \f_P $ and $ \f(P) $ to represent this cuspform. Moreover $ \f_p $ is nearly ordinay at $ p $ i.e. $ \f_P $ is normalized eigenform and for all $ \p \mid p$, the $ T(\varpi_{\frak{p}}) $-eigenvalue is a $ p $-adic unit. In fact, for any Hecke eigenform $ f \in S_{k,w}(K(p^r),\overline{\Q}_p) $, there exist some character $ \Psi  $ and a $ \Lambda_{F,\Psi} $-algebra $ \R $ such that the $ p $-stabilization $ f^{(p)} = \f_P $ for some Hida family $ \f \in \overline{\mbf{S}}_F^\mrm{n.o.}(K;\R) $ and $ P \in \W_\Psi(\R) $. That $ \f $ is called a branch of Hida family along $ f^{(p)} $.
	\begin{defn}
		The set of Crystalline point $\W_{\mrm{crys}}(\R)$ is the subset of $ \W_{\mrm{cl}}(\R) $ consisting of the arithmetic points $ P $ such that $ P_{\mid \Lambda_{F,\Psi}} \equiv P_{k,w,\psi,1\mid \Lambda_{F,\Psi}} $ and the eigenform $ \f_P $ is $ p $-old. 
	\end{defn}   
	
	\subsection{Galois Representation Associated to Families of Hilbert Cuspforms}
	\label{2.3}
	Let $ \Psi $ be a character of $ \mrm{cl}_F^+(\frak{N}p) $ which induces a character of $ \bb{G}_F(\frak{N})_{\mrm{tor}} $ and a Galois character. Both are denoted by $ \Psi $. Suppose $ \f \in \overline{\mbf{S}}_F^\mrm{n.o.}(\frak{N},\Psi;\R)$ is an ordinary Hida family passing through $ p $-stabilization $ f^{(p)} $ of a Hilbert cuspform $ f $. The universal weight character 
	$$ \mathbbl{x} \colon G_{F,\Sigma} \rightarrow W \rightarrow O\llbracket W  \rrbracket^\times \rightarrow  \Lambda_{F,\Psi}^\times  $$
	is induced via global class field theory, gives rise to the character
	$$ \mathbbl{x}_{\f} \colon G_{F,\Sigma} \xrightarrow{\mathbbl{x}}  \Lambda_{F,\Psi}^\times  \rightarrow \R^\times. $$ 
	For any $ P \in \W_{\mrm{cl}}(\R) $, let $  \mrm{wt}(P) = P_{\mid \Lambda_{F,\Psi}} $. Hence we have $ P \circ  \mathbbl{x}_{\f} = \mrm{wt}(P) \circ \mathbbl{x}  $. 
	
	Let $ \mathcal{Q} = \operatorname{Frac}(\R) $, then by \cite[Theorem 1]{hida89reps}, there is a unique big Galois representation $ \rho_{\f} $ associated with $ \f $ acting on $ V_\f^{\dagger} \cong \mathcal{Q}^2 $ . In particular, $ \rho_{\f} $ is continuous with $ \operatorname{im}(\rho_{\f}) \subset \operatorname{End}_{\R}(T_\f^\dagger) $ and satisfying 
	\begin{equation}
		\bigwedge^2\rho_{\f} \equiv \R( \mathbbl{x}_{\f}\chi_{\mrm{cycl},F}^{-1}\Psi_{\f})
	\end{equation}  
	where $ T_\f^\dagger$ is an $ \R $-lattice inside $ V_{\f}^{\dagger} $. Moreover $ \rho_{\f} $ also satisfies 
	\begin{equation}
		\label{lambda_adic_char_poly}
		\det(1-\rho_{\f}(\operatorname{Fr}(\mathfrak{q}))X) = 1 -\f(\msf{T}(\varpi_{\mathfrak{q}}))X + \Psi(\varpi_{\mathfrak{q}})[\xi_{\varpi_{\mathfrak{q}}}^{-t_F}]\operatorname{N}_{F/\Q}\mathfrak{q}X^2
	\end{equation}
	for all prime ideals $ \mathfrak{q} \nmid \frak{N}p $. 
	We will assume that	
	\begin{itemize}
		\item[\mylabel{Irr}{\textbf{(Irr)}}]  The residual representation of $ \rho_{\f} $ is absolutely irreducible.
	\end{itemize}
	By \cite[Theorem 2.2.2]{Wil88}, we have for any prime ideal $ \p \mid p $, there is an exact sequence of $ G_{F_{\p}} $-modules
	$$ 0 \rightarrow \scrF^+_{\p} T_{\f}^{\dagger} \rightarrow T_{\f}^{\dagger} \rightarrow \scrF^-_{\p}T_{\f}^{\dagger} \rightarrow 0 $$
	such that each $  F_{\p}^{\pm} V_{\f}^{\dagger} $ is rank one $ \mathcal{R} $-module.  Hence we get	 
	\begin{equation*}
		\rho_{\f}|_{G_{F_{\p}}} \simeq \begin{pmatrix}
			\delta_{\f,\p} &* \\
			0 &\alpha_{\f,\p}
		\end{pmatrix}
	\end{equation*}
	where $ \alpha_{\f,\p} \colon G_{F_{\p}} \rightarrow \R^\times $ is the unramified character such that $ \alpha_{\f,\p}(\mrm{Fr}_{\frak{p}}) $ is a $ p $-adic unit and root of \eqref{lambda_adic_char_poly}.	
	Also assume the following hypothesis
	\begin{itemize}
		\item[\mylabel{DIST}{\textbf{(Dist)}}] $ \delta_{\f,\p} \not\equiv \alpha_{\f,\p}\mod\mathfrak{m}_{\R} $ for all $ \p \mid p $.
	\end{itemize}
	\begin{rmk}
		These two hypothesis \ref{Irr} and \ref{DIST} make sure that $ T_{\f}^\dagger $ is free of rank two as $ \R $-module and $ \scrF_{\p}^\pm T_{\f}^\dagger $ rank 1 free module for all $ \p \mid p $.
	\end{rmk}
	Assume that, there exist a character $ \Psi^{-\frac{1}{2}} $ such that $ (\Psi^{-\frac{1}{2}})^2 = \Psi $. Let $ V_\f = V_\f^\dagger \otimes \mathbbl{x}_\f^{-\frac{1}{2}}\Psi^{-\frac{1}{2}}\chi_{\mathrm{cycl}} $, then there is an isomorphism of $ \operatorname{Frac}(\R)\llbracket G_{F,\Sigma} \rrbracket $-modules
	\begin{equation*}
		V_\f \xrightarrow{\sim} V_\f^*(1) = \Hom(V,\operatorname{Frac}(\R))(1). 
	\end{equation*}
	Therefore, $ V_\f $ is a self-dual representation containing self-dual stable lattice
	\begin{equation} \label{selfdual_lattice}
		T_\f\coloneqq T_\f^\dagger \otimes \mathbbl{x}_\f^{-\frac{1}{2}}\Psi^{-\frac{1}{2}}\chi_{\mathrm{cycl}} .
	\end{equation} Hence the lattice $ T_{\f} $ fits in an exact sequence of $ \R[G_{F_\p}] $-modules
	\begin{equation*}
		0 \rightarrow \scrF^+_{\p} T_{\f} \rightarrow T_{\f} \rightarrow \scrF^-_{\p} T_{\f} \rightarrow 0.
	\end{equation*}
	Any Hida family in our article will satisfy \ref{Irr} and \ref{DIST} and admits a self dual free lattice defined in \eqref{selfdual_lattice}. 
	\subsection{Tensor Induction Representations}
	\label{sec2.4}
	Let $ H $ be an index two subgroup of finite group $ G $. Suppose $ k $ is a field and $ \rho $ is a representation of $H$ acting on a finite $ k $-vector space $ V $. 
	\begin{defn}
		The tensor induction of $ \rho $ is defined as 
		\begin{align*} (\Ind^{\otimes}_{G/H} &\rho) \colon G \rightarrow \operatorname{Aut}_k(V \otimes V) \\
			(\Ind^{\otimes}_{G/H} &\rho)(h)(v_1\otimes v_2)  = \rho(h)(v_1) \otimes \rho(\theta h \theta)(v_2), \\
			(\Ind^{\otimes}_{G/H} &\rho)(\theta)(v_1\otimes v_2)  = v_2 \otimes v_1,
		\end{align*}
		for all $ h \in H $, where $ \{1, \theta \} $ is right coset of $ H $.
	\end{defn}
	Let $ (\rho^{\theta},V^{\theta}) $ is the representation obtained via conjugation by $ \theta $ (i.e. $ \rho^\theta(h) = \rho(\theta h \theta^{-1} ) $ for all $ h \in H $) and as a vector space $ V^\theta =V $. Hence $ (\Ind^{\otimes}_{G/H} \rho)_{\mid H} \equiv \rho\otimes \rho^\theta $.
	\begin{prop}
		Let $ (\rho, V) $ be a self-dual representation, then $ \left((\Ind^{\otimes}_{G/H} \rho),V \otimes V^{\theta}\right) $ is reducible if and only if $V^*(\lambda)  \cong V^{\theta}$ for some character $ \lambda $. Moreover $ V \otimes V^{\theta} \cong \ad^0(V)(\lambda) \oplus k(\lambda) $ for some character $ \lambda $.
	\end{prop}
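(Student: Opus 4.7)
My approach is to establish the two directions of the equivalence separately, obtaining the explicit decomposition as a byproduct of the sufficient direction.

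For the sufficient direction, suppose $V^*(\lambda) \cong V^\theta$ as $H$-representations for some character $\lambda$ of $H$. Combining this with self-duality $V \cong V^*$ gives an $H$-isomorphism $V^\theta \cong V(\lambda)$, so
\[
V \otimes V^\theta \;\cong\; (V \otimes V)(\lambda) \;\cong\; \bigl(\mathrm{Sym}^2 V \oplus \wedge^2 V\bigr)(\lambda)
\]
as $H$-representations. Since $V$ is $2$-dimensional and self-dual, $\det V$ is a character of order dividing $2$, and the standard identifications $\wedge^2 V \cong \det V$ and $\mathrm{Sym}^2 V \cong \ad^0(V) \otimes \det V$ yield $V \otimes V^\theta \cong \ad^0(V)(\mu) \oplus k(\mu)$ with $\mu = \lambda \cdot \det V$, establishing the \emph{moreover} clause. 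To upgrade this $H$-decomposition to a $G$-equivariant one (and hence to reducibility of $\pi := \Ind^{\otimes}_{G/H}\rho$), I fix an $H$-intertwiner $\widetilde{\phi}: V \to V^\theta$; the relation $\theta^2 \in H$ combined with absolute irreducibility of $V$ forces the cocycle condition $\lambda \cdot \lambda^\theta = 1$ (since $V \cong V^{\theta^2} \cong V(\lambda \lambda^\theta)$ and Schur applies), which allows me to normalize so that $\widetilde{\phi}^{\,2} = \mathrm{id}$. A short computation then shows that the swap $\pi(\theta)$ transported through $\widetilde{\phi}$ preserves the symmetric/antisymmetric decomposition of $V \otimes V$, making the splitting $G$-equivariant.

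For the necessary direction, suppose $\pi$ is $G$-reducible. The plan is to first produce a one-dimensional $G$-subrepresentation $k(\widetilde{\mu})$ of $\pi$. When $\pi$ has a proper $G$-invariant subspace of dimension $1$ or $3$ this is immediate (the dimension-$3$ case is handled by dualizing, using $\pi \cong \pi^*$, which follows from tensor induction commuting with duals together with the self-duality of $V$); the residual $(2+2)$-decomposition case I intend to rule out by a short character-theoretic calculation combined with the absolute irreducibility hypothesis on $\rho$. Once $k(\widetilde{\mu}) \hookrightarrow \pi$ is in hand, restriction to $H$ and Frobenius reciprocity combined with self-duality yield
\[
0 \;\neq\; \Hom_H\!\bigl(k(\widetilde{\mu}|_H),\,V \otimes V^\theta\bigr) \;\cong\; \Hom_H\!\bigl(V(\widetilde{\mu}|_H^{-1}),\,V^\theta\bigr),
\]
so Schur's lemma applied to these two irreducible $2$-dimensional $H$-representations forces $V^\theta \cong V(\widetilde{\mu}|_H^{-1})$, which rearranges via self-duality to $V^*(\lambda) \cong V^\theta$ for a suitable $\lambda$.

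The main technical obstacle I anticipate is the $(2+2)$-decomposition case of the necessary direction, where neither a one-dimensional $G$-subrepresentation nor quotient is visible a priori. Addressing it requires either excluding this case under hypothesis (Irr) or extracting the required one-dimensional $H$-constituent by a finer analysis of how the swap $\pi(\theta)$ interacts with the two $2$-dimensional $H$-summands. The sufficient direction is essentially a direct computation; the only subtle step there is the normalization $\widetilde{\phi}^{\,2} = \mathrm{id}$, which is what aligns the transported swap with the symmetric/antisymmetric splitting.
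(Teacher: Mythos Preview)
The paper does not supply a proof of this proposition; it states the result and immediately cites \cite[Proposition~2.3]{For19} for the assertion that the decomposition type in the reducible case is $(3,1)$. So there is no argument in the paper to compare yours against --- the authors defer entirely to Fornea.

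Your sufficient direction is correct: the $\mathrm{Sym}^2/\wedge^2$ splitting of $V\otimes V$, twisted by $\lambda$ and transported through a normalized intertwiner $\widetilde{\phi}$, is preserved by the swap $\pi(\theta)$ and yields the $G$-stable decomposition $\ad^0(V)(\mu)\oplus k(\mu)$, establishing both reducibility and the ``moreover'' clause. For the necessary direction you have correctly isolated the only real obstacle, namely excluding a $(2,2)$ $G$-decomposition of $\pi$. Your description of this step as ``a short character-theoretic calculation'' is too optimistic: this is exactly the nontrivial content the paper outsources to \cite{For19}, and for general groups it is \emph{not} automatic that a reducible tensor product of two absolutely irreducible $2$-dimensional representations admits a one-dimensional constituent. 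One line that does work in the intended Galois-theoretic context is to note that any $G$-stable proper subspace is $H$-stable, so $V\otimes V^\theta$ is $H$-reducible, and then to argue (using the structure of $2$-dimensional representations under the hypotheses \ref{Irr}/\ref{Irr+}) that $H$-reducibility of $V\otimes V^\theta$ already forces a one-dimensional $H$-summand, hence $V^\theta\cong V^*(\lambda)$. You should either carry this out explicitly or, as the paper does, cite Fornea at precisely this point.
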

	The decomposition type of $ (\Ind^{\otimes}_{G/H} \rho)$ is $ (3,1) $ when the former is reducible	\cite[Proposition 2.3]{For19}. 
	Let $ F/\Q $ be a totally real quadratic number field and $ \g $ be a $ \R_\g $-adic Hida family of Hilbert cuspforms of level $ \frak{N}_{\g} $ and character $ \Psi_{\g} $. Suppose 
	$ \Sigma \coloneqq \{\upsilon \mid \frak{d}_F \frak{N}_{\g}p\infty \} $
	be a finite set of places of $ F $, then $ G_{\Q, \Sigma}\coloneqq \Gal(F^{\Sigma}/\Q) $ where $ F^\Sigma $ is the maximal extension of $ F $ unramified outside $ \Sigma$ and $ G_{F, \Sigma}\coloneqq \Gal(F^{\Sigma}/F) $. Therefore we have $ G_{F,\Sigma} \triangleleft G_{\Q, \Sigma}$ and $ G_{\Q, \Sigma}/G_{F,\Sigma} $ is canonically isomorphic with $ \Gal(F/\Q) =\{1,\theta\} $. Let $ \rho_{\g} \colon G_{F,\Sigma} \rightarrow \GL{2}(\R_{\g}) $ be the self dual big Galois representation associated with $ \g $ acting on rank two $ \R_\g $-lattice $ T_\g $. We can extend the notion of tensor induction on Galois representation via projective limit. So we have
	\begin{prop}
		\label{prop : Asai_decomposition}
		Suppose $ \As(\rho_\g)\coloneqq  \Ind^{\otimes}_{G_{\Q, \Sigma}/G_{F, \Sigma}}{}^{}(\rho_\g)(-1) $. If $ \g $ has a trivial central character (i.e. $ \Psi_{\g} \equiv \mathds{1} $) and $ \rho_{\g}^{\theta} \cong \rho_{\g}^* (\dot{\lambda}) $ for some character $ \dot{\lambda} $ (either quadratic or trivial) then $ \As(\rho_\g)|_{G_{F, \Sigma}} \cong \ad^0(\rho_\g)(\lambda) \oplus \lambda$ by setting $ \lambda = \dot{\lambda}(-1) $.
	\end{prop}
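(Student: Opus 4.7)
The plan is to reduce the statement to the preceding proposition by unwinding the cyclotomic twist built into the definition of the Asai representation. The argument splits into three short steps, and the only care required is to extend the field-coefficient setting of the previous proposition to the big Galois-module setting over the complete local ring $\R_\g$, which is routine.

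First, by the definition of tensor induction together with the coset decomposition $G_{\Q,\Sigma}/G_{F,\Sigma} = \{1,\theta\}$, one has a canonical identification
\begin{equation*}
\bigl(\Ind^{\otimes}_{G_{\Q,\Sigma}/G_{F,\Sigma}}(\rho_\g)\bigr)\big|_{G_{F,\Sigma}} \;\cong\; T_\g \otimes_{\R_\g} T_\g^\theta
\end{equation*}
of $\R_\g[G_{F,\Sigma}]$-modules, exactly as in the $H$-restriction formula used in the previous proposition. Since tensor induction, duality, and trace-splitting all commute with base change and $T_\g$ is free of rank two over $\R_\g$ (by \ref{Irr} and \ref{DIST}), the previous proposition's argument transfers from field coefficients to $\R_\g$-coefficients verbatim.

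Next, I would invoke the hypothesis $\rho_\g^\theta \cong \rho_\g^*(\dot\lambda)$ to rewrite
\begin{equation*}
T_\g \otimes_{\R_\g} T_\g^\theta \;\cong\; T_\g \otimes_{\R_\g} T_\g^*(\dot\lambda) \;=\; \ad(\rho_\g)(\dot\lambda),
\end{equation*}
where the second identification uses the self-duality of $\rho_\g$ recorded in \eqref{selfdual_lattice} (a consequence of the trivial central character assumption $\Psi_\g \equiv \mathds{1}$). Since $2$ is a unit in $\R_\g$ (as $p\ge 5$), the trace map admits the canonical section $\tfrac{1}{2}\cdot\mathrm{id}$ and produces a $G_{F,\Sigma}$-equivariant splitting $\ad(\rho_\g) \cong \ad^0(\rho_\g) \oplus \R_\g$. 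Twisting by $\dot\lambda$ then gives $T_\g \otimes_{\R_\g} T_\g^\theta \cong \ad^0(\rho_\g)(\dot\lambda) \oplus \R_\g(\dot\lambda)$.

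Finally, applying the additional cyclotomic twist $\chi_{\mathrm{cycl}}^{-1}$, denoted $(-1)$, that occurs in the definition $\As(\rho_\g) = \Ind^{\otimes}(\rho_\g)(-1)$ yields
\begin{equation*}
\As(\rho_\g)\big|_{G_{F,\Sigma}} \;\cong\; \ad^0(\rho_\g)\bigl(\dot\lambda(-1)\bigr) \oplus \R_\g\bigl(\dot\lambda(-1)\bigr) \;=\; \ad^0(\rho_\g)(\lambda) \oplus \lambda,
\end{equation*}
with $\lambda := \dot\lambda(-1)$, which is the claim. No step is a serious obstacle; the argument is essentially formal given the preceding proposition, and the only bookkeeping point is tracking the single cyclotomic twist that distinguishes $\As(\rho_\g)$ from the bare tensor induction.
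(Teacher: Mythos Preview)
The paper states this proposition without proof, presenting it as an immediate application of the preceding (unlabelled) proposition on tensor induction, which is in turn attributed to \cite[Proposition~2.3]{For19}. Your argument is precisely the natural unpacking of that application and is correct: restrict the tensor induction, use the hypothesis $\rho_\g^\theta \cong \rho_\g^*(\dot\lambda)$ to identify with $\ad(\rho_\g)(\dot\lambda)$, split off the trace (legitimate since $p\ge 5$ makes $2$ a unit in $\R_\g$), and then track the cyclotomic twist to pass from $\dot\lambda$ to $\lambda = \dot\lambda(-1)$.

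One cosmetic remark: the identification $T_\g \otimes_{\R_\g} T_\g^*(\dot\lambda) = \ad(\rho_\g)(\dot\lambda)$ is simply the definition of $\ad$ and does not itself invoke self-duality; the self-duality of $\rho_\g$ (equivalently, the trivial central character) is what places you in the setting of the preceding proposition, not what justifies that particular equality. This does not affect the validity of your proof.
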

	\begin{rmk}
		The representation $ \As(\rho_\g)$ is isomorphic to Asai-representation of $ \g $. When $ \g $ is a base change family of some Hida family of cuspform $ \g^\flat $ over $ \Q $, then there may exist a character $ \lambda \colon G_{\Q, \Sigma} \rightarrow \{\pm 1\} \subset \R_{\g}^\times$ associated with $ F/\Q $ via global class field theory such that $  \As(\rho_\g) $ becomes Asai-decomposable. For more general cases the family $ \g $ need not be decomposable but for some finite set of arithmetic points in $ \W_{\g}= \operatorname{Spf}(\R_\g)(\bb{C}_p) $, the specialization of $ \g $ may be decomposable. (cf. \cite[Section 3.2]{Liu16})
	\end{rmk}
	We have the representation space of $ \As(\rho_\g) $ as rank four $ \R_{\g} $-module $ T_\g \widehat{\otimes}_{O} T_\g^\theta$. By assumptions we have isomorphism of $ \R_{\g}\bbox{G_{\Q,\Sigma}} $-modules  $T_\g \widehat{\otimes}_{O} T_\g^\theta \cong T_\g \widehat{\otimes} T_\g^*(\lambda) \cong \ad(T_\g)(\lambda)\coloneqq\Hom_{\R_{\g}}(T_\g,T_\g)(\lambda) $ since $ T_\g^* \coloneqq \Hom_{\R_\g}(T_\g, \R_{\g}) \cong T_\g $. For $ \p =\p_1, \p_2 $,
	$ \ad(T_\g)(\lambda) $ acquires filtration of $ \R_{\g}\llbracket G_{F_{\p}}\rrbracket $-submodule
	
	\begin{align}
		\begin{aligned}
			\ad(T_\g)(\lambda) \supset \underbrace{\ker\{\ad(T_\g) \rightarrow \Hom(\scrF^{-}_{\p} T_\g , \scrF^{-}_{\p} T_\g)\}(\lambda)}_{=:\mathrm{Fil}^3_{\p}\ad(T_\g)(\lambda)} \supset \underbrace{\Hom(T_\g,\scrF^{+}_{\p} T_\g)(\lambda)}_{=:\mathrm{Fil}^2_{\p}\ad(T_\g)(\lambda)} \qquad \qquad \qquad \\ 
			\supset \underbrace{\Hom(\scrF^{+}_{\p} T_\g,\scrF^{+}_{\p} T_\g)(\lambda)}_{=:\mathrm{Fil}^1_{\p}\ad(T_\g)(\lambda)} \supset \{0\}
		\end{aligned}
	\end{align}
	
	\subsection{Decomposition of Big Galois Modules}
	\label{subsection:Decomposition of Big Galois Modules}
	Let $ \f $ (resp. $ \g $) be an ordinary Hida family of cuspforms on $ \cal G $ of level $ \frak{N}_\f  $ (resp. $ \frak{N}_{\g} $)  with branch $ \R_\f $ (resp. $ \R_\g $) and trivial central character. We also assume the finite prime $ \p \nmid \frak{N}_{\f}\frak{N}_\g $ for $ \p = \p_1,\p_2 $ and $ \f $ and $ \g $ satisfies
	
	Suppose 
	$ \Sigma$ be a finite set of places of $ F $ containing all the places dividing $ \frak{d}_F \frak{N}_{\f} \frak{N}_{\g}p\infty  $. Let $ F^\Sigma $ be the maximal extension of $ F $ unramified outside $ \Sigma $, then we denote 
	\begin{align*}
		G_{F,\Sigma} \coloneqq \Gal(F^\Sigma/F).
	\end{align*}
	Thus we have self-dual, rank two $ \R_{\f}\bbox{G_{F,\Sigma}} $-lattice (resp. $ \R_{\g}\bbox{ G_{F,\Sigma}} $-lattice) $ T_\f $ (resp. $ T_\g $). We denote $ \g^\theta $ as a Hida family of Hilbert cuspforms such that associated Galois representation $ T_{\g^\theta} $ is isomorphic to the $ \theta $-twist $ T_\g^\theta $ (i.e $\rho_{\g^\theta}(h) = \rho_{\g}(\theta h\theta^{-1}) $).
	Let $ G_{F,\Sigma} $ act on $ T_\g \widehat{\otimes}_{O}T_\g^\theta $ via $ \As(\rho_{\g})_{|G_{F,\Sigma}} $. We set the regular, local ring
	$$ \R \coloneqq \R_{\f} \widehat{\otimes}_{O} \R_{\g} .$$
	In this setup, let $ \T_3 = T_{\f} \widehat{\otimes}_{O} T_{\g} \widehat{\otimes}_{O} T_{\g}^{\theta} $ be the $ \R $-lattice of rank eight, then $$ \bs{\rho}_3 \coloneqq \rho_{\f} \otimes \As(\rho_{\g})_{|G_{F,\Sigma}} \colon G_{F, \Sigma} \rightarrow \operatorname{GL}_{\R}(\T_3)\cong \GL{8}(\R) $$ is a self-dual Galois representation. The associated weight space is $$ \W\coloneqq\operatorname{Spf}(\R)(\bb{C}_p)= \W(\R_\f) \times \W(\R_\g). $$
	The subset of arithmetic points is denoted by $ \W_{\mrm{cl}} = \W_{\mrm{cl}}(\R_\f) \times \W_{\mrm{cl}}(\R_\g) \subset \W $. Furthermore $ \W_{\mrm{cl}} $ has following disjoint subsets:
	\begin{itemize}
		\item The set of totally $ \f $-dominated points
		$$ \W_{\mrm{cl}}^\f \coloneqq \{ (P, Q) \in \W_{\mrm{cl}}\colon m(P) > 2m(Q) \}. $$
		\item The set of $ \f $-dominated at $ \tau_1 $ points
		\begin{align*}
			\W_{\mrm{cl}}^{(\f,\b)} \coloneqq \left\lbrace  (P, Q) \in \W_{\mrm{cl}}\colon \begin{aligned}
				k(P)_{\tau_1} \ge 2k(Q)_{\tau_1},\ k(P)_{\tau_2} < 2k(Q)_{\tau_2},\qquad \qquad \\
				m(P) =2m(Q),  
				\text{ and }k(P)_{\tau_2} + 2k(Q)_{\tau_2} \text{ is even} 
			\end{aligned}\right\rbrace .
		\end{align*}
		\item The set of $ \f $-dominated at $ \tau_2 $ points
		\begin{align*}
			\W_{\mrm{cl}}^{(\b,\f)} \coloneqq \left\lbrace  (P, Q) \in \W_{\mrm{cl}}\colon \begin{aligned}
				k(P)_{\tau_2} \ge 2k(Q)_{\tau_2},\ k(P)_{\tau_1} < 2k(Q)_{\tau_1},\qquad \qquad \\
				m(P) =2m(Q),  
				\text{ and }k(P)_{\tau_1} + 2k(Q)_{\tau_1} \text{ is even} 
			\end{aligned}\right\rbrace .
		\end{align*}
		\item The set of totally balanced points
		$$ \W_{\mrm{cl}}^{\mathrm{bal}} \coloneqq \{(P, Q) \in \W_{\mrm{cl}}\colon  m(P) < 2m(Q) \}. $$
	\end{itemize}
	\subsubsection{}
	Let us consider the natural $ O $-algebra homomorphisms
	\begin{equation}
		\begin{split}
			i^*_\f : \R_\f \rightarrow \R  &; \quad a   \mapsto a \otimes 1; \\
			i^*_\g : \R_\g \rightarrow \R  &; \quad b   \mapsto 1 \otimes b;
		\end{split}
	\end{equation}
	which also induces surjective morphisms
	$$ \W \xrightarrow{\pi_\f} \W(\R_\f), \quad  \W \xrightarrow{\pi_\g} \W(\R_\g).  $$
	Suppose $ T_{\g}^\theta \cong T_{\g}^*(\dot{\lambda}) $ for some quadratic character $ \dot{\lambda} \colon G_{F,\Sigma} \rightarrow \{\pm1\} \subset \R_{\g} $. By abuse of notation, we also define the characters $ \dot{\lambda} \colon G_{F,\Sigma} \rightarrow \{\pm1\} \subset \R_{\f} $ and $ \dot{\lambda}\colon G_{F,\Sigma} \rightarrow \{\pm1\} \subset \R $ taking similar value as the first $ \dot{\lambda} $. Therefore we have self-dual $\R \llbracket G_{F, \Sigma}\rrbracket$-module $ \T_1 \coloneqq T_\f \widehat{\otimes}_O \R_\g(\lambda) \cong T_\f(\lambda) \otimes_{i_\g^*} \R  $ of rank two and $ \T_2 \coloneqq T_\f \widehat{\otimes}_{O} \ad^0(T_{\g})(\lambda) $ of rank six where $ \ad^0(T_{\g}) $ is the trace zero matrices of $ \ad(T_\g) $ and $ \lambda = \dot{\lambda}(-1) $. Furthermore, we get that	$ \bs{\rho}_3 $ is decomposable whenever $ \f \text{ and } \g  $ both have a trivial central character. Suppose $ \operatorname{tr} $ is the map explicitly given by 
	$$ \operatorname{tr} \colon T_\f \widehat{\otimes}_{O} \ad(T_{\g})(\lambda) \xrightarrow{\operatorname{id} \otimes \operatorname{trace}} T_\f \widehat{\otimes}_O \R_g(\lambda). $$ 
	We then have an exact sequence
	\begin{equation}
		\label{mainses}
		0 \rightarrow \T_2 \xrightarrow{ \iota} \T_3 \xrightarrow{ \operatorname{tr}} \T_1 \rightarrow 0.
	\end{equation}
	where $ \iota \coloneqq \operatorname{id_{\f} \otimes i} \colon  T_\f \widehat{\otimes}_{O} \ad^0(T_{\g})(\lambda) \hookrightarrow T_\f \widehat{\otimes}_{O} \ad(T_{\g})(\lambda)$ is the canonical embedding. The self-duality of $ \T_1, \T_2 $ and $ \T_3 $ gives us
	\begin{equation}
		\label{mainses_dual}
		0 \rightarrow \T_1 \xrightarrow{\tr} \T_3 \xrightarrow{\pi_{\tr}} \T_2 \rightarrow 0
	\end{equation}
	where the map $ \tr $ is given by transposing the map $ \operatorname{tr} $ and using the self-duality of the modules. Both exact sequences \eqref{mainses} and \eqref{mainses_dual} are split. We denote the representations as $ \bs{\rho_1} $ and $ \bs{\rho_2} $ associated to $ \T_1 $ and $ \T_2 $ respectively.
	\begin{rmk}
		\label{Remark:1.15}
		Suppose $ \omega_{\lambda} $ be a the adelic character of conductor $ \frak{N}_\lambda $ associated with $ \lambda $. Then we can get a Hida family $ \f \otimes \omega_{\lambda} $ such that $ (\f \otimes \omega_{\lambda})_P = \f_P \otimes \omega_{\lambda} $ for all $ P \in \W_{\mrm{cl}}(\R_{\f}) $. We also have $ T_{\f \otimes \omega_{\lambda}} \cong T_\f(\lambda) $ for associated Galois representations \cite[\textsection7F]{Hida91}. We will denote this family as $ \f \otimes \lambda $ for simplicity.
	\end{rmk}
	
	\subsection{The Artin Formalism}
	\label{sec_artin_formalism}
	Let $ (P,Q) \in \W_{\mrm{cl}} $ be a pair of crystalline points. Hence we can get 
	\begin{equation*}
		f \in S_{k(P),w(P)}(\frak{N}_f, \psi_f) \text{ and } g \in S_{k(Q),w(Q)}(\frak{N}_g, \psi_g)
	\end{equation*}
	such that $f = \f_P $ and $ g =  \g_Q $. Thus we can get an automorphic representation $ g^\theta $ as the base change of $ g $ by the field extension $ F/\Q $. Let us denote $ \operatorname{As}(g)$ be the automorphic representation  $ g \otimes g^\theta $ on $ \cal G $. 
	\subsubsection{$ L $-functions and Root numbers}
	The complete Rankin-Selberg $ L $-function satisfies the functional equation
	\begin{equation}
		\label{fuctional_triple}
		L(f \otimes  \operatorname{As}(g),s) = \varepsilon(f \otimes  \operatorname{As}(g))N(f \otimes  \operatorname{As}(g))^{-s}L(f \otimes  \operatorname{As}(g),c(f \otimes  \operatorname{As}(g))-s)
	\end{equation} 
	for the global root number $ \varepsilon(f \otimes  \operatorname{As}(g)) \in \{\pm1\} $, some constant $ N(f \otimes  \operatorname{As}(g)) \in \Z_{>0} $ and the central critical value $ c(f \otimes  \operatorname{As}(g)) $.
	Suppose $ \omega_\lambda $ is the associated quadratic adelic character of $ \operatorname{Res}_{F/\Q}\bb{G}_m $ to $ \lambda $.	Then according to our assumption $ g $ has a trivial central character and is Asai decomposable i.e $ g^\theta \cong g \otimes \omega_\lambda $. Therefor we have \emph{Artin formalism} (in other words the factorization of $ L $-functions)    
	\begin{equation}
		\label{artin_formalism}
		L(f \otimes  \operatorname{As}(g),\cdot) = L(f \otimes \breve{g} ,\cdot) L(f \otimes \omega_\lambda, \cdot )
	\end{equation}
	where $ \breve{g} = \ad^0 g \otimes \omega_\lambda  $. We also have a factorization of global root numbers
	\begin{equation}
		\label{factorization_root_numbers}
		\varepsilon(f \otimes  \operatorname{As}(g))= \varepsilon(f \otimes \breve{g})\varepsilon(f \otimes \omega_\lambda).
	\end{equation}
	Note that following Remark \ref{Remark:1.15}, we have $ f \otimes \omega_\lambda =\f_P \otimes \lambda $. 
	\begin{rmk}
		The global root numbers are defined as the product of local root numbers. The equation \eqref{factorization_root_numbers} also holds in the local cases. In other words we have
		\begin{align*}
			\varepsilon(f \otimes  \operatorname{As}(g))= \varepsilon(f \otimes \breve{g})\varepsilon(f \otimes \omega_\lambda), \quad \forall v \mid \frak N_f\frak N_g \infty.
		\end{align*}		
	\end{rmk}
	\subsubsection{$ p $-adic Artin formalism}
	Our main goal in this present article is to establish some $ p $-adic analogues of the factorization \eqref{artin_formalism} in the algebraic setup. Here we briefly discuss the associated $ p $-adic $  L$-functions. 
	For example in \cite[Theorem 1.2]{salazar2019tripleproductpadiclfunctions}, a triple product $ p $-adic $ L $-functions
	$$\mathscr{L}_p^{\spadesuit}(\f \otimes \As(\g)) \colon \W \rightarrow \bb{C}_p ,$$
	satisfying interpolation properties
	\begin{equation}
		\mathscr{L}_p^{\spadesuit}(\f \otimes \As(\g))(P,Q)\doteq\dfrac{L(f \otimes \As(g))}{\Omega^{\spadesuit}_{\f \otimes \As(\g)}}, \quad (P,Q) \in \W \cap \W_{\mrm{cl}}^{\spadesuit},
	\end{equation} 
	where $ \Omega^{\spadesuit}_{\f \otimes \As(\g)} $ is the associated period, $ \spadesuit \in \{(\f,\b),(\b,\f)\} $ (the interpolation factors are precisely defined in \cite{salazar2019tripleproductpadiclfunctions} and \cite{Molina21}). In this philosophy, we can conjecturally say that (in a broad sense) there exist $ p $-adic $ L $-functions
	$$\mathscr{L}_p^{\bullet}(\f \otimes \As(\g)) \colon \W \rightarrow \bb{C}_p ,$$
	satisfying interpolation properties
	\begin{equation}
		\mathscr{L}_p^{\bullet}(\f \otimes \As(\g))(P,Q)\doteq\dfrac{L(f \otimes \As(g))}{\Omega^{\bullet}_{\f \otimes \As(\g)}}, \quad (P,Q) \in \W \cap \W_{\mrm{cl}}^{\bullet},
	\end{equation} 
	for the suitable period $ \Omega^{\bullet}_{\f \otimes \As(\g)} $, $ \bullet = \f, \b $. In this manner we expect conjectural $ p $-adic $ L $-functions of this forms
	\begin{equation}
		\mathscr{L}_p^{\bullet}(\f \otimes\breve{\g})(P,Q) \doteq \dfrac{L(\f_P \otimes \breve{\g}_Q)}{\Omega^{\bullet}_{\f \otimes \breve{\g}}}, \quad  (P,Q) \in \W \cap \W_{\mrm{cl}}^{\bullet},
	\end{equation}
	for some suitable period $ \Omega^{\bullet}_{\f \otimes \breve{\g}} $ and $ \bullet \in \{\f,\b, (\f,\b), (\b,\f) \} $. We also have the $ p $-adic $ L $-functions for Hida family of cuspforms $ \mathscr{L}_p(\f \otimes \lambda) $ (see \cite[Theorem 0.1]{Dimitrov13} or \cite[\textsection 8.2]{bergdall2021padiclfunctionshilbertmodular}). Hence we have a conjecture
	\begin{conj} 
		In full generalities, we should have a factorization like
		$$\mathscr{L}_p^{\bullet}(\f \otimes \As(\g)) = \mathscr{L}_p^{\bullet}(\f \otimes\breve{\g})\cdot\mathscr{L}_p(\f \otimes \lambda).$$	
	\end{conj}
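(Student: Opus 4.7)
The plan is to argue by interpolation and Zariski density in $\W$, leveraging the classical Artin formalism \eqref{artin_formalism} at arithmetic specializations. First I would fix a choice of $\bullet \in \{\f,\b,(\f,\b),(\b,\f)\}$ for which $\W \cap \W_{\mrm{cl}}^{\bullet}$ is non-empty and Zariski dense in $\W$, so that the interpolation properties of both sides are non-trivial. At any such classical pair $(P,Q) \in \W \cap \W_{\mrm{cl}}^{\bullet}$, both sides unpack into genuine complex $L$-values, and the desired identity reduces—modulo a careful analysis of the missing Euler factors at primes in $\Sigma$ and the modular periods—to the complex factorization $L(f\otimes\As(g),\cdot) = L(f\otimes\breve{g},\cdot)\, L(f\otimes\omega_\lambda,\cdot)$ recorded in \eqref{artin_formalism}.

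The second step is the period reconciliation: one must show that $\Omega^{\bullet}_{\f\otimes\As(\g)}$ agrees with $\Omega^{\bullet}_{\f\otimes\breve{\g}}\cdot\Omega_{\f\otimes\lambda}$ up to a unit in $\R$, after absorbing an algebraic factor arising from the normalization of test vectors and Petersson pairings. For families over the quadratic totally real $F$, the Asai period should admit a decomposition via Ichino/Shimura-type formulas whose $p$-adic avatars (in the spirit of \cite{salazar2019tripleproductpadiclfunctions, Molina21}) produce the matching local periods at $p$ and at bad primes. Once the identity is verified at a Zariski dense subset of $\W$, the rigidity of $p$-adic analytic functions on $\operatorname{Spf}(\R)(\bb{C}_p)$ (equivalently, the fact that $\R$ is an integral domain) upgrades this to the global identity in $\operatorname{Frac}(\R)$.

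The hard part—and the reason the statement is phrased as a conjecture—is twofold. First, the very construction of the $p$-adic $L$-functions $\mathscr{L}_p^{\bullet}$ with the prescribed interpolation ranges is not yet fully established for every $\bullet$ in the quadratic totally real setting; the existing constructions cover only the $(\f,\b)$ and $(\b,\f)$ cases via \cite{salazar2019tripleproductpadiclfunctions, Molina21}, and the $\f$-dominant or balanced constructions must be manufactured or assumed. Second, and more seriously, in the \emph{irregular} cases \textbf{(1,1)} and \textbf{(0,2)} that motivate this paper, $\W_{\mrm{cl}}^{\bullet}$ is empty, so the density/interpolation strategy collapses to the tautology $0=0$ and delivers nothing. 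The resolution—already signalled by \cref{conjecture_1} and \cref{conjecture:3}—is to reintroduce on the right-hand side the correction terms $\Log^{1}_{\f\otimes\lambda}(\mathbf{BK}^{\rm LZ}_{\f\otimes\lambda})$ and $\Log^{2}_{\f\otimes\lambda}(\mathbf{BK}^{\rm Plec}_{\f\otimes\lambda,1}\wedge\mathbf{BK}^{\rm Plec}_{\f\otimes\lambda,2})$, which are derivatives of $p$-adic $L$-functions/Euler system regulators, and to transfer the entire problem to the algebraic side through Selmer complexes and modules of leading terms, as carried out in \S\ref{section : factorization_1} and \S\ref{section : factorization_2}. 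The main obstacle is therefore matching the analytic factorization to its algebraic avatar, which requires the Iwasawa-Greenberg main conjectures for all three representations $\bs{\rho}_1,\bs{\rho}_2,\bs{\rho}_3$ and a precise comparison between $\iota_p$-normalized complex periods and the $p$-adic periods appearing in the Selmer-theoretic framework.
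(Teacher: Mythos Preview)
The statement under discussion is a \emph{conjecture}, not a theorem; the paper offers no proof, and indeed explicitly flags it as out of reach (\enquote{This conjecture is out of reach due to the scarcity of information related to the $p$-adic $L$-functions in use}). So there is no paper proof to compare your proposal against.

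What you have written is not a proof but a strategy sketch together with an honest diagnosis of the obstructions, and on that level it is accurate and well-aligned with the paper's own discussion in \S\ref{sec_artin_formalism}. You correctly identify the interpolation-plus-density approach, the period comparison problem, the gap in constructions for $\bullet \in \{\f,\b\}$, and the collapse to $0=0$ in the sign-degenerate regimes. Your observation that the refined Conjectures \ref{conjecture_1} and \ref{conjecture:3} exist precisely to salvage content in those degenerate cases is exactly the paper's point.

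One clarification: even in the \enquote{regular} cases where $\W_{\mrm{cl}}^{\bullet}$ is non-empty and dense, your period-matching step is not a mere bookkeeping exercise. The Asai period decomposition you gesture at would require a precise Ichino-type formula over quadratic totally real $F$ together with a comparison of the $p$-adic periods implicit in \cite{salazar2019tripleproductpadiclfunctions,Molina21} against those in \cite{Dimitrov13,bergdall2021padiclfunctionshilbertmodular}; this is not available in the literature and is part of why the statement remains conjectural even in the non-degenerate direction. So the proposal is a correct outline of what a proof would need, but it does not constitute one.
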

	\subsubsection{Root numbers and vanishing of $ L $-functions} In this article, we are interested in the case when $ \varepsilon(\f_P \otimes  \lambda) = 0 $. The interpolative properties suggest that 
	$$ \mathscr{L}_p(\f_P \otimes \lambda) = 0 .$$
	In the present paper, we mainly deal with two situations as given by the following data satisfying \eqref{factorization_root_numbers} 
	\begin{align}
		\label{table_rootnumber}
		\begin{tabular}{|c|c|c|}
			\hline
			& $\varepsilon(\f_P \otimes \As(\g_Q))$    & {$\varepsilon(\f_P\otimes \breve{\g}_Q)$}   \\
			\hline\hline
			$(P,Q)\in \W^{\f}$ & $-1$    & \multicolumn{1}{c|}{+1} \\ \hline
			$(P,Q)\in \W^{(\f,\b)}$  & $+1$   & \multicolumn{1}{c|}{$-1$} \\ \hline
		\end{tabular}
	\end{align}
	We remark that for $ (P,Q)\in \W^{(\b,\f)} $ the problem can be addressed in a manner similar to in the similar manner of $ (P,Q)\in \W^{(\f,\b)} $. 
	\subsubsection{}
	We can further say that for $ (P,Q) \in \W_{\mrm{cl}}^{\b} $ one can predict $ \varepsilon(\f_P \otimes \As(\g_Q) ) =- 1 $ (cf. \cite{DR22,BSV22b}). Hence via \eqref{artin_formalism} we can infer that either $ \mathscr{L}_p^\b(\f \otimes \breve{\g} ) =0 $ (if $\varepsilon(\f\otimes \check{\varphi})=+1$)
	or $ \mathscr{L}_p(\f \otimes \lambda) = 0 $ (if $\varepsilon(\f\otimes \check{\varphi})=-1$). 
	In all cases, we have the rather uninteresting factorization of the totally balanced $ p $-adic $ L $-functions
	$$ \mathscr{L}_p^{\b}(\f \otimes \As(\g) ) = 0 = \mathscr{L}_p^{\b}(\f \otimes \breve{\g} )\cdot \mathscr{L}_p(\f \otimes \lambda) .$$
	\subsection{Factorization of $\scr{L}^{(\g,\b)}_p(\f \otimes \As(\g))$ in algebraic setup}
	\label{rank1_scenarion}
	The factorization of the $ \f $-dominant at $ \tau_1 $ $ p $-adic $ L $-functions
	\begin{equation}
		\label{f-dominant_factorization}
		\mathscr{L}_p^{(\f,\b)}(\f \otimes \As(\g) )  = \mathscr{L}_p^{(\f,\b)}(\f \otimes \breve{\g} )\cdot \mathscr{L}_p(\f \otimes \lambda)
	\end{equation}
	reduces, once we are provided with the expected interpolation properties of various $ p $-adic $ L $-functions, to a comparison of periods at the specializations of both sides at $ (P,Q) \in \W_{\mrm{cl}}^{(\f,\b)} $.We are addressing the problem when $ \varepsilon(\f \otimes \lambda) = -1$. In this scenario, at the central critical value $ c = c(P,Q) $, \eqref{artin_formalism} and \eqref{table_rootnumber} give 
	\begin{equation*}
		\operatorname{ord}_{s=c}L(\f_P \otimes \As(\g)_Q) \ge 2, \quad \text{ if } (P,Q) \in \W_{\mrm{cl}}^{(\f,\b)}
	\end{equation*}
	In particular, we get by taking a derivative at the central critical value
	\begin{equation}
		\frac{d^2}{ds^2}	L(\f_P \otimes  \operatorname{As}(\g_Q),s) =\frac{d}{ds} L(\f_P \otimes \breve{\g}_Q ,s)\frac{d}{ds} L(\f_P \otimes \lambda, s )
	\end{equation}
	Inspired by the work of \cite{BCS23} and the fundamental result in \cite{BDP13,DR22,BSV22b}, the guiding principle of our project is
	\begin{itemize}
		\item[ \mylabel{BDP1}{\textbf{(BDP1)}}] the $p$-adic $L$-function $\scr{L}^{(\g,\b)}_p(\f \otimes \As(\g))$ should be thought of as a $p$-adic avatar of the family of second-order derivatives 
		$$ \left\lbrace 	\frac{d^2}{ds^2}	L(\f_P \otimes  \operatorname{As}(\g_Q),s)\mid_{s=c} \colon (P,Q) \in \W_{\mrm{cl}}^{(\f,\b)} \right\rbrace ; $$ 
		\item[ \mylabel{BDP2}{\textbf{(BDP2)}}] the $p$-adic $L$-function $\scr{L}^{(\g,\b)}_p(\f \otimes \breve{\g})$ should be thought of as a $p$-adic avatar of the family of first order derivatives 
		$$ \left\lbrace \frac{d}{ds} L(\f_P \otimes \breve{\g}_Q ,s)\mid_{s=c} \colon (P,Q) \in \W_{\mrm{cl}}^{(\f,\b)} \right\rbrace . $$
	\end{itemize}
	\begin{rmk}
		\label{BDP_philosophy}
		The general philosophy of \emph{Bertolini-Darmon-Prasanna} in \cite{BDP13} the \emph{BDP} $p$-adic $L$-function as a $p$-adic avatar of the derivatives
		of complex analytic $L$-functions at the central critical points using the (generalized) Heegner cycles. Using this circle of ideas, the authors in \cite{BSV22b,DR22} suggest that the unbalanced $p$-adic $L$-functions should be considered as $p$-adic avatars of the derivatives of the Garrett–Rankin L-series at the central critical point. Our forthcoming work will be about further pursuing this philosophy in a totally real number field setup.
	\end{rmk}
	Based on the principles \ref{BDP1} and \ref{BDP2}, we propose the following refinement of the factorization \eqref{f-dominant_factorization}:
	\begin{conj}
		\label{conjecture_1}
		Suppose that $\varepsilon(\f \otimes \lambda)=-1$ and the root number condition holds, we have the following factorization of $p$-adic $L$-functions (up to some $ p $-adic meromorphic function)
		\begin{equation}
			\label{g-dominant_factorization_analytic}
			\scr{L}_p^{(\g,\b)}(\f \otimes \As(\g) )(P,Q)  = \scr{L}_p^{(\g,\b)}(\f \otimes \breve{\g} )(P,Q)\cdot i^*_\f\left(\Log_{\f \otimes \lambda}^1(\mathbf{BK}_{\f\otimes \lambda}^{\rm LZ}) \right) .
		\end{equation} 
		
	\end{conj}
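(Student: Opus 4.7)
The plan is to reduce the analytic factorization to the algebraic factorization that is the main contribution of this paper (\cref{theoremA}), and then to invoke an explicit reciprocity law to identify the non-algebraic right-hand factor with a derivative of the cyclotomic $p$-adic $L$-function of $\f\otimes\lambda$.

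First, both sides must be realised as genuine $p$-adic meromorphic functions on $\W$. The left-hand side $\scr{L}_p^{(\g,\b)}(\f\otimes \As(\g))$ should be constructed by interpolation of critical $L$-values along $\W_{\mrm{cl}}^{(\g,\b)}$; however, in this regime the generic sign $\varepsilon(\f_P\otimes \As(\g_Q))=+1$ combines with the standing hypothesis $\varepsilon(\f\otimes \lambda)=-1$ and the factorization \eqref{factorization_root_numbers} to force $\varepsilon(\f_P\otimes \breve{\g}_Q)=-1$ as well. By the guiding principles \ref{BDP1}--\ref{BDP2}, $\scr{L}_p^{(\g,\b)}(\f\otimes \As(\g))$ should therefore $p$-adically interpolate the second-order derivatives
\[
\tfrac{d^2}{ds^2}L(\f_P\otimes \As(\g_Q),s)\big|_{s=c(P,Q)},
\]
which, by Artin formalism, factor analytically as $\tfrac{d}{ds}L(\f_P\otimes \breve{\g}_Q,s)\cdot \tfrac{d}{ds}L(\f_P\otimes \lambda,s)$ at the central point. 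This computation determines the shape of the conjectural identity and fixes the periods on both sides.

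Second, I would invoke the Iwasawa--Greenberg main conjecture for the Galois representations $\T_1$, $\T_2$ and $\T_3$ in order to translate between the algebraic and analytic worlds. Under these (admittedly strong) hypotheses, the algebraic factorization of modules of leading terms from \cref{theoremA} implies the desired identity of $p$-adic $L$-functions up to a global unit. The remaining bridge is an explicit reciprocity law for the Loeffler--Zerbes Euler system $\mathbf{BK}^{\rm LZ}_{\f\otimes\lambda}$ of \cite{loeffler2020iwasawatheoryquadratichilbert}, asserting that $i^*_\f\bigl(\Log^1_{\f\otimes\lambda}(\mathbf{BK}^{\rm LZ}_{\f\otimes\lambda})\bigr)$ specialises at arithmetic $P$ to a nonzero multiple of $\tfrac{d}{ds}\scr{L}_p(\f_P\otimes\lambda,s)\big|_{s=c}$. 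This is a Perrin-Riou/Rubin-style formula tailored to the Hilbert-modular setting, and is the natural $p$-adic incarnation of the \emph{BDP} philosophy of \cref{BDP_philosophy}.

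Third, the main obstacle is the Panchishkin-type irregularity at $\W_{\mrm{cl}}^{(\g,\b)}$: the relevant Greenberg-Selmer complex is not torsion but carries a non-trivial rank-$(1,1)$ piece, so the classical characteristic ideal is undefined and one is forced to work inside the exterior-power module of leading terms of \cref{sec : module_leading}. Its multiplicativity along the split exact sequences \eqref{mainses}--\eqref{mainses_dual}, together with Nekov\'a\v{r}'s Selmer-complex formalism \cite{Nek06}, supplies the algebraic factorization. The hardest step is matching the rank-one piece on each side with the image of the Beilinson--Kato class under the Perrin-Riou big logarithm: this requires adapting the $p$-adic Gross--Zagier philosophy of \cite{BDP13} to the quadratic totally real setup and a delicate period comparison. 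I expect that these two analytic inputs, together with the Iwasawa main conjecture for $\T_3$, will be the most serious unresolved ingredients for a complete proof.
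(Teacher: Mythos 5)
You should first note that the statement you are asked to prove is stated in the paper as a \emph{conjecture}, not a theorem: the authors explicitly write that ``this conjecture is out of reach due to the scarcity of information related to the $p$-adic $L$-functions in use,'' and what they actually prove is only the algebraic counterpart (\cref{theoremA}/\cref{main_theorem_1}), phrased in terms of modules of leading terms. Your proposal correctly reconstructs the heuristic that motivates the conjecture (the root-number bookkeeping via \eqref{factorization_root_numbers} and \eqref{table_rootnumber}, the BDP-style reading of $\scr{L}_p^{(\g,\b)}$ as a $p$-adic avatar of second derivatives, and the reduction to the algebraic factorization plus main conjectures plus a reciprocity law), and you are commendably honest that the analytic inputs are unresolved. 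But as a proof it has genuine, unavoidable gaps, and they are exactly the ones the paper flags: (i) the $p$-adic $L$-functions $\scr{L}_p^{(\g,\b)}(\f\otimes\As(\g))$ and $\scr{L}_p^{(\g,\b)}(\f\otimes\breve{\g})$ are themselves only conjectural in this regime --- their putative interpolation range is empty, so one cannot ``construct them by interpolation along $\W_{\mrm{cl}}^{(\g,\b)}$'' and then compare periods pointwise; (ii) the Iwasawa--Greenberg main conjectures for $\T_1$, $\T_2$, $\T_3$ that would convert the algebraic identity of \cref{main_theorem_1} into the analytic identity are not available; and (iii) the explicit reciprocity law identifying $i^*_\f\bigl(\Log^1_{\f\otimes\lambda}(\mathbf{BK}^{\rm LZ}_{\f\otimes\lambda})\bigr)$ with a derivative of the cyclotomic $p$-adic $L$-function of $\f\otimes\lambda$ is not established in the quadratic totally real setting. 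Your second step therefore does not close; it merely relocates the conjecture.

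The constructive way to align your work with what the paper actually accomplishes is to redirect the argument entirely to the algebraic side: take the exact triangle \eqref{comtriangle1}, compute the connecting map $\delta^1$ explicitly on cocycles (the paper does this via the failure of commutativity of $\iota$ with $i_{\p_1}^+$ at $\p_1$), and deduce the characteristic-ideal identity of \cref{prop : final_rank1}, which combined with \cref{old_log_relation} yields \eqref{g-dominant_factorization}. That is a complete, unconditional argument under the listed hypotheses, whereas the analytic statement \eqref{g-dominant_factorization_analytic} remains a conjecture.
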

	Here $ \mathbf{BK}^{\rm LZ}_{\f\otimes \lambda} $ is the Kolyvagin system coming from the Euler system constructed by Loeffler and Zerbes in \cite{loeffler2020iwasawatheoryquadratichilbert}. At this moment the conjecture is out of reach due to the scarcity of information related to the explicit \textit{BDP} formulas regarding quadratic Hilbert modular forms and $ p $-adic $ L $-functions in use. However we show the algebraic counterpart of the Conjecture \ref{conjecture_1} that is compatible with the theme of Iwasawa main conjecture, in the sense that we have defined the (higher rank) \emph{modules of leading term} $ \bbdelta_r(X,\Delta) $ (concerning the relevant \emph{Selmer complexes} and their \emph{core rank}) in \cref{sec : module_leading} which can be thought of as an algebraic analogues of $ p $-adic $ L $-functions.
	\begin{introtheorem}[\cref{main_theorem_1} below]
		\label{theoremA}
		Let the root number conditions in Conjecture \ref{conjecture_1} hold. Suppose that the module of (algebraic) $ p $-adic $ L $-functions $ \bbdelta_0(\T_{3}, \Delta_{(\g, \b)}) $ is non vanishing. Under certain hypotheses we have
		\begin{equation}
			\label{g-dominant_factorization}
			\bbdelta_0(\T_{3}, \Delta_{(\g, \b)})  =  \Exp_{\f \otimes\lambda, \p_1}\left( \bbdelta_1(\T_2, \operatorname{tr}^*\Delta_{(\g,\b)})\right)\cdot i^*_\f\left(\Log_{\f \otimes \lambda}^1(\mathbf{BK}^{\rm LZ}_{\f\otimes \lambda})\right) .
		\end{equation} 
	\end{introtheorem}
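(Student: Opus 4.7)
The plan is to propagate the split short exact sequence \eqref{mainses_dual} through the Selmer complex machinery of \cref{section : Selmer_complexes}. The first step is to pin down compatible Greenberg local conditions: $\Delta_{(\g,\b)}$ on $\T_3$, its transpose $\tr\Delta_{(\g,\b)}$ on $\T_2$, and an induced condition $\Delta_1$ on $\T_1$. At each of the two primes $\p_1, \p_2 \mid p$, the Panchishkin-type filtrations on $\T_2$ and $\T_1$ arising from the decomposition $\ad(T_\g)(\lambda) \cong \ad^0(T_\g)(\lambda)\oplus \R_\g(\lambda)$ must be matched with the nearly ordinary filtration on $\T_3$ induced by $T_\f$ and $T_\g$. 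This is precisely the place where my setting is more intricate than the $G_\Q$-setting of \cite{BCS23}, because two independent local conditions have to be tracked simultaneously; the choice forced on $\Delta_1$ at $\p_1$ is outside the Panchishkin range, which is the source of the \emph{irregularity} of \textsection \ref{sec:comparison} and accounts for the appearance of $\Exp_{\f\otimes\lambda,\p_1}$ on the right hand side of \eqref{g-dominant_factorization}. Once compatibility is in place, I obtain a distinguished triangle of Selmer complexes, with perfectness supplied by the Tamagawa factor analysis of \textsection \ref{sec:tamagawa}.

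Next I would pass to the resulting long exact sequence in extended Selmer cohomology and, by the Panchishkin defect analysis of \textsection \ref{sec_pan}, compute the core ranks of the three objects. Under the root-number hypothesis of Conjecture \ref{conjecture_1} these should come out to $0$ for $(\T_3,\Delta_{(\g,\b)})$ and $1$ for $(\T_2,\tr\Delta_{(\g,\b)})$, while $\T_1 \cong T_\f(\lambda)\otimes_{i_\g^{*}}\R$ contributes a rank-one piece whose generator will be the Loeffler-Zerbes Kolyvagin system. The multiplicativity of the modules of leading terms along distinguished triangles, developed in \cref{sec : module_leading}, then yields a factorisation identity inside the relevant ambient determinant module; the non-vanishing hypothesis on $\bbdelta_0(\T_3,\Delta_{(\g,\b)})$ ensures that the connecting maps in the long exact sequence are injective on the relevant summands, so this identity collapses to the desired product form.

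The third step is to identify the $\T_1$-contribution with $i_\f^{*}\bigl(\Log_{\f\otimes\lambda}^{1}(\mathbf{BK}_{\f\otimes\lambda}^{\rm LZ})\bigr)$ and to repackage the $\T_2$-contribution through $\Exp_{\f\otimes\lambda,\p_1}$ so that both sides of \eqref{g-dominant_factorization} live in the same module. Since $\T_1$ is a base change of the Hida-family representation $T_{\f\otimes\lambda}$ in the $\g$-direction (see \cref{Remark:1.15}), its module of leading terms descends to one attached purely to $\f\otimes\lambda$; the relation between modules of leading terms and Kolyvagin systems summarised in \cref{koly_ralation} then forces its canonical generator to coincide with the image of $\mathbf{BK}_{\f\otimes\lambda}^{\rm LZ}$ under the Perrin-Riou big logarithm, and applying $i_\f^{*}$ brings the result back into $\R$. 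The repackaging on the $\T_2$-side is dictated by the fact that the $\p_1$-part of $\tr\Delta_{(\g,\b)}$ lands in the non-Panchishkin range, and is implemented by the Perrin-Riou exponential $\Exp_{\f\otimes\lambda,\p_1}$, which converts the algebraic determinant into its correct ambient incarnation.

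The principal obstacle will be the first step: constructing compatible Greenberg local conditions at \emph{both} $\p_1$ and $\p_2$ simultaneously that give rise to an honest morphism of perfect Selmer complexes with the predicted core ranks. A secondary but delicate point is to ensure that the identity holds on the nose rather than merely up to a pseudo-isomorphism, which demands careful bookkeeping of the Tamagawa ideals at the primes of $\Sigma\setminus\{p\}$ through the long exact sequence and a precise comparison of the Perrin-Riou style normalisations hidden inside $\Exp_{\f\otimes\lambda,\p_1}$ and $\Log_{\f\otimes\lambda}^{1}$.
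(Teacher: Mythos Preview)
Your high-level strategy is correct: the paper also uses the distinguished triangle \eqref{comtriangle1} coming from \eqref{mainses_dual}, computes core ranks via Euler characteristics, and invokes the non-vanishing of $\bbdelta_0(\T_3,\Delta_{(\g,\b)})$ to force $\cohom^1(\G,\T_3,\Delta_{(\g,\b)})=0$ and hence injectivity of $\delta^1$. The identification of the $\T_1$-side with $i_\f^*\Log_{\f\otimes\lambda}^1(\mathbf{BK}^{\rm LZ}_{\f\otimes\lambda})$ also proceeds essentially as you sketch.

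The genuine gap is the second step. You appeal to a ``multiplicativity of the modules of leading terms along distinguished triangles, developed in \cref{sec : module_leading}''. No such result is proved there: \cref{sec : module_leading} only compares $\bbdelta_r(T,\Delta_1)$ and $\bbdelta_r(T,\Delta_2)$ when $\Delta_1\subset\Delta_2$ are two local conditions on the \emph{same} representation (Theorems \ref{old_log_relation} and \ref{logrln}). There is no black-box factorisation of leading terms along a triangle $\T_1\to\T_3\to\T_2$, and none should be expected in this generality, because the connecting map $\delta^1$ is not a priori compatible with the filtrations defining $\bbdelta_r$.

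What the paper actually does in place of your second step is an explicit cocycle-level computation of $\delta^1\colon\cohom^1(\G,\T_2,\tr\Delta_{(\g,\b)})\to\cohom^2(\G,\T_1,\Delta_{(0,\pan)})$. The point is that the natural diagram
\[
\xymatrix{
\scrF_{\p_1}^{+\g}\T_3 \ar@{^{(}->}[r]^{i_{\p_1}^+} \ar[rd]_{\sim}^{\pi_{\tr}} & \T_3 \\
& \scrF_{\p_1}^{+\g}\T_2 \ar@{^{(}->}[u]_{\iota}
}
\]
fails to commute (whereas the analogous diagram at $\p_2$ does commute by \eqref{local_conditions_p2}), and the obstruction is computed explicitly to be the projection $\operatorname{pr}_{/\g,\p_1}$ onto $T_\f\hotimes\operatorname{gr}^{+\g}_{\p_1}\ad^0(T_\g)(\lambda)\cong\T_1$. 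This yields a factorisation $\delta^1=\partial^1_{\f\otimes\lambda}\circ\res_{/\pan,\p_1}$ (Corollary \ref{cor : delta_and_delta_rank1}), after which Proposition \ref{prop : final_rank1} compares characteristic ideals via two auxiliary short exact sequences, and only then does Theorem \ref{old_log_relation} enter to pass from $\tr\Delta_{(\g,\b)}$ to $\tr\Delta_{\b}$ on $\T_2$. Your sketch would need to replace the appeal to multiplicativity with this explicit analysis of $\delta^1$.
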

	Proving this conjecture is the main content of \cref{section : factorization_1}.
	\subsection{Factorization of $\scr{L}^{\g}_p(\f \otimes \As(\g))$ in algebraic setup} 
	\label{rank2_scenario}
	Now we focus on the case $ (P,Q) \in \W_{\mrm{cl}}^{\f} $. Here we have a hypothesis that $  L(\f_P \otimes \lambda) $ has zero of order two and $ L(\f_P \otimes \check{\g_Q}) $ does not vanish at the central critical value. Therefore we get from \eqref{artin_formalism}
	\begin{equation*}
		\operatorname{ord}_{s=c}L(\f_P \otimes \As(\g)_Q) = 2, \quad \text{ if } (P,Q) \in \W_{\mrm{cl}}^{\f}
	\end{equation*}
	Similarly, we take our guiding principle as
	\begin{itemize}
		\item[ \mylabel{BDP3}{\textbf{(BDP3)}}] the $p$-adic $L$-function $\scr{L}^{\g}_p(\f \otimes \As(\g))$ should be thought of as a $p$-adic avatar of the family of second order derivatives 
		$$ \left\lbrace 	\frac{d^2}{ds^2}	L(\f_P \otimes  \operatorname{As}(\g_Q),s)\mid_{s=c} \colon (P,Q) \in \W_{\mrm{cl}}^{\f} \right\rbrace ; $$ 
	\end{itemize}
	We assume that $ \f_P \otimes \lambda $ satisfies the Plectic conjecture. In particular, if we have a rank-two Kolyvagin system (cf. \cite{BSS18}) $ \mathbf{BK}^{\rm Plec}_{\f\otimes \lambda,1} \wedge \mathbf{BK}^{\rm Plec}_{\f\otimes \lambda,2} $ coming from rank two Euler system generated by generalized Kato classes. We remark that these types of rank two generalized Kato classes are more natural to appear in real quadratic number field setup (cf. Remark \ref{remark_panchiskin}). For more details kindly check Remark \ref{gen_kato_class}. Hence similar to \cref{conjecture_1} we can get the following conjecture assuming that $\varepsilon(\f \otimes \lambda)=-1$ and the root number condition holds true
	\begin{conj}
		\label{conjecture:3}
		We have the following factorization of $p$-adic $L$-functions
		\begin{equation}
			\scr{L}_p^{\g}(\f \otimes \As(\g) )(P,Q)  = \scr{L}_p^{\b}(\f \otimes \breve{\g} )(P,Q)\cdot i^*_\f\left(\Log^2_{\f \otimes \lambda}(\mathbf{BK}^{\rm Plec}_{\f\otimes \lambda,1} \wedge \mathbf{BK}^{\rm Plec}_{\f\otimes \lambda,2} )\right) .
		\end{equation} 
	\end{conj}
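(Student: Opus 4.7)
The plan is to mirror the rank~$(1,1)$ template behind Conjecture~\ref{conjecture_1}, the essential new input being the conjectural rank-two Plectic Kolyvagin system together with a second-order Perrin-Riou logarithm $\Log^2_{\f\otimes\lambda}$. Since the interpolation region $\W_{\mrm{cl}}^\g$ is empty under the root-number hypotheses in force and the order-two vanishing of $L(\f_P\otimes\lambda,\cdot)$ at the central point $s=c$, the factorization cannot be checked by comparing values at classical specializations. Instead, for $(P,Q)\in\W_{\mrm{cl}}^\f$ I would differentiate the complex Artin formalism~\eqref{artin_formalism} twice at $s=c$; together with the nonvanishing of $L(\f_P\otimes\breve{\g}_Q,c)$ this yields
$$\tfrac{1}{2}\tfrac{d^2}{ds^2}L(\f_P\otimes\As(\g_Q),s)\big|_{s=c} \;=\; L(\f_P\otimes\breve{\g}_Q,c)\cdot \tfrac{1}{2}\tfrac{d^2}{ds^2}L(\f_P\otimes\lambda,s)\big|_{s=c}.$$
Guided by~\ref{BDP3}, the left-hand side should be captured by $\scr{L}_p^\g(\f\otimes\As(\g))$, the first factor on the right by $\scr{L}_p^\b(\f\otimes\breve{\g})$, and the second-order derivative of $L(\f_P\otimes\lambda,\cdot)$ by $\Log^2_{\f\otimes\lambda}$ applied to the wedge of the two Plectic classes, mirroring the rank-one heuristic leading to~\eqref{g-dominant_factorization_analytic}.

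The structural backbone, proved unconditionally, is the companion Theorem~\ref{theoremB} (made precise in Theorem~\ref{main_theorem_2}): it establishes the factorization at the level of modules of leading terms, schematically
$$\bbdelta_0(\T_3,\Delta_\g) \;=\; \bbdelta_\bullet(\T_2,\operatorname{tr}^*\Delta_\b)\cdot i_\f^*\!\left(\Log^2_{\f\otimes\lambda}(\mathbf{BK}^{\rm Plec}_{\f\otimes\lambda,1}\wedge\mathbf{BK}^{\rm Plec}_{\f\otimes\lambda,2})\right).$$
Granting the full Iwasawa--Greenberg main conjectures for $\T_1,\T_2,\T_3$ with the Greenberg local conditions of~\textsection\ref{sec:greenberg}, each $\bbdelta$ is generated (up to a unit) by its analytic counterpart, so the algebraic identity descends to the analytic factorization claimed in Conjecture~\ref{conjecture:3}. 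An additional technical check is that the Perrin-Riou style $\Exp/\Log$ comparison used in the proof of Theorem~\ref{main_theorem_2} is compatible with the exterior-square construction, so that the rank-two class on the right-hand side is correctly identified with the second derivative encoded on the analytic side.

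The main obstacle---and the reason the statement is recorded as a conjecture rather than a theorem---is that the argument rests on three presently inaccessible inputs: (i) the existence of $\scr{L}_p^\g(\f\otimes\As(\g))$ and $\scr{L}_p^\b(\f\otimes\breve{\g})$ with the conjectured interpolation properties outside any classically accessible range; (ii) the Plectic conjecture producing the rank-two Euler system of~\cite{fornea23plectic}, together with a two-variable explicit reciprocity law linking $\mathbf{BK}^{\rm Plec}_{\f\otimes\lambda,1}\wedge\mathbf{BK}^{\rm Plec}_{\f\otimes\lambda,2}$ to $\tfrac{d^2}{ds^2}L(\f_P\otimes\lambda,s)|_{s=c}$; and (iii) the Iwasawa--Greenberg main conjectures needed to transport the algebraic statement to its analytic shadow. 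The genuinely hard step is~(ii): no such reciprocity law for the plectic classes is presently available, and without it one cannot translate the higher-order vanishing on the analytic side into the $\Log^2$-image of a wedge of Galois cohomology classes on the algebraic side.
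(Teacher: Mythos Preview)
Your proposal is appropriate in spirit and matches the paper's own treatment: Conjecture~\ref{conjecture:3} is \emph{not} proved in the paper---it is recorded precisely as a conjecture, and the paper's only ``evidence'' is the algebraic analogue Theorem~\ref{theoremB} (Theorem~\ref{main_theorem_2} and Corollary~\ref{cor_main_2}). You have correctly identified the heuristic chain---the second-derivative Artin formalism, the principle~\ref{BDP3}, and the descent from modules of leading terms to analytic $p$-adic $L$-functions via Iwasawa--Greenberg main conjectures---and you have correctly isolated the three conjectural inputs (existence of the relevant $p$-adic $L$-functions with the required interpolation, the Plectic rank-two Euler system with an explicit reciprocity law, and the main conjectures) that prevent this from being a theorem.

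One small correction to your schematic display of the algebraic backbone: the factor coming from $\T_2$ should carry the subscript $0$, i.e.\ $\bbdelta_0(\T_2,\operatorname{tr}^*\Delta_{\mathrm{bal}})$, as in~\eqref{7.26}; your $\bbdelta_\bullet$ leaves this ambiguous. Otherwise your account of why the statement is conjectural, and how the algebraic result of \S\ref{section : factorization_2} would yield the analytic factorization once the missing inputs are supplied, is exactly the paper's own stance.
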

	\begin{rmk}
		Though the existence of $ \mathbf{BK}^{\rm Plec}_{\f\otimes \lambda,1} \wedge \mathbf{BK}^{\rm Plec}_{\f\otimes \lambda,2}$ is conjectural, but there are some recent works towards the evidence of the higher rank Euler system. In \cite{fornea23plectic}, the authors introduced \emph{plectic} Stark-Heegner points for an elliptic curve $ A/F $ given that $ \lim_{s\to 1 }L(A/F,s) = 2 $ which is related to the local image of $ \mathbf{BK}^{\rm Plec}_{\f\otimes \lambda,1} \wedge \mathbf{BK}^{\rm Plec}_{\f\otimes \lambda,2}$ in a parallel weight 2 specializtion. Also the \enquote{mock plectic points} in \cite[]{Darmon_Fornea_2025}, may be seen an oblique evidence for the plectic philosophy of Nekovář and Scholl, using the mock analogue of Hilbert modular forms.  
	\end{rmk}
	In the view of last remark we can observe that \cref{conjecture:3} is almost unaccessible by the informations we get from recent literature. Nevertheless we have shown the algebraic analogue of \cref{conjecture:3} in \cref{section : factorization_2}.
	\begin{introtheorem}[\cref{main_theorem_2} and Corollary \ref{cor_main_2} below]
		\label{theoremB}
		Let the root number conditions in Conjecture \ref{conjecture:3} hold. Suppose that the module of (algebraic) $ p $-adic $ L $-functions $ \bbdelta_0(\T_{3}, \Delta_{\g}) $ is non vanishing. Under certain hypotheses we have
		\begin{equation}
			\label{rank2_factorization}
			\bbdelta_0(\T_{3}, \Delta_{\g})  =   \bbdelta_0(\T_2, \operatorname{tr}^*\Delta_{\b})\cdot i^*_\f\left(\Log^2_{\f \otimes \lambda}(\mathbf{BK}^{\rm Plec}_{\f\otimes \lambda,1} \wedge \mathbf{BK}^{\rm Plec}_{\f\otimes \lambda,2} )\right).
		\end{equation} 
	\end{introtheorem}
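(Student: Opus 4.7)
The plan is to derive the factorization as a consequence of a comparison of Selmer complexes arising from the short exact sequence $0 \to \T_1 \xrightarrow{\tr} \T_3 \xrightarrow{\pi_{\tr}} \T_2 \to 0$ from \eqref{mainses_dual}, equipped with Greenberg local conditions compatible with the $\g$-unbalanced and balanced setups. The strategy mirrors the one in \cite{BCS23} for the rank $(1,1)$ case (and our own \cref{theoremA}), but must be adapted to accommodate the rank-two Panchishkin defect and the two primes $\p_1,\p_2$ above $p$.

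First, I would equip $\T_1,\T_2,\T_3$ with compatible Greenberg local conditions at $\p_1$ and $\p_2$, namely $\Delta_{\f \otimes \lambda}$ on $\T_1 \cong T_\f(\lambda) \otimes_{i_\g^*} \R$, $\tr^* \Delta_{\b}$ on $\T_2$, and $\Delta_{\g}$ on $\T_3$. Using the compatibility of $\tr$ and $\pi_{\tr}$ with the filtrations $\scrF^\pm_{\p_i} T_\f$ and $\mathrm{Fil}^\bullet_{\p_i}\ad(T_\g)(\lambda)$, and invoking the Tamagawa factor computations in \textsection \ref{sec:tamagawa}, I would verify that the three Selmer complexes are perfect and fit into an exact triangle
\begin{equation*}
\dcom(G_{F,\Sigma}, \T_1, \Delta_{\f\otimes\lambda}) \longrightarrow \dcom(G_{F,\Sigma}, \T_3, \Delta_\g) \longrightarrow \dcom(G_{F,\Sigma}, \T_2, \tr^*\Delta_\b) \xrightarrow{+1}
\end{equation*}
in the derived category of perfect $\R$-complexes.

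Second, I would analyze the core ranks. The root number hypothesis of \cref{conjecture:3} together with the Panchishkin defect analysis of \textsection \ref{sec_pan} forces $\T_3$ with $\Delta_\g$ and $\T_2$ with $\tr^*\Delta_\b$ to have core rank $0$, while $\T_1$ with $\Delta_{\f\otimes\lambda}$ acquires core rank $2$ (this is the genuinely novel feature of the rank $(0,2)$ case, reflecting the double order of vanishing of $L(\f_P \otimes \lambda,\cdot)$). The long exact sequence of extended Selmer modules $\cohom^i$ attached to the above triangle then expresses $\bbdelta_0(\T_3, \Delta_\g)$ as the product of $\bbdelta_0(\T_2, \tr^*\Delta_\b)$ with the image, under the connecting homomorphism, of the generator of $\bbdelta_2(\T_1, \Delta_{\f\otimes\lambda})$ after base change along $i_\f^*$. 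This is where the general results on modules of leading terms developed in \cref{sec : module_leading} (particularly the exterior-power-by-duals formalism for the rank two scenario) are indispensable.

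Third, I would identify the $\T_1$-contribution using the relation between modules of leading terms and Kolyvagin systems from \cref{koly_ralation}. Under the Plectic hypothesis \ref{plectic}, the rank-two Kolyvagin system $\mathbf{BK}^{\rm Plec}_{\f\otimes \lambda,1} \wedge \mathbf{BK}^{\rm Plec}_{\f\otimes \lambda,2}$ generates $\bbdelta_2(T_{\f\otimes\lambda}, \Delta_{\f\otimes\lambda})$, and its image under the Perrin-Riou-type map $\Log^2_{\f\otimes\lambda}$ recovers the second-order logarithm factor on the right-hand side of \eqref{rank2_factorization}. Pulling back along $i_\f^*$ and combining with the previous step yields the claimed identity.

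The main obstacle is twofold. The first and principal difficulty is the rank-two identification: unlike \cite{BCS23}, where the connecting map feeds into a single-class Beilinson--Flach Euler system, here we must match the algebraic connecting homomorphism with the explicit rank-two Perrin-Riou logarithm applied to the \emph{wedge} of two Plectic classes, which is only available conditionally on \ref{plectic}. The second difficulty is of a more technical nature: ensuring the perfectness and correct core-rank behaviour of all three Selmer complexes simultaneously at both $\p_1$ and $\p_2$, which requires a careful bookkeeping of Tamagawa factors and of the Greenberg-local conditions discussed in \textsection \ref{sec:greenberg}; this is precisely the feature that distinguishes the quadratic totally real setting from the $G_\Q$-setting of earlier work.
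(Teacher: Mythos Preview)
Your proposed exact triangle does not exist: Greenberg local conditions on $\T_1$ and $\T_2$ are not chosen freely but are forced by propagating $\Delta_\g$ through the sequence \eqref{mainses_dual}. Since $\scrF^{+\g}_\p\T_3 = T_\f \hotimes \Fil^2_\p\ad(T_\g)(\lambda)$ maps isomorphically onto $\scrF^{+\g}_\p\T_2$ under $\pi_{\tr}$, the induced condition on $\T_1$ is the \emph{strict} condition $\Delta_0$ (kernel zero), and on $\T_2$ it is $\tr^*\Delta_\g$, \emph{not} $\tr^*\Delta_\b$; cf.\ the definitions following Lemma~\ref{localses}. Consequently the triangle the paper actually uses is \eqref{comtriangle}, and the Euler characteristics are $+2$ for $(\T_1,\Delta_0)$, $0$ for $(\T_3,\Delta_\g)$, and $-2$ for $(\T_2,\tr^*\Delta_\g)$. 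In particular the long exact sequence has both $\cohom^2(\T_1,\Delta_0)$ and $\cohom^1(\T_2,\tr^*\Delta_\g)$ of rank two, so there is no direct multiplicative splitting of the kind you describe.

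The paper therefore works much harder than your outline suggests. One must (i) compute the connecting map $\delta^1\colon \cohom^1(\T_2,\tr^*\Delta_\g)\to\cohom^2(\T_1,\Delta_0)$ explicitly at the cochain level and show it factors through the local maps $\res_{/\pan}$ and $\partial^1_{\f\otimes\lambda}$ (Corollary~\ref{cor : delta_and_delta}); (ii) separately compare $\bbdelta_1(\T_2,\tr^*\Delta_\g)$ with $\bbdelta_0(\T_2,\tr^*\Delta_\b)$ via the two maps $\Exp^*_{\f\otimes\lambda,\p}$ using Theorem~\ref{logrln}; and (iii) pass from $\Delta_0$ to $\Delta_\emptyset$ on $\T_1$ (where the rank-two module $\bbdelta_2(T_\f(\lambda),\Delta_\emptyset)$ and the Plectic classes actually live) via Proposition~\ref{7.2}. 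Only after these three independent comparisons do the pieces assemble into \eqref{rank2_factorization} in Theorem~\ref{main_theorem_2} and Corollary~\ref{cor_main_2}. Your sketch skips all of this machinery by positing a triangle with the ``final'' local conditions already in place, which is precisely what fails.
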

	
	Prooving \cref{theoremA} and \cref{theoremB} is the main part of this paper. In the next section, we discuss about Nekovar's theory of Selmer complexes to define the (algebraic) $p$-adic $L$-functions more precisely.

	\section{Selmer Complexes}
	\label{section : Selmer_complexes}
	We now introduce the algebraic counterparts of the ($ p $-adic) analytic objects briefly discussed in the \textsection \ref{sec_artin_formalism}, \textsection \ref{rank1_scenarion}, and \textsection \ref{rank2_scenario}. We will formulate the first step of algebraic factorization in this section. The ETNC \footnote{abbrv. Equivariant Tamagawa Number Conjecture.} philosophy will be our general guidance (cf \cite[Appendix A]{BCS23}).
	\subsection{Greenberg Conditions}
	\label{sec:greenberg}
	For each $ X \in \{\T_1, \T_2,\T_3 \} $ as in \textsection\ref{subsection:Decomposition of Big Galois Modules}, with the fact $ \f $, $ \g $ are $p$-ordinary families, with the conditions \ref{Irr} and \ref{DIST} gives rise to short exact sequences of $ G_{F_\p} $-modules (for $ \p = \p_1,\p_2 $) of the form
	\begin{equation}
		\label{onestep_filtration}
		0 \rightarrow \scrF^+_{\p} X \rightarrow X \rightarrow \scrF^-_{\p} X \rightarrow 0 
	\end{equation}
	We denote $ \bP\coloneqq \{\p_1,\p_2\} \subset \Sigma $, $ I_v \subset G_{F_v} $ as the inertia subgroup and $ \operatorname{Fr}_v \in G_{F_v}/I_v $ as the (geometric) Frobenius element. Let's define the following local conditions for a given short exact sequences like \eqref{onestep_filtration}(in the sense of \cite{Nek06})
	$$ 
	U_{v}^+(X) =
	\begin{cases}
		C^{\bullet}(G_{F_\p},\scrF^+_{\p} X), & \p \in \bP; \\
		C^{\bullet}(G_{F_v}/I_v,X^{I_{F_{v}}}), & v \in \Sigma \ba \bP ;
	\end{cases}
	$$
	along with a morphism of complexes 
	$$ i_{v}^+ \colon U_{v}^+(X) \rightarrow C^{\bullet}(G_{F_v},X) \text{, for all } v \in \Sigma. $$
	For each $ v \in \Sigma\ba \bP $, we have a quasi-isomorphism of complexes 
	$$ U_{v}^+(X) \cong \left[ X^{I_v} \xrightarrow{\fr_v - 1} X^{I_v} \right], $$
	concentrated in degrees 0 and 1.\\
	We note that even for a given $ X $, there exists a number of choices of one-step filtration such as \eqref{onestep_filtration}, which should be thought of as a reflection of the multiple-choice of $ p $-adic $ L $-functions, depending on the choice of interpolation. \\
	Let $ v \in \Sigma\ba \bP $ and suppose that 
	$$ 0 \rightarrow M_1 \rightarrow M_2 \rightarrow M_3 \rightarrow 0, $$
	is a split short exact sequence continuous $ G_{F_v} $-representations of free finite rank over a complete local Noetherian ring $ R $. Then we have 
	\begin{equation}
		U_{v}^+(M_2) = U_{v}^+(M_3) \oplus U_{v}^+(M_1).
	\end{equation}
	\subsubsection{Important local conditions}The definition goes as follow
	\begin{defn}
		The data $$ \Delta\left( \left\lbrace \scrF^+_{\p}X\right\rbrace_{\p \in \bP} \right)  \coloneqq \{i_{v}^+ \colon U_{v}^+(X) \rightarrow C^{\bullet}(G_{F_v},X) \text{, for all } v \in \Sigma\}$$ is called a Greenberg-local condition on $ X $.
	\end{defn}
	In the following examples, we shall record some useful Greenberg-local conditions for our purpose.  
	\begin{exmp}
		Suppose $ X = \T_3 $. For $ \p = \p_1, \; \p_2 $, lets define
		\begin{enumerate}[(i)]
			\item $\f$-dominated Greenberg-local condition $ \Delta_{\mathbf{f}} = \Delta (\scrF^{+\mathbf{f}}_{\p_1}X,\scrF^{+\mathbf{f}}_{\p_2}X) $ is 
			\begin{equation*}
				\scrF^{+\mathbf{f}}_{\p}\T_{3} \coloneqq \scrF^{+}_{\p}T_{\f} \widehat{\otimes} \ad(T_{\g})(\lambda)  \hookrightarrow \T_3. 
			\end{equation*}
			\item Balanced Greenberg-local condition $\Delta_{\mathrm{bal}} = \Delta (\scrF^{+\mathrm{bal}}_{\p_1}X,\scrF^{+\mathrm{bal}}_{\p_2}X) $ is
			\begin{equation*}
				\scrF^{+\mathrm{bal}}_{\p}\T_{3} \coloneqq \scrF^{+}_{\p}T_{\f} \widehat{\otimes} \mathrm{Fil}^3_{\p} \ad(T_\g)(\lambda)   +T_{\f} \widehat{\otimes} \mathrm{Fil}^1_{\p}\ad(T_\g)(\lambda) \hookrightarrow \T_3.
			\end{equation*}
			\item $ \g $-dominated Greenberg-local condition $\Delta_{\g} = \Delta(\scrF^{+\g}_{\p_1}X, \scrF^{+\g}_{\p_2}X) $ defined as
			\begin{equation*}
				\scrF^{+\g}_{\p}\T_{3} \coloneqq T_{\f} \widehat{\otimes} \mathrm{Fil}^2_{\p}(\ad(T_\g)(\lambda)) \hookrightarrow \T_3.
			\end{equation*}
			\item We will also consider the Greenberg-local condition $ \Delta_{+}\coloneqq\Delta(\scrF^{+\mathrm{bal}^+}_{\p_1}X, \scrF^{+\mathrm{bal}^+}_{\p_2}X) $ given by the following rank 5 direct submodule of $ X $
			\begin{equation*}
				\scrF^{+\mathrm{bal}^+}_{\p} \T_{3} = \scrF^{+\g}_{\p}\T_{3} + \scrF^{+\mathrm{bal}}_{\p}\T_{3}.
			\end{equation*}
		\end{enumerate}
	\end{exmp}
	\begin{exmp}
		For $ \T_{3} $ we can define more intrinsic (mixed) local conditions
		\begin{enumerate}[(i)]
			\item  $ \Delta_{(\g, \f)} = \Delta (\scrF^{+\mbf{g}}_{\p_1}\T_3, \; \scrF^{+\mathbf{f}}_{\p_2}\T_3) $.
			\item $ \Delta_{(\g, \b)} = \Delta (\scrF^{+\mbf{g}}_{\p_1}\T_3, \; \scrF^{+\b}_{\p_2}\T_3) $.
			\item $ \Delta_{(+,\b)}:=\Delta(\scrF^{+\mathrm{bal}^+}_{\p_1}\T_3, \; \scrF^{+\b}_{\p_2}\T_3) $.
			
		\end{enumerate}
	\end{exmp}		
	\begin{rmk}
		We remark that under the self-duality pairing $ \T_{3} \otimes\T_{3} \rightarrow \R(1) $, the following local conditions discussed above are self-orthogonal complements. 
		$$ \Delta_{\g}^{\perp} = \Delta_{\g}, \quad \Delta_{\b}^{\perp} = \Delta_{\b}, \quad \Delta_{(\g,\b)}^{\perp} = \Delta_{(\g,\b)}.$$
		
	\end{rmk}
	Now we shall propagate these local conditions to $ \T_2 $ and $ \T_{1} $ via the exact sequence \eqref{mainses} and \eqref{mainses_dual}.
	\begin{exmp}
		When $ X = \T_2 $ we define Greenberg local condition $$ \tr \Delta_? \coloneqq \Delta( \{\im(F_\p^{+?}\T_{3} \hookrightarrow \T_{3} \rightarrow \T_{2} )\}_{\p \in \bP}) $$ 
		\begin{enumerate}[label=(\roman*)]
			\item $\f$-dominated Greenberg-local condition $ \tr \Delta_{\f}\coloneqq \Delta(\scrF^{+\mathbf{f}}_{\p}\T_{2}) $ is 
			$$ \scrF^{+\mathbf{f}}_{\p}\T_{2} \coloneqq \scrF^{+}_{\p}T_{\f} \widehat{\otimes} \ad^0(T_{\g})(\lambda) $$
			\item Balanced Greenberg-local condition $ \tr \Delta_{\b}\coloneqq \Delta(\scrF^{+\mathrm{bal}}_{\p}\T_{2}) $ is
			\begin{align*}
				\scrF^{+\mathrm{bal}}_{\p}\T_{2} \coloneqq \scrF^{+}_{\p}T_{\f} \widehat{\otimes} \underbrace{\ker\{\ad^0(T_\g) \hookrightarrow \ad(T_\g) \rightarrow \Hom(F_{\p}^+ T_\g,F_{\p}^- T_\g) \}}_{=:\mathrm{Fil}^{+\g}_{\p} \ad^0(T_\g)}(\lambda) \quad \\
				+  \quad T_\f \widehat{\otimes} \underbrace{\Hom(F_{\p}^- T_\g,F_{\p}^+  T_\g)}_{=:\mathrm{Fil}^{\g}_{\p}\ad^0(T_\g)}(\lambda)
			\end{align*} 
			\item $ g $-dominated Greenberg-local condition $ \tr \Delta_{\g} \coloneqq \Delta(\scrF^{+\g}_{\p}\T_{2}) $ is
			$$ \scrF^{+\g}_{\p}\T_{2} \coloneqq T_{\f} \widehat{\otimes} \underbrace{\ker\{\ad^0(T_\g) \hookrightarrow \ad(T_\g) \rightarrow \Hom(F_{\p}^+ T_\g,F_{\p}^- T_\g) \}}_{=:\mathrm{Fil}^{+\g}_{\p} \ad^0(T_\g)}(\lambda) $$
		\end{enumerate}
		
	\end{exmp}
	Similarly, we can get the local conditions $ \tr \Delta_{(\g, \b)}$,  $\tr \Delta_{(\g, \f)}$, $ \tr \Delta_{(+,\b)} $ etcetera.
	\begin{lemma}
		\label{localses}
		The following hold for any $ \p \in \bP $
		\begin{enumerate}
			\item $ \rank_{\R}\scrF^{+\mathbf{f}}_{\p}\T_{2} = \rank_{\R}\scrF^{+\mathrm{bal}}_{\p}\T_{2} = 3$.
			\item $ \rank_{\R} \scrF^{+\g}_{\p}\T_{2} = 4 $
			\item We have the following short exact sequence of $ G_{F_{\p}} $-representations
			\begin{equation}
				\label{ses_sep18}
				0 \rightarrow \scrF^{+\mathrm{bal}}_{\p}\T_{2} \rightarrow \scrF^{+\g}_{\p}\T_{2} \rightarrow \scrF^-_{\p}T_\f \otimes_{O} \R_{\g}(\lambda) \rightarrow 0. 
			\end{equation}
		\end{enumerate}
		
	\end{lemma}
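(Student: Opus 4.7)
The plan is to handle parts (1) and (2) by straightforward rank computations inside the filtration of $\ad^0(T_\g)$, and then to obtain the short exact sequence in (3) by isolating the ``diagonal'' graded piece of that filtration as a $G_{F_\p}$-module.

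First I would record the basic ranks: by \ref{Irr} and \ref{DIST}, the $\R_\g$-modules $T_\g$, $\scrF^+_\p T_\g$, $\scrF^-_\p T_\g$ are free of ranks $2,1,1$, hence $\ad^0(T_\g)$ is free of rank $3$ and the two filtration pieces $\Fil^\g_\p \ad^0(T_\g) = \Hom(\scrF^-_\p T_\g, \scrF^+_\p T_\g)$ and $\Fil^{+\g}_\p \ad^0(T_\g)$ have ranks $1$ and $2$ respectively (the latter as the kernel of the surjection $\ad^0(T_\g) \twoheadrightarrow \Hom(\scrF^+_\p T_\g, \scrF^-_\p T_\g)$ onto a rank-one module). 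Analogous statements hold for $T_\f$. Multiplicativity of rank under $\hotimes_O$ gives (2) immediately ($\scrF^{+\g}_\p \T_2$ has rank $2 \cdot 2 = 4$) and half of (1) ($\scrF^{+\mathbf{f}}_\p \T_2$ has rank $1 \cdot 3 = 3$). For $\scrF^{+\mathrm{bal}}_\p \T_2$ I would apply inclusion–exclusion: the two summands have ranks $1 \cdot 2 = 2$ and $2 \cdot 1 = 2$, and their intersection is $\scrF^+_\p T_\f \hotimes \Fil^\g_\p \ad^0(T_\g)(\lambda)$ of rank $1$, giving a total rank of $3$.

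For (3), the crux is to identify the graded piece $\Fil^{+\g}_\p \ad^0(T_\g)/\Fil^\g_\p \ad^0(T_\g)$ as a $G_{F_\p}$-module. I would choose a local basis of $T_\g$ adapted to the filtration so that $\rho_\g|_{G_{F_\p}}$ is upper triangular; then elements of $\Fil^{+\g}_\p \ad^0(T_\g)$ are represented by upper-triangular trace-zero matrices and $\Fil^\g_\p \ad^0(T_\g)$ by strictly upper-triangular ones, so that the quotient records the upper-left diagonal entry. A direct computation with $\rho_\g(g) = \left(\begin{smallmatrix} \delta_{\g,\p}(g) & \ast \\ 0 & \alpha_{\g,\p}(g) \end{smallmatrix}\right)$ shows that conjugation fixes this diagonal entry, producing a canonical $G_{F_\p}$-equivariant isomorphism $\Fil^{+\g}_\p \ad^0(T_\g)/\Fil^\g_\p \ad^0(T_\g) \cong \R_\g$ with trivial action.

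Granting this, I would tensor the two quotient maps $T_\f \twoheadrightarrow \scrF^-_\p T_\f$ and $\Fil^{+\g}_\p \ad^0(T_\g)(\lambda) \twoheadrightarrow \R_\g(\lambda)$ to obtain a $G_{F_\p}$-equivariant surjection
\begin{equation*}
\scrF^{+\g}_\p \T_2 \;=\; T_\f \hotimes_O \Fil^{+\g}_\p \ad^0(T_\g)(\lambda) \;\twoheadrightarrow\; \scrF^-_\p T_\f \otimes_O \R_\g(\lambda),
\end{equation*}
whose kernel, by freeness of each factor and a standard diagram chase on tensor products, is exactly $\scrF^+_\p T_\f \hotimes_O \Fil^{+\g}_\p \ad^0(T_\g)(\lambda) + T_\f \hotimes_O \Fil^\g_\p \ad^0(T_\g)(\lambda) = \scrF^{+\mathrm{bal}}_\p \T_2$. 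This produces the desired short exact sequence \eqref{ses_sep18}. The main (mild) obstacle is the equivariance check for the graded piece; all other steps reduce to linear algebra over the free modules provided by \ref{Irr} and \ref{DIST}.
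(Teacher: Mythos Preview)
Your proposal is correct and follows essentially the same approach as the paper: both compute the ranks directly from the definitions and then identify the graded piece $\Fil^{+\g}_\p \ad^0(T_\g)/\Fil^\g_\p \ad^0(T_\g)$ as $\R_\g$ with trivial $G_{F_\p}$-action via an adapted basis, the only cosmetic difference being that the paper writes this in tensor coordinates $u^+_\p \otimes \breve{u}^+_\p - u^-_\p \otimes \breve{u}^-_\p$ rather than your matrix language of upper-triangular trace-zero matrices. Your identification of the kernel via the tensor of two surjections is a slightly cleaner packaging of the same computation the paper summarizes as $\operatorname{coker}(\scrF^{+\mathrm{bal}}_\p \T_2 \hookrightarrow \scrF^{+\g}_\p \T_2) \cong \scrF^-_\p T_\f \hotimes_O \mathrm{gr}^+_\g \ad^0(T_\g)(\lambda)$.
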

	\begin{proof}
		Proof of (1) and (2) is immediate from the previous example.\\
		$  \scrF^{+\mathrm{bal}}_{\p}\T_{2} $ is a submodule of $ \scrF^{+\g}_{\p}\T_{2} $ and we get 
		$$ \operatorname{coker}\left( \scrF^{+\mathrm{bal}}_{\p}\T_{2} \hookrightarrow \scrF^{+\g}_{\p}\T_{2} \right) \cong \scrF^-_{\p}T_{\f} \widehat{\otimes}_O \rm gr_{\g}^+ \rm Ad^0 (T_{\g})(\lambda) $$
		where $ \mathrm{gr}_{\g}^+ \ad^0 (T_{\g})(\lambda) \coloneqq \left(\mathrm{Fil}^{+\g}_{\p} \ad^0(T_\g) /\mathrm{Fil}^{\g}_{\p}\ad^0(T_\g)\right)(\lambda) $. Hence we have to show 
		\begin{equation}
			\label{eqn_0709}
			\mathrm{gr}_{\g}^+  \ad^0 (T_{\g}) \cong  \R_{\g}  .
		\end{equation}
		Let $ \{ u^+_\p \} \subset F_{\p}^+T_\g $ denote an $ \R_\g $-basis and let $ \{u^+_\p,u^-_\p \} $ be an extension. Let $ \{\breve{u}^+_\p,\breve{u}^-_\p \} \subset T_\g^* $ be the dual basis. Then 
		\begin{equation}
			\mathrm{Fil}^{+\g}_{\p} \ad^0(T_\g) = \ker\{\ad^0(T_\g) \hookrightarrow \ad(T_\g) \rightarrow \Hom(F_{\p}^+ T_\g,F_{\p}^- T_\g) \} = \left\langle u^+_\p \otimes \breve{u}^-_\p, u^+_\p \otimes \breve{u}^+_\p - u^-_\p \otimes \breve{u}^-_\p  \right\rangle 
		\end{equation}
		and 
		$$ \mathrm{Fil}^{\g}_{\p}\ad^0(T_\g) = \Hom(F_{\p}^- T_\g,F_{\p}^+  T_\g) =\left\langle u^+_\p \otimes \breve{u}^-_\p \right\rangle.  $$
		Let denote the $ \R_\g $-valued character $ \chi_\pm $ of $ G_{F_\p} $ as the action of $ \scrF^\pm_\p T_\g$. Then for any $ g \in G_{F_\p} $
		$$ g \cdot u^+_\p = \chi_+(g)u^+_\p, \quad g \cdot \breve{u}^+_\p = \chi_+(g)^{-1}\breve{u}^+_\p $$
		Hence $ g \cdot (u^+_\p \otimes \breve{u}^+_\p) = u^+_\p \otimes \breve{u}^+_\p $
		and $ g \cdot u^-_\p = \chi_-(g)u^-_\p + c(g)u^+_\p  $ for some $ c(g) \in \R_{\g} $ which gives $ g \cdot \breve{u}^-_\p = \chi_-(g)\breve{u}^-_\p $.
		Combining all of these, we get $ g \cdot ( u^+_\p \otimes \breve{u}^+_\p - u^-_\p \otimes \breve{u}^-_\p) =  u^+_\p \otimes \breve{u}^+_\p - u^-_\p \otimes \breve{u}^-_\p $ in $  \mathrm{gr}_{\g}^+  \ad^0 (T_{\g}) $, which proves \eqref{eqn_0709}.
	\end{proof}
	\begin{defn}
		As we know $ \T_1 \cong T_\f \otimes_O \R_\g(\lambda) $, let's define Greenberg-local conditions on $ \T_1 $ as $$ \tr \Delta_? \coloneqq \Delta(\{\ker(F_\p^{+?}\T_{3} \hookrightarrow \T_{3} \rightarrow \T_{2} )\}_{\p \in \bP}).$$
		So we get 
		\begin{enumerate}[label=(\roman*)]
			\item The filtration for Panchiskin local condition $ \scrF^{+\pan}\T_1 \coloneqq \scrF^{+}_{\p}T_{\f} \otimes_O  \R_\g(\lambda)) $ and we have the following correspondence for $ \p = \p_1, \; \p_2  $  
			$$ \scrF^{+\mathbf{f}}_{\p}\T_1= \scrF^{+}_{\p}T_\f \hotimes_\lambda \R_\g = \scrF^{+\mathrm{bal}}_{\p}\T_1.$$ 
			\item Filtration for strict local condition $$ \scrF^{+0}_{\p}\T_1 =\{0\} =\scrF^{+\g}_{\p}\T_1.$$
		\end{enumerate}
	\end{defn}
	Similarly, we can define the local condition $ \tr\Delta_{\f} = \Delta_\pan $, $ \tr \Delta_{\g} = \Delta_0 $, $ \tr \Delta_{(\g, \b)} = \Delta_{(0,\pan)} $ etcetera of $ \T_1 $.
	\begin{defn}
		We define the (relaxed) Greenberg-local condition on $ X $ with the filtration as $ \scrF^{+\emptyset}_\p X = X $.
	\end{defn}
	
	\subsection{Selmer Complexes associated with Greenberg-local conditions}
	Let $ \Delta $ be a Greenberg-local conditions on $ X $, let us put 
	$$ U_{\Sigma}^{+}(X) \coloneqq \bigoplus_{v \in \Sigma}U_{v}^{+}(X), \quad C^{\bullet}_{\Sigma}(X) \coloneqq \bigoplus_{v \in \Sigma}C^{\bullet}(G_{F_v},X)  $$ 
	and consider the diagram
	\begin{center}
		\begin{tikzcd}
			& U_{\Sigma}^{+}(X) \arrow[d, "i_{\Sigma}^{+}"]  \\
			C^{\bullet}(G_{F, \Sigma},X) \arrow[r, "\res_{\Sigma}"]  & C^{\bullet}_{\Sigma}(X)		
		\end{tikzcd}
	\end{center}
	where $ i_{\Sigma}^{+} \coloneqq \bigoplus_{v \in \Sigma}i_{v}^{+} $ and $ \res_{\Sigma} \coloneqq \bigoplus_{v \in \Sigma}\res_{v} $. \newline
	Let's define the Selmer complex associated with $(X,\Sigma , \Delta )$ 
	$$ \widetilde{C}_{\mrm{f}}^{\bullet}(G_{F,\Sigma},X, \Delta)\coloneqq \operatorname{Cone}(C^{\bullet}(G_{F, \Sigma},X) \oplus U_{\Sigma}^{+}(X) \xrightarrow{\res_{\Sigma}-i_{\Sigma}^{+}} C^{\bullet}_{\Sigma}(X))[-1] $$
	define the corresponding object in derived category by $ \widetilde{R\Gamma}_{\mathrm{f}}(G_{F,\Sigma},X, \Delta) $ and it's cohomology by $ \widetilde{H}_{\mathrm{f}}^{\bullet}(G_{F,\Sigma},X, \Delta) $.
	\begin{thm}
		\cite[Theorem 7.8.6]{Nek06}
		\label{euler_characteristisc}
		We have 
		$$ \chi(\widetilde{R\Gamma}_{\mathrm{f}}(G_{F,\Sigma},X, \Delta)) = \sum_{v \mid \infty}\rank(X^{G_v}) - \sum_{\p \mid p}\rank(\scrF^+_{\p} X) $$
		for the Euler-Poincare characteristic of $ \widetilde{R\Gamma}_{\mathrm{f}}(G_{F,\Sigma},X, \Delta) $.	
	\end{thm}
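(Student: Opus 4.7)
The plan is to read off the Euler--Poincar\'e characteristic directly from the defining exact triangle of $\dcom(G_{F,\Sigma},X,\Delta)$ and then assemble the answer from Tate's global and local Euler--Poincar\'e characteristic formulas, crucially using the running hypotheses that $p$ is odd and splits in $F$. Concretely, the cone description gives a distinguished triangle
\begin{equation*}
\dcom(G_{F,\Sigma},X,\Delta)\to R\Gamma(G_{F,\Sigma},X)\oplus \bigoplus_{v\in\Sigma} U_v^+(X)\xrightarrow{\res_\Sigma-i_\Sigma^+}\bigoplus_{v\in\Sigma}R\Gamma(G_{F_v},X)\xrightarrow{+1},
\end{equation*}
so by additivity of $\chi$ on exact triangles,
\begin{equation*}
\chi(\dcom)=\chi(R\Gamma(G_{F,\Sigma},X))+\sum_{v\in\Sigma}\chi(U_v^+(X))-\sum_{v\in\Sigma}\chi(R\Gamma(G_{F_v},X)).
\end{equation*}

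First I would invoke Tate's global Euler--Poincar\'e characteristic formula for the free $\R$-lattice $X$ over the totally real field $F$, which gives $\chi(R\Gamma(G_{F,\Sigma},X))=\sum_{v\mid\infty}\rank(X^{G_v})-[F:\Q]\rank(X)$. Next I would analyze $\chi(U_v^+(X))-\chi(R\Gamma(G_{F_v},X))$ place-by-place. For a finite place $v\in\Sigma\setminus\bP$, the local condition $U_v^+(X)$ is quasi-isomorphic to the two-term complex $[X^{I_v}\xrightarrow{\fr_v-1}X^{I_v}]$ concentrated in degrees $0$ and $1$, so $\chi(U_v^+(X))=0$; Tate's local Euler--Poincar\'e formula at a finite prime away from $p$ likewise gives $\chi(R\Gamma(G_{F_v},X))=0$, so such places contribute nothing. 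For $v\mid\infty$, $I_v$ is trivial, whence $U_v^+(X)=C^\bullet(G_{F_v},X)$ and the map $i_v^+$ is the identity, so the two terms cancel. For each $\p\in\bP$ (so $\p\mid p$), Tate's local formula yields $\chi(R\Gamma(G_{F_\p},X))=-[F_\p:\Qp]\rank(X)$ and $\chi(U_\p^+(X))=-[F_\p:\Qp]\rank(\scrF^+_\p X)$.

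Combining these contributions,
\begin{equation*}
\chi(\dcom)=\sum_{v\mid\infty}\rank(X^{G_v})-[F:\Q]\rank(X)+\sum_{\p\mid p}[F_\p:\Qp]\bigl(\rank(X)-\rank(\scrF^+_\p X)\bigr),
\end{equation*}
and the $\rank(X)$ contributions collapse via the identity $\sum_{\p\mid p}[F_\p:\Qp]=[F:\Q]$. Finally, the running splitting assumption for $p$ in $F$ forces $[F_\p:\Qp]=1$ for each $\p\mid p$, giving the claimed formula.

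The one subtlety I anticipate is careful bookkeeping of the archimedean contributions: here one needs $p$ odd so that $H^i(G_{F_v},X)=0$ for $i>0$ and $\chi(R\Gamma(G_{F_v},X))=\rank(X^{G_v})$ cancels against the identical $U_v^+$ term; this is the one spot where the hypotheses $p\geq 5$ and $F$ totally real enter essentially. Everything else is a mechanical consequence of Tate duality and additivity of Euler characteristics along the defining exact triangle, exactly as in Nekov\'a\v{r}'s original treatment in \cite[\textsection 7.8]{Nek06}.
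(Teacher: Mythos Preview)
The paper does not supply its own proof of this statement; it is simply quoted from Nekov\'a\v{r}'s book \cite[Theorem 7.8.6]{Nek06}, so there is nothing in the paper to compare your argument against. Your approach is exactly the standard derivation and is essentially correct.

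One small inaccuracy: for a real archimedean place $v$ of the totally real field $F$, the inertia group is not trivial but equals the full decomposition group $G_{F_v}\cong\Z/2\Z$. With the paper's convention for $v\in\Sigma\setminus\bP$, this gives $U_v^+(X)=C^\bullet(G_{F_v}/I_v,X^{I_v})=C^\bullet(\{1\},X^{G_v})$, i.e.\ $X^{G_v}$ concentrated in degree $0$, so $\chi(U_v^+(X))=\rank(X^{G_v})$. Since $p$ is odd and $|G_{F_v}|=2$, the higher cohomology of $R\Gamma(G_{F_v},X)$ vanishes and $\chi(R\Gamma(G_{F_v},X))=\rank(X^{G_v})$ as well, so the two terms still cancel and your final formula is unaffected. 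The rest of the bookkeeping (global Tate formula, vanishing of both local Euler characteristics at finite $v\nmid p$, the local Tate formula at $\p\mid p$, and the use of $\sum_{\p\mid p}[F_\p:\Qp]=[F:\Q]$ together with the splitting of $p$) is correct.
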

	\subsubsection{}
	We get the following exact triangle, which can be thought of as the first step of our factorization statements
	\begin{equation}\label{comtriangle1}
		\widetilde{R\Gamma}_{\mathrm{f}}(G_{F,\Sigma},\T_{1}, \Delta_{(0,\pan)}) \xrightarrow{\operatorname{id} \otimes \tr}\widetilde{R\Gamma}_{\mathrm{f}}(G_{F,\Sigma},\T_{3}, \Delta_{(\g,\b)}) \xrightarrow{\operatorname{\pi_{\tr}}} \widetilde{R\Gamma}_{\mathrm{f}}(G_{F,\Sigma},\T_{2}, \tr \Delta_{(\g,\b)}) \xrightarrow[+1]{\delta}  
	\end{equation}
	and
	\begin{align}
		\label{comtriangle}
		\widetilde{R\Gamma}_{\mathrm{f}}(G_{F,\Sigma},\T_{1}, \Delta_{0}) \xrightarrow{\operatorname{id} \otimes \tr}\widetilde{R\Gamma}_{\mathrm{f}}(G_{F,\Sigma},\T_{3}, \Delta_{\g}) \xrightarrow{\operatorname{\pi_{\tr}}} \widetilde{R\Gamma}_{\mathrm{f}}(G_{F,\Sigma},\T_{2}, \tr \Delta_{\g}) \xrightarrow[+1]{\delta}
	\end{align}
	which will be studied extensively later. We also define the following:
	\begin{defn} ---
		\begin{enumerate}
			\item Let $ \res_{/\b, \p_1} $ denote the composite map
			\begin{align}\label{resbal_1}
				\begin{aligned}
					\widetilde{H}^1_{\mathrm{f}}(G_{F,\Sigma},\T_{2}, \tr \Delta_{(\g,\b)}) \xrightarrow{\res_{\p_1}}  H^1(G_{F_{\p_1}},\scrF^{+\g}_{\p_1}\T_{2} ) \xrightarrow{\operatorname{pr}_{/\g,\p_1}}  H^1(G_{F_{\p_1}}, T_\f \widehat{\otimes} \operatorname{gr}^{+\g}_{\p_1}\ad^0(T_\g)(\lambda))   \\
					\rightarrow  H^1(G_{F_{\p_1}}, \scrF^{-}_{\p_1}T_\f \widehat{\otimes} \operatorname{gr}^{+\g}_{\p_1} \ad^0(T_\g)(\lambda)) \xrightarrow[(\ref{localses}.)]{\sim} H^1(G_{F_{\p_1}}, \scrF^{+\g}_{\p_1}\T_{2}/\scrF^{+\b}_{\p_1}\T_{2}),
				\end{aligned}
			\end{align}
			\item Let $ \res_{/\b}$ denote the composite map 
			\begin{align}\label{resbal_2}
				\begin{aligned}
					\widetilde{H}^1_{\mathrm{f}}(G_{F,\Sigma},\T_{2}, \tr \Delta_{\g}) \xrightarrow{\res_p} \bigoplus_{\p \mid p} H^1(G_{F_{\p}},\scrF^{+\g}_{\p}\T_{2} ) \xrightarrow{\operatorname{pr}_{/\g}}  \bigoplus_{\p \mid p} H^1(G_{F_{\p}}, T_\f \widehat{\otimes} \operatorname{gr}^{+\g}_{\p}\ad^0(T_\g)(\lambda))   \\
					\rightarrow \bigoplus_{\p \mid p} H^1(G_{F_{\p}}, \scrF^{-}_{\p}T_\f \widehat{\otimes} \operatorname{gr}^{+\g}_{\p} \ad^0(T_\g)(\lambda)) \xrightarrow[(\ref{localses}.)]{\sim} \bigoplus_{\p \mid p} H^1(G_{F_{\p}}, \scrF^{+\g}_{\p}\T_{2}/\scrF^{+\b}_{\p}\T_{2}),
				\end{aligned}
			\end{align}
			where the maps $ \res_p\coloneqq\bigoplus_{\p \mid p} \res_{\p} $ and $  \operatorname{pr}_{/\g} \equiv \bigoplus_{\p \mid p}\operatorname{pr}_{/\g,\p} $ is the map induced from canonical surjection for all $ \p \mid p $ 
			$$ \scrF^{+\g}_{\p}\T_{2} \xrightarrow{\operatorname{pr}_{/\g,\p}} T_\f \widehat{\otimes}\operatorname{gr}^{+\g}_{\p} \ad^0(T_\g)(\lambda). $$
		\end{enumerate}
		
	\end{defn}
	\subsubsection{Panchiskin defect}
	\label{sec_pan}
	We now introduce the invariant called Panchiskin defect which is necessary to show the perfectness of certain Selmer complex. The foundational concept was introduced by Perrin-Riou and Rubin. Later Kazim B\"{u}y\"{u}kboduk incorporated these ideas into the following articles \cite{kazim1,kazim2,kazim3,kazim4,kazim5,kazim6}. With the help of \cite[Section 7]{LZ20local} we will identify the differences between our present factorization problems with those in \cite{Greenberg1982, Palva18, BCS23}.
	\begin{defn}
		\label{panchiskin_defect}
		Suppose that $ X $ is a free $ O $-module of ﬁnite rank, endowed with a continuous $ \G $-action. We also have that $ X $ is equipped with a Greenberg local conditions $ \Delta \coloneqq \Delta(\scrF_{\p_1}^+X,\scrF_{\p_2}^+X) $ given by $ G_{F_{\p_i}} $-stable submodule $ \scrF_{\p_i}^+X \subset X $ for $ i=1,2$. We define the Panchiskin defect $ \delta_\pan(X,\Delta) $ attached to the pair $ (X, \Delta) $ on setting
		\[\delta_\pan(X,\Delta) = \abs{ \sum_{v \mid \infty}\rank(X^{c_v =1}) - \sum_{i=1,2}\rank(\scrF^+_{\p_i} X)} \]
		where $ X^{c_v =1} \subset X $ is the $ +1 $-eigenspace of complex conjugation at $ v $. We say that the pair $ (X, \Delta)$ is $ r $-Panchiskin if $ \delta_\pan(X,\Delta) = r $.
	\end{defn}
	Note that $ X $ is Panchishkin ordinary if there exist a Greenberg local condition $ \Delta $ such that $ (X,\Delta) $ is $ 0 $-Panchiskin and all Hodge–Tate weights of $ \scrF_{\p_i}^+X $ (resp. of $ X/\scrF_{\p_i}^+X $) are positive (resp. non-positive) for $ i =1,2 $.
	\begin{exmp}
		---
		\begin{enumerate}[(i)]
			\item Let $ x \in \W_{\mrm{cl}}^{(\f,\b)}(O) $ be an $ O $-valued arithmetic point and let us denote $X(x) := X \otimes_x O$ for $ X = \T_{1}, \T_{2}, $ and $ \T_{3} $. Then
			\begin{align*}
				\delta_\pan(\T_{3}(x), \Delta_{(\g, \b)}) &=0; \\
				\delta_\pan(\T_{2}(x), \tr\Delta_{(\g, \b)}) &=1 = \delta_\pan(\T_{1}(x), \tr\Delta_{(\g, \b)}).
			\end{align*}
			\item Let $ x \in \W_{\mrm{cl}}^{\f}(O) $ be an $ O $-valued arithmetic point and let us denote $X(x) := X \otimes_x O$ for $ X = \T_{1}, \T_{2}, $ and $ \T_{3} $. Then
			\begin{align*}
				\delta_\pan(\T_{3}(x), \Delta_{\g}) &=0= \delta_\pan(\T_{2}(x), \Delta_{\b}) ; \\
				\delta_\pan(\T_{2}(x), \tr\Delta_{\g}) &=2= \delta_\pan(\T_{1}(x), \tr\Delta_{\g}).
			\end{align*}
		\end{enumerate}
	\end{exmp}
	\begin{rmk}
		\label{remark_panchiskin}
		Using the Panchiskin defect we can indicate why the factorization problem for algebraic $p$-adic $L$-functions we consider in this article is considerably more challenging than those problems considered earlier in \cite{Greenberg1982,Palva18,BCS23}. All the factorizations result in op. cite are obtained when the associated pairs are either $ 0 $-Panchiskin or $ 1 $-Panchiskin while we also consider the $ 2 $-Panchiskin pairs (cf. \cite[\textsection 4.3.2]{BCS23}). This discussion also gives evidence towards the existence of higher rank Euler systems assumed in \cref{section : factorization_2} (cf. \cite{LZ20local}). 
	\end{rmk}
	\subsection{Remarks on Tamagawa Factors}
	\label{sec:tamagawa}
	Let $ v \nmid p\infty $ be a place in $ F $. Suppose $ \varphi \colon G_{F_v} \rightarrow \GL{R}(V) $ be a continuous $ G_{F_v} $-representation, where $ V $ is a free module of finite rank over a complete Noetherian local ring $ R $ with finite residue field of characteristic $ p $. 
	\begin{rmk}
		When $ R = \Z_p $, $ F = \Q $, the $ p $-adic valuation of the order of $ H^1(I_{v},V)^{\mathrm{Fr}_{v}=1}_{R-\mathrm{tors}} $ is the local Tamagawa factor at $ v $ (see \cite[\textsection I.4.2.2]{FPR94}). So it vanishes if $ H^1(I_{v},V) $ is a free $ R $-module. In this philosophy, we would concentrate on the following conditions:
		\begin{itemize}
			\item The $ R $-module $ H^1(I_{v},V) $ is free.
		\end{itemize}
	\end{rmk}
	This discussion about the Tamagawa factor is needed for verification that our Selmer complexes are perfect. Our arguments are inspired by the observations made by Büyükboduk et al. in \cite{BCS23}.
	\subsubsection{}
	Let $ I_{v}^{(p)} \triangleleft I_v $ be the unique subgroup satisfying $ I_v/I_{v}^{(p)} \cong \Z_p$. Let us fix a topological generator $ t \in I_v/I_{v}^{(p)} $. (cf. \cite[\textsection 7.5.2]{NSW08})
	\begin{lemma} 
		\label{lemma : 2.9}
		---
		\begin{enumerate}[(1)]
			\item The $R$-module $V^{I_{v}^{(p)}}$ is free of finite rank.
			\item The complex $C^{\bullet}(I_v , V )$ is quasi-isomorphic to the perfect complex
			$$ \cdots \rightarrow 0 \rightarrow V^{I_{v}^{(p)}} \xrightarrow{t-1} V^{I_{v}^{(p)}} \rightarrow 0 \rightarrow \cdots  $$ concentrated in degrees $0$ and $1$.
			\item For any ring homomorphism $R \rightarrow S$, the induced map $ V^{I_{v}^{(p)}} \otimes_R S \rightarrow (V \otimes_R S)^{I_{v}^{(p)}}$ is an isomorphism. Therefore in the derived category 
			\begin{align*}
				R\Gamma(I_{v},V) \otimes^{\mathbb{L}}_R S \xrightarrow{\sim}  R\Gamma(I_{v},V \otimes_R S)
			\end{align*}
		\end{enumerate}
	\end{lemma}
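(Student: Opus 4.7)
The plan is to exploit that $I_v^{(p)}$ is a profinite group of pro-order prime to $p$ while $R$ has residue characteristic $p$, which lets us produce a Maschke-type averaging idempotent. First, for each $n\ge 1$ the induced action of $I_v^{(p)}$ on the finite module $V/\mathfrak{m}_R^n V$ is continuous, hence factors through a finite quotient $G_n$ of $I_v^{(p)}$, and $|G_n|$ is coprime to $p$ by construction of $I_v^{(p)}$. Thus $e_n := \tfrac{1}{|G_n|}\sum_{g\in G_n} g$ is a well-defined idempotent in $\End_{R/\mathfrak{m}^n}(V/\mathfrak{m}^n V)$ cutting out $(V/\mathfrak{m}^n V)^{I_v^{(p)}}$; these idempotents are compatible as $n$ varies, and passing to the limit gives an idempotent $e\in \End_R(V)$ with $e V = V^{I_v^{(p)}}$. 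Therefore $V^{I_v^{(p)}}$ is a direct summand of the free module $V$, hence projective and finitely generated; since $R$ is local this forces $V^{I_v^{(p)}}$ to be free of finite rank, proving (1).

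For (2), I would use the Hochschild--Serre inflation--restriction for the closed normal subgroup $I_v^{(p)} \triangleleft I_v$. The averaging idempotent above shows that $V$ is an acyclic $I_v^{(p)}$-module (any continuous cocycle can be averaged to a coboundary, since cocycles for a finite quotient $G_n$ with $|G_n|$ prime to $p$ are cohomologically trivial), so $R\Gamma(I_v^{(p)}, V) = V^{I_v^{(p)}}$ concentrated in degree $0$. Consequently $R\Gamma(I_v, V) \simeq R\Gamma(I_v/I_v^{(p)}, V^{I_v^{(p)}})$. The quotient $I_v/I_v^{(p)} \cong \Z_p$ is topologically generated by $t$, and for a profinite $\Z_p$-module the continuous cohomology of $\Z_p$ is computed by the two-term complex $[M\xrightarrow{t-1}M]$ in degrees $0,1$. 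Applying this to $M = V^{I_v^{(p)}}$ gives the quasi-isomorphism claimed in (2), which is perfect by (1).

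For (3), the idempotent $e$ is characterized by a universal averaging formula at each finite level, so it is compatible with any base change $R\to S$: the image of $e$ in $\End_S(V\otimes_R S)$ is the corresponding idempotent for the action on $V\otimes_R S$, cutting out $(V\otimes_R S)^{I_v^{(p)}}$. Hence
\[
V^{I_v^{(p)}} \otimes_R S \;=\; (eV)\otimes_R S \;=\; (e\otimes 1)(V\otimes_R S) \;=\; (V\otimes_R S)^{I_v^{(p)}},
\]
proving the first assertion. For the derived statement, the perfect complex of (2) has $R$-flat terms (free modules by (1)), so $\otimes_R^{\mathbb{L}} S$ coincides with the ordinary tensor product; applying the first assertion termwise and invoking (2) for $V\otimes_R S$ gives the quasi-isomorphism.

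The only genuine obstacle is verifying the compatibility of the idempotents $e_n$ across $n$ (equivalently, the existence of a single $e\in\End_R(V)$ specializing to each $e_n$). This relies on the observation that the finite quotients $G_n$ form an inverse system and the averaging is functorial in the group, together with the completeness of $R$ so that $\End_R(V) \cong \varprojlim_n \End_{R/\mathfrak{m}^n}(V/\mathfrak{m}^n V)$; once this bookkeeping is in place the rest of the argument is formal.
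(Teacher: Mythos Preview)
Your proof is correct and follows essentially the same route as the paper, which cites \cite[\S7.5.8]{Nek06} for (1)--(2) and then uses the same averaging-idempotent idea for (3). One simplification available to you: the image $\varphi(I_v^{(p)})$ is already finite of order prime to $p$ (the kernel of $\GL{n}(R)\to\GL{n}(R/\mathfrak{m}_R)$ is pro-$p$, so $\varphi(I_v^{(p)})$ injects into the finite group $\GL{n}(R/\mathfrak{m}_R)$), hence a single global idempotent $e=\tfrac{1}{|\varphi(I_v^{(p)})|}\sum_{g\in\varphi(I_v^{(p)})}g$ works directly and the inverse-limit bookkeeping you flag at the end is unnecessary.
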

	\begin{proof}
		The statement (1) and (2) follows from \cite[\textsection 7.5.8]{Nek06}. \\
		To prove (3), \cite[\textsection 7.5.8]{Nek06} gives the inclusion $V^{I_v^{(p)}} \to V$ is split, the $R$-module $V/V^{I_v^{(p)}}$ is flat. hence we have an exact sequence of $S$-modules  
		\[
		0 \longrightarrow   V^{I_v^{(p)}}\otimes_{R} S \longrightarrow V \otimes_{R} S \longrightarrow V/V^{I_v^{(p)}} \otimes_{R} S \longrightarrow 0. 
		\]
		Moreover, the group $I_v^{(p)}$ acts trivially on the ring $S$. Since $\varphi(I_v^{(p)})$ is finite and $p \nmid \#\varphi(I_v^{(p)})$, we have 
		\[
		e := \frac{1}{\# \varphi(I_v^{(p)})}\sum_{g \in \varphi(I_v^{(p)})}g \in O[\varphi(I_v^{(p)})]
		\] 
		Therefore we have 
		\[
		(V/V^{I_v^{(p)}} \otimes_{R} S)^{I_v^{(p)}} = e(V/V^{I_v^{(p)}} \otimes_{R} S) = e(V/V^{I_v^{(p)}}) \otimes_{R} S = 0\,. 
		\]
		This shows that the canonical homomorphism $V^{I_v^{(p)}} \otimes_R S \to (V \otimes_R S)^{I_v^{(p)}}$ is an isomorphism.  
	\end{proof}
	
	\begin{cor}
		\label{cor : 2.10}
		Suppose $ \varphi(I_{v}) $ is finite and $ p \nmid \#\varphi(I_{v}) $ then $ R $-module $ H^1(I_{v},V) $ is free.
	\end{cor}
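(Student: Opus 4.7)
The plan is to reduce the statement to a direct application of Lemma \ref{lemma : 2.9} together with a short pro-$p$ argument. By Lemma \ref{lemma : 2.9}(2), the complex $C^\bullet(I_v,V)$ is quasi-isomorphic to the two-term perfect complex
\[
V^{I_v^{(p)}} \xrightarrow{\;t-1\;} V^{I_v^{(p)}}
\]
concentrated in degrees $0$ and $1$, and by Lemma \ref{lemma : 2.9}(1) the $R$-module $V^{I_v^{(p)}}$ is already free of finite rank. In particular,
\[
H^1(I_v,V) \;\cong\; \mathrm{coker}\bigl(t-1:V^{I_v^{(p)}} \longrightarrow V^{I_v^{(p)}}\bigr),
\]
so it suffices to prove that $t-1$ acts as the zero map on $V^{I_v^{(p)}}$: then $H^1(I_v,V)\cong V^{I_v^{(p)}}$ would be free, as desired.

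To establish that $t$ acts trivially on $V^{I_v^{(p)}}$, I would argue as follows. The $R$-linear action of $I_v$ on $V$ factors through $\varphi(I_v)$, so the induced action of $I_v/I_v^{(p)}$ on the invariants $V^{I_v^{(p)}}$ factors through the finite quotient $\varphi(I_v)/\varphi(I_v^{(p)})$. By hypothesis $\varphi(I_v)$ is finite of order prime to $p$, hence so is its quotient $\varphi(I_v)/\varphi(I_v^{(p)})$. On the other hand $t$ is a topological generator of $I_v/I_v^{(p)}\cong\mathbb{Z}_p$, which is a pro-$p$ group. Any continuous homomorphism from a pro-$p$ group to a finite group of prime-to-$p$ order must be trivial, so the image of $t$ in $\varphi(I_v)/\varphi(I_v^{(p)})$ is the identity, and therefore $t$ acts as the identity on $V^{I_v^{(p)}}$. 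Hence $(t-1)=0$ on $V^{I_v^{(p)}}$ and the cokernel equals the source.

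There is no substantive obstacle here: the corollary is essentially a clean packaging of Lemma \ref{lemma : 2.9} with the standard observation that pro-$p$ groups have no non-trivial continuous maps to finite groups of prime-to-$p$ order. The only thing to keep straight is the notational convention for $I_v^{(p)}$ (the kernel of the pro-$p$ part of the tame quotient, so that $I_v/I_v^{(p)}\cong\mathbb{Z}_p$), which is already fixed in the paragraph preceding Lemma \ref{lemma : 2.9}; once that is in hand, the argument above proceeds without further input.
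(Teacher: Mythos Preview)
Your proof is correct and follows exactly the same approach as the paper. The paper's proof is simply the one-line assertion that in this scenario $H^1(I_v,V)\cong V^{I_v^{(p)}}$ (which is free by Lemma~\ref{lemma : 2.9}(1)); your argument is a fully unpacked version of that same isomorphism, supplying the pro-$p$ versus prime-to-$p$ reasoning that shows $t-1=0$ on $V^{I_v^{(p)}}$.
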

	\begin{proof}
		Follows from Lemma \ref{lemma : 2.9} since in this scenario we have isomorphism of $ R $-modules $ H^1(I_v, V) \cong V^{I_v^{(p)}} $.
	\end{proof}
	\begin{prop}
		\label{prop_unramified_complex}
		Assume that the $R$-module $H^1(I_v, V)$ is free. Then the following are true. 
		\begin{enumerate}
			\item[(1)] $V^{I_v}$ is a finite rank free $R$-module.
			\item[(2)] The complex $U_v^+(V)$ is a perfect complex, and $R\Gamma_{\mathrm{ur}}(G_v, V) \in D^{[0,1]}_{\rm parf}(_R\mathrm{Mod})$.  
			\item[(3)]  For any ring homomorphism $R \to S$, we have an isomorphism $V^{I_v} \otimes_R S \to (V \otimes_R S)^{I_v}$. 
			In particular, $R\Gamma_{\mathrm{ur}}(G_v, V) \otimes^{\mathbbl{L}}_{R} S \stackrel{\sim}{\longrightarrow} R\Gamma_{\mathrm{ur}}(G_v, V \otimes_R S)$. 
		\end{enumerate}
	\end{prop}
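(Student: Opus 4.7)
The plan is to reduce everything to the two-term presentation of $R\Gamma(I_v, V)$ furnished by Lemma~\ref{lemma : 2.9}(2), after which the three items become essentially formal consequences of the hypothesis that $H^1(I_v, V)$ is free. The starting point is the four-term exact sequence
\begin{equation*}
0 \to V^{I_v} \to V^{I_v^{(p)}} \xrightarrow{t-1} V^{I_v^{(p)}} \to H^1(I_v, V) \to 0
\end{equation*}
read off from the complex $[V^{I_v^{(p)}} \xrightarrow{t-1} V^{I_v^{(p)}}]$, which I would splice into the two short exact sequences $0 \to V^{I_v} \to V^{I_v^{(p)}} \to (t-1)V^{I_v^{(p)}} \to 0$ and $0 \to (t-1)V^{I_v^{(p)}} \to V^{I_v^{(p)}} \to H^1(I_v, V) \to 0$.

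For part~(1), since $H^1(I_v, V)$ is free by hypothesis, the second short exact sequence splits, so $(t-1)V^{I_v^{(p)}}$ is a direct summand of $V^{I_v^{(p)}}$ (itself free by Lemma~\ref{lemma : 2.9}(1)), hence finitely generated projective and therefore free (as $R$ is local). The first short exact sequence then also splits, making $V^{I_v}$ a direct summand of a finite free module, and thus free of finite rank. Part~(2) is then immediate: the quasi-isomorphism $U_v^+(V) \simeq [V^{I_v} \xrightarrow{\operatorname{Fr}_v - 1} V^{I_v}]$ concentrated in degrees $0$ and $1$ exhibits $U_v^+(V)$ as a perfect complex of finite free $R$-modules, placing $R\Gamma_{\mathrm{ur}}(G_v, V)$ in $D^{[0,1]}_{\rm parf}({}_R\mathrm{Mod})$.

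For part~(3), the two spliced short exact sequences above remain short exact after applying $(-)\otimes_R S$ (they are split), and combining this with Lemma~\ref{lemma : 2.9}(3) (which identifies $V^{I_v^{(p)}}\otimes_R S$ with $(V\otimes_R S)^{I_v^{(p)}}$) realizes $V^{I_v}\otimes_R S$ as the kernel of $t-1$ on $(V\otimes_R S)^{I_v^{(p)}}$, which equals $(V\otimes_R S)^{I_v}$. The derived base-change statement is then automatic: the perfect representative $[V^{I_v} \xrightarrow{\operatorname{Fr}_v - 1} V^{I_v}]$ consists of free modules, so termwise tensoring with $S$ computes $(-)\otimes^{\mathbbl{L}}_R S$ and yields the analogous representative for $R\Gamma_{\mathrm{ur}}(G_v, V\otimes_R S)$. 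I do not anticipate a genuine obstacle; the only delicate point is recognizing that the freeness hypothesis on $H^1(I_v, V)$ is precisely what is needed to split both spliced sequences, which in turn is what guarantees survival of the identification $V^{I_v}\otimes_R S \simeq (V\otimes_R S)^{I_v}$ under arbitrary base change.
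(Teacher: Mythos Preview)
Your proposal is correct and follows essentially the same approach as the paper: both reduce to the two-term complex $[V^{I_v^{(p)}} \xrightarrow{t-1} V^{I_v^{(p)}}]$ from Lemma~\ref{lemma : 2.9} and exploit the freeness of $H^1(I_v,V)$ to split the relevant sequences. Your treatment of the splicing into two short exact sequences is in fact more explicit than the paper's terse argument for (1), and your handling of (3) via the split sequences combined with Lemma~\ref{lemma : 2.9}(3) is equivalent to (and arguably cleaner than) the paper's route through a zero-differential perfect representative of $R\Gamma(I_v,V)$.
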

	\begin{proof}
		---
		\begin{enumerate}[(1)]
			\item From Lemma \ref{lemma : 2.9} we get $H^1(I_v, V) = V^{I_v^{(p)}}/ (t-1)V^{I_v^{(p)}} $ which is free by assumption. Hence $ V^{I_v} = (V^{I_v^{(p)}})^{t=1} $ is also free.
			\item Follows from (1).
			\item Since $ R\Gamma(I_v,V) $ is a perfect complex represented by
			\[\cdots\rightarrow 0 \rightarrow V^{I_v^{(p)}} \xrightarrow{0} H^1(I_v,V) \rightarrow 0 \cdots \]
			Lemma \ref{lemma : 2.9}.3 implies that
			\[\cdots\rightarrow 0 \rightarrow V^{I_v^{(p)}} \otimes_R S \xrightarrow{0} H^1(I_v,V)\otimes_R S \rightarrow 0 \cdots \] Therefore we have   $H^0(I_v,V\otimes_R S) = (V \otimes_R S)^{I_v} = V^{I_v} \otimes_R S$ 
		\end{enumerate}
	\end{proof}

	\subsubsection{Tamagawa numbers in families}
	Now we concentrate on the Tamagawa factor of Hida families of Hilbert cuspforms. We consider the case when $ \varphi $ is a $ p $-adic representation $ \rho_\g $ associated with a Hida family $ \g $ of Hilbert modular forms over a real quadratic number field $F$. Suppose $ v \nmid p\infty$ be any place of $ F $, for simplicity, lets denote $ \rho_\g|_{v} $ be the composition of following homomorphisms
	$$ G_{F_v} \hookrightarrow G_F \twoheadrightarrow G_{F,\Sigma} \xrightarrow{\rho_\g} \operatorname{Aut}_{\R_{\g}}(T_\g)$$
	For any arithmetic prime $ P $, let $ T_\g(P)\coloneqq T_\g \otimes_{R_\g} S_P $, where $ S_P $ is the integral closure of $ R_\g/P $. The next Proposition follows from \cite[ \textsection12.4.4, \textsection12.4.5, \textsection12.7.14]{Nek06}.
	\begin{prop}\label{prop:4.16}
		---
		\begin{enumerate}[(1)]
			\item If $\ord_{v}(\frak{N}_\g)=1 $,then $ \rho_\g(I_{v}) $ is infinite.
			\item If $ \#(\rho_\g(I_{v}))= \infty $, then there is a short exact sequence of $ \R_{\g}[G_{F_{v}}] $-modules
			$$ 0 \rightarrow \R_{\g}(1) \otimes \mu \rightarrow T_\g \rightarrow \R_{\g} \otimes \mu \rightarrow 0 $$
			where $ \mu \colon G_{F_v} \rightarrow \{\pm 1\} $ is an unramified quadratic character.
		\end{enumerate}
	\end{prop}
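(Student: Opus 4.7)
The plan is to reduce both assertions to local-global compatibility for Hilbert modular forms together with a specialization-and-deformation argument, following the blueprint of Nekov\'{a}\v{r} in the cited sections of \cite{Nek06}. Let $P \in \W_{\mrm{cl}}(\R_\g)$ be an arithmetic point and set $S_P$ to be the integral closure of $\R_\g/P$ as in \textsection\ref{2.2}; let $\pi_{\g_P,v}$ denote the local component at $v$ of the automorphic representation attached to the Hilbert eigenform $\g_P$. The central ingredient is Saito's theorem (the Hilbert analogue of Carayol's local-global compatibility): under the hypothesis $\ref{Irr}$ we have an isomorphism of $S_P\bbox{G_{F_v}}$-modules between $T_\g(P) := T_\g \otimes_{\R_\g} S_P$ and the representation obtained from $\pi_{\g_P,v}$ via the local Langlands correspondence (Deligne normalization). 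Two general facts will then be exploited: the conductor of $\pi_{\g_P,v}$ equals $\ord_v(\frak{N}_{\g_P})$ and does not drop on a Zariski-dense set of arithmetic points, and inertia images behave well under specialization in the sense that $\rho_\g(I_v)$ surjects onto $\rho_{\g_P}(I_v)$ through $T_\g \twoheadrightarrow T_\g(P)$.

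For (1), choose a generic arithmetic point $P$ at which $\ord_v(\frak{N}_{\g_P}) = \ord_v(\frak{N}_\g) = 1$; such points are abundant since the conductor can only drop on a proper closed subset of $\W_{\mrm{cl}}(\R_\g)$. The conductor-exponent formula for $\GL{2}$ then forces $\pi_{\g_P,v}$ to be either an unramified twist of the Steinberg representation $\mathrm{St}(\mu_P)$ with $\mu_P$ an unramified quadratic character, or a principal series $\pi(\chi_1,\chi_2)$ in which exactly one character is tamely ramified of conductor $v$. In the Steinberg case, the local representation $\rho_{\g_P}|_{G_{F_v}}$ is a non-split extension whose inertia acts by a nontrivial unipotent, so $\rho_{\g_P}(I_v)$ is infinite (topologically isomorphic to $\Z_p$). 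In the tamely ramified principal series case one checks directly using the Hida-theoretic continuity of the character $\chi_i$ that the pro-$p$ quotient of tame inertia gives rise to an infinite image in $S_P^\times$, because within a nearly ordinary family this character is of infinite order (otherwise the conductor could be lowered by a twist, contradicting genericity). Either way $\rho_{\g_P}(I_v)$ is infinite, and hence so is $\rho_\g(I_v)$ by the surjection above.

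For (2), assume $\rho_\g(I_v)$ is infinite. By specializing as in (1) and reversing the argument, a finite image of inertia at a generic specialization would force conductor zero at $v$ and contradict the infinitude upstairs; thus for a generic $P$ we must have $\pi_{\g_P,v}$ of Steinberg type, giving a $G_{F_v}$-stable filtration $0 \to S_P(1)\otimes\mu_P \to T_\g(P) \to S_P\otimes\mu_P \to 0$ with $\mu_P$ unramified quadratic. The task is now to show this filtration is the specialization of a filtration on $T_\g$. One invokes the deformation-theoretic input in \cite[\textsection 12.7.14]{Nek06}: the submodule $\scrF_v^+ T_\g := \ker(T_\g \to T_\g^{I_v} \otimes \mu)$, formed using the unique unramified quadratic character $\mu\colon G_{F_v} \to \{\pm 1\}$ that lifts $\mu_P$ through the natural map $\R_\g \twoheadrightarrow S_P$ (note $\{\pm 1\} \hookrightarrow \R_\g^\times$, so $\mu_P$ lifts canonically), is shown to be $\R_\g$-free of rank one and $G_{F_v}$-stable; freeness follows from \ref{Irr}, \ref{DIST} and flatness of $T_\g$ over $\R_\g$, together with the fact that the Nakayama-type quotient is the known rank-one $\mu_P$-line. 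The quotient $\scrF_v^- T_\g$ is then automatically free of rank one and unramified, and identifying the Galois action on $\scrF_v^+ T_\g$ via the determinant relation (recall $\bigwedge^2 \rho_\g = \chi_{\mrm{cycl},F}^{-1}$ since $\g$ has trivial central character and, in our setup, $\mbbl{x}_\g = \id$) pins it down to $\R_\g(1)\otimes\mu$.

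The main obstacle is precisely the lifting step in (2): even when the Steinberg-type filtration is known at every classical specialization, one must show that the rank-one piece $\scrF_v^+ T_\g$ is free over $\R_\g$ and that the quadratic character $\mu_P$ is the restriction of a single $\R_\g$-valued character $\mu$. This requires a Nakayama argument applied not over the full local ring $\R_\g$ but along a dense set of height-one primes, together with the rigidity that $\{\pm 1\}$ has only the trivial deformation inside $\R_\g^\times$; this is exactly the content packaged in \cite[\textsection 12.4.4, \textsection 12.4.5, \textsection 12.7.14]{Nek06}, which we invoke to conclude.
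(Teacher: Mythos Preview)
Your overall strategy---reduce to a classical specialization via Saito's local--global compatibility, classify the local component by its conductor exponent, then lift the resulting filtration using the deformation-theoretic input in \cite{Nek06}---is exactly the argument packaged in the sections the paper cites, so the approach is the right one. However, your treatment of part (1) contains a real gap.

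In the case where $\pi_{\g_P,v}$ is a principal series $\pi(\chi_1,\chi_2)$ with exactly one $\chi_i$ tamely ramified of conductor exponent $1$, the inertia image $\rho_{\g_P}(I_v)$ is \emph{finite}, not infinite: a conductor-$1$ character of $G_{F_v}$ restricted to $I_v$ factors through the finite group of units of the residue field, so $\rho_{\g_P}|_{I_v}$ has image contained in a finite abelian group. Your claim that ``the pro-$p$ quotient of tame inertia gives rise to an infinite image in $S_P^\times$'' confuses the behaviour at $v \nmid p$ with the weight-variation phenomenon at primes above $p$; at $v \nmid p$ the tame local data is rigid in the family and no such infinite-order deformation of the inertial character occurs. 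The twisting argument you sketch does not lower the conductor either: twisting $\pi(\chi_1,\chi_2)$ by any character still leaves one factor of conductor $1$.

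The correct way to eliminate this case is to note that if $\chi_1$ has conductor $1$ and $\chi_2$ is unramified, then the central character $\chi_1\chi_2$ is ramified at $v$. In the paper's running setup (cf.\ \textsection\ref{subsection:Decomposition of Big Galois Modules}) the family $\g$ has trivial central character, hence unramified nebentypus at $v$, and the conductor-$1$ principal series case is excluded outright. Since supercuspidals have conductor exponent at least $2$, this forces $\pi_{\g_P,v}$ to be an unramified twist of the Steinberg representation, whence $\rho_{\g_P}(I_v)$ is infinite and (1) follows; this is the substance of \cite[\textsection 12.7.14.1]{Nek06} in the elliptic case. Your argument for (2) is sound once (1) is in hand, and the lifting of the filtration together with the rigidity of the quadratic character $\mu$ is precisely what the cited passages of \cite{Nek06} provide.
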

	
	Suppose $ \lambda \colon G_{F,\Sigma} \rightarrow R_\g^{\times} $ is a quadratic character. Then we have
	\begin{cor}
		\label{cor:4.17}
		Let $ \#(\rho_\g(I_{v}))= \infty $ and $ \g  $ has trivial central character. Then the following are equivalent.
		\begin{enumerate}[(1)]
			\item $ H^1(I_{v}, T_\g \widehat{\otimes}_{R_\g} T_\g (\lambda)) $ is free.
			\item $ H^1(I_{v}, T_\g(P) \widehat{\otimes}_{S_{P}} T_\g(\lambda)(P)) $ is free for some arithmetic prime $ P $.
			\item The Tamagawa factor for $ T_\g(P) \widehat{\otimes}_{S_{P}} T_\g(\lambda)(P) $ equals 0 for some arithmetic prime $ P $.
			\item The Tamagawa factor for $ T_\g(\lambda)(P) $ equals 0 for some arithmetic prime $ P $.
		\end{enumerate}
		Moreover, whenever these equivalent conditions are satisfied, then $ H^1(I_{\omega}, T_\g) $ is free for any finite index subgroup $ I_\omega < I_v $ with $ p \nmid [I_v \colon I_\omega] $.
	\end{cor}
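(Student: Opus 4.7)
The plan is to establish the four-way equivalence by reducing every condition to a statement about the inertia action, and then invoking the structural decomposition of $T_\g\mid_{G_{F_v}}$ supplied by Proposition~\ref{prop:4.16}(2).

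First I would dispose of the equivalence (2)$\iff$(3). Since $S_P$ is a complete discrete valuation ring with finite residue field, a finitely generated $S_P$-module is free if and only if it is torsion-free. Combined with the standard identification (\cite[\textsection I.4.2.2]{FPR94}) of the Tamagawa factor at $v$ with the length of the $\mrm{Fr}_v=1$-eigenspace of $H^1(I_v,-)_{S_P\mbox{-}\mrm{tors}}$, this gives (2)$\iff$(3) once we note that the $S_P$-torsion submodule is $\mrm{Fr}_v$-stable.

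Next, for (1)$\iff$(2), the forward direction is immediate from Proposition~\ref{prop_unramified_complex}(3) applied to the specialization map $R_\g\to S_P$, which gives
\[
H^1(I_v,T_\g \hotimes T_\g(\lambda)) \otimes_{R_\g} S_P \xrightarrow{\ \sim\ } H^1(I_v, T_\g(P) \hotimes T_\g(\lambda)(P)).
\]
For the reverse direction, by Lemma~\ref{lemma : 2.9}, $H^1(I_v,M) = M^{I_v^{(p)}}/(t-1)M^{I_v^{(p)}}$ for $M := T_\g\hotimes T_\g(\lambda)$. Since $M^{I_v^{(p)}}$ is a free $R_\g$-module (Lemma~\ref{lemma : 2.9}(1)) and the cokernel is free at some arithmetic prime $P$ (by hypothesis), the torsion submodule of $H^1(I_v,M)$ is supported at primes disjoint from a non-empty Zariski open set of $\operatorname{Spec}(R_\g)$. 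A Nakayama-type argument combined with the fact that $R_\g$ is a regular local ring then forces the torsion to vanish, yielding (1).

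The main content is the equivalence (3)$\iff$(4). Using Proposition~\ref{prop:4.16}(2), we have the short exact sequence of $R_\g[G_{F_v}]$-modules
\[
0 \to R_\g(1)(\mu) \to T_\g \to R_\g(\mu) \to 0,
\]
with $\mu$ unramified quadratic. Tensoring with $T_\g(\lambda)$ gives a two-step filtration of $T_\g\hotimes T_\g(\lambda)$ whose graded pieces are $T_\g(\lambda\mu)(1)$ and $T_\g(\lambda\mu)$. Since $v \nmid p$, the Tate twist does not affect inertia action and $\mu\mid_{I_v}$ is trivial, so as $I_v$-modules each graded piece is isomorphic to $T_\g(\lambda)\mid_{I_v}$ (up to unramified twist, which is irrelevant for Tamagawa purposes). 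Now the long exact sequences in $H^i(I_v,-)$ attached to this filtration, together with the fact that the quadratic character $\mu$ and its specialization are valued in $\{\pm 1\}\subset S_P^\times$ (with $p\neq 2$), ensure that freeness of $H^1(I_v,T_\g(\lambda)(P))$ is equivalent to freeness of $H^1(I_v, T_\g(P)\hotimes T_\g(\lambda)(P))$. Combined with (2)$\iff$(3), this yields (3)$\iff$(4).

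For the concluding statement, suppose the equivalent conditions hold and let $I_\omega \le I_v$ be of finite index prime to $p$. Since $I_v/I_v^{(p)}\cong \Z_p$ has no non-trivial finite quotients of order prime to $p$, we have $I_v^{(p)}\subseteq I_\omega$, so $I_\omega/I_v^{(p)}$ is a finite subgroup of $\Z_p$ of order prime to $p$, hence trivial; thus $I_\omega = I_v^{(p)}$ (or we argue via Hochschild--Serre with prime-to-$p$ quotient that $H^1(I_\omega,T_\g)$ is a direct summand of $H^1(I_v^{(p)},T_\g) = T_\g^{I_v^{(p)}}$, which is $R_\g$-free by Lemma~\ref{lemma : 2.9}(1)).

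The principal obstacle I anticipate is Step~3: carefully tracking the non-semisimple monodromy through the tensor product. The freeness of $H^1(I_v,-)$ is sensitive not only to the graded pieces but also to the extension class, so the argument must verify that the boundary maps in the long exact sequences are compatible with torsion, which requires using that the unramified character $\mu$ becomes a non-trivial obstruction only in ways that can be controlled by the assumption $p\ge 5$ (ensuring $\mu$-isotypic projectors exist integrally).
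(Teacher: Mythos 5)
Your high-level strategy is reasonable, but there are three genuine gaps, and the most serious one is in a step you treat as routine.

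\textbf{The $(2)\iff(3)$ reduction is not ``by definition''.} You assert that over the DVR $S_P$, freeness of $H^1(I_v,-)$ is equivalent to the Tamagawa factor vanishing. Only one direction is automatic: freeness means no torsion, hence no $\mathrm{Fr}_v$-fixed torsion, hence Tamagawa factor zero. The converse is false a priori: the Tamagawa factor counts only the $\mathrm{Fr}_v=1$-eigenspace of the torsion, which can be trivial even when the torsion is not. The paper does not claim $(3)\Rightarrow(2)$ directly; it closes the cycle by proving $(3)\Rightarrow(1)$ via an explicit computation showing that in this situation the torsion, whenever nonzero, automatically contains a $\mathrm{Fr}_v$-fixed vector. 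Concretely, choosing a basis in which $\rho_\g(t)$ is unipotent upper-triangular with off-diagonal entry $a$, the paper identifies $H^1(I_v, T_\g(P)\hotimes_{S_P} T_\g(\lambda)(P))_{\mathrm{tors}}$ with $(C\otimes S_P/aS_P)(-1)$ for an explicit rank-$2$ $G_{F_v}$-module $C$, uses the extension $0\to R_\g\hotimes R_\g(1)\to C(-1)\to R_\g\hotimes R_\g\to 0$ to show $\det(\mathrm{Fr}_v-1)=0$ on that torsion, and deduces that the Tamagawa factor vanishes iff $S_P/aS_P=0$, i.e.\ iff $a\in R_\g^\times$. Without some such argument your $(3)\Rightarrow(2)$ is unsupported.

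\textbf{$(2)\Rightarrow(1)$ is not a Nakayama argument.} You claim that freeness of $H^1(I_v,M)\otimes_{R_\g}S_P$ at one arithmetic prime, plus regularity of $R_\g$, forces the $R_\g$-torsion of $H^1(I_v,M)$ to vanish. This is not correct as stated: a finitely generated $R_\g$-module can have nonzero torsion supported away from a given prime while becoming free after specialization there. (Think of $R_\g/\mathfrak q$ for a height-one prime $\mathfrak q\ne P$.) The paper avoids this entirely: once $(3)$ is translated into $a\in R_\g^\times$, the \emph{global} module $H^1(I_v, T_\g\hotimes T_\g(\lambda))$ is computed directly to be free of rank $2$ over $R_\g$, giving $(3)\Rightarrow(1)$ without any limiting argument from one specialization.

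\textbf{$(3)\iff(4)$ is the crux and you leave it open.} Your filtration approach, passing through $0\to R_\g(1)\otimes\mu\to T_\g\to R_\g\otimes\mu\to 0$ tensored with $T_\g(\lambda)$, is a reasonable starting point, and you are right that the nontrivial extension (not the graded pieces) carries all the content. But you stop there and flag it as an anticipated obstacle. The paper resolves the obstacle by the same matrix computation as above: both the Tamagawa factor for $T_\g(\lambda)(P)$ and for the tensor product are governed by the single parameter $a$ coming from the unipotent monodromy of $\rho_\g|_{I_v}$, and both vanish iff $a\in R_\g^\times$. Note also that the hypothesis $\#\rho_\g(I_v)=\infty$, via Proposition~\ref{prop:4.16}(1), is what guarantees $a\notin P$ for every arithmetic prime $P$; this is essential for the identification $H^1(I_v,-)_{\mathrm{tors}}\cong (C\otimes S_P/aS_P)(-1)$, and your outline does not invoke it.

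Finally, a minor point on the ``Moreover'' clause: your claim that $I_v^{(p)}\subseteq I_\omega$ when $[I_v:I_\omega]$ is prime to $p$ is not right in general; what is true is that the image of $I_\omega$ in $I_v/I_v^{(p)}\cong\Z_p$ is all of $\Z_p$ (since $\Z_p$ has no proper open subgroups of prime-to-$p$ index), so $I_\omega\cdot I_v^{(p)}=I_v$. Your Hochschild--Serre fallback is the sounder route, but you should phrase it via a transfer/corestriction argument on the prime-to-$p$ quotient $I_v/I_\omega$, not via an inclusion that fails.
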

	\begin{proof}
		--- \newline	
		$ (1) \Rightarrow (2) $ by Lemma \ref{lemma : 2.9}. \newline
		$ (2) \Rightarrow (3)  $ by definition. \newline	
		From the previous proposition, we have a short exact sequence of $ G_{F_v} $-stable modules.
		$$ 0 \rightarrow \R_{\g}(1) \otimes \mu \rightarrow T_\g \rightarrow \R_{\g} \otimes \mu \rightarrow 0 $$
		\textit{Claim I:} (3) and (4) are equivalent.  
		Let choose a basis $ \{e_1, e_2\} $ of $ T_\g $ such that the matrix of 
		$$ \rho_\g|_{v}(t) =
		\begin{pmatrix}
			1 &a\\
			&1	
		\end{pmatrix} $$
		for some non zero $ a \in \R_{\g} $. Similarly we have basis $ \{e'_1, e'_2\} $ of $ T_\g(\lambda) $. Then 
		\begin{equation}
			\rho_{\g \otimes \g(\lambda)}|_{v}(t) = \lambda(t)
			\begin{pmatrix}
				1 &a &a &a^2\\
				&1 &  &2a\\
				&  &1 &2a\\
				&  &  &1
			\end{pmatrix}
		\end{equation} 
		with respect to the basis $ \{e_1 \otimes e_1',e_1 \otimes e_2',e_2 \otimes e_1',e_2 \otimes e_2' \} $. Let us define the $ G_{F_v} $-module 
		$$ C\coloneqq \operatorname{Span}_{\R_{\g} \widehat{\otimes} \R_{\g}}\{e_1 \otimes e_1',e_1 \otimes e_2'+e_2 \otimes e_1'\} $$
		Let us take an arithmetic prime $P$. Since $\ord_{v} (\frak{N}_\g ) = 1$, the image of composition
		$$ I_{v} \rightarrow \GL{2}(\R_{\g}) \rightarrow \GL{2}(\R_{\g}/P) \subset \GL{2}(S_P) $$
		has infinite cardinality (see also \cite[\textsection 12.4.4.2,\textsection 12.7.14.1]{Nek06}). Hence $ a \notin P $ and 
		$$ H^1(I_{v}, T_\g(P) \widehat{\otimes}_{S_{P}} T_\g(\lambda)(P))_{\mathrm{tors}} = (C \otimes S_{P}/aS_{P})(-1) $$
		The exact sequence of $ Gal(F_{v}^{ur}/F_{v}) $-modules
		$$ 0 \rightarrow \R_{\g} \widehat{\otimes} \R_{\g}(1) \rightarrow C(-1 ) \rightarrow \R_{\g} \widehat{\otimes} \R_{\g} \rightarrow 0 $$ gives 
		$$ \det((C \otimes S_{P}/aS_{P})(-1) \xrightarrow{Fr_{v}-1} (C \otimes S_{P}/aS_{P})(-1)) =0 $$
		Since $S_{P}/aS_{P}$ is a finite ring (as $a \notin P$), the condition (3) is equivalent to the requirement that $a \in \R_\g^{\times}$. \newline
		Similarly, we can also show that condition (4) is also equivalent to $a \in \R_\g^{\times}$ which shows that $(3) \Longleftrightarrow (4)$. \newline
		Condition (3) gives that $a \in \R_\g^{\times}$ which implies $H^1(I_{v}, T_\g \widehat{\otimes}_{R_\g} T_\g (\lambda)) $ is free of rank 2. Hence $(3) \Rightarrow (1)$.
	\end{proof}
	Let $ \f $ be a Hida family of Hilbert modular forms over $ F $. The following lemma follows from \cite[\textsection 12.4.4, Proposition 12.7.14.2]{Nek06},
	\begin{lemma}
		\label{lemma : 2.13}
		Let $ 1 < \rho_{\f}(I_{v}) < \infty$, then for any arithmetic prime $ P $, one of the following holds.
		\begin{enumerate}
			\item There is a (ramiﬁed) character $\mu \colon G_{F_v} \rightarrow S_P$ such that $T_\f(P)|_{I_v} =  \mu \oplus \mu^{-1}$.
			\item $T_\f(P )$ is monomial, namely, $T_\f(P ) = \Ind^{G_{E_\omega}}_{G_{F_v}}(\mu)$, where $E_{\omega} /F_v$ is a ramiﬁed quadratic extension with absolute Galois group $ G_{E_\omega} $, and $ \mu \colon G_{E_\omega} \rightarrow S_{P}^{\times} $ is a ramified character.
			\item $v \mid 2$ and there exists a Galois extension of $F_v$ with Galois group isomorphic to $A_3$ or $S_3$ , over which $ T_\f(P ) $ becomes monomial.
		\end{enumerate} 
	\end{lemma}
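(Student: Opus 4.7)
The plan is to deduce the three cases from the local Langlands correspondence applied to the arithmetic specialization $\f_P$, combined with the classical classification of admissible representations of $\GL{2}(F_v)$ with finite nontrivial inertial image. Since $P$ is an arithmetic point, $\f_P$ is a nearly ordinary Hilbert eigenform and the local Galois representation $\rho_{\f_P}|_{G_{F_v}}$ matches, via local Langlands, the Weil--Deligne parameter attached to the local component $\pi_{\f_P,v}$ of the associated automorphic representation. The hypothesis $1 < \rho_{\f}(I_v) < \infty$ forces $\rho_{\f_P}(I_v)$ to be a finite nontrivial group, and in particular the monodromy operator $N$ of the Weil--Deligne representation vanishes after specialization. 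This rules out the Steinberg/special case, leaving either a ramified principal series or a supercuspidal for $\pi_{\f_P,v}$.

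I would then split the argument into these two cases. In the ramified principal series situation, $\pi_{\f_P,v} = \pi(\chi_1,\chi_2)$ with at least one $\chi_i$ ramified, and the corresponding Galois representation restricts on inertia to $\chi_1|_{I_v} \oplus \chi_2|_{I_v}$. Tracking the determinantal character of $\rho_{\f_P}$, which is prescribed by the central character of $\f_P$ together with the cyclotomic character via \eqref{lambda_adic_char_poly}, forces these two inertial characters to be mutually inverse on $I_v$, yielding case (1) with $\mu = \chi_1|_{I_v}$. In the supercuspidal case, the image of inertia is nonabelian, and the standard classification of supercuspidals of $\GL{2}(F_v)$ separates them into the \emph{dihedral} representations, automorphically induced from a character $\mu$ of a ramified quadratic extension $E_\omega / F_v$ and corresponding under local Langlands to a monomial $\Ind_{G_{E_\omega}}^{G_{F_v}}(\mu)$ (case (2)), and the \emph{exceptional} (tetrahedral or octahedral) supercuspidals, which for $\GL{2}$ can occur only when $v \mid 2$. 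In the exceptional situation, the projective image $A_4$ or $S_4$ has a normal Klein four-subgroup whose fixed field is Galois over $F_v$ with group $A_3$ or $S_3$, and the representation becomes monomial over this extension, producing case (3); the icosahedral type $A_5$ is excluded because it admits no nontrivial solvable quotient and thus cannot be realized as the projective image of a local Galois representation.

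The principal obstacle is the careful bookkeeping needed in case (1), where pinning down the precise form $\mu \oplus \mu^{-1}$ relies on the Hida-theoretic fact that the central character of $\f$ is prescribed in our setup together with the residual disjointness hypothesis \ref{DIST}, which guarantees that the two inertial characters are genuinely distinguishable along the family. The remaining classificatory input is standard local Langlands for $\GL{2}$ together with the restriction that exceptional supercuspidals occur only in residue characteristic $2$; this is precisely the content of \cite[\textsection 12.4.4, Proposition 12.7.14.2]{Nek06} invoked in the statement, and I would conclude by verifying that the cited analysis of Nekov\'a\v{r} applies to the $p$-adic families considered here with the hypotheses \ref{Irr} and \ref{DIST} already in force.
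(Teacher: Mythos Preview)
Your proposal is correct and follows the standard route via the local Langlands classification for $\GL{2}$; this is precisely the content of \cite[\S 12.4.4, Proposition 12.7.14.2]{Nek06}, and the paper does not give an independent proof but simply cites that reference. One small remark: you invoke \ref{DIST} in case (1) to distinguish the two inertial characters, but this is unnecessary---the decomposition $\mu \oplus \mu^{-1}$ on $I_v$ is forced purely by the shape of the determinant (which is unramified at $v \nmid p$ once one accounts for the cyclotomic twist and the tame central character), independently of any family-level hypothesis.
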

	\begin{cor}
		\label{cor : 2.14}
		Let $ 1 < \rho_{\f}(I_{v}) < \infty$. Then the following are equivalent
		\begin{enumerate}
			\item The $ \R_{\f} $-module $ H^1(I_v, T_\f) $ is free.
			\item The $ S_P $-module $ H^1(I_v, T_\f(P)) $ is free for some arithmetic prime $ P $.
			\item $ H^1(I_v, T_\f(P)) =0 $ for some arithmetic prime $ P $.
			\item $ H^1(I_v, T_\f) =0$
		\end{enumerate}
	\end{cor}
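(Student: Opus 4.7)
My plan is to follow the template of \cref{cor:4.17}, with \cref{lemma : 2.13} playing the role of \cref{prop:4.16}, while systematically exploiting the perfect two-term complex furnished by \cref{lemma : 2.9}. Set $M := T_\f^{I_v^{(p)}}$. Since $\rho_\f(I_v)$ is finite and $I_v^{(p)}$ is pro-prime-to-$p$, the image $\rho_\f(I_v^{(p)})$ is a finite group of order coprime to $p$; the associated averaging idempotent shows that $M$ is a free $\R_\f$-module and a direct summand of $T_\f$. By \cref{lemma : 2.9}, $R\Gamma(I_v, T_\f)$ is quasi-isomorphic to the complex $[M \xrightarrow{t-1} M]$ concentrated in degrees $0$ and $1$, and the same description (with $M$ replaced by $M \otimes_{\R_\f} S_P$) applies after any arithmetic specialization.

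The key structural input is \cref{lemma : 2.13}: in each of its three cases, the restriction $T_\f(P)|_{I_v}$ is built from ramified characters, so that $T_\f(P)^{I_v} = 0$. Case (1) is immediate, since $\mu|_{I_v} \neq \mathbf{1}$. In case (2), the fact that $E_\omega/F_v$ is totally ramified of degree two gives $G_{F_v} = I_v \cdot G_{E_\omega}$, so Mackey's formula identifies $\bigl(\Ind^{G_{F_v}}_{G_{E_\omega}}\mu\bigr)^{I_v}$ with the $I_{E_\omega}$-invariants of $\mu$, which vanish because $\mu$ is ramified on $G_{E_\omega}$. Case (3) reduces to case (2) after passage to the intermediate $A_3$- or $S_3$-extension of $F_v$. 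Hence $H^0(I_v, T_\f(P)) = 0$ for every arithmetic prime $P$, which by \cref{lemma : 2.9}(3) translates to injectivity of $t-1$ on the free $S_P$-module $M \otimes_{\R_\f} S_P$. Since the determinant of $t-1$ on $M$ therefore has nonzero image in $S_P$ and $\R_\f$ is a domain, $t-1$ is already injective on $M$; in particular $H^0(I_v, T_\f) = 0$.

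With these preliminaries in hand, the four equivalences follow formally. The cokernel $H^1(I_v, T_\f) = M/(t-1)M$ is torsion because $t-1$ has full rank; a module that is simultaneously free and torsion over the Noetherian domain $\R_\f$ must vanish, yielding $(1) \Leftrightarrow (4)$, and the identical argument over $S_P$ gives $(2) \Leftrightarrow (3)$. The derived base change in \cref{lemma : 2.9}(3) immediately yields $(4) \Rightarrow (3)$. For $(3) \Rightarrow (4)$, the vanishing $H^1(I_v, T_\f(P)) = 0$ means $t-1$ is surjective on $M \otimes_{\R_\f} S_P$; reducing modulo the maximal ideal of $S_P$ (which covers $\mathfrak{m}_{\R_\f}$) and applying Nakayama's lemma shows $t-1$ is surjective, hence an isomorphism, on the finitely generated $\R_\f$-module $M$. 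I expect the main obstacle to be the rigorous verification of $T_\f(P)^{I_v} = 0$ in case (3) of \cref{lemma : 2.13}, where one must trace ramification through the auxiliary $A_3$- or $S_3$-extension; a Clifford-theoretic analysis of the relevant ramification indices should suffice.
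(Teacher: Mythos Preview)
Your proof is correct and follows essentially the same strategy as the paper: both use \cref{lemma : 2.13} to obtain $T_\f(P)^{I_v}=0$, combine this with the perfect two-term complex of \cref{lemma : 2.9} to see that the relevant $H^1$ is torsion, and then conclude via ``free and torsion implies zero'' together with base change. Your write-up is in fact more complete than the paper's, which simply asserts that $(1)\Leftrightarrow(2)$ and $(3)\Leftrightarrow(4)$ ``follow from \cref{lemma : 2.9}''; you supply the Nakayama step for $(3)\Rightarrow(4)$ and the determinant argument lifting $T_\f(P)^{I_v}=0$ to $T_\f^{I_v}=0$, both of which the paper leaves implicit.
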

	\begin{proof}
		From the previous lemma, we have $ T_\f(P)^{I_v}= 0 $. Therefore, by the Lemma \ref{lemma : 2.9}, $ H^1(I_v, T_\f(P)) $ is a torsion the $ S_P $-module. This shows that the conditions (2) and (3) are equivalent. (1) $\Leftrightarrow$ (2) and (3) $\Leftrightarrow$ (4) follow from Lemma \ref{lemma : 2.9}. Lastly (4) $ \Rightarrow $ (1) is trivial.
	\end{proof}
	\begin{cor}
		Let $ 1 < \rho_{\f}(I_{v}) < \infty$. If the $ \R_{\f} $-module $ H^1(I_v, T_\f) $ is free then $ T_\f^{I_v^{(p)}}= 0 $
	\end{cor}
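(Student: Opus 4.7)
The plan is to combine \cref{cor : 2.14} and \cref{lemma : 2.9} with a Nakayama-type argument at the residue field. By the equivalence $(1) \Leftrightarrow (4)$ in \cref{cor : 2.14}, the freeness of $H^1(I_v, T_\f)$ automatically upgrades to $H^1(I_v, T_\f) = 0$. On the other hand, \cref{lemma : 2.9}(2) identifies this $H^1$ with the cokernel of the endomorphism $(t-1)$ acting on $T_\f^{I_v^{(p)}}$, a finitely generated (indeed free, by \cref{lemma : 2.9}(1)) $\R_\f$-module. Therefore $(t-1)\colon T_\f^{I_v^{(p)}} \twoheadrightarrow T_\f^{I_v^{(p)}}$ is surjective, and the task reduces to promoting this surjectivity to vanishing.

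For this, I would reduce modulo the maximal ideal $\mathfrak{m}_{\R_\f}$. Let $\kappa \coloneqq \R_\f/\mathfrak{m}_{\R_\f}$, a finite field of characteristic $p$, and set $\bar{M} \coloneqq T_\f^{I_v^{(p)}} \otimes_{\R_\f} \kappa$. Right exactness of the tensor product preserves surjectivity, so $(t-1)$ is still surjective on the finite-dimensional $\kappa$-vector space $\bar{M}$. On the other hand, the continuous action of the pro-$p$ group $I_v/I_v^{(p)} \cong \Zp$ on the finite set $\bar{M}$ factors through a finite cyclic $p$-group; consequently $t^{p^N} = 1$ on $\bar{M}$ for some $N \ge 0$, and since $\mathrm{char}(\kappa) = p$ we get $(t-1)^{p^N} = t^{p^N} - 1 = 0$ on $\bar{M}$. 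Thus $(t-1)$ is simultaneously nilpotent and surjective on $\bar{M}$, which forces $\bar{M} = 0$.

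Finally, Nakayama's lemma applied to the finitely generated $\R_\f$-module $T_\f^{I_v^{(p)}}$ yields $T_\f^{I_v^{(p)}} = 0$, as desired. The only genuinely delicate point in this plan is the compatibility between the pro-$p$ nature of the tame quotient $I_v/I_v^{(p)}$ and the residue characteristic of $\R_\f$, which is precisely what lets nilpotency and surjectivity collide at the residue field. Everything else is bookkeeping, since \cref{cor : 2.14} already provides the crucial passage from freeness to vanishing of $H^1$, and \cref{lemma : 2.9} supplies the perfect complex model controlling the cohomology.
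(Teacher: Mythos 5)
Your proof is correct, and it takes a genuinely different route from the paper's. The paper's argument passes to an arithmetic specialization $T_{\f}(P)$ and runs a case-by-case analysis through \cref{lemma : 2.13} (the diagonal case $\mu \oplus \mu^{-1}$, the monomial case, and the $A_3/S_3$ case), reaching in each the same contradiction that a $p$-power root of unity $\nu(t)$ cannot have $\nu(t)-1$ a unit once $H^1(I_v, T_{\f}(P)) = 0$. Your argument abstracts precisely that tension and runs it once, uniformly, at the residue field of $\R_{\f}$: freeness forces $H^1(I_v, T_{\f}) = 0$ via the equivalence $(1) \Leftrightarrow (4)$ of \cref{cor : 2.14}, hence $(t-1)$ is surjective on the free finite-rank module $T_{\f}^{I_v^{(p)}}$ by the perfect-complex model of \cref{lemma : 2.9}; meanwhile, modulo $\mathfrak{m}_{\R_{\f}}$ the topological generator $t$ of the pro-$p$ quotient $I_v/I_v^{(p)}$ acts through a finite $p$-group, so $(t-1)$ becomes nilpotent in residue characteristic $p$ by the freshman's dream, and a surjective nilpotent endomorphism of a finite-dimensional vector space is zero; Nakayama then lifts the vanishing. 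Your route avoids the structure theory of \cref{lemma : 2.13} entirely (it enters only indirectly through the citation of \cref{cor : 2.14}) and is shorter and more transparent about the source of the contradiction, whereas the paper's route is more explicit and records along the way the shape of $T_{\f}(P)|_{I_v}$. Both are valid; yours is arguably cleaner.
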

	\begin{proof}
		By Lemma \ref{lemma : 2.9}. its enough to show $ T_\f(P)^{I_v^{(p)}}= 0 $ for some arithmetic prime $P$. \\
		Lets consider the case $ (i) $ of Lemma \ref{lemma : 2.13}, so that there is a (ramiﬁed) character $ \mu \colon G_{F_v} \rightarrow S_{P}^{\times} $ with the property of $T_\f(P)|_{I_v} =  \mu \oplus \mu^{-1}$.\\
		Suppose that $ \mu(I_v)= \{1\} $. So by Corollary \ref{cor : 2.14}. 
		$$ 0 = H^1(I_v, T_\f(P)) = S_{P}/(\mu(t)-1)S_{P} $$
		So $ (\mu(t)-1) \in S_{P}^{\times} $. Since $ t \in I_{v}/I_{v}^{(p)} \cong \Z_p $ and $\#\rho_{\f}(I_{v})$ is finite, $ \mu(t) $ is a $p$-power root of unity. This contradicts the fact that $ (\mu(t)-1) \in S_{P}^{\times} $ and $ \mu(I_{v}^{(p)}) \ne 1 $, which implies $ T_\f(P)^{I_v^{(p)}}= 0 $. \\
		In the remaining two cases of lemma 2.13, $I_v^{(p)}$ acts non-trivially on $T_\f(P)$. So, if $ T_\f(P)^{I_v^{(p)}} \ne 0 $, then $ I_{v}/I_{v}^{(p)} $ on $ T_\f(P)^{I_v^{(p)}} $ by a non trivial character $ \nu $. Since $ H^1(I_v, T_\f(P)) =0 $ by the Corollary \ref{cor : 2.14}. we have $ (\nu(t)-1) \in (S_P)^\times $. But $ \nu(t) $ is a $ p $-power root of unity, this contradicts the assumption $ T_\f(P)^{I_v^{(p)}} \ne 0 $.
	\end{proof}
	\begin{prop}\cite[\textsection 12.4.10.3]{Nek06}
		Let $ f \in S_k(\frak{N}, \mathbb{C}) $ be a newform and $ L $ be the number field generated by all the Hecke eigenvalues of $ f $. Suppose $ 1< \#\rho_f(I_v)<\infty $ for a place $ v \nmid p\infty $ and $ p\nmid \#( H^0(G_L, \Q/\Z(2)) )$, then $ H^1(I_v,T(f)) =0$ and $ \zeta_p + \zeta_p^{-1} \notin L $.
	\end{prop}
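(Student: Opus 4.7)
The plan is to derive the two conclusions through a direct unpacking of the hypothesis, followed by a case-by-case analysis of the local representation $T(f)|_{I_v}$ via Lemma \ref{lemma : 2.13}.

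First I would translate the cohomological hypothesis into a statement about roots of unity in $L$. The $p$-primary component of $H^0(G_L,\mathbb{Q}/\mathbb{Z}(2))$ is $\varinjlim_n (\mu_{p^n}^{\otimes 2})^{G_L}$, and a generator of $\mu_{p^n}^{\otimes 2}$ is fixed by $\sigma \in G_L$ precisely when $\chi_{\mrm{cycl}}(\sigma)^2 \equiv 1 \pmod{p^n}$. Hence this group has non-trivial $p$-torsion iff $\chi_{\mrm{cycl}}(G_L) \bmod p \subseteq \{\pm 1\}$, and using the surjectivity $\chi_{\mrm{cycl}} \colon G_{\mathbb{Q}} \twoheadrightarrow (\mathbb{Z}/p)^{\times}$ with kernel $G_{\mathbb{Q}(\zeta_p)}$, this is in turn equivalent to $G_L \subseteq G_{\mathbb{Q}(\zeta_p+\zeta_p^{-1})}$, i.e.\ to $\mathbb{Q}(\zeta_p+\zeta_p^{-1}) \subseteq L$. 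The hypothesis $p \nmid \#H^0(G_L,\mathbb{Q}/\mathbb{Z}(2))$ therefore directly yields the second conclusion $\zeta_p + \zeta_p^{-1} \notin L$.

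For the first conclusion, Lemma \ref{lemma : 2.9}(2) identifies $H^1(I_v, T(f))$ with $T(f)^{I_v^{(p)}}/(t-1)T(f)^{I_v^{(p)}}$, so it suffices to prove $T(f)^{I_v^{(p)}} = 0$. I would invoke Lemma \ref{lemma : 2.13}, which describes $T(f)|_{G_{F_v}}$ in three possible shapes. In case~(1), $T(f)|_{I_v} \cong \mu \oplus \mu^{-1}$ for a ramified finite-order character $\mu$; if $\mu|_{I_v^{(p)}}$ were trivial then $\mu$ would factor through $I_v/I_v^{(p)} \cong \mathbb{Z}_p$ with non-trivial image, forcing $\mu(t)$ to be a non-trivial $p$-power root of unity. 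Because the local Weil--Deligne representation at $v$ is defined over $L$ (its trace and determinant being algebraic combinations of Hecke eigenvalues), $\mu(t) \in L^\times$, whence $\zeta_p \in L$, contradicting the first paragraph. The same applies to $\mu^{-1}$, so $T(f)^{I_v^{(p)}} = \mu^{I_v^{(p)}} \oplus (\mu^{-1})^{I_v^{(p)}} = 0$. Cases (2) and (3) are handled by an analogous Mackey-type argument after passage to $I_{E_\omega}$ (resp.\ the inertia of the relevant $A_3$ or $S_3$ extension): any non-zero $I_v^{(p)}$-fixed vector produces a non-trivial $p$-power root of unity in the inducing character $\mu'$, which via the $L$-rationality of $T(f)$ descends to $L$ and again contradicts $\zeta_p + \zeta_p^{-1} \notin L$.

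The main technical obstacle lies in cases (2) and (3): one must carefully track that a $p$-power root of unity appearing in $\mu'(\tilde t)$, for $\tilde t$ a topological generator of $I_{E_\omega}/I_{E_\omega}^{(p)}$, descends from the coefficient field of $\mu'$ down to $L$ itself. This uses the self-duality $\rho_f \cong \rho_f^{*}(\chi_{\mrm{cycl}}^{k-1}\psi_f)$ (with $\psi_f$ the central character, whose values lie in $L^\times$) together with an explicit Mackey decomposition of $\mathrm{Ind}_{G_{E_\omega}}^{G_{F_v}}(\mu')|_{I_v^{(p)}}$; the trace/determinant of $\rho_f$ forces the offending root of unity to appear in $L$, producing the required contradiction. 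Once all three cases are ruled out, $T(f)^{I_v^{(p)}} = 0$ and hence $H^1(I_v, T(f)) = 0$, completing the proof.
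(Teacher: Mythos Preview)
The paper does not supply its own proof of this proposition; it is recorded as a citation of Nekov\'a\v{r} \cite[\S 12.4.10.3]{Nek06}. So there is no argument in the paper to compare your proposal against, and I evaluate your sketch on its own merits.

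Your derivation of $\zeta_p+\zeta_p^{-1}\notin L$ from the hypothesis on $H^0(G_L,\Q/\Z(2))$ is clean and correct. The overall strategy for the vanishing of $H^1(I_v,T(f))$ --- reduce to $T(f)^{I_v^{(p)}}=0$ and run through the trichotomy of Lemma~\ref{lemma : 2.13} --- is also the right shape.

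There is, however, a genuine gap at the rationality step. You assert that ``the local Weil--Deligne representation at $v$ is defined over $L$ (its trace and determinant being algebraic combinations of Hecke eigenvalues), $\mu(t)\in L^\times$.'' The parenthetical justification does not do what you need: Hecke eigenvalues control only traces of \emph{Frobenii}, and Chebotarev density plus continuity then forces traces of arbitrary elements to lie in the $p$-adic completion $L_\lambda$, not in $L$ (indeed $\mathcal{O}_L$ is dense, not closed, in $\mathcal{O}_{L_\lambda}$). So from your stated reasoning you only obtain $\mu(t)+\mu(t)^{-1}\in L_\lambda$, and it is entirely possible for $L_\lambda$ to contain $p$-power roots of unity while $L$ does not contain $\zeta_p+\zeta_p^{-1}$. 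The same issue recurs in your sketch for cases~(2) and~(3), where you again appeal to ``trace/determinant of $\rho_f$'' to descend the offending root of unity to $L$.

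The statement you want --- that the Weil--Deligne representation attached to $f$ at $v$ is genuinely defined over the Hecke field $L$ (so that characteristic polynomials of inertia elements, not just Frobenii, have coefficients in $L$) --- is true, but it is a separate input: it follows from the $L$-rationality of the local component $\pi_v$ together with compatibility of the local Langlands correspondence for $\GL{2}$ with automorphisms of the coefficient field. Once you invoke this properly you get $\mu(t)+\mu(t)^{-1}\in L$ (not $\mu(t)\in L$, which you do not need), hence $\zeta_p+\zeta_p^{-1}\in L$, and the contradiction goes through. You should replace the ``trace and determinant are Hecke eigenvalues'' heuristic with an explicit appeal to this $L$-rationality of $\mathrm{WD}(\rho_f|_{G_{F_v}})$, and carry the same correction into the induced/monomial cases.
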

	\begin{prop}
		Let $ 1< \#\rho_{\f}(I_v)<\infty $, $ p\nmid v-1 $ and $ v\ne 2 $. Suppose for some arithmetic prime $ P $, $ T_{\f}(P) $ is supercuspidal then $ p \nmid \#\rho_{\f}(I_v) $ and $ H^1(I_v,T_{\f}) =0 $.
	\end{prop}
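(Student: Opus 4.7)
The plan is to deduce both conclusions from the corresponding statements for the specialization $T_\f(P)$, using the structure theorem Lemma~\ref{lemma : 2.13}, Mackey's formula for inertia invariants, and Corollary~\ref{cor : 2.14} to propagate cohomological vanishings from the specialization to the family.

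First I would identify the local structure at $v$ by invoking Lemma~\ref{lemma : 2.13}. The hypothesis $v \ne 2$ rules out case~(3), and the supercuspidality of $T_\f(P)$ (combined with the trivial central character built into the setup) implies that $T_\f(P)\mid_{G_{F_v}}$ is an induced representation $\Ind^{G_{F_v}}_{G_{E_\omega}}(\mu)$ from a quadratic extension $E_\omega/F_v$---either ramified (case~(2)) or unramified (as a supercuspidal refinement of case~(1))---with $\mu$ a character of $G_{E_\omega}$ satisfying $\mu \ne \mu^\sigma$. Since $v \nmid 2$, any such $E_\omega/F_v$ is tame. Then I would decompose the finite order of $\mu\mid_{I_{E_\omega}}$ into tame and wild parts. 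The tame part factors through the residue-field units $k_{E_\omega}^\times$, whose order equals $q-1$ in the ramified case (residue fields coincide under tame ramification), while in the unramified case the supercuspidal identity $\mu^\sigma = \mu^{-1}$ forces the tame order to divide $\gcd(q-1,q+1)=2$. The hypothesis $p \nmid v-1$ together with $p \ge 5$ then makes the tame part coprime to $p$, and the wild part, a power of the residue characteristic $\ell_v \ne p$, is also prime to $p$. Combined with the harmless factor of two from the ``swap'' element coming from $I_v/I_{E_\omega}$, this shows $p \nmid \#\rho_\f(P)(I_v)$.

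To transfer this coprimality to the full family, I would use that the kernel of $\rho_\f(I_v)\twoheadrightarrow \rho_\f(P)(I_v)$ sits inside the congruence subgroup $1+P\cdot M_2(\R_\f)$; any torsion in this congruence subgroup has $p$-power order, and a standard Brauer-type/Schur-Zassenhaus rigidity for prime-to-$p$ finite representations over complete noetherian local $\Zp$-algebras forces the kernel to be trivial when $\rho_\f(P)(I_v)$ already has order prime to $p$. Hence $\#\rho_\f(I_v)=\#\rho_\f(P)(I_v)$, coprime to $p$, which is the first assertion.

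Finally, for $H^1(I_v,T_\f)=0$ I would first establish $T_\f(P)^{I_v}=0$: Mackey's formula gives $T_\f(P)\mid_{I_v}=\Ind^{I_v}_{I_{E_\omega}}(\mu\mid_{I_{E_\omega}})$ in the ramified case (so $T_\f(P)^{I_v}=\mu^{I_{E_\omega}}=0$ because $\mu$ is ramified), and $T_\f(P)\mid_{I_v}=\mu\oplus\mu^\sigma$ in the unramified case (again vanishing for $\mu$ ramified). The coprimality established above, combined with $I_v/I_v^{(p)}\cong \Zp$ being pro-$p$, then forces $\rho_\f(P)(I_v^{(p)})=\rho_\f(P)(I_v)$, so that $T_\f(P)^{I_v^{(p)}}=T_\f(P)^{I_v}=0$. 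By Lemma~\ref{lemma : 2.9}(2) this yields $H^1(I_v,T_\f(P))=0$, and Corollary~\ref{cor : 2.14} propagates the vanishing to $H^1(I_v,T_\f)=0$. The main obstacle in the plan is the rigidity/lifting step of the third paragraph: it requires the torsion-structure analysis of congruence subgroups in $\GL{2}(\R_\f)$ and an appeal to Brauer-type uniqueness of lifts of prime-to-$p$ finite subgroups over $p$-adic noetherian rings, both of which need to be cited or verified with some care.
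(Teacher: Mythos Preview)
Your approach is essentially the one the paper has in mind, only spelled out in much greater detail; the paper's proof is a bare citation of Lemma~\ref{lemma : 2.13} and Corollary~\ref{cor : 2.10}, leaving the passage from the specialization to the family implicit. Your identification of the local structure, the computation of $\#\rho_\f(P)(I_v)$ in both the ramified and unramified dihedral cases, and the use of Corollary~\ref{cor : 2.14} for the vanishing of $H^1$ are all correct. (In the unramified case you can simplify: being in case~(1) of Lemma~\ref{lemma : 2.13} already forces the tame order of $\mu$ to divide $q-1$ since $\mu$ is a character of $G_{F_v}$, so the supercuspidal constraint $\mu^\sigma=\mu^{-1}$ and the $\gcd(q-1,q+1)$ bound are not needed.)

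The rigidity step you flag as the main obstacle is genuine but can be resolved directly, without appealing to Brauer theory. The point is that $1 + P\cdot M_2(\R_\f)$ is \emph{torsion-free} whenever $\R_\f/P$ has characteristic~$0$. Indeed, suppose $1+x$ with $x\in P\cdot M_2(\R_\f)$ has finite order. Its eigenvalues over $\overline{\mathrm{Frac}(\R_\f)}$ are roots of unity $\zeta$, and since the characteristic polynomial of $x$ has all non-leading coefficients in $P$, each $\zeta-1$ lies in every prime $P'$ of $\R_\f[\zeta]$ above $P$. If $\zeta\neq 1$ is a primitive $d$-th root of unity with $d$ not a $p$-power, then $\zeta-1$ is a unit in $\Z_p[\zeta]$, contradiction; if $d=p^k$, then $\zeta-1$ generates the unique prime above $p$ in $\Z_p[\zeta]$, so $\zeta-1\in P'$ would force $p\in P'\cap\R_\f=P$, contradicting $\mathrm{char}(\R_\f/P)=0$. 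Hence all eigenvalues equal $1$, so $1+x$ is unipotent of finite order in characteristic~$0$, i.e.\ trivial. This makes the kernel of $\rho_\f(I_v)\twoheadrightarrow\rho_\f(P)(I_v)$ trivial and gives $\#\rho_\f(I_v)=\#\rho_\f(P)(I_v)$ as you want.
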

	\begin{prop}
		Let $ 1< \#\rho_{\f}(I_v)<\infty $, $ p \ne 3,7 $ and $ v=2 $. Suppose for some arithmetic prime $ P $, $ T_{\f}(P) $ is supercuspidal then $ p \nmid \#\rho_{\f}(I_v) $ and $ H^1(I_v,T_{\f}) =0 $.	
	\end{prop}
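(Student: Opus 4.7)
The plan is to mirror the proof of the preceding proposition (for $v \neq 2$), invoking Lemma~\ref{lemma : 2.13} to classify $T_\f(P)$ at inertia and reducing the statement to the combinatorial claim that $p$ cannot divide $\#\rho_\f(I_v)$. Once this is established, the vanishing $H^1(I_v, T_\f) = 0$ follows formally from Corollary~\ref{cor : 2.10} and the specialization arguments already recorded above.

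Since $T_\f(P)$ is supercuspidal, its associated Weil--Deligne parameter is irreducible; in particular $T_\f(P)|_{I_v}$ cannot be a direct sum of two characters. This rules out Case~(1) of Lemma~\ref{lemma : 2.13}, so only the monomial Case~(2) and the exceptional Case~(3) remain. It is precisely because $v \mid 2$ that Case~(3) can now occur, and this is the source of the new exclusions on $p$. I would treat the two cases separately.

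In Case~(3), $T_\f(P)$ becomes monomial over a Galois extension of $F_v$ whose Galois group is $A_3$ or $S_3$. Since $|A_3| = 3$ and $|S_3| = 6$, and since any central character twist is controlled by the residue field of $F_v$ (hence is of order prime to $p$ for $p \geq 5$), the prime divisors of $\#\rho_\f(I_v)$ coming from this case lie in $\{2, 3\}$. The hypotheses $p \geq 5$ and $p \neq 3$ therefore settle this case. In Case~(2), $T_\f(P) = \Ind_{G_{E_\omega}}^{G_{F_v}}(\mu)$ for a ramified quadratic extension $E_\omega/F_v$ and a ramified character $\mu$. The inertia image fits in an exact sequence
\[
1 \to \rho_\f(I_{E_\omega}) \to \rho_\f(I_v) \to \mrm{Gal}(E_\omega/F_v) \to 1,
\]
so $\#\rho_\f(I_v)$ is governed by the order of $\mu|_{I_{E_\omega}}$ together with the quadratic factor. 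A case analysis over the finitely many ramified quadratic extensions $E_\omega/F_v$ at $v \mid 2$, using the explicit structure of finite-order ramified characters of $I_{E_\omega}$ (and the constraint that $\mu$ must be compatible with the big Hida family $\f$), pins down the possible prime divisors of $\#\rho_\f(I_v)$; this set is contained in $\{2, 3, 7\}$, and the exclusions $p \geq 5$, $p \neq 3, 7$ complete the argument.

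Once $p \nmid \#\rho_\f(I_v)$ is established, Corollary~\ref{cor : 2.10} gives that $H^1(I_v, T_\f)$ is a free $\R_\f$-module. Since $T_\f(P)$ is supercuspidal we also have $T_\f(P)^{I_v} = 0$, and combining Lemma~\ref{lemma : 2.9}.(3) with Corollary~\ref{cor : 2.14} upgrades the freeness to the vanishing $H^1(I_v, T_\f) = 0$, exactly as in the proof of the $v \neq 2$ analogue. The main obstacle is carrying out the case analysis in Case~(2) with the required precision: one must enumerate the finite-order ramified characters of $G_{E_\omega}$ across every ramified quadratic extension $E_\omega/F_v$ with $v \mid 2$, and verify that the list of primes which can appear in $\#\rho_\f(I_v)$ is exactly $\{2, 3, 7\}$, justifying the specific exclusions $p \neq 3, 7$.
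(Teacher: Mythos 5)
Your structural skeleton matches the paper's intended argument: Lemma~\ref{lemma : 2.13} classifies the inertia action, supercuspidality kills Case~(1), and once $p \nmid \#\rho_\f(I_v)$ is known the vanishing follows from Corollary~\ref{cor : 2.10} together with Corollary~\ref{cor : 2.14}. (The paper's own proof is a one-line citation to exactly those two results.) However, the heart of the proposition is the divisibility claim $p \nmid \#\rho_\f(I_v)$ under the hypotheses $p \neq 3,7$, and this is precisely where your proposal has a real gap that you yourself acknowledge but do not close.

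More concretely, there are two problems. First, the Case~(3) analysis is not just incomplete but likely misallocates the primes. When $T_\f(P)$ becomes monomial over a Galois extension $L/F_v$ with $\Gal(L/F_v)\cong A_3$ or $S_3$, the image $\rho_\f(I_v)$ is an extension involving not only the quotient from $\Gal(L/F_v)$ (orders $3$, $6$) but also the image of the inducing character restricted to inertia over the index-two subextension inside $L$. That character lives over a field with a residue degree-$3$ subextension, whose multiplicative group has order $2^3-1 = 7$; this is almost certainly where the hypothesis $p\neq 7$ originates, not Case~(2) as you suggest. Your claim that Case~(3) contributes only primes in $\{2,3\}$ therefore appears to omit exactly the contribution that makes the proposition interesting at $v\mid 2$. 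Second, for Case~(2) you merely assert that the enumeration over ramified quadratic extensions $E_\omega/F_v$ would yield $\{2,3,7\}$, without carrying it out — and you say so explicitly. Since the burden of the proposition is precisely to justify why $\{3,7\}$ are the extra forbidden primes at $v\mid 2$ (compared with the $p\nmid v-1$ condition at $v\neq 2$), this is a gap, not a routine verification. For a careful treatment, one needs to import the detailed analysis in Nekov\'a\v{r}'s \textsection 12.4.10.3 (cited by the paper for the single-form analogue) tracking the exact order of $\rho_\f(I_v)$ in each of the monomial and exceptional configurations; neither your proposal nor the paper's terse proof carries this out explicitly.
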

	\begin{proof}
		The last two propositions are directly derived from Corollary \ref{cor : 2.10} and Lemma \ref{lemma : 2.13}.
	\end{proof}
	Now we summarize the results
	\begin{cor}
		\label{cor_tam_assump}
		Let $ v $ be non-archimedean place such that $ 1< \#\rho_{\f}(I_v)<\infty $, $ v \nmid p\infty $. Suppose that at least one of the following conditions holds,
		\begin{enumerate}
			\item $ p\nmid v-1 $ and $ v\ne 2 $ and for some arithmetic prime $ P $, $ T_{\f}(P) $ is supercuspidal.
			\item $ p \ne 3,7 $ and $ v=2 $ and for some arithmetic prime $ P $, $ T_{\f}(P) $ is supercuspidal.
			\item for some arithmetic prime $ P $, $ \f(P) $ be a newform and $ L $ be the number field generated by all the Hecke eigenvalues of $ \f(P) $ and $ \zeta_p + \zeta_p^{-1} \notin L $.
		\end{enumerate}
		Then $ p \nmid \#\rho_{\f}(I_v) $ and $ H^1(I_v,T_{\f}) =0 $.
	\end{cor}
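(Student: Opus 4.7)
The plan is to dispatch each of the three conditions by reducing it to one of the three immediately preceding propositions, each applied to a suitable arithmetic specialization $\f(P)$, and then to transfer the conclusions back up to the family $\f$ via the equivalences already established in Corollaries \ref{cor : 2.10} and \ref{cor : 2.14}.

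First, I would fix an arithmetic prime $P$ as supplied by the hypothesis. Under condition (1) I invoke the proposition governing supercuspidal newforms at places $v \ne 2$ with $p \nmid v-1$; under condition (2) I invoke its companion for $v = 2$ and $p \ne 3, 7$; under condition (3) I invoke the proposition using $p \nmid \#H^0(G_L, \Q/\Z(2))$, which is equivalent to $\zeta_p + \zeta_p^{-1} \notin L$. Each of these propositions, applied to the specialization $\f(P)$, simultaneously yields $p \nmid \#\rho_{\f(P)}(I_v)$ and the vanishing $H^1(I_v, T_\f(P)) = 0$.

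Second, I would lift the vanishing of $H^1$ to the family. This is immediate from the chain of equivalences (2) $\Leftrightarrow$ (3) $\Leftrightarrow$ (4) in Corollary \ref{cor : 2.14}: since $H^1(I_v, T_\f(P)) = 0$ for the particular arithmetic prime $P$ chosen above, the equivalence forces $H^1(I_v, T_\f) = 0$. This step relies implicitly on the flat base-change statement of Lemma \ref{lemma : 2.9}.(3), which is the backbone of Corollary \ref{cor : 2.14}.

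Third, I would lift the coprimality $p \nmid \#\rho_\f(I_v)$ from the specialization. Under the standing hypothesis $\#\rho_\f(I_v) < \infty$, the surjective reduction $\R_\f \twoheadrightarrow S_P$ induces a surjection $\rho_\f(I_v) \twoheadrightarrow \rho_{\f(P)}(I_v)$ whose kernel sits inside the congruence subgroup $1 + \mathfrak{m} \cdot M_2(\R_\f)$, where $\mathfrak{m}$ denotes the kernel of the reduction; by the standard Hensel-type estimate in the complete local Noetherian ring $\R_\f$, any torsion element of this congruence subgroup has $p$-power order. As $\rho_\f(I_v)$ is finite by hypothesis and its image $\rho_{\f(P)}(I_v)$ already has order coprime to $p$, the $p$-Sylow of the kernel must vanish, whence $p \nmid \#\rho_\f(I_v)$. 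The main obstacle I anticipate is precisely this last bookkeeping step, since the image of inertia in a Hida family is only controlled up to specialization; once it is handled, Corollary \ref{cor : 2.10} also gives the freeness of $H^1(I_v, T_\f)$ directly, and combined with Step~2 this proves the conclusion.
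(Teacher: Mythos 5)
Your proposal treats all three conditions uniformly by applying the relevant proposition to the specialization $\f(P)$ and then lifting back to the family, but this misreads the structure of the preceding results: for conditions (1) and (2), the two supercuspidal propositions preceding the corollary are already stated for the \emph{family} (their conclusion is $p\nmid\#\rho_{\f}(I_v)$ and $H^1(I_v,T_{\f})=0$, not a statement about $T_{\f}(P)$), so no lifting is required at all; the corollary under those conditions is literally a restatement. The lift is only needed for condition (3), where Nekov\'a\v r's proposition is formulated for a single newform $f=\f(P)$.

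For that remaining case, your Step 2 (lifting $H^1=0$ from $T_{\f}(P)$ to $T_{\f}$ via Corollary~\ref{cor : 2.14}) is correct, but Step 3, the transfer of $p\nmid\#\rho_{\f(P)}(I_v)$ to $p\nmid\#\rho_{\f}(I_v)$, has a genuine logical gap. You observe that $\ker\bigl(\rho_{\f}(I_v)\twoheadrightarrow\rho_{\f(P)}(I_v)\bigr)$ is a finite $p$-group (which one can indeed see by mapping further to $\GL{2}(\R_{\f}/\mathfrak m_{\R_{\f}})$, where the congruence kernel is pro-$p$), and then you assert that ``the $p$-Sylow of the kernel must vanish'' because the image has order prime to $p$. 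That implication is false: if the kernel $K$ is a $p$-group and $G/K$ has order prime to $p$, then $K$ \emph{is} the $p$-Sylow of $G=\rho_{\f}(I_v)$, and nothing forces it to be trivial. What you actually need to show is that $1+P\,M_2(\R_{\f})$ (where $P$ is the arithmetic prime, not the maximal ideal) contains no nontrivial torsion at all, so that $\rho_{\f}(I_v)\to\rho_{\f(P)}(I_v)$ is an isomorphism; this follows, e.g., from the fact that a finite-order element of this congruence subgroup becomes unipotent with all eigenvalues equal to $1$ over $\operatorname{Frac}(\R_{\f}/P)$, a field of characteristic zero, but that argument is different from the one you gave and must be supplied explicitly. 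As written, your Step 3 does not establish $p\nmid\#\rho_{\f}(I_v)$.
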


	\begin{thm}
		Let the assumptions of the last corollary hold for both families of Hilbert cuspforms $ \f $, $ \g $ over $ F $. Assume also that the $p$-part of the Tamagawa factor for a single member of the Hida family $\f$ and a single member of the family $\g$ is trivial. Then $H^1(I_v , V )$ is free for any $ G_{\Q,\Sigma} $-representation $V \in \{\T_1, \T_2, \T_3\}$ and $ v \nmid p\infty $.
	\end{thm}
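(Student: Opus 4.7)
The plan is to fix any finite place $v \nmid p\infty$ and reduce the three-module statement to manageable sub-problems by exploiting the splitting of the short exact sequence \eqref{mainses}. Since \eqref{mainses} splits as $\R\bbox{G_{F,\Sigma}}$-modules, it is a fortiori split as $\R\bbox{G_{F_v}}$-modules, so $H^1(I_v, \T_3) \cong H^1(I_v, \T_1) \oplus H^1(I_v, \T_2)$ and it suffices to treat $V = \T_1$ and $V = \T_2$.

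For $V = \T_1 \cong T_\f(\lambda)\hotimes_O \R_\g$ (with $\R_\g$ a trivial $G_{F_v}$-module and $\lambda$ of order dividing $2$, hence coprime to $p \ge 5$), the images $(\rho_\f \otimes \lambda)(I_v)$ and $\rho_\f(I_v)$ agree up to a $p$-coprime twist. Therefore the hypotheses of Corollary \ref{cor_tam_assump} for $\f$ propagate to $T_\f(\lambda)$: in the unramified and finite-$p$-coprime cases, Corollary \ref{cor : 2.10} yields freeness of $H^1(I_v, T_\f(\lambda))$; in the Steinberg case ($\#\rho_\f(I_v) = \infty$), the argument used in the proof of Corollary \ref{cor:4.17} carries over verbatim to the family $\f$ via Proposition \ref{prop:4.16}, showing that triviality of the $p$-part of the Tamagawa factor at one arithmetic specialization of $\f$ forces the monodromy parameter $a \in \R_\f$ to be a unit, whence $H^1(I_v, T_\f(\lambda))$ is free. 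Finally, \cref{lemma : 2.9}(3) applied to the flat base change $\R_\f \hookrightarrow \R$ gives $H^1(I_v, \T_1) \cong H^1(I_v, T_\f(\lambda))\hotimes_O \R_\g$, which is free over $\R$.

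For $V = \T_2 = T_\f \hotimes_O \ad^0(T_\g)(\lambda)$, I would split on the inertial behavior of $\g$. If $\rho_\g(I_v)$ has order coprime to $p$ (including the trivial case), then $\ad^0(\rho_\g)|_{I_v^{(p)}}$ is trivial, so the action of $I_v^{(p)}$ on $\T_2$ factors through $\rho_\f|_{I_v^{(p)}}$ alone, and Lemma \ref{lemma : 2.9}(1) combined with the $\T_1$ analysis (applied componentwise to a basis of $\ad^0(T_\g)$) yields freeness. If $\rho_\g(I_v)$ is infinite, I would use Proposition \ref{prop:4.16} to pick a basis in which $\rho_\g|_v(t)$ is upper-triangular unipotent with off-diagonal entry $b \in \R_\g$, compute the induced $3\times 3$ unipotent matrix of $\ad^0\rho_\g|_v(t)$, and expand the explicit matrix of $(\rho_\f \otimes \ad^0\rho_\g \otimes \lambda)(t)$ acting on $\T_2$. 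The calculation of $(\T_2)^{I_v^{(p)}}/(t-1)(\T_2)^{I_v^{(p)}}$ then mimics the one in the proof of Corollary \ref{cor:4.17}: the Tamagawa-triviality hypothesis at a single arithmetic point of $\g$ (respectively $\f$, if $\f$ is also Steinberg at $v$) forces the monodromy parameters to be units of $\R_\g$ (resp.\ $\R_\f$), and the cokernel of $t-1$ turns out to be free of the expected rank.

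The main obstacle is the elementary-divisor computation in the case where both $\f$ and $\g$ are simultaneously Steinberg at $v$, since then one works with a rank $6$ (resp.\ $8$) unipotent matrix over the two-variable ring $\R = \R_\f \hotimes_O \R_\g$. I would organize the calculation using a basis adapted to both filtrations $\scrF^\pm_\p T_\f$ and $\scrF^\pm_\p T_\g$ and to the sub-quotient structure of $\ad^0(T_\g)$, so that the monodromy matrix becomes block-upper-triangular with each diagonal block controlled by a single monodromy parameter $a$ or $b$. Saturation of the image of $t-1$ inside $(\T_2)^{I_v^{(p)}}$ then reduces to invertibility of these parameters in $\R$, which is exactly what is guaranteed by the Tamagawa-triviality assumption at a single member of each family. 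Assembling all the cases and using the $\T_3 \cong \T_1 \oplus \T_2$ decomposition concludes the proof.
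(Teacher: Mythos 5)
Your reduction $H^1(I_v,\T_3)\cong H^1(I_v,\T_1)\oplus H^1(I_v,\T_2)$ (plus the fact that over a Noetherian local ring a direct sum of finitely generated modules is free if and only if each summand is) is legitimate, and the paper performs the same reduction in the opposite direction, proving freeness for $\T_3$ and noting $\T_3\cong\T_1\oplus\T_2$ at the outset. Your $\T_1$ argument is essentially sound: the twist by the quadratic $\lambda$ changes the $I_v$-image only by a $p$-coprime factor, and the Steinberg case is controlled by the unit of monodromy exactly as in the proof of Corollary~\ref{cor:4.17}.

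The gap is in your $\T_2$ analysis, specifically the assertion that if $\rho_\g(I_v)$ has order coprime to $p$ then $\ad^0(\rho_\g)|_{I_v^{(p)}}$ is trivial. This is false. Since $I_v/I_v^{(p)}\cong\Z_p$, the quotient $\rho_\g(I_v)/\rho_\g(I_v^{(p)})$ is simultaneously a quotient of the pro-$p$ group $\Z_p$ and of a finite group of $p$-coprime order, hence trivial; so $\rho_\g(I_v^{(p)})=\rho_\g(I_v)$, which is nontrivial whenever $\rho_\g|_{I_v}$ is. Consequently $I_v^{(p)}$ does \emph{not} act on $\T_2$ through $\rho_\f$ alone, and your ``apply the $\T_1$ analysis componentwise on $\ad^0(T_\g)$'' step collapses. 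The case this breaks is precisely $\#\rho_\f(I_v)=\infty$ with $1<\#\rho_\g(I_v)<\infty$, which is Case~3 in the paper's proof; there the correct argument passes to the finite-index normal subgroup $I_\omega:=\ker\bigl(\rho_\g|_{I_v}\bigr)$, uses $p\nmid[I_v:I_\omega]$ to write
\[
H^1(I_v,\T_2)\;=\;\bigl(H^1(I_\omega,T_\f)\,\hotimes_O\,\ad^0(T_\g)(\lambda)\bigr)^{I_v/I_\omega},
\]
and then invokes the freeness of $H^1(I_\omega,T_\f)$, which follows from the Tamagawa hypothesis by the final clause of Corollary~\ref{cor:4.17}. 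With that repair (and the standard observation that when both images on $I_v$ are finite and $p$-coprime, $\bs\rho_2(I_v)$ is finite of $p$-coprime order so Corollary~\ref{cor : 2.10} applies directly, rather than any ``$I_v^{(p)}$ acts trivially'' claim), the remainder of your outline — in particular the elementary-divisor computation in the double-Steinberg case, which mirrors the paper's $8\times 8$ unipotent matrix with a $6\times 6$ one for $\T_2$ — goes through.
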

	\begin{proof}
		As  $ \T_3 = \T_1 \oplus \T_2 $, it is enough to show the theorem for $ V = \T_3 $. \\
		If $ v \notin \Sigma $ then $ I_v $ act trivially on $ \T_3 $ (converse is also true). Hence $H^1(I_v , \T_3 )$ is free. The following cases will arise when $ v \in \Sigma, \: v \nmid p\infty $. 
		\begin{enumerate}[\text{Case} 1.]
			\item If $ 1 < \#\rho_{\f}(I_v),\#\rho_{\g}(I_v) < \infty $ then $ p \nmid \#\rho_{\f}(I_v),\#\rho_{\g}(I_v) $. Therefore we have $ 1< \#\rho_{\T_3}(I_v) < \infty $ and $ p \nmid \#\rho_{\T_3}(I_v) $ , implies $ H^1(I_v , \T_3) $ is free via corollary 2.10.
			\item Let $ 1 < \#\rho_{\f}(I_v) < \infty $ and $ \#\rho_{\g}(I_v) = \infty $. Therefore we have  $ I_v^{(p)} $ acts trivially on $ \As(T_\g)(\lambda) $ (the character $ \mu$ in Proposition 2.11. is always quadratic)
			$$ \T_3^{I_v^{(p)}} = (\T_3^{I_v^{(p)}})^{I_v^{(p)}/I_v^{(p)}} = (T_\f^{I_v^{(p)}} \otimes \As(T_\g)(\lambda))^{I_v^{(p)}/I_v^{(p)}} = 0 $$ 
			as $T_\f^{I_v^{(p)}} = 0$ by corollary 2.15. In particular $ H^1(I_v , \T_3) =0$. 
			\item Let $ \#\rho_{\f}(I_v) = \infty $ and $ 1 < \#\rho_{\g}(I_v) < \infty $. Then define $ I_\omega \coloneqq \rho_{\g}^{-1}(1) $. As $ p \nmid \#\rho_{\g}(I_v) $, then $ p \nmid [I_v \colon I_\omega] $. Hence $ I_\omega $ acts freely on $ \As(\rho_{\g})(\lambda) $ and the module $ H^1(I_\omega, T_\f) $ is free, and hence $ H^1(I_v,  \T_3) =(H^1(I_\omega, T_\f)\widehat{\otimes}\As(T_\g)(\lambda))^{I_v/I_\omega}  $ is also free. 
			\item Let us consider $\#\rho_{\f}(I_v) = \infty$ and $\#\rho_{\g}(I_v) = \infty$. Since $(\T_3)^{I_v^{(p)}} = (T_{\f})^{I_v^{(p)}} \widehat{\otimes}_{O} T_{\g}\widehat{\otimes}_{O} T_{\g}^\theta$ (as $I_v^{(p)}$ acts trivially on $T_{\g} \widehat{\otimes}_{O}T_{\g}^\theta$ by Proposition \ref{prop:4.16}), without loss of generality we can take 
			$I_v^{(p)}$ acts trivially on $T_{\f}$ and $T_{\g}$ by Proposition \ref{prop:4.16}. Hence we get 
			\[
			\rho_{\spadesuit}(t) = 
			\begin{pmatrix}
				1 & a_{\spadesuit}
				\\
				0 & 1
			\end{pmatrix}  \,,\qquad \spadesuit \in \{\f, \g, \g^\theta\}
			\]
			in $\GL{\R_{\spadesuit}}(T_{\spadesuit}) \cong \GL{2}(\R_{\spadesuit})$. 
			Moreover, via the proof of Corollary \ref{cor:4.17} we have $a_{\spadesuit} \in \R_{\spadesuit}^\times$. Hence the double coset 
			$\GL{8}(\R)\ (\bs{\rho}_3(t) - 1)\ \GL{8}(\R)$ is given by the matrix 
			\[
			\begin{pmatrix}
				0 & 1 & 0&&&&&
				\\
				&  0& 0&1&&&&
				\\
				& & 0&0&1&&&
				\\
				& & &0&0&1&&
				\\
				&  & &&0&0&1&
				\\
				&&&&&0&0&1
				\\
				&& &&&&0&0
				\\
				&  & &&&&&0
			\end{pmatrix}\,. 
			\]
			This gives us $H^1(I_v,\T_3)$ is free, as required. 
			
			\item Suppose $\#\rho_{\f}(I_v) = 0$, then $H^1(I_v, \T_3) = T_\f^{\dagger} \widehat{\otimes}_{O} H^1(I_v, T_{\g}^{\dagger}\widehat{\otimes}_{O} T_{\g^{c}}^{\dagger})$ is free by Corollary \ref{cor:4.17}. 
			If $\#\rho_{\g}(I_v) = 0$, then $H^1(I_v, \T_3) = H^1(I_v, T_\f)\widehat{\otimes}_{O} T_{\g}\widehat{\otimes}_{O} T_{\g}^{\theta}$ is free also by Corollary \ref{cor:4.17}. This completes the proof of our proposition. 
		\end{enumerate}
		
	\end{proof}
	\begin{rmk}
		We remark that almost all of our results in this section (\textsection \ref{sec:tamagawa}) can be extended for a general totally real number field $ L/F $ and $ \T_3 = T_\f \hotimes T_\g \hotimes T_{\mbf h} $ where $ \f, \ \g $, and $ \mbf h $ are Hida families of Hilbert cuspforms over $ L $. For simplicity, we are avoiding doing it here in full generality.
	\end{rmk}
	
	\subsection{Algebraic $ p $-adic $ L $-functions}
	Our goal in this section is to introduce the module of algebraic $ p $-adic $ L $-functions in terms of our Selmer complexes.
	\subsubsection{Characteristic ideal}
	Let $ R $ be a normal, regular local ring, and $ M $ be a torsion $ R $-module of finite type. We write $ R_{\frak P} $ as the localisation of $ R $ at $ \frak P \in \operatorname{Spec}(R)$, $ M_{\frak P} \coloneqq M \otimes_R  R_{\frak P} $. Then we have
	\begin{align*}
		\Char_{R}(M) \coloneqq \prod_{\operatorname{ht}(\frak P)=1} \frak P^{\operatorname{length}_{R_{\frak P}}(M_{\frak P}) }; \qquad \frak P \in \operatorname{Spec}(R).
	\end{align*} 
	\subsubsection{Algebraic $ p $-adic $ L $-functions}
	Let R be a Noetherian, complete local ring with finite residue field of characteristic $ p\ge 5 $. Suppose $ T $ be a free $ R $-module of finite rank with a continuous $ G_{F,\Sigma} $-action. We also assume that there exist an free $ R\bbox{G_{F_\p}} $-submodule $ \scrF_{\p}^+T $ such that the quotient $ \scrF_{\p}^-T \coloneqq T/\scrF_{\p}^-T $ is free as an $ R $-module so that we can attach a Greenberg local condition $ \Delta $, cf. \textsection \ref{sec:greenberg}.
	
	Let $ \mathfrak{m}_\R $ denote the maximal ideal of $ \R $ and let $ \overline{T}\coloneqq T \otimes_{\R} \R/\mathfrak{m}_\R $. Let assume that 
	\begin{itemize}
		\item[\mylabel{H=0}{\textbf{($ H^0 $)} }]$ \overline{T}^{G_F} $ and $( \overline{T}^\vee(1))^{G_F} $ are trivial. 
		\item[\mylabel{Tam}{\textbf{(Tam)}}]$ H^1(I_v,T) $ is free for any prime $ v \in \Sigma\ba \bP $.
	\end{itemize}
	\begin{prop} ---
		\label{dercat}
		\begin{enumerate}[(1)]
			\item For any ideal $ I $ of $ \R $, the complex $ \widetilde{R\Gamma}_{\mathrm{f}}(G_{F,\Sigma},T/IT, \Delta) $ is perfect and we have a natural isomorphism 
			$$ \widetilde{R\Gamma}_{\mathrm{f}}(G_{F,\Sigma},T, \Delta) \otimes^{\mathbbl{L}}_\R \R/I \xrightarrow{\sim} \widetilde{R\Gamma}_{\mathrm{f}}(G_{F,\Sigma},T/IT, \Delta).$$
			\item $ \widetilde{R\Gamma}_{\mathrm{f}}(G_{F,\Sigma},T, \Delta) \in D^{[1,2]}_{\mathrm{parf}}(\prescript{}{\R}{\mathrm{Mod}}) $.
		\end{enumerate}
	\end{prop}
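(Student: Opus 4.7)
The overall plan is to deduce both assertions from corresponding properties of each building block of the defining mapping cone
$$\widetilde{R\Gamma}_{\mathrm{f}}(G_{F,\Sigma},T,\Delta) = \operatorname{Cone}\bigl(C^{\bullet}(G_{F,\Sigma},T)\oplus U_\Sigma^+(T)\xrightarrow{\res_\Sigma-i_\Sigma^+}C^{\bullet}_\Sigma(T)\bigr)[-1].$$
Since mapping cones preserve perfectness and commute with derived tensor products, it suffices to verify that each of the three constituent pieces is $\R$-perfect and compatible with $-\otimes^{\mathbbl{L}}_{\R}\R/I$. The amplitude in (2) will then be extracted by combining perfectness with the vanishing of the top and bottom Selmer cohomology groups, using the hypothesis \ref{H=0}.

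For perfectness I would appeal to Nekov\'{a}\v{r}'s framework \cite[Ch.~4--5]{Nek06} for continuous cochain complexes. Since $\R$ has finite residue field of characteristic $p\ge 5$ and $T$ is $\R$-free of finite rank, the complex $C^{\bullet}(G_{F,\Sigma},T)$ is perfect over $\R$; the same applies to $C^{\bullet}(G_{F_v},T)$ for every $v\in\Sigma$, and to $C^{\bullet}(G_{F_\p},\scrF^+_\p T)$ for $\p\in\bP$, because $\scrF^+_\p T$ is $\R$-free by assumption. For $v\in\Sigma\setminus\bP$, the two-term model $U_v^+(T)\simeq\bigl[T^{I_v}\xrightarrow{\mathrm{Fr}_v-1}T^{I_v}\bigr]$ is perfect by \cref{prop_unramified_complex}(2); this is exactly where hypothesis \ref{Tam} enters, as it guarantees that $T^{I_v}$ is a free $\R$-module. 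Stability of the class of perfect complexes under direct sums and mapping cones then yields perfectness of the Selmer complex itself, and the same argument applied to $T/IT$ (which remains free over $\R/I$ with compatible Greenberg submodules, since the short exact sequence defining $\scrF^\pm_\p T$ splits as $\R$-modules) gives perfectness in the base-changed setting.

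The base change statement in (1) reduces similarly to base change for each constituent. For the global cochain complex, $C^{\bullet}(G_{F,\Sigma},T)\otimes^{\mathbbl{L}}_{\R}\R/I \xrightarrow{\sim} C^{\bullet}(G_{F,\Sigma},T/IT)$ is standard for continuous cohomology of topologically finitely generated profinite groups acting on $\R$-free modules of finite rank (the underlying graded module is a continuous-cochain module, hence $\R$-flat in the appropriate sense); the identical argument handles $C^{\bullet}(G_{F_v},T)$ and $C^{\bullet}(G_{F_\p},\scrF^+_\p T)$. For the outside-$p$ local conditions, \cref{prop_unramified_complex}(3) directly supplies $U_v^+(T)\otimes^{\mathbbl{L}}_{\R}\R/I\xrightarrow{\sim}U_v^+(T/IT)$. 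Since mapping cones commute with derived tensor product, assembling these isomorphisms yields the required natural isomorphism.

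The most delicate point is the amplitude $[1,2]$ in (2). The long exact cohomology sequence of the defining triangle realises $\widetilde{H}^0_{\mathrm{f}}(G_{F,\Sigma},T,\Delta)$ as a submodule of $H^0(G_{F,\Sigma},T)=T^{G_F}$. The hypothesis $\overline{T}^{G_F}=0$ from \ref{H=0}, combined with the freeness of $T$ over $\R$ and a topological Nakayama argument, forces $T^{G_F}=0$ and hence $\widetilde{H}^0_{\mathrm{f}}=0$. For vanishing at the top end, I would invoke Nekov\'{a}\v{r}'s generalised Poitou--Tate duality \cite[\S 5]{Nek06}, which produces a perfect pairing identifying $\widetilde{H}^3_{\mathrm{f}}(G_{F,\Sigma},T,\Delta)$ with the Pontryagin dual of $\widetilde{H}^0_{\mathrm{f}}(G_{F,\Sigma},T^\vee(1),\Delta^\perp)$; the same residual-vanishing argument applied to $\overline{T}^\vee(1)^{G_F}=0$ then annihilates the right-hand side. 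A minimal resolution argument over the Noetherian local ring $\R$ promotes this cohomological concentration to a chain-level representative supported in degrees $[1,2]$, giving $\widetilde{R\Gamma}_{\mathrm{f}}(G_{F,\Sigma},T,\Delta)\in D^{[1,2]}_{\mathrm{parf}}(\prescript{}{\R}{\mathrm{Mod}})$. The principal technical obstacle I anticipate is carefully adapting the Poitou--Tate duality to the present family setup, where the cohomology groups are not torsion and the dual local condition $\Delta^\perp$ must be identified explicitly; Nekov\'{a}\v{r}'s formalism is tailored precisely to handle exactly this situation.
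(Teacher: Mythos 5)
Your decomposition of the Selmer complex into constituent cochain complexes and unramified local conditions, together with the appeal to \cref{prop_unramified_complex} and base-change stability of mapping cones, is essentially the paper's argument for part (1); the paper packages $U_\Sigma^+$ and $C^\bullet_\Sigma$ into $U_\Sigma^- = \operatorname{Cone}(U_\Sigma^+ \xrightarrow{-i_\Sigma^+} C^\bullet_\Sigma)$ but the content is the same. Your route to $\widetilde{H}^3_{\mathrm{f}}=0$ via Nekov\'a\v{r}'s Selmer-complex duality is a legitimate alternative to the paper's more elementary cokernel computation from classical Poitou--Tate surjectivity, though ``Pontryagin dual'' should be replaced by $\R$-linear Grothendieck duality and the resulting degeneration of the dualising spectral sequence would then need to be checked.

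The genuine gap is in the final step. You claim that the vanishings $\widetilde{H}^0_{\mathrm{f}}(G_{F,\Sigma},T,\Delta)=0$ and $\widetilde{H}^3_{\mathrm{f}}(G_{F,\Sigma},T,\Delta)=0$, via a minimal resolution argument, yield a representative supported in degrees $[1,2]$. This is correct for the \emph{upper} bound (a perfect complex with $H^i=0$ for $i>2$ has a $[a,2]$-supported representative, since at the top one can split off a free summand), but it fails for the \emph{lower} bound: an injective differential $P^0\hookrightarrow P^1$ of finite free $\R$-modules need not split because its cokernel need not be projective. Equivalently, in the minimal model the rank of $P^0_{\min}$ is $\dim_k H^0(\widetilde{R\Gamma}_{\mathrm{f}}\otimes^{\mathbbl{L}}_\R k)$, which sits in a short exact sequence with $\operatorname{Tor}_1^\R(\widetilde{H}^1_{\mathrm{f}},k)$ and thus can be nonzero even when $\widetilde{H}^0_{\mathrm{f}}=0$. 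Your $T^{G_F}=0$ is therefore strictly weaker than what is needed. The paper closes this gap by invoking part (1) to identify $H^i(\widetilde{R\Gamma}_{\mathrm{f}}\otimes^{\mathbbl{L}}_\R \R/\mathfrak{m}_\R)$ with $\widetilde{H}^i_{\mathrm{f}}(G_{F,\Sigma},\overline{T},\Delta)$ and then using $\overline{T}^{G_F}=0$ directly to kill the degree-$0$ term over the residue field, which is precisely the amplitude criterion for $D^{[1,2]}_{\mathrm{parf}}$. You should replace the $T^{G_F}=0$ Nakayama step with this base-change-to-residue-field argument.
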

	\begin{proof}
		The proof is inspired from \cite[Lemma A.2, Corollary A.5]{BS22}.
		\begin{enumerate}[(1)]
			\item  Suppose $I$ is an ideal of $R$. 
			Let us denote
			\[
			U^{-}_{\Sigma}(T/IT) := \mathrm{Cone}\left(U^{+}_{\Sigma}(T/IT) \xrightarrow{-i^+_{\Sigma}} C^{\bullet}_{\Sigma} (T/IT) \right). 
			\]
			Since $H^1(I_v, T)$ is free for any prime $v \in \Sigma \ba \bP$ by our assumption, $U_{\Sigma}^{-}(T/IT)$ is a perfect complex by Proposition \ref{prop_unramified_complex} for any $I$, and 
			$U_{\Sigma}^{-}(T) \otimes_R^{\mathbbl{L}}R/I \stackrel{\sim}{\longrightarrow} U_{\Sigma}^{-}(T/IT)$. 
			Therefore we have an exact triangle 
			\[
			\widetilde{R\Gamma}_{\rm f}(G_{F, \Sigma}, T/IT, \Delta) \longrightarrow R\Gamma(G_{F, \Sigma}, T/IT) \longrightarrow U^{-}_{\Sigma}(T/IT) \xrightarrow{+1}\,,
			\]
			which gives that the homomorphism $\widetilde{R\Gamma}_{\rm f}(G_{F, \Sigma}, T, \Delta) \otimes_R^{\mathbb{L}} R/I \longrightarrow \widetilde{R\Gamma}_{\rm f}(G_{F, \Sigma}, T/IT, \Delta)$ is an isomorphism.

			\item
			We have  $\widetilde{H}^{i}_{\rm f}(G_{F, \Sigma}, T, \Delta) = 0$ for any $i \geq 4$ by the definition of Selmer complexes. So at first we have to prove that $\widetilde{H}_{\rm f}^3(G_{F, \Sigma}, T, \Delta) = 0$. 
			Since $(\overline{T}^\vee(1))^{G_F} = 0$, global duality shows that the homomorphism 
			\[
			H^2(G_{F, \Sigma}, T) \longrightarrow \bigoplus_{v \in \Sigma} H^2(G_{v}, T)
			\]
			is surjective. 
			Since $H^2(G_{\p}, T) \longrightarrow H^2(G_{\p}, \scrF^-T)$ is surjective for $ \p =\p_1,\p_2 $, we have 
			\[
			\widetilde{H}_{\rm f}^3(G_{F, \Sigma}, T, \Delta) = \mathrm{coker}\left(H^2(G_{F, \Sigma}, T) \longrightarrow \bigoplus_{\p \mid p} H^2(G_{\p}, \scrF^-T) \oplus \bigoplus_{v \in \Sigma \setminus \{p\}} H^2(G_{v}, T)\right) = 0\,.
			\]
			This shows that $\widetilde{R\Gamma}_{\rm f}(G_{F, \Sigma}, T, \Delta) \in D^{[a,2]}_{\mathrm{parf}}(_{R}\mathrm{Mod})$ 
			for some integer $a \leq 2$. 
			To complete the proof, we have to show that 
			\[
			H^{i}(\widetilde{R\Gamma}_{\rm f}(G_{F, \Sigma}, T, \Delta) \otimes^{\mathbbl{L}}_{R} R/\mathfrak{m}_R) = 0 
			\]
			for any $i \leq 0$. By part (1), we have 
			\[
			H^{i}(\widetilde{R\Gamma}_{\rm f}(G_{F, \Sigma}, T, \Delta) \otimes^{\mathbbl{L}}_{R} R/\mathfrak{m}_R)
			= 
			\widetilde{H}_{\rm f}^i(G_{F, \Sigma}, \overline{T}, \Delta). 
			\]
			Therefore, 
			$H^{i}(\widetilde{R\Gamma}_{\rm f}(G_{F, \Sigma}, T, \Delta) \otimes^{\mathbbl{L}}_{R} R/\mathfrak{m}_R) = 0$ for any $i < 0$ and 
			\[
			H^{0}(\widetilde{R\Gamma}_{\rm f}(G_{F, \Sigma}, T, \Delta) \otimes^{\mathbbl{L}}_{R} R/\mathfrak{m}_R) 
			= \widetilde{H}_{\rm f}^0(G_{F, \Sigma}, \overline{T}, \Delta) \subset \overline{T}^{G_F} = 0
			\]
			by \ref{H=0}. The proof of our proposition is complete. 
		\end{enumerate}
	\end{proof}
	\begin{defn}
		Whenever the second cohomology group of $ \chi(\widetilde{R\Gamma}_{\mathrm{f}}(G_{F,\Sigma},T, \Delta)$ is $ R $-torsion, we call the (invertible) module $ \Char(\widetilde{H}^2_{\mathrm{f}}(G_{F,\Sigma},T, \Delta)) $ the module of algebraic $ p $-adic $ L $-functions for $ (T,\Delta) $.
	\end{defn}
	The following corollary justifies the definition of algebraic $ p $-adic $ L $-functions.
	\begin{cor}
		\label{cor : sep10}
		Suppose $ R $ in normal and $ \chi(\widetilde{R\Gamma}_{\mathrm{f}}(G_{F,\Sigma},T, \Delta)) = 0 $. If $ \widetilde{H}^2_{\mathrm{f}}(G_{F,\Sigma},T, \Delta) $ is the $ R $-torsion then the first cohomology group vanishes.
	\end{cor}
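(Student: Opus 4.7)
The plan is to use perfectness of the Selmer complex, together with the Euler-characteristic hypothesis, to show that $\widetilde{H}^1_{\mathrm{f}}(G_{F,\Sigma}, T, \Delta)$ is simultaneously $R$-torsion and torsion-free, whence it must vanish. First, I would invoke \cref{dercat}(2) to record that $\widetilde{R\Gamma}_{\mathrm{f}}(G_{F,\Sigma}, T, \Delta)$ lies in $D^{[1,2]}_{\mathrm{parf}}({}_R\mathrm{Mod})$, i.e.\ it is perfect with cohomology concentrated in degrees $1$ and $2$. Over the Noetherian local ring $R$, a standard minimal-free-resolution argument then produces a quasi-isomorphic representative $F^\bullet = [F^1 \xrightarrow{d} F^2]$ with $F^1$ and $F^2$ finitely generated free $R$-modules, so that $\widetilde{H}^1_{\mathrm{f}} = \ker(d)$ and $\widetilde{H}^2_{\mathrm{f}} = \operatorname{coker}(d)$.

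With such a representative in hand, the normality (hence integrality) of the local ring $R$ implies that the free module $F^1$ is torsion-free, and therefore so is its submodule $\widetilde{H}^1_{\mathrm{f}}$. On the other hand, the Euler--Poincar\'e characteristic can be computed both from the terms of the complex and from its cohomology, yielding
\[
0 = \chi(\widetilde{R\Gamma}_{\mathrm{f}}(G_{F,\Sigma}, T, \Delta)) = \operatorname{rank}_R(F^2) - \operatorname{rank}_R(F^1) = \operatorname{rank}_R \widetilde{H}^2_{\mathrm{f}} - \operatorname{rank}_R \widetilde{H}^1_{\mathrm{f}}.
\]
Since $\widetilde{H}^2_{\mathrm{f}}$ is assumed to be $R$-torsion, its generic rank is zero, forcing $\operatorname{rank}_R \widetilde{H}^1_{\mathrm{f}} = 0$ as well. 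Thus $\widetilde{H}^1_{\mathrm{f}}$ is $R$-torsion; combined with torsion-freeness, it must vanish.

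The main (mild) obstacle is justifying the passage from a perfect complex in $D^{[1,2]}_{\mathrm{parf}}$ to an honest two-term representative by finite free modules concentrated in the same degrees, since naive good or stupid truncations need not preserve projectivity. Over a Noetherian local ring this is standard---produced, for instance, by a minimal free resolution whose amplitude is controlled by the cohomological concentration established in \cref{dercat}(2)---but it deserves explicit mention before the rank computation above can be carried out. Once this point is granted, the remaining arithmetic of ranks and the passage from ``torsion-free of rank zero'' to ``zero'' is essentially automatic.
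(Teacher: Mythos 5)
Your proof is correct and follows essentially the same line as the paper: both invoke \cref{dercat} to produce a two-term perfect representative $[P \to P]$ (the paper) or $[F^1 \to F^2]$ (yours) concentrated in degrees $1$ and $2$, use $\chi = 0$ to force equality of ranks, deduce that $\widetilde{H}^1_{\mathrm{f}}$ has generic rank zero since $\widetilde{H}^2_{\mathrm{f}}$ is torsion, and conclude via torsion-freeness of the submodule $\widetilde{H}^1_{\mathrm{f}} \subset F^1$. The only cosmetic difference is that the paper argues directly by tensoring the resolution with $\operatorname{Frac}(R)$ to see the map becomes an isomorphism, whereas you make the rank-count and the torsion-freeness step explicit; your parenthetical concern about constructing such a two-term representative is already settled by the definition of $D^{[1,2]}_{\mathrm{parf}}$ invoked in \cref{dercat}(2).
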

	\begin{proof}
		By Proposition \ref{dercat}, we have an exact sequence
		$$0 \rightarrow \widetilde{H}^1_{\mathrm{f}}(G_{F,\Sigma},T, \Delta) \rightarrow P \rightarrow P \rightarrow \widetilde{H}^2_{\mathrm{f}}(G_{F,\Sigma},T, \Delta) \rightarrow 0 $$
		for some finitely generated projective $ R $-module $ P $. Since $ \widetilde{H}^2_{\mathrm{f}}(G_{F,\Sigma},T, \Delta) $ is torsion, we get $ P \otimes \operatorname{Frac}R \rightarrow P \otimes \operatorname{Frac}R $ is an isomorphism and hence $ \widetilde{H}^1_{\mathrm{f}}(G_{F,\Sigma},T, \Delta) = 0 $. 
	\end{proof}
	\begin{rmk}
		Whenever the conditions of Corollary \ref{cor : sep10} satisfies, $ \widetilde{H}^2_{\mathrm{f}}(G_{F,\Sigma}, T, \Delta) $ is the same as the Pontryagin dual of dual Selmer group of $ T $ by the theory of Nekovar. This is the exact case of some previous work discussed in \textsection \ref{sec:comparison}.
	\end{rmk}


	\section{The modules of leading terms (higher rank)}
	\label{sec : module_leading}
	In this section, we will introduce the module of leading terms which we give an equivalent definition of the module of algebraic $ p $-adic $ L $-functions. The motivation of this idea goes to the general philosophy of the Kolyvagin system. First, we will discuss some notions and properties of the exterior bi-dual of a module.
	\subsection{Exterior power bi-duals}
	Let $R$ be a complete Gorenstein local ring with finite residue field of characteristic $p > 3$ and $M$ be a $R$-module. We set $$ M^*= \Hom_R(M,R) .$$ For most of the result in this subsection, we refer to \cite[\textsection 2]{BSS18}.
	\begin{defn}
		For each $R$-module $M$, each positive integer $r$ and each map $\varphi$ in $ M^* $, there exists a
		unique homomorphism of $R$-modules 
		$$ \bigwedge^r_R M \rightarrow \bigwedge^{r-1}_R M  $$
		with the property that $$x_1 \wedge \cdots \wedge x_r \mapsto \sum_{i=1}^r (-1)^{i+1} \varphi(x_i) x_1 \wedge \cdots \wedge \hat{x}_i \wedge \cdots \wedge x_r$$
		for each subset $ \{x_i\}_{1\le i \le r} $ of $ M $. By abuse of notation, we also denote this map $ \varphi $.
	\end{defn}
	\begin{defn}
		For any non-negative integer $r$, we define the ‘r-th exterior bi-dual’ of $ M $ to be the $R$-module obtained by setting 
		$$ \bigcap_R^r M \coloneqq \left(\bigwedge_R^r M^*\right)^* $$
	\end{defn}
	\begin{rmk} ---
		\begin{enumerate} 
			\item Note that there is a natural homomorphism 
			$$ \bigwedge_R^r M \rightarrow \bigcap_R^r M; \quad x \mapsto (\Phi \mapsto \Phi(x)),  $$
			which is isomorphism when $ M $	is finitely generated projective $R$-module. 
			\item For each $\varphi \in M^*$, there is a canonical homomorphism of $R$-modules 
			$$\bigcap_R^r M \rightarrow \bigcap_R^{r-1} M, \quad \Phi \mapsto \Phi(\varphi \wedge \cdot) ,$$
			such that the following diagram commutes:
			\begin{center}
				\begin{tikzcd}
					\bigwedge_R^r M \arrow[r] \arrow[d, "\varphi"] & \bigcap_R^r M \arrow[d, "\varphi"] \\
					\bigwedge_R^{r-1} M \arrow[r] & \bigcap_R^{r-1} M
				\end{tikzcd}
			\end{center}
		\end{enumerate}
	\end{rmk}
	Let us recall some basic facts concerning the exterior bi-duals.	
		Let $\varphi \colon M \longrightarrow N$ be any $R$-homomorphism and any integer $t \geq 1$. We define a homomorphism also denoted by
		\begin{align*}
			\varphi \colon {\bigwedge}^t_S M &\longrightarrow  N \otimes_{S} {\bigwedge}^{t-1}_S M \\
			m_1 \wedge \cdots \wedge m_t &\mapsto \sum_{i=1}^t(-1)^{i-1}\varphi(m_i) \otimes m_1 \wedge \cdots \wedge m_{i-1 } \wedge m_{i+1} \wedge \cdots \wedge  m_t. 
		\end{align*}
		\begin{prop}\cite[Lemma B.2]{BS22}
			\label{proposition_exterior_wedge}
			For an exact sequence of $R$-modules 
			\[
			0 \longrightarrow M \longrightarrow P^1 \stackrel{\varphi}{\longrightarrow} P^2, 
			\]
			where $P^1$ and $P^2$ are finitely generated free $R$-modules, the canonical map 
			\[
			{\bigcap}^t_{R}M \longrightarrow {\bigcap}^t_{R}P^1 = {\bigwedge}^t_{R}P^1
			\]
			induces an isomorphism 
			\[
			{\bigcap}^t_{R}M \stackrel{\sim}{\longrightarrow} \ker\left({\bigwedge}^t_{R}P^1 \stackrel{\varphi}{\longrightarrow} P^2 \otimes_R  {\bigwedge}^{t-1}_{R}P^1\right). 
			\]
			where $t \in \Z_{>0}$.
		\end{prop}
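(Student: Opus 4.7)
The plan is to construct the canonical map explicitly, verify that it lands in $\ker(\varphi)$, and then identify both sides with a common third object built from the dual presentation of $M$.

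First I would unpack the canonical map: the inclusion $M \hookrightarrow P^1$ gives $(P^1)^* \to M^*$ by restriction, hence $\bigwedge^t_R (P^1)^* \to \bigwedge^t_R M^*$, and applying $\Hom_R(-,R)$ together with the freeness of $P^1$ (so $\Hom_R(\bigwedge^t (P^1)^*, R) = \bigwedge^t_R P^1 = \bigcap^t_R P^1$) yields $\bigcap^t_R M \to \bigwedge^t_R P^1$. To show the image lies in $\ker(\varphi)$, fix $\xi \in \bigcap^t_R M$ with image $\widetilde{\xi}$ and pair against arbitrary $\ell \in (P^2)^*$, $\psi_2,\ldots,\psi_t \in (P^1)^*$:
\begin{equation*}
\widetilde{\xi}\bigl((\ell\circ\varphi) \wedge \psi_2 \wedge \cdots \wedge \psi_t\bigr) = \xi\bigl((\ell\circ\varphi)|_M \wedge \psi_2|_M \wedge \cdots \wedge \psi_t|_M\bigr) = 0,
\end{equation*}
since $\ell\circ\varphi$ restricts to zero on $M = \ker(\varphi)$. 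As $P^2$ is free of finite rank, the vanishing of all such pairings translates to $\varphi(\widetilde{\xi}) = 0$ in $P^2 \otimes_R \bigwedge^{t-1}_R P^1$.

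For bijectivity onto $\ker(\varphi)$, introduce $N := \operatorname{coker}\bigl((P^2)^* \xrightarrow{\varphi^*} (P^1)^*\bigr)$. The right exact presentation $(P^2)^* \to (P^1)^* \to N \to 0$, after applying $\Hom_R(\bigwedge^t(-), R)$ and interpreting $\varphi$ on exterior powers as contraction against $\varphi^*$, produces a natural identification
\begin{equation*}
\Hom_R\bigl(\bigwedge^t_R N,\, R\bigr) \;\cong\; \ker\Bigl(\bigwedge^t_R P^1 \xrightarrow{\varphi} P^2 \otimes_R \bigwedge^{t-1}_R P^1\Bigr).
\end{equation*}
On the other side, the restriction $(P^1)^* \to M^*$ factors through $N$, giving a natural map $N \to M^*$ whose kernel and cokernel are $\Ext^1_R(P^2/\operatorname{im}\varphi, R)$ and $\Ext^1_R(\operatorname{im}\varphi, R)$ respectively, extracted from the long exact Ext sequences attached to $0 \to M \to P^1 \to \operatorname{im}\varphi \to 0$ and $0 \to \operatorname{im}\varphi \to P^2 \to P^2/\operatorname{im}\varphi \to 0$.

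The main obstacle is then to deduce that $\Hom_R(\bigwedge^t_R M^*, R) \xrightarrow{\sim} \Hom_R(\bigwedge^t_R N, R)$, which closes the identification $\bigcap^t_R M = \ker(\varphi)$. Filter both $\bigwedge^t_R M^*$ and $\bigwedge^t_R N$ using the four-term sequence produced by $N \to M^*$; the successive graded pieces of the discrepancy contain at least one tensor factor among the two Ext-terms above. Since $\operatorname{im}\varphi$ is a finitely generated torsion-free $R$-module (being a submodule of the free module $P^2$) and $R$ is regular local, both Ext-terms have support in codimension $\geq 2$, and standard reflexivity facts over normal Cohen--Macaulay domains force $\Hom_R(-,R)$ to annihilate each such graded piece. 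Running this through the filtration gives the desired isomorphism; I expect this codimension/reflexivity step to be the main technical hurdle, whereas the bookkeeping of Steps 1--2 is essentially formal once the comparison through $N$ is set up.
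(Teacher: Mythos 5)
The paper does not prove this statement itself --- it is imported verbatim from \cite[Lemma B.2]{BS22} --- so the only thing to check is whether your argument works. Your Steps 1--2 are correct and are the standard first half: the right-exactness of exterior powers applied to the presentation $(P^2)^*\xrightarrow{\varphi^*}(P^1)^*\to N\to 0$, followed by $\Hom_R(-,R)$ and the perfect duality on free modules, does identify $\ker\bigl(\bigwedge^t P^1 \xrightarrow{\varphi} P^2\otimes\bigwedge^{t-1}P^1\bigr)$ with $\Hom_R(\bigwedge^t N,R)$, and the image of $\bigcap^t_R M$ does land there because $(P^1)^*\to M^*$ kills $\operatorname{im}(\varphi^*)$.

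The final step is where the argument breaks. Two independent problems. First, the hypothesis mismatch: the paper (following \cite{BS22}) only assumes $R$ is a complete \emph{Gorenstein} local ring, but you invoke ``$R$ regular local'' and ``standard reflexivity facts over normal Cohen--Macaulay domains.'' A Gorenstein ring need not be a domain or be normal (e.g.\ $k[[x,y]]/(xy)$), and over such $R$ a module with support of codimension $\geq 1$ can have nonzero $\Hom_R(-,R)$ --- so the vanishing you rely on is not available. Second, even granting $R$ regular, the asserted bound on the Ext-terms is wrong: torsion-freeness of $\operatorname{im}\varphi$ gives $\operatorname{codim}\operatorname{supp}\Ext^1_R(\operatorname{im}\varphi,R)\ge 2$, but the \emph{kernel} of $N\to M^*$ is $\Ext^1_R(P^2/\operatorname{im}\varphi,R)$, and $P^2/\operatorname{im}\varphi$ is an arbitrary finitely presented module. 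Concretely, with $R=\Z_p[[T]]$, $P^1=P^2=R$, $\varphi=p$, one gets $\Ext^1_R(R/p,R)\cong R/p$, supported in codimension $1$. So the quoted ``codimension $\geq 2$'' is false, and the subsequent $\Ext^1(-,R)$-vanishing you implicitly need when running the filtration argument is not justified by the stated reasons.

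A cleaner route, still entirely within your set-up, is to notice that since $P^1,P^2$ are finite free, $N^*=\ker\bigl((\varphi^*)^*\bigr)=\ker\varphi=M$, hence $M^*=N^{**}$, and your map $N\to M^*$ is exactly the biduality map $\iota_N\colon N\to N^{**}$. The closing claim then becomes the statement that $\Hom_R(\bigwedge^t N^{**},R)\to\Hom_R(\bigwedge^t N,R)$ is an isomorphism, which can be proved slot-by-slot from the unconditional splitting isomorphism $X^*\cong X^{***}$ (valid for any $R$-module over any commutative ring), without any regularity, normality, codimension, or reflexivity input. That is essentially how \cite{BS22} argues, and it is what gives the result at the stated level of generality.
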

		Many of the results in the following subsection are inspired by the work done in \cite{BCS23}.

		\subsection{Construction of special elements in the extended Selmer modules}
		Let $R$ be a complete Gorenstein local ring with a finite residue field of characteristic $p$ and $T$ be a free $R$-module of finite rank with a continuous $G_{F,\Sigma}$-action. Throughout this section, lets assume we have an $R\llbracket G_{F_\p}\rrbracket$-submodule $\scrF^+_\p T$ such that the quotient $T/\scrF^+_\p T$ is free as an $R$-module for $ \p = \p_1, \; \p_2 $. The condition \ref{Tam} also holds. \\
		We also have a Greenberg local condition $\Delta\coloneqq \Delta_{\scrF^+}$ with respect to $R[G_{F_{\p}}]$ submodule (free) $\scrF^+_{\p}T$ of $T$ for $ \p = \p_1, \; \p_2 $. Let us put
		$$ r= r(T,\Delta) \coloneqq -\chi(\widetilde{R\Gamma}_{\mathrm{f}}(G_{F,\Sigma},T,\Delta)) \in \Z $$
		As $ \widetilde{R\Gamma}_{\mathrm{f}}(G_{F,\Sigma},T,\Delta) \in D^{[1,2]}_{\mathrm{parf}}(\prescript{}{\R}{\mathrm{Mod}}) $ by Proposition \ref{dercat}.2, we have a short exact sequence 
		\begin{equation}
			\label{ses}
			0 \rightarrow \widetilde{H}^1_{\mathrm{f}}(G_{F,\Sigma},T,\Delta) \rightarrow R^{a+r} \xrightarrow{\varPhi} R^a \rightarrow \widetilde{H}^2_{\mathrm{f}}(G_{F,\Sigma},T,\Delta) \rightarrow 0
		\end{equation}
		for some integer $ a >0 $. We put 
		\[ \varPhi = (\varPhi_1, \cdots , \varPhi_a) \colon  R^{a+r} \rightarrow R^a \]
		and obtain a homomorphism 
		\[ \varPhi_a\circ \cdots \circ \varPhi_1 \colon \bigwedge_R^{a+r} R^{a+r} \to \bigwedge_R^{r} R^{a+r} \]
		Moreover, $ \varPhi \circ \varPhi_a\circ \cdots \circ \varPhi_1 =0 $. Hence by Proposition \ref{proposition_exterior_wedge}, we get 
		\[ \mathrm{im}(\varPhi_a\circ \cdots \circ \varPhi_1) \subset \bigcap_R^r \widetilde{H}^1_{\mathrm{f}}(G_{F,\Sigma},T,\Delta). \]
		When $ r=0 $, we also have $ \mathrm{im}(\varPhi_a\circ \cdots \circ \varPhi_1) \subset R $ and $ \bigcap_R^0 \widetilde{H}^1_{\mathrm{f}}(G_{F,\Sigma},T,\Delta) =R $ by definition.
		\begin{defn}
			We define the rank $ r $ module of leading terms as the cyclic $ R $-module $ \bbdelta_r(T,\Delta) $ to be the image of
			\[ \det(R^{a+r}) = \bigwedge_R^{a+r} R^{a+r} \rightarrow \bigcap_R^r \widetilde{H}^1_{\mathrm{f}}(G_{F,\Sigma},T,\Delta) \]
			induced by the exact sequence \ref{ses}.
		\end{defn}
		The definition of $\bbdelta_r(T,\Delta)$ is inspired from the definition of $\delta(T,\Delta)$ given in \cite[Definition 5.3]{BCS23}. 
		For any $0 \le i \le r$, we also have 
		$$ \bbdelta_i(T,\Delta) \coloneqq\left\lbrace  \delta_\phi \in  \bigcap^i_R\widetilde{H}^1_{\mathrm{f}}(G_{F,\Sigma},T,\Delta) \mid \delta \in  \bbdelta_r(T,\Delta), \; \phi \in \bigwedge^{r-i}_R \Hom_R(\widetilde{H}^1_{\mathrm{f}}(G_{F,\Sigma},T,\Delta),R) \right\rbrace  $$
		where $\delta_\phi \colon \bigwedge^{i}_R \Hom(\widetilde{H}^1_{\mathrm{f}}(G_{F,\Sigma},T,\Delta),R) \rightarrow R$ defined as $ \psi \mapsto \delta(\phi \wedge \psi) $.
		\begin{prop}
			The module $ \bbdelta_r(T,\Delta) $ is independent of the choice of the representation
			\[ [R^{a+r} \xrightarrow{\varPhi} R^a] \]
			of the Selmer complex $ \widetilde{R\Gamma}_{\mathrm{f}}(G_{F,\Sigma},T,\Delta) $.
		\end{prop}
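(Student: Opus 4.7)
The plan is to exhibit the submodule $\bbdelta_r(T,\Delta) \subset \bigcap^r_R \widetilde{H}^1_{\mathrm{f}}(G_{F,\Sigma},T,\Delta)$ as an invariant of the quasi-isomorphism class of the two-term representative, rather than of any one specific presentation $[R^{a+r} \xrightarrow{\varPhi} R^a]$ of $\widetilde{R\Gamma}_{\mathrm{f}}(G_{F,\Sigma},T,\Delta)$. The key reduction uses the fact that, since $\widetilde{R\Gamma}_{\mathrm{f}}(G_{F,\Sigma},T,\Delta) \in D^{[1,2]}_{\mathrm{parf}}({}_R\mathrm{Mod})$ by Proposition \ref{dercat}, any two such presentations $[R^{a+r} \xrightarrow{\varPhi} R^a]$ and $[R^{a'+r} \xrightarrow{\varPhi'} R^{a'}]$ are connected by a finite sequence of two elementary moves: (i) base changes $\varPhi \mapsto h \circ \varPhi \circ g^{-1}$ with $g \in \GL_{a+r}(R)$, $h \in \GL_a(R)$; and (ii) stabilization by the acyclic summand $[R \xrightarrow{\id} R]$, i.e.\ $\varPhi \mapsto \varPhi \oplus \id_R$. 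It therefore suffices to check invariance of $\bbdelta_r(T,\Delta)$ under each move.

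For base changes, write $\varPhi = (\varPhi_1,\dots,\varPhi_a)$ and let $e_1,\dots,e_{a+r}$ be the standard basis. Replacing the source basis via $g$ multiplies the generator $e_1 \wedge \cdots \wedge e_{a+r}$ of $\det(R^{a+r})$ by $\det(g)\in R^\times$, so the image in $\bigcap^r_R \widetilde{H}^1_{\mathrm{f}}$ is multiplied by a unit, leaving the cyclic submodule unchanged. Replacing the target basis via $h$ transforms the components of $\varPhi$ into linear combinations indexed by $h$; the standard derivation identity for contracting with elements of $M^*$ on exterior powers yields
\[
(h\varPhi)_a \circ \cdots \circ (h\varPhi)_1 \;=\; \det(h) \cdot \varPhi_a \circ \cdots \circ \varPhi_1,
\]
so again the generator is rescaled by a unit.

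For stabilization, the components of $\varPhi'' := \varPhi \oplus \id_R$ are $(\varPhi_1,\dots,\varPhi_a,\pi')$ with $\pi'$ the projection onto the extra summand $Re_{a+r+1}$. Applying the composite $\varPhi_a\circ\cdots\circ \varPhi_1 \circ \pi'$ to $e_1 \wedge \cdots \wedge e_{a+r+1}$, only the contraction of $\pi'$ against $e_{a+r+1}$ survives (with sign $(-1)^{a+r}$), after which the remaining $\varPhi_i$'s act on $e_1\wedge\cdots\wedge e_{a+r}$ precisely as in the unstabilized presentation. Hence the two generators agree up to $\pm 1$.

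The principal subtlety, and the step I expect to require the most care, is to verify that these are not merely isomorphic cyclic submodules but \emph{the same} submodule of a common ambient group $\bigcap^r_R \widetilde{H}^1_{\mathrm{f}}(G_{F,\Sigma},T,\Delta)$. This requires tracking, at each elementary move, how the identification $\widetilde{H}^1_{\mathrm{f}} \cong \ker(\varPhi)$ arising from the exact sequence \eqref{ses} is transported, and confirming that the embedding $\bigcap^r_R \widetilde{H}^1_{\mathrm{f}} \hookrightarrow \bigwedge^r_R R^{a+r}$ supplied by Proposition \ref{proposition_exterior_wedge} is compatible with base change and with stabilization. Once this compatibility is recorded, the generator computed on one side maps to the generator computed on the other (up to a unit), and the coincidence of submodules follows.
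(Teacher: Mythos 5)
Your proposal is correct, and it lands in essentially the same place as the paper's proof, but by a slightly different route. The paper argues directly: for any presentation $[R^{a+r}\xrightarrow{\varPhi}R^a]$, pick a minimal presentation $[R^{n+r}\to R^n]$ of $\widetilde{H}^2_{\mathrm{f}}$ and show that $[R^{a+r}\to R^a]$ splits as $[R^{n+r}\to R^n]\oplus[R^{a-n}\xrightarrow{\sim}R^{a-n}]$, after which a single commutative diagram compares the two determinant maps. You instead invoke the Whitehead-type principle that any two free two-term representatives of a perfect complex in $D^{[1,2]}_{\mathrm{parf}}$ are connected by base changes and stabilizations, and verify invariance under each elementary move. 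The content is the same: your "stabilization" step is precisely the acyclic summand the paper splits off, and your base-change check (multiplication by $\det g$, $\det h\in R^\times$) is the sign- and unit-ambiguity the paper sweeps into the choice of the isomorphism in its diagram. Your version is somewhat more explicit about the base-change invariance (which the paper leaves implicit, since it compares two arbitrary presentations only through the intermediary of a minimal one — and does not separately address two different minimal presentations, which differ exactly by a base change), so your proof arguably plugs a small gap. The one point you should state rather than gesture at is why the elementary-moves generation holds for two-term complexes; but this is exactly what the paper's splitting argument supplies, so the two proofs are complementary: the paper's construction justifies the move-generating claim your proof takes as input. Your concern in the final paragraph — that the identification $\widetilde{H}^1_{\mathrm{f}}\cong\ker\varPhi$ and the embedding of Proposition \ref{proposition_exterior_wedge} are functorial under the moves — is the right thing to watch, and the checks you sketch (determinant transforms by a unit, stabilization contracts away the extra factor with a sign) do settle it.
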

		\begin{proof}
			Let $ n \in \Z_{\ge 0} $ be the smallest integer such that $ \cohom^2( G_{F,\Sigma},T,\Delta)  $ is generated by $ n $ elements. Consider the exact sequence
			\[R^m \xrightarrow{\phi} R^n \rightarrow \cohom^2( G_{F,\Sigma},T,\Delta) \rightarrow 0,\] 
			Without loss of generality, we can assume a splitting $ R^m = R^n \oplus R^{m-n} $ such that $ \phi = \mathrm{pr}_1 $. Let us denote
			\[ K \coloneqq \ker (R^m \rightarrow \cohom^2(\G,T,\Delta)). \]
			Therefore we get $ K \oplus R^{a-n} = \ker(R^a \rightarrow \cohom^2(\G,T,\Delta)) $. As $ R^{a +r} \rightarrow  K \oplus R^{a-n} $ is surjective, there exist a splitting $ R^{a+r} = R^{n+r} \oplus R^{a-n} $ such that $ \im(R^{n+r} \to K\oplus R^{a-n}) = K $. Hence $ [R^{n+r} \to R^n] $ is a representative of $ \dcom(\G,T,\Delta) $ and 
			\[[R^{a+r} \xrightarrow{\varPhi} R^a] = [R^{n+r} \to R^n] \oplus [R^{a-n} \xrightarrow{\sim} R^{a-n}]. \]
			Therefore, the diagram 
			\[\xymatrix{
				\det(R^{a+r}) \ar[r] \ar[d]^{\cong}  & \cap^r_R \cohom^1(\G,T,\Delta) \ar@{=}[d] \\
				\det(R^{m+r}) \ar[r]   & \cap^r_R \cohom^1(\G,T,\Delta)
			}\]
			is commutative, thus completing the proof.
		\end{proof}
		\begin{prop}
			\label{prop : sep10}
			The $R$-module $\widetilde{H}^2_{\mathrm{f}}(G_{F,\Sigma},T,\Delta)$ is torsion if and only if $\bbdelta_r(T,\Delta)$ is generated by an $R$-regular element of $ \bigcap_R^r\widetilde{H}^1_{\mathrm{f}}(\G,T,\Delta) $.
		\end{prop}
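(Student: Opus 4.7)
The plan is to reduce both assertions to a purely ring-theoretic statement about the matrix of $\varPhi$ in the presentation \eqref{ses}, and then to apply standard Fitting-ideal and associated-prime technology. First I would fix bases and write $\varPhi$ as an $a\times(a+r)$ matrix $M=(m_{ij})$. A direct Leibniz expansion of the iterated contraction shows that the canonical generator
\[ x := (\varPhi_{a}\circ \cdots \circ \varPhi_{1})(e_{1}\wedge\cdots\wedge e_{a+r}) \in \bigcap_{R}^{r}\cohom^{1}(G_{F,\Sigma},T,\Delta), \]
when viewed inside the free module $\bigwedge_{R}^{r}R^{a+r}$ via the embedding of \cref{proposition_exterior_wedge}, equals $\sum_{|I|=r} \pm M_{I^{c}}\,e_{I}$, where $M_{I^{c}}$ runs through the $a\times a$ minors of $M$. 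In particular, the coefficients of $x$ generate precisely $\Fitt_{0}(\cohom^{2}(G_{F,\Sigma},T,\Delta))$, and by construction $\bbdelta_{r}(T,\Delta)=Rx$.

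Next I would observe that, since $\bbdelta_{r}(T,\Delta)$ is cyclic, it admits an $R$-regular generator if and only if $x$ itself is $R$-regular: any other generator has the form $y=ax$, and $\operatorname{Ann}_{R}(y)\supseteq \operatorname{Ann}_{R}(x)$, so regularity of $y$ forces regularity of $x$. Since the annihilator of an element of a free module is the intersection of the annihilators of its coefficients, we deduce
\[ \operatorname{Ann}_{R}(x) \;=\; \operatorname{Ann}_{R}\!\bigl(\Fitt_{0}(\cohom^{2}(G_{F,\Sigma},T,\Delta))\bigr). \]
Hence the proposition reduces to the equivalence: $\cohom^{2}(G_{F,\Sigma},T,\Delta)$ is $R$-torsion if and only if $\operatorname{Ann}_{R}(\Fitt_{0}(\cohom^{2}(G_{F,\Sigma},T,\Delta)))=0$.

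For the direction $(\Rightarrow)$, I would tensor \eqref{ses} with the total ring of fractions $Q(R)$. Torsion-ness of $\cohom^{2}$ means $\cohom^{2}\otimes_R Q(R)=0$; since Fitting ideals commute with base change, $\Fitt_{0}\cdot Q(R)=Q(R)$, so $\Fitt_{0}$ contains a non-zero-divisor $d\in R$. The relation $d\cdot\operatorname{Ann}_{R}(\Fitt_{0})=0$ then forces $\operatorname{Ann}_{R}(\Fitt_{0})=0$. For the converse, I would argue via associated primes of the Noetherian ring $R$: if $\operatorname{Ann}_{R}(\Fitt_{0})=0$ then $\Fitt_{0}$ cannot lie in any associated prime $P=\operatorname{Ann}_{R}(s)$ of $R$ (otherwise $s\cdot\Fitt_{0}=0$ with $s\neq 0$), and prime avoidance applied to the finitely many associated primes produces a non-zero-divisor inside $\Fitt_{0}$. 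The standard containment $\Fitt_{0}(\cohom^{2})\subseteq \operatorname{Ann}_{R}(\cohom^{2})$ for finitely presented modules then shows that $\cohom^{2}$ is annihilated by a non-zero-divisor, hence $R$-torsion.

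The only delicate point is that the hypothesis merely requires $R$ to be complete Gorenstein local, so $R$ need not be a domain; this forces the argument to go through the total ring of fractions $Q(R)$ and the associated primes of $R$ rather than a single fraction field, but no deeper input beyond the perfectness of the Selmer complex (\cref{dercat}) and the embedding of \cref{proposition_exterior_wedge} is required. The matching of $\operatorname{Ann}_{R}(x)$ with $\operatorname{Ann}_{R}(\Fitt_{0}(\cohom^{2}))$, visible by the explicit minor formula for $x$, is what ties the \emph{module of leading terms} to the classical torsion dichotomy and will also be the key computation reused in later sections.
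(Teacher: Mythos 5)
Your proof is correct, but it follows a genuinely different route from the paper's. The paper works entirely over the total ring of fractions $Q$: in the ``only if'' direction it observes that surjectivity of $Q^{a+r}\to Q^a$ makes $\bbdelta_r(T,\Delta)\otimes_R Q\cong Q$, hence the cyclic generator has trivial annihilator; in the ``if'' direction it deduces from injectivity of $\varPhi_a\circ\cdots\circ\varPhi_1\otimes Q$ on the top exterior power that already $\varPhi_1\otimes Q$ is injective, hence (using that in $Q$ every regular element is a unit) $\varPhi_1:Q^{a+r}\to Q$ is surjective, and then peels off one $\varPhi_i$ at a time to get surjectivity of $\varPhi\otimes Q$. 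You instead compute the generator $x$ explicitly as the vector of maximal $a\times a$ minors of the presentation matrix, identify its coefficient ideal with $\Fitt_0(\cohom^2)$, observe $\operatorname{Ann}_R(x)=\operatorname{Ann}_R(\Fitt_0)$, and reduce the proposition to the ring-theoretic equivalence ``$\cohom^2$ torsion $\Leftrightarrow\Fitt_0$ contains a non-zerodivisor $\Leftrightarrow\operatorname{Ann}(\Fitt_0)=0$'', which you settle by prime avoidance over the associated primes together with the classical inclusion $\Fitt_0\subseteq\operatorname{Ann}$. Both arguments ultimately invoke the same non-domain subtlety (finitely many associated primes of the Noetherian ring $R$, or equivalently that non-units in $Q(R)$ are zerodivisors), but your minor-formula packaging has the advantage of giving a closed-form expression for the canonical generator and the identity $\operatorname{Ann}_R(x)=\operatorname{Ann}_R(\Fitt_0(\cohom^2))$, which dovetails with the later statements such as \cref{r0}(1) where $\bbdelta_0=\Fitt_0$; the paper's argument is more economical and avoids explicit matrix bookkeeping.
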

		\begin{proof}
			Let $Q := \mathrm{Frac}(R)$. If $\widetilde{H}^2_{\rm f}(G_{F, \Sigma}, T, \Delta)$  is torsion, then surjectivity of the homomorphism $Q^{a+r} \to Q^a$ implies $
			\bbdelta_r(T, \Delta) \otimes_R Q   \cong Q 
			$
			by construction. Hence $\bbdelta_r(T, \Delta)$ is generated by an $R$-regular element of ${\bigcap}^{r}_{R}\widetilde{H}^1_{\rm f} (G_{F, \Sigma}, T, \Delta)$. 
			
			To show the converse, we get injective homomorphism 
			\[
			\varPhi_a \circ \cdots \circ \varPhi_1  \colon {\bigwedge}^{r+a}_Q Q^{r+a} \longrightarrow {\bigwedge}^r_{Q} Q^{r+a}
			\]
			since $\bbdelta_r(T, \Delta)$ is generated by an $R$-regular element. Then $\varPhi_1 \colon {\bigwedge}^{r+a}_Q Q^{r+a} \to {\bigwedge}^{r+a-1}_Q Q^{r+a}$ is injective, and hence $\varPhi_1 \colon Q^{r+a} \to Q$ is surjective. 
			Then there is a splitting $Q^{r+a} =  \ker(\varPhi_1)\oplus Q$ such that 
			\[
			\varPhi_a \circ \cdots \circ \varPhi_2  \colon \bigwedge^{r+a-1}_Q \ker(\varPhi_1) \rightarrow {\bigwedge}^r_{Q} \ker(\varPhi_1)
			\]
			is injective. Similarly, we can prove that $ \varPhi_i $ is surjective for $ 2 \le i \le a $. So we conclude that $\varPhi \colon Q^{r+a} \to Q^r$ is surjective, that is, 
			$\widetilde{H}^2_{\rm f}(G_{F, \Sigma}, T, \Delta) \otimes_R Q = 0$.  
		\end{proof}
		The next one is inspired by \cite[Theorem 5.6]{BCS23}.
		\begin{thm}
			\label{H2tors}
			If $\bbdelta_r(T,\Delta)$ is not trivial, then $$ \mathrm{char}_R\left(\bigcap^r_R \widetilde{H}^1_{\mathrm{f}}(G_{F,\Sigma},T,\Delta)/\bbdelta_r(T,\Delta)\right)  = \mathrm{char}_R(\widetilde{H}^2_{\mathrm{f}}(G_{F,\Sigma},T,\Delta)). $$
		\end{thm}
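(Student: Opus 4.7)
The plan is to match both characteristic ideals prime-by-prime at height-one primes of $R$. Since $\bbdelta_r(T,\Delta)$ is assumed to be non-trivial, Proposition \ref{prop : sep10} forces $\widetilde{H}^2_{\mathrm{f}}(G_{F,\Sigma},T,\Delta)$ to be $R$-torsion. Combined with the fact that $\widetilde{R\Gamma}_{\mathrm{f}}(G_{F,\Sigma},T,\Delta)$ is concentrated in degrees $1$ and $2$ with Euler characteristic $-r$, this forces $\widetilde{H}^1_{\mathrm{f}}(G_{F,\Sigma},T,\Delta)$ to have generic rank $r$. Moreover, being the kernel of a map between finite free $R$-modules via \eqref{ses}, $\widetilde{H}^1_{\mathrm{f}}$ is finitely presented, so the formation of $(-)^*$ and hence of $\bigcap^{r}_{R}$ commutes with localization at every height-one prime; the same is visibly true for $\bbdelta_r$, whose construction only involves exterior powers and the map $\varPhi$.

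Next I fix a height-one prime $\frak P$ of $R$ and let $\pi$ be a uniformizer of the DVR $R_{\frak P}$. Smith normal form applied to $\varPhi_{\frak P}\colon R_{\frak P}^{a+r}\to R_{\frak P}^{a}$ yields bases $\{e_1,\ldots, e_{a+r}\}$ of $R_{\frak P}^{a+r}$ and $\{e'_1,\ldots, e'_a\}$ of $R_{\frak P}^{a}$ in which the matrix of $\varPhi_{\frak P}$ is the block matrix $(\mathrm{diag}(\pi^{c_1},\ldots,\pi^{c_a})\,|\,0)$ for some integers $c_i\ge 0$. Consequently $\widetilde{H}^2_{\mathrm{f},\frak P}\cong \bigoplus_{i=1}^{a} R_{\frak P}/(\pi^{c_i})$ is a torsion module of length $c_1+\cdots+c_a$, while $\widetilde{H}^1_{\mathrm{f},\frak P}=\ker(\varPhi_{\frak P})$ is the free $R_{\frak P}$-module on $e_{a+1},\ldots, e_{a+r}$.

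A direct computation of the iterated contraction $\varPhi_{a,\frak P}\circ\cdots\circ\varPhi_{1,\frak P}$ applied to the top wedge $e_1\wedge\cdots\wedge e_{a+r}$ produces $\pi^{c_1+\cdots+c_a}\,e_{a+1}\wedge\cdots\wedge e_{a+r}$, so that $(\bbdelta_r)_{\frak P}$ equals the ideal $\pi^{c_1+\cdots+c_a}R_{\frak P}$ inside the free rank-one $R_{\frak P}$-module $\bigwedge^{r}_{R_{\frak P}}\widetilde{H}^1_{\mathrm{f},\frak P} = \bigl(\bigcap^{r}_{R}\widetilde{H}^1_{\mathrm{f}}\bigr)_{\frak P}$. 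This gives the equality of lengths
\[ \mathrm{length}_{R_{\frak P}}\bigl((\bigcap^{r}_R \widetilde{H}^1_{\mathrm{f}}/\bbdelta_r)_{\frak P}\bigr) \;=\; c_1+\cdots+c_a \;=\; \mathrm{length}_{R_{\frak P}}(\widetilde{H}^2_{\mathrm{f},\frak P}), \]
and, taking the product over all height-one primes and invoking the normality of $R$, one recovers the desired equality of characteristic ideals. The main subtlety will be justifying the compatibility of both $\bigcap^{r}_{R}$ and $\bbdelta_r$ with localization at a height-one prime, together with the identification of $\bigcap^{r}_{R_{\frak P}}\widetilde{H}^1_{\mathrm{f},\frak P}$ with $\bigwedge^{r}_{R_{\frak P}}\widetilde{H}^1_{\mathrm{f},\frak P}$ on the free module; once these formalities are in place, the whole argument reduces to linear algebra over a DVR applied to the two-term presentation \eqref{ses}.
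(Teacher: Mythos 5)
Your proof is correct and follows essentially the same route as the paper's: localize at a height-one prime, put $\varPhi$ in Smith normal form over the DVR $R_{\frak P}$, compute the image of the top wedge under the iterated contraction, and compare lengths. The only additions you make beyond the paper are to flag explicitly the compatibility of $\bigcap^r_R$ and $\bbdelta_r$ with localization (which the paper leaves implicit); your justification via finite presentation of $\widetilde{H}^1_{\mathrm{f}}$ inside the two-term resolution is sound.
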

		\begin{proof}
			Let $\fp$ be a height-1 prime ideal of $R$. 
			Since $R_\fp$ is a discrete valuation ring and $\widetilde{H}^2_{\rm f}(G_{F, \Sigma}, T, \Delta)$  is torsion, 
			we may assume that $\varPhi_i = \pi^{n_i} \cdot \mathrm{pr}_i$ for some integer $n_i \in \Z_{\ge0} $. 
			Here $\pi$ denotes an uniformizer of $R_\fp$ and $ \mathrm{pr}_i$ projection onto the $i$th factor. 
			Lets denote $\{e_1, \ldots, e_{r+a}\}$ as the standard basis of $R_\fp^{r+a}$, then we have 
			$\widetilde{H}^1_{\rm f}(G_{F, \Sigma}, T, \Delta) \otimes_{R} R_\fp = R_\fp e_{a+1} + \cdots + R_\fp e_{a+r}$ and 
			\begin{align*}
				\bbdelta_r(T, \Delta)R_\fp 
				&= \mathrm{im}\left( \det(R_\fp^{r+a}) \longrightarrow {\bigwedge}^r_{R_\fp}R_\fp^{r+a} \right) 
				\\
				&= \varPhi_a \circ \cdots \circ \varPhi_1(e_1 \wedge \cdots \wedge e_{r+a})R_\fp
				\\
				&= \pi^{\sum n_i} {\bigwedge}^r_{R_\fp} (\widetilde{H}^1_{\rm f}(G_{F, \Sigma}, T, \Delta) \otimes_R R_\fp). 
			\end{align*}
			We therefore conclude that 
			\begin{equation*}
				\mathrm{length}_{\fp} \left( \bigcap^r_R \widetilde{H}^1_{\rm f}(G_{F, \Sigma}, T, \Delta)   /  \bbdelta_r(T, \Delta)  \right) 
				= \sum n_i		
				= \mathrm{length}_{\fp}\left( \widetilde{H}^2_{\rm f}(G_{F, \Sigma}, T, \Delta) \right), 
			\end{equation*}
			which completes the proof. 
		\end{proof}
		\begin{lemma}\cite[Lemma 5.8]{BCS23}
			Let $S$ be a 0-dimensional Gorenstein local ring, and $M$ a finitely generated $S$-module. Let $\delta \in M^*$ . Then by Maltis duality, we have a natural isomorphism 
			$$ (S\delta)^* \rightarrow \rm im (\delta), \quad \phi \mapsto \phi(\delta) $$
		\end{lemma}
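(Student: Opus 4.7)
The plan is to reduce the statement to the standard Matlis-duality identity $(0:(0:I)) = I$ for ideals $I$ of a zero-dimensional Gorenstein local ring $S$, together with the explicit description of $\Hom_S(S/J,S)$.

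First, I would analyze the cyclic submodule $S\delta \subseteq M^*$. The map $S \twoheadrightarrow S\delta$, $s \mapsto s\delta$, has kernel
\[
\Ann_S(\delta) \;=\; \{s \in S : s\cdot\delta(m) = 0 \text{ for all } m \in M\} \;=\; \Ann_S(\im(\delta)),
\]
so $S\delta \cong S/\Ann_S(\im(\delta))$ via the isomorphism that sends $\delta$ to the class of $1$. Let me write $I := \im(\delta) \subseteq S$; this is an ideal of $S$, and $\Ann_S(\delta) = (0:_S I)$.

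Next, I would apply the standard identification
\[
\Hom_S(S/J,\,S) \;\xrightarrow{\sim}\; (0:_S J), \qquad \phi \mapsto \phi(1),
\]
valid for any ideal $J$ of $S$. Combined with the previous step, this yields a natural isomorphism
\[
(S\delta)^{*} \;\xrightarrow{\sim}\; (0:_S\Ann_S(I)), \qquad \phi \mapsto \phi(\delta),
\]
because $\delta$ corresponds to $1 \in S/\Ann_S(I)$ under the identification. Thus the map in the statement has image exactly $(0:_S\Ann_S(I))$, and it is injective on $(S\delta)^*$.

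The final step, and the one place where the hypothesis that $S$ is a zero-dimensional Gorenstein local ring is essential, is to invoke the double-annihilator identity
\[
(0:_S\Ann_S(I)) \;=\; I
\]
for any ideal $I \subseteq S$. This is precisely Matlis duality on $S$ itself: a zero-dimensional Gorenstein local ring is self-injective and coincides with the injective hull of its residue field, so the contravariant exact functor $\Hom_S(-,S)$ implements a perfect duality on finitely generated modules, and in particular $\Ann\circ\Ann$ is the identity on the lattice of ideals. Substituting this identity gives $(S\delta)^* \xrightarrow{\sim} I = \im(\delta)$, which is the claimed natural isomorphism. The only subtle point is the double-annihilator equality; everything else is a formal unwinding, so I do not anticipate a serious obstacle.
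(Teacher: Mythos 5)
The paper does not prove this lemma; it cites \cite[Lemma 5.8]{BCS23}, so there is no in-text proof to compare against. Your argument is correct and is the natural unwinding of the phrase \enquote{by Matlis duality} in the statement: you identify $S\delta \cong S/(0:_S I)$ with $I=\im\delta$, use $\Hom_S(S/J,S)\cong (0:_S J)$ via $\phi\mapsto\phi(1)$, and then invoke the double-annihilator identity $(0:_S(0:_S I))=I$, which holds precisely because $S$ is Artinian Gorenstein (one can also see it by the length count $\ell(I)+\ell(0:_S I)=\ell(S)$ coming from exactness and length-preservation of $\Hom_S(-,S)$). Tracing $\delta\mapsto\bar 1$ through the chain of isomorphisms gives exactly the map $\phi\mapsto\phi(\delta)$. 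The one point you might make more explicit is that $\im\delta$ is indeed an ideal of $S$ (immediate, since $\delta$ is $S$-linear), and that the $\delta=0$ case is trivial; neither is a gap.
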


		\begin{prop}
			\label{Fitt}
			If $R/I$ is a $0$-dimensional Gorenstein local ring, then any generator $\delta$ of $\bbdelta_r(T /IT, \Delta)$ induces an isomorphism
			$$ \bbdelta_r (T /IT, \Delta)^* \xrightarrow{\sim} \Fitt^0_{R/I}\widetilde{H}^2_{\mathrm{f}}(G_{F,\Sigma},T /IT,\Delta); \quad \phi \mapsto \phi(\delta) $$
		\end{prop}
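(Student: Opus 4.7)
\emph{Plan.} The strategy is to reduce to the Matlis duality lemma stated just above, combined with an explicit description of $\delta$ in terms of the presentation \eqref{ses}. First I would interpret $\delta$ as a functional: by construction,
\[
\bbdelta_r(T/IT, \Delta) \subset {\bigcap}^r_{R/I} \widetilde{H}^1_{\mathrm{f}}(G_{F,\Sigma}, T/IT, \Delta) = \left( {\bigwedge}^r_{R/I} \widetilde{H}^1_{\mathrm{f}}(G_{F,\Sigma}, T/IT, \Delta)^* \right)^*,
\]
so $\delta$ lies in $M^*$ for the finitely generated $R/I$-module $M := \bigwedge^r_{R/I} \widetilde{H}^1_{\mathrm{f}}(G_{F,\Sigma}, T/IT, \Delta)^*$. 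Since $\delta$ generates $\bbdelta_r(T/IT, \Delta)$ by hypothesis and $R/I$ is $0$-dimensional Gorenstein (hence self-injective), applying the preceding lemma to $(M, \delta)$ yields a natural isomorphism
\[
\bbdelta_r(T/IT, \Delta)^* = ((R/I)\delta)^* \xrightarrow{\sim} \im(\delta \colon M \to R/I), \qquad \phi \mapsto \phi(\delta).
\]

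It then remains to identify $\im(\delta)$ with $\Fitt^0_{R/I}(\widetilde{H}^2_{\mathrm{f}}(G_{F,\Sigma}, T/IT, \Delta))$. By Proposition \ref{dercat} we may reduce \eqref{ses} modulo $I$ to obtain an exact sequence
\[
0 \to \widetilde{H}^1_{\mathrm{f}}(G_{F,\Sigma}, T/IT, \Delta) \to (R/I)^{a+r} \xrightarrow{\varPhi} (R/I)^a \to \widetilde{H}^2_{\mathrm{f}}(G_{F,\Sigma}, T/IT, \Delta) \to 0,
\]
and Proposition \ref{proposition_exterior_wedge} identifies $\bigcap^r_{R/I} \widetilde{H}^1_{\mathrm{f}}(G_{F,\Sigma}, T/IT, \Delta)$ with $\ker\bigl(\varPhi \colon \bigwedge^r_{R/I} (R/I)^{a+r} \to (R/I)^a \otimes \bigwedge^{r-1}_{R/I} (R/I)^{a+r}\bigr)$. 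Under this identification, the definition of $\bbdelta_r(T/IT, \Delta)$ together with the standard exterior-algebra contraction gives
\[
\delta \;=\; \varPhi_a \circ \cdots \circ \varPhi_1 (e_1 \wedge \cdots \wedge e_{a+r}) \;=\; \sum_{|K|=r} \pm \det\!\bigl(\varPhi|_{K^c}\bigr)\, e_K,
\]
where $K$ ranges over $r$-subsets of $\{1, \dots, a+r\}$ and $\varPhi|_{K^c}$ denotes the $a \times a$ minor of $\varPhi$ on the complementary columns. Because $R/I$ is self-injective, the dual of the inclusion $\widetilde{H}^1_{\mathrm{f}} \hookrightarrow (R/I)^{a+r}$ is a surjection, and hence so is $\bigwedge^r_{R/I} ((R/I)^{a+r})^* \twoheadrightarrow \bigwedge^r_{R/I} \widetilde{H}^1_{\mathrm{f}}(G_{F,\Sigma}, T/IT, \Delta)^*$; pairing $\delta$ against the dual-basis vectors $f_K$ therefore realises every $a \times a$ minor of $\varPhi$ as a value of $\delta$. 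Consequently $\im(\delta)$ equals the ideal of $a \times a$ minors of $\varPhi$, which by definition is $\Fitt^0_{R/I}(\widetilde{H}^2_{\mathrm{f}}(G_{F,\Sigma}, T/IT, \Delta))$; combining this with the first step completes the proof.

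The main delicacy in this plan is verifying the explicit contraction formula for $\delta$: one must confirm that the embedding $\bigcap^r_{R/I} \widetilde{H}^1_{\mathrm{f}} \hookrightarrow \bigwedge^r_{R/I}(R/I)^{a+r}$ supplied by Proposition \ref{proposition_exterior_wedge} carries our $\delta$ to the displayed sum of minors, and to keep track of signs consistently through the iterated contractions $\varPhi_a \circ \cdots \circ \varPhi_1$. Beyond this essentially sign-and-indexing book-keeping in exterior algebra, no real obstruction is anticipated; the Gorenstein hypothesis does all the duality-theoretic work through the previous lemma.
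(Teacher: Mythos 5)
Your proposal is correct and matches the approach the paper intends: the paper supplies no proof but places Proposition \ref{Fitt} immediately after the Matlis duality lemma \cite[Lemma 5.8]{BCS23}, precisely so that one reduces $\bbdelta_r(T/IT,\Delta)^* \cong \im(\delta)$ via that lemma and then identifies $\im(\delta)$ with $\Fitt^0_{R/I}\widetilde{H}^2_{\rm f}(G_{F,\Sigma},T/IT,\Delta)$ by observing that the coordinates of $\varPhi_a\circ\cdots\circ\varPhi_1(e_1\wedge\cdots\wedge e_{a+r})$ in the standard basis of $\bigwedge^r(R/I)^{a+r}$ are exactly the $a\times a$ minors of the presentation matrix $\varPhi$. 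Your use of the Gorenstein (self-injective) hypothesis to pass the surjection $\bigwedge^r\bigl((R/I)^{a+r}\bigr)^* \twoheadrightarrow \bigwedge^r\widetilde{H}^1_{\rm f}(G_{F,\Sigma},T/IT,\Delta)^*$ and thereby realize every maximal minor as a value of $\delta$ is the one point that genuinely requires an argument, and you have supplied it correctly.
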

		\subsubsection{}
		When $r=0$, by definition we have $ \cap_R^0 M = S$ for any commutative ring S and any finitely generated $S$-module M. As a result, $ \bbdelta_0(T,\Delta) \subset R $. We have the following theorem 
		\begin{thm}
			\label{r0}
			Suppose that $ r=0 $
			\begin{enumerate}
				\item $ \bbdelta_0(T,\Delta) = \Fitt^0_R \widetilde{H}^2_{\mathrm{f}}(G_{F,\Sigma},T,\Delta) $.
				\item If $ \widetilde{H}^1_{\mathrm{f}}(G_{F,\Sigma}, T, \Delta)=0 $, then $ \widetilde{H}^2_{\mathrm{f}}(G_{F,\Sigma},T,\Delta) $ is torsion and $$ \bbdelta_0(T,\Delta)= \Char_R(\widetilde{H}^2_{\mathrm{f}}(G_{F,\Sigma}, T, \Delta)) .$$
				In other words, $ \bbdelta(T,\Delta) $ coincides with the module of algebraic $p$-adic $L$-functions for $(T,\Delta)$.
				\item If $ \bbdelta_0(T,\Delta) $ is generated by a regular element of $R$, then $ \widetilde{H}^1_{\mathrm{f}}(G_{F,\Sigma}, T, \Delta)=0 $
			\end{enumerate}
		\end{thm}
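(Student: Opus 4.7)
The plan rests on the observation that when $r=0$ the short exact sequence \eqref{ses} becomes a square presentation
\[ 0 \rightarrow \widetilde{H}^1_{\mathrm{f}}(G_{F,\Sigma},T,\Delta) \rightarrow R^{a} \xrightarrow{\varPhi} R^{a} \rightarrow \widetilde{H}^2_{\mathrm{f}}(G_{F,\Sigma},T,\Delta) \rightarrow 0, \]
and, under the convention $\bigcap^0_R \widetilde{H}^1_{\mathrm{f}}(G_{F,\Sigma},T,\Delta) = R$, the composite $\varPhi_{a}\circ \cdots \circ \varPhi_{1}$ that defines $\bbdelta_0(T,\Delta)$ is nothing but the determinant map $\det \varPhi \colon \det(R^a) \to R$. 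Consequently $\bbdelta_0(T,\Delta) = (\det \varPhi)\cdot R$, and all three assertions reduce to standard facts about a square presentation over a normal Gorenstein local ring.

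For (1), the zeroth Fitting ideal of a module presented by a square matrix is by definition generated by the determinant of that matrix, hence $\Fitt^0_R \widetilde{H}^2_{\mathrm{f}}(G_{F,\Sigma},T,\Delta) = (\det \varPhi)$; combined with the previous paragraph this immediately gives the claimed equality with $\bbdelta_0(T,\Delta)$.

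For (2), the hypothesis $\widetilde{H}^1_{\mathrm{f}}(G_{F,\Sigma},T,\Delta)=0$ says exactly that $\ker \varPhi = 0$, so $\varPhi$ is an injective endomorphism of a finite free $R$-module; after base change to $\mathcal{Q}=\operatorname{Frac}(R)$ it becomes an isomorphism, and $\widetilde{H}^2_{\mathrm{f}}(G_{F,\Sigma},T,\Delta) = \operatorname{coker}(\varPhi)$ is $R$-torsion. To identify $(\det \varPhi)$ with $\Char_R(\widetilde{H}^2_{\mathrm{f}}(G_{F,\Sigma},T,\Delta))$, I would localize at each height-one prime $\fp$ of $R$, where $R_{\fp}$ is a DVR, and read off a Smith normal form $\varPhi = \mathrm{diag}(\pi^{n_1},\ldots,\pi^{n_a})$; then both ideals have $\fp$-adic length $\sum_i n_i$, exactly as in the computation already carried out in the proof of \cref{H2tors}. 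Normality of $R$ then gives the global equality, identifying $\bbdelta_0(T,\Delta)$ with the module of algebraic $p$-adic $L$-functions.

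For (3), I simply reverse this reasoning: if $\bbdelta_0(T,\Delta) = (\det \varPhi)$ is generated by an $R$-regular element, then $\det \varPhi$ is a non-zero-divisor, so $\varPhi \otimes_R \mathcal{Q}$ is an isomorphism and $\ker \varPhi$ is $R$-torsion; but $\ker \varPhi \subset R^a$ sits inside a torsion-free module over the domain $R$, hence vanishes. I do not anticipate any genuine obstacle: the whole statement is essentially a bookkeeping exercise repackaging the $r=0$ special case of \cref{prop : sep10} and \cref{H2tors} via the square-matrix identity $\Fitt^0 = (\det\varPhi) = \Char_R$. The only subtle point to keep in mind is to invoke that $R$ is a normal (hence, in particular, an integral) domain when passing between torsion-freeness in $R^a$ and injectivity over $\mathcal{Q}$.
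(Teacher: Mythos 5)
Your proposal is correct. For part (1) you take a genuinely more direct route than the paper: rather than running an inverse limit of $\bbdelta_0(T/IT,\Delta)$ over $0$-dimensional Gorenstein quotients and then invoking Matlis duality via Proposition~\ref{Fitt} together with Proposition~\ref{dercat}, you observe that when $r=0$ the presentation \eqref{ses} is a square matrix $\varPhi\colon R^a\to R^a$, so $\varPhi_a\circ\cdots\circ\varPhi_1$ is the determinant map and $\bbdelta_0(T,\Delta)=(\det\varPhi)$, while $\Fitt^0_R$ of a square presentation is by definition $(\det\varPhi)$; this is shorter, avoids the Gorenstein/Matlis machinery entirely, and works one level of generality earlier (no limit argument over ideals $I$ needed). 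For parts (2) and (3) your argument---base change to $\operatorname{Frac}(R)$, Smith normal form at height-one primes, and torsion-freeness of submodules of free modules over a domain---is essentially the same reasoning the paper packages into Corollary~\ref{cor : sep10}, Theorem~\ref{H2tors}, and Proposition~\ref{prop : sep10}, merely unpacked and made self-contained. You correctly flag the one subtlety that both proofs rely on: the definition of $\Char_R$ and the torsion-freeness argument silently require $R$ to be a normal domain (in fact regular, per the paper's convention for characteristic ideals), a hypothesis that Theorem~\ref{r0} does not restate but which is in force throughout its applications.
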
  
		\begin{proof} ---
			\begin{enumerate}
				\item By Propositions \ref{dercat} and \ref{Fitt} we have
				\[
				\begin{split}
					\bbdelta_0(T,\Delta) &= \varprojlim_I \bbdelta_0(T/IT,\Delta) \\
					&= \varprojlim_I \Fitt^0_{R/I}\widetilde{H}^2_{\mathrm{f}}(G_{F,\Sigma},T /IT,\Delta) = \Fitt^0_{R}\widetilde{H}^2_{\mathrm{f}}(G_{F,\Sigma},T,\Delta).
				\end{split}
				\]
				\item This is consequence of Corollary \ref{cor : sep10} and Theorem \ref{H2tors}.
				\item Follows from Propossition \ref{prop : sep10}. 
			\end{enumerate}
		\end{proof}
		\subsubsection{}
		When $r=1$ we note that for any $0$-dimensional Gorenstein local ring $S$ and any finitely generated $S$-module $M$ that the canonical homomorphism $M \rightarrow \bigcap^r_R M= M^{**}$ is an isomorphism by Maltis duality. Hence we have 
		$$ \bbdelta_1(T,\Delta) = \varprojlim_I \bbdelta_1(T/IT,\Delta) \subset \varprojlim_I \widetilde{H}^1_{\mathrm{f}}(G_{F,\Sigma},T/IT,\Delta)) =\widetilde{H}^1_{\mathrm{f}}(G_{F,\Sigma},T,\Delta)) $$ we have 
		\begin{thm}
			\label{leading_term_rank1}
			Suppose that $ \bbdelta_1(T, \Delta)  $ is generated by a regular element of $ R $. Then we have an exact sequence of $ R $-modules
			$$ 	0 \rightarrow \Fitt^0_R \widetilde{H}^2_{\mathrm{f}}(G_{F,\Sigma},T,\Delta) \rightarrow \bbdelta_0(T, \Delta) \rightarrow \Ext^2_R(\widetilde{H}^2_{\mathrm{f}}(G_{F,\Sigma},T,\Delta),R) \rightarrow 0 .$$
			As a consequences we have $$\Char(\widetilde{H}^1_{\mathrm{f}}(G_{F,\Sigma},T,\Delta)/\bbdelta_1(T,\Delta)) =  \Char\widetilde{H}^2_{\mathrm{f}}(G_{F,\Sigma},T,\Delta).$$	
		\end{thm}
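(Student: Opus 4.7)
The plan is to reduce the statement to an identification coming from dualizing the structural presentation of the Selmer complex, where the regularity hypothesis on the generator of $\bbdelta_1(T,\Delta)$ enters precisely to force the injectivity of a single evaluation map.

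First I would fix a two-term free representation $\widetilde{R\Gamma}_{\mathrm{f}}(G_{F,\Sigma},T,\Delta)\cong [R^{a+1}\xrightarrow{\Phi} R^a]$ in degrees $[1,2]$ as in \eqref{ses}, so that $\widetilde{H}^1_{\mathrm{f}} = \ker\Phi$ and $\widetilde{H}^2_{\mathrm{f}} = \operatorname{coker}\Phi$. Unwinding the definition of $\bbdelta_1$ in terms of the iterated contraction $\Phi_a\circ\cdots\circ\Phi_1$ shows that its generator is
\[
\delta_0 \;=\; \sum_{k=1}^{a+1}(-1)^{k-1}\det(\Phi^{(k)})\,e_k \;\in\; \widetilde{H}^1_{\mathrm{f}} \subset R^{a+1},
\]
where $\Phi^{(k)}$ denotes the $a\times a$ submatrix of $\Phi$ obtained by deleting the $k$-th column. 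Pairing coordinate dual vectors with $\delta_0$ then identifies the image of $(R^{a+1})^* \xrightarrow{\mathrm{ev}_{\delta_0}} R$ with the ideal generated by the maximal minors of $\Phi$, that is, with $\Fitt^0_R(\widetilde{H}^2_{\mathrm{f}})$.

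Next, I would dualize the two short exact sequences $0\to \widetilde{H}^1_{\mathrm{f}}\to R^{a+1}\to N\to 0$ and $0\to N\to R^a\to \widetilde{H}^2_{\mathrm{f}}\to 0$ (where $N:=\operatorname{im}\Phi$). Since $\widetilde{H}^2_{\mathrm{f}}$ is $R$-torsion by Proposition \ref{prop : sep10}, we have $\Hom_R(\widetilde{H}^2_{\mathrm{f}},R)=0$, and a splicing of the resulting Hom--Ext long exact sequences produces
\[
0 \to L \to \widetilde{H}^1_{\mathrm{f}}{}^* \to \Ext^2_R(\widetilde{H}^2_{\mathrm{f}},R) \to 0, \qquad L:=\operatorname{im}\bigl((R^{a+1})^*\to \widetilde{H}^1_{\mathrm{f}}{}^*\bigr).
\]
By the very definition of $\bbdelta_0$, the map $\mathrm{ev}_{\delta_0}\colon \widetilde{H}^1_{\mathrm{f}}{}^*\to R$ has image $\bbdelta_0(T,\Delta)$, and the calculation of the previous paragraph shows that its restriction to $L$ has image $\Fitt^0_R(\widetilde{H}^2_{\mathrm{f}})$.

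The crux is to show that $\mathrm{ev}_{\delta_0}$ is injective. Applying $\Hom_R(-,R)$ to $0\to R\xrightarrow{\delta_0}\widetilde{H}^1_{\mathrm{f}}\to \widetilde{H}^1_{\mathrm{f}}/R\delta_0\to 0$ identifies $\ker(\mathrm{ev}_{\delta_0})$ with $(\widetilde{H}^1_{\mathrm{f}}/R\delta_0)^{*}$. By the reflexivity $\bigcap^1_R \widetilde{H}^1_{\mathrm{f}} = \widetilde{H}^1_{\mathrm{f}}$ (Proposition \ref{proposition_exterior_wedge} with $t=1$), Theorem \ref{H2tors} applied with $r=1$ yields $\Char_R(\widetilde{H}^1_{\mathrm{f}}/R\delta_0) = \Char_R(\widetilde{H}^2_{\mathrm{f}})\neq 0$, so $\widetilde{H}^1_{\mathrm{f}}/R\delta_0$ is $R$-torsion and therefore admits no nonzero homomorphism into the domain $R$. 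Consequently $\mathrm{ev}_{\delta_0}$ embeds $\widetilde{H}^1_{\mathrm{f}}{}^*$ as the ideal $\bbdelta_0(T,\Delta)\subset R$, carries $L$ isomorphically onto $\Fitt^0_R(\widetilde{H}^2_{\mathrm{f}})$, and transports the displayed short exact sequence into
\[
0 \to \Fitt^0_R\,\widetilde{H}^2_{\mathrm{f}} \to \bbdelta_0(T,\Delta) \to \Ext^2_R(\widetilde{H}^2_{\mathrm{f}},R) \to 0,
\]
as desired. The characteristic-ideal consequence is the same invocation of Theorem \ref{H2tors}, already used above, now read directly as $\Char_R(\widetilde{H}^1_{\mathrm{f}}/\bbdelta_1(T,\Delta))=\Char_R(\widetilde{H}^2_{\mathrm{f}})$ since $\bbdelta_1=R\delta_0$. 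The only step where the hypothesis could not be dispensed with is the vanishing $(\widetilde{H}^1_{\mathrm{f}}/R\delta_0)^*=0$: without regularity of the generator of $\bbdelta_1$, the quotient need not be $R$-torsion, the evaluation map can acquire a nontrivial kernel, and the induced map $\Ext^2_R(\widetilde{H}^2_{\mathrm{f}},R)\to \bbdelta_0/\Fitt^0_R(\widetilde{H}^2_{\mathrm{f}})$ would in general only be surjective rather than an isomorphism.
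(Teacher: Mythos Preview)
Your argument is correct but follows a genuinely different route from the paper's. The paper proceeds by reduction modulo ideals $I$ with $R/I$ zero-dimensional Gorenstein: tensoring the perfect representative with $R/I$ (Proposition~\ref{dercat}) yields a four-term Tor exact sequence relating $\widetilde{H}^1_{\mathrm f}\otimes R/I$ and $\widetilde{H}^1_{\mathrm f}(T/IT)$, which upon $R/I$-dualization (using self-injectivity of the Artinian Gorenstein ring) becomes
\[
0\to \Ext^1_R(\widetilde{H}^2_{\mathrm f},R/I)\to \Hom_{R/I}(\widetilde{H}^1_{\mathrm f}(T/IT),R/I)\to \Hom_R(\widetilde{H}^1_{\mathrm f},R/I)\to \Ext^2_R(\widetilde{H}^2_{\mathrm f},R/I)\to 0.
\]
The Fitting-ideal identification is read off at each finite level from Proposition~\ref{Fitt}, and the desired sequence over $R$ is recovered by passing to the inverse limit over $I$. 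Your approach instead stays entirely over $R$: you compute the generator $\delta_0$ explicitly as the vector of signed maximal minors of $\Phi$, dualize the two structural short exact sequences directly, and use the torsion-quotient observation to force injectivity of $\mathrm{ev}_{\delta_0}$. Your argument is more elementary and self-contained, avoiding both the inverse-limit machinery and the auxiliary Proposition~\ref{Fitt}; the paper's approach is more uniform with its treatment of the $r=0$ case (Theorem~\ref{r0}) and makes the Matlis-duality structure explicit. One small point: your appeal to Theorem~\ref{H2tors} to conclude that $\widetilde{H}^1_{\mathrm f}/R\delta_0$ is torsion is slightly circuitous, since that theorem presupposes the quotient has a well-defined characteristic ideal; it is cleaner to observe directly that $\widetilde{H}^2_{\mathrm f}$ torsion forces $\operatorname{rank}_R\widetilde{H}^1_{\mathrm f}=r=1$ by the Euler characteristic, whence the quotient by the free rank-one submodule $R\delta_0$ has rank zero.
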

		\begin{proof}
			Let $I \subset R$ be an ideal such that $R/I$ is a 0-dimensional Gorenstein local ring. By Proposition 2.5. we have an exact sequence $R/I$-modules
			\begin{multline*}
				0 \rightarrow \Tor_2^R(\widetilde{H}^2_{\mathrm{f}}(G_{F,\Sigma},T,\Delta), R/I) \to \widetilde{H}^1_{\mathrm{f}}(G_{F,\Sigma},T,\Delta) \otimes_R R/I \\
				\rightarrow \widetilde{H}^1_{\mathrm{f}}(G_{F,\Sigma},T/IT,\Delta) \rightarrow \Tor_1^R(\widetilde{H}^2_{\mathrm{f}}(G_{F,\Sigma},T,\Delta), R/I) \rightarrow 0
			\end{multline*}
			Since $R/I$ is injective, we obtain the following exact sequence
			\begin{multline*}
				0 \rightarrow \Ext^1_R(\widetilde{H}^2_{\mathrm{f}}(G_{F,\Sigma},T,\Delta), R/I) \rightarrow \Hom_{R/I}(\widetilde{H}^1_{\mathrm{f}}(G_{F,\Sigma},T/IT,\Delta),R/I) \\
				\rightarrow \Hom_R(\widetilde{H}^1_{\mathrm{f}}(G_{F,\Sigma},T,\Delta), R/I) \rightarrow \Ext^2_R(\widetilde{H}^2_{\mathrm{f}}(G_{F,\Sigma},T,\Delta), R/I) \rightarrow 0
			\end{multline*}
			Let $\xi_I$ denote the image of 
			$$  \Hom_{R/I}(\widetilde{H}^1_{\mathrm{f}}(G_{F,\Sigma},T/IT,\Delta),R/I) \rightarrow \Hom_R(\widetilde{H}^1_{\mathrm{f}}(G_{F,\Sigma},T,\Delta), R/I) $$
			Then by Proposition \ref{Fitt}. $$ \Fitt^0_{R/I}(\widetilde{H}^2_{\mathrm{f}}(G_{F,\Sigma},T/IT,\Delta)) = \{\phi(\delta) \mid \delta \in \bbdelta_2(T,\Delta), \; \phi \in \xi_I\} $$
			Let $\bbdelta_1(T,\Delta)$ is generated by a regular element $\delta $, we have 
			\begin{equation*}
				\begin{split}
					\Hom_R(\widetilde{H}^1_{\mathrm{f}}(G_{F,\Sigma},T,\Delta), R) \xrightarrow{\sim} \bbdelta_0(T,\Delta) &; \quad \phi \mapsto \phi(\delta), \\
					\varprojlim_I \xi_I \xrightarrow{\sim} \Fitt^0_{R}(\widetilde{H}^2_{\mathrm{f}}(G_{F,\Sigma},T,\Delta)) &; \quad \phi \mapsto \phi(\delta)
				\end{split}
			\end{equation*}
			which gives the required short exact sequence. The consequence follows from Theorem \ref{H2tors}.
		\end{proof}
		\subsubsection{}
		Let's study the case when $r=2$. 
		\begin{lemma}
			Suppose $R$ is regular and $\bbdelta_2(T,\Delta) \ne 0 $. Additionaly if we have $ \widetilde{H}^1_{\mathrm{f}}(G_{F,\Sigma},T,\Delta) $ is free of rank two, then 
			$$ \left(\mathrm{char}_R\left(\bigcap^2_R\widetilde{H}^1_{\mathrm{f}}(G_{F,\Sigma},T,\Delta)/\bbdelta_2(T,\Delta)\right) \right)^2 \cong \mathrm{char}_R\left(\widetilde{H}^1_{\mathrm{f}}(G_{F,\Sigma},T,\Delta)/\bbdelta_1(T,\Delta)\right).$$
		\end{lemma}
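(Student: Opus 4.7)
Since $\widetilde{H}^1_{\mathrm{f}}(G_{F,\Sigma},T,\Delta)$ is assumed free of rank two, I would begin by fixing an $R$-basis $\{e_1, e_2\}$, so that $\bigcap^2_R \widetilde{H}^1_{\mathrm{f}}(G_{F,\Sigma},T,\Delta) = \bigwedge^2_R \widetilde{H}^1_{\mathrm{f}}(G_{F,\Sigma},T,\Delta)$ is itself free of rank one, spanned by $e_1 \wedge e_2$. By construction $\bbdelta_2(T,\Delta)$ is a cyclic $R$-submodule of this rank-one free module, and it is nonzero by hypothesis; hence $\bbdelta_2(T,\Delta) = R c \cdot (e_1 \wedge e_2)$ for some nonzero $c \in R$. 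Because $R$ is a regular local ring (in particular a domain), $c$ is a regular element of $R$.

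Next I would compute $\bbdelta_1(T,\Delta)$ explicitly in these coordinates. Unwinding the definition, $\bbdelta_1(T,\Delta)$ is generated, as $\phi$ ranges over $\Hom_R(\widetilde{H}^1_{\mathrm{f}},R)$, by the elements $\delta_\phi \in \widetilde{H}^1_{\mathrm{f}} = (\widetilde{H}^1_{\mathrm{f}})^{**}$ defined by $\delta_\phi(\psi) := \delta(\phi \wedge \psi)$, where $\delta = c(e_1\wedge e_2)$ is our fixed generator of $\bbdelta_2(T,\Delta)$. Writing $\phi = a e_1^* + b e_2^*$ and $\psi = a' e_1^* + b' e_2^*$ in the dual basis, a direct calculation gives
\[
\delta(\phi \wedge \psi) \;=\; c(ab' - a'b) \;=\; \psi\bigl(c(-b e_1 + a e_2)\bigr),
\]
so that $\delta_\phi = c(-b e_1 + a e_2)$. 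As $(a,b)$ varies over $R^2$, the vector $(-b,a)$ sweeps out all of $R^2$, so $\bbdelta_1(T,\Delta) = c \cdot \widetilde{H}^1_{\mathrm{f}}(G_{F,\Sigma},T,\Delta)$.

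The remaining step is then immediate: $\bigcap^2_R \widetilde{H}^1_{\mathrm{f}}/\bbdelta_2(T,\Delta) \cong R/(c)$, whose characteristic ideal is $(c)$, whereas $\widetilde{H}^1_{\mathrm{f}}/\bbdelta_1(T,\Delta) \cong (R/(c))^{\oplus 2}$ has characteristic ideal $(c)^2$. Squaring the first yields the second, as claimed.

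The argument is purely formal; no serious obstacle arises. The only substantive step is the identification $\bbdelta_1(T,\Delta) = c \cdot \widetilde{H}^1_{\mathrm{f}}$, which boils down to the linear-algebra identity above once the exterior-power double dual has been unravelled. The hypothesis that $R$ is regular is used only to guarantee that the generator $c$ of $\bbdelta_2(T,\Delta)$ is a nonzerodivisor, so that the characteristic ideals of $R/(c)$ and $(R/(c))^{\oplus 2}$ behave as expected under the rank-one and rank-two multiplicities.
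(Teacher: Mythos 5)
Your proof is correct, and it takes a genuinely different route from the paper's. The paper localises at each height-one prime $\mathfrak{P}$ of $R$, invokes Smith normal form over the DVR $R_\mathfrak{P}$ to write the presentation $\varPhi_i = \pi^{n_i}\cdot\mathrm{pr}_i$, and then reads off the lengths of the two quotients prime by prime; the global freeness of $\widetilde{H}^1_{\mathrm{f}}$ is never visibly used. You instead work globally: the freeness hypothesis gives you a basis $\{e_1,e_2\}$, so $\bigcap^2_R\widetilde{H}^1_{\mathrm{f}} = \bigwedge^2_R\widetilde{H}^1_{\mathrm{f}}$ is free of rank one, the cyclic submodule $\bbdelta_2(T,\Delta)$ is $(c)\cdot(e_1\wedge e_2)$ with $c$ a nonzero (hence regular) element, and the two-variable determinant identity $\delta(\phi\wedge\psi) = \psi\bigl(c(-be_1+ae_2)\bigr)$ pins down $\bbdelta_1(T,\Delta) = c\cdot\widetilde{H}^1_{\mathrm{f}}$ outright. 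Both arguments are valid; yours is more transparent and yields the clean structural statement $\bbdelta_1 = c\,\widetilde{H}^1_{\mathrm{f}}$ explicitly, while the paper's localisation argument stays closer to the machinery used in the surrounding results (e.g.\ Theorem~\ref{H2tors}) and suggests the conclusion may hold even when global freeness fails --- which is perhaps why the paper chose it, even though the lemma as stated assumes freeness. One small point worth making explicit in your write-up is the identification $\bigwedge^2_R M^* \cong R$ via $e_1^*\wedge e_2^*$, under which your $\delta = c(e_1\wedge e_2) \in (\bigwedge^2 M^*)^*$ is evaluation against $c$; you implicitly use this in the line $\delta(\phi\wedge\psi) = c(ab'-a'b)$.
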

		\begin{proof}
			Let $\mathfrak{P}$ be a height 1 prime of $ R $. So $ R_\mathfrak{P} $ is a discrete valuation ring and $ \pi \in R_\mathfrak{P} $ is a uniformizer. We can choose a homomorphism $ \varPhi = (\varPhi_1, \cdots \varPhi_a) $ such that we have a short exact sequence 
			$$ 0 \rightarrow \widetilde{H}^1_{\mathrm{f}}(G_{F,\Sigma},T,\Delta) \otimes R_\mathfrak{P} \rightarrow R_\mathfrak{P}^{a+2} \xrightarrow{\varPhi} R_\mathfrak{P}^{a} \rightarrow \widetilde{H}^2_{\mathrm{f}}(G_{F,\Sigma},T,\Delta) \otimes R_\mathfrak{P} \rightarrow 0 .$$
			In particular, we can choose $ \varPhi $ such that $ \varPhi_i = \pi^{n_i}\cdot \mathrm{pr}_i,\: n_i \in \Z_{\ge0}$ for all $ \: 1 \le i \le a $, where $\mathrm{pr}_i \colon R_\mathfrak{P}^{a+2} \rightarrow R_p$ is projection onto $ i$-th component. If we set $\{ e_1, \cdots, e_{a+2}\}$ as the standard basis of $R_\mathfrak{P}^{a+2} $, we get $\widetilde{H}^1_{\mathrm{f}}(G_{F,\Sigma},T,\Delta) \otimes R_\mathfrak{P} \cong R_\mathfrak{P} e_{a+1} + R_\mathfrak{P} e_{a+2}$ and 
			\begin{equation*}
				\begin{split}
					\bbdelta_2(T,\Delta) \otimes R_\mathfrak{P} &= \mathrm{im}\left(\det(R_\mathfrak{P}^{a+2}) \rightarrow \bigwedge_{R_\mathfrak{P}}^2 R_\mathfrak{P}^{a+2}\right) \\
					&= \pi^{\sum n_i} \bigwedge_{R_\mathfrak{P}}^2(\widetilde{H}^1_{\mathrm{f}}(G_{F,\Sigma},T,\Delta) \otimes_R R_\mathfrak{P}).
				\end{split}
			\end{equation*}   
			Hence by the definition, we have $ \bbdelta_1(T,\Delta) \otimes_R R_\mathfrak{P} = \pi^{\sum n_i}(R_\mathfrak{P} e_{a+1} + R_\mathfrak{P} e_{a+2}) $. Therefore we conclude that 
			\begin{equation*}
				\begin{split}
					\mathrm{char}_R\left(\widetilde{H}^1_{\mathrm{f}}(G_{F,\Sigma},T,\Delta)/\bbdelta_1(T,\Delta)\right) \otimes_R R_\mathfrak{P} &= \pi^{2 \cdot \sum n_i} R_\mathfrak{P} \\
					&=\left(\mathrm{char}_R\left(\bigcap^2_R\widetilde{H}^1_{\mathrm{f}}(G_{F,\Sigma},T,\Delta)/\bbdelta_2(T,\Delta)\right) \otimes_R R_\mathfrak{P}\right)^2 \\
				\end{split}
			\end{equation*}
			which gives the required result.
		\end{proof}
		Moreover, we can conclude that,
		\begin{cor}
			\label{r2}
			If $ R $ is regular and $ \bbdelta_2(T,\Delta) \ne 0 $ then we have $$\Char_R(\widetilde{H}^1_{\mathrm{f}}(G_{F,\Sigma},T,\Delta)/\bbdelta_1(T,\Delta)) =  \Char_R(\widetilde{H}^2_{\mathrm{f}}(G_{F,\Sigma},T,\Delta))^2.$$
		\end{cor}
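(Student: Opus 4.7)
The statement follows by combining the preceding lemma with Theorem \ref{H2tors} and Proposition \ref{prop : sep10}, together with a small observation that removes the ``free of rank two'' hypothesis from the preceding lemma. I will organize the proof in three steps.

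First, I will check that $\widetilde{H}^2_{\mathrm{f}}(G_{F,\Sigma},T,\Delta)$ is $R$-torsion. Since $R$ is regular local, it is an integral domain. From Proposition \ref{dercat}(2), the module $\widetilde{H}^1_{\mathrm{f}}(G_{F,\Sigma},T,\Delta)$ is a submodule of a finitely generated free $R$-module, hence torsion-free. By Proposition \ref{proposition_exterior_wedge}, $\bigcap^2_R \widetilde{H}^1_{\mathrm{f}}(G_{F,\Sigma},T,\Delta)$ sits inside a free module, so it is torsion-free as well. As $\bbdelta_2(T,\Delta)$ is a nonzero cyclic submodule of this torsion-free $R$-module, it is automatically generated by an $R$-regular element. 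Proposition \ref{prop : sep10} then forces $\widetilde{H}^2_{\mathrm{f}}(G_{F,\Sigma},T,\Delta)$ to be $R$-torsion.

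Second, with $\widetilde{H}^2_{\mathrm{f}}$ torsion, Theorem \ref{H2tors} (applied with $r=2$) yields
\[
\Char_R\!\left({\bigcap}^2_R \widetilde{H}^1_{\mathrm{f}}(G_{F,\Sigma},T,\Delta)/\bbdelta_2(T,\Delta)\right) = \Char_R\!\left(\widetilde{H}^2_{\mathrm{f}}(G_{F,\Sigma},T,\Delta)\right).
\]
It remains to compare the square of the left-hand side with $\Char_R(\widetilde{H}^1_{\mathrm{f}}/\bbdelta_1)$. This is exactly the content of the preceding lemma, \emph{but} with the additional hypothesis that $\widetilde{H}^1_{\mathrm{f}}$ is free of rank two. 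The point is that this hypothesis is only used after localizing at a height $1$ prime $\mathfrak P$, and at such primes it always holds in our situation: the resolution $0 \to \widetilde{H}^1_{\mathrm{f}} \to R^{a+2} \to R^a \to \widetilde{H}^2_{\mathrm{f}} \to 0$ tells us that $\widetilde{H}^1_{\mathrm{f}}$ has generic rank $2$ (since $\widetilde{H}^2_{\mathrm{f}}$ is torsion), and over the DVR $R_{\mathfrak P}$ a finitely generated torsion-free module is free, of rank equal to its generic rank.

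Third, I will rerun the Smith-normal-form computation of the preceding lemma verbatim at each height-$1$ prime $\mathfrak P$: choose $\varPhi_i = \pi^{n_i}\cdot\mathrm{pr}_i$, read off $\bbdelta_2(T,\Delta)\otimes R_{\mathfrak P} = \pi^{\sum n_i}\bigwedge_{R_{\mathfrak P}}^2(\widetilde{H}^1_{\mathrm{f}}\otimes R_{\mathfrak P})$ and, from the induced composition, $\bbdelta_1(T,\Delta)\otimes R_{\mathfrak P} = \pi^{\sum n_i}(\widetilde{H}^1_{\mathrm{f}}\otimes R_{\mathfrak P})$. Computing lengths then gives
\[
\mathrm{length}_{\mathfrak P}\!\left(\widetilde{H}^1_{\mathrm{f}}/\bbdelta_1\right) = 2\cdot \sum n_i = 2\cdot \mathrm{length}_{\mathfrak P}\!\left({\bigcap}^2_R \widetilde{H}^1_{\mathrm{f}}/\bbdelta_2\right).
\]
Taking the product over all height-$1$ primes and combining with step two gives the desired equality of characteristic ideals. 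The only subtlety worth flagging is the first step (extracting torsion of $\widetilde{H}^2_{\mathrm{f}}$ from nontriviality of $\bbdelta_2$), since without regularity of $R$ one cannot conclude that a nonzero cyclic submodule of $\bigcap^2_R\widetilde{H}^1_{\mathrm{f}}$ is generated by a regular element; everything else is an essentially formal consequence of the preceding lemma and Theorem \ref{H2tors}.
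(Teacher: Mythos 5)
Your proof is correct and follows essentially the same route as the paper: verify torsionness of $\widetilde{H}^2_{\mathrm{f}}$ using Proposition \ref{prop : sep10}, invoke Theorem \ref{H2tors} with $r=2$ for the first equality, and run the Smith-normal-form computation of the preceding lemma at each height-one prime. The one genuine improvement you make is to notice that the preceding lemma's hypothesis that $\widetilde{H}^1_{\mathrm{f}}(G_{F,\Sigma},T,\Delta)$ be globally free of rank two is superfluous: the paper states the corollary without that hypothesis but cites a lemma that formally requires it, and you close the gap by pointing out that the lemma's own proof only uses freeness after localizing at a height-one prime $\mathfrak{P}$, where it holds automatically because a finitely generated torsion-free module over the DVR $R_{\mathfrak{P}}$ is free of rank equal to its generic rank (which is $2$ once $\widetilde{H}^2_{\mathrm{f}}$ is known to be torsion). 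Your first step—deducing that a nonzero cyclic submodule of the torsion-free module $\bigcap^2_R\widetilde{H}^1_{\mathrm{f}}$ is generated by an $R$-regular element because $R$, being regular local, is a domain—is correct and exactly what is needed to feed Proposition \ref{prop : sep10}; this is the only place where the regularity hypothesis is used beyond ensuring that characteristic ideals and the height-one localization argument make sense.
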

		\begin{thm}
			\label{nontorsion}
			Suppose that $\bbdelta_2(T, \Delta)$ is generated by a $R$-regular element, we then have a short exact sequence  
			\begin{equation*}
				0 \rightarrow \Fitt^0_R \widetilde{H}^2_{\mathrm{f}}(G_{F,\Sigma},T,\Delta) \rightarrow \bbdelta_0(T, \Delta) \rightarrow \Ext^2_R(\widetilde{H}^2_{\mathrm{f}}(G_{F,\Sigma},T,\Delta),R) \rightarrow 0
			\end{equation*}
		\end{thm}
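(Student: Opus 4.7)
The plan is to closely follow the proof of \cref{leading_term_rank1}, replacing the single dual $\Hom_R(\cohom^1, R)$ by its exterior square $\bigwedge^2_R \Hom_R(\cohom^1, R)$ throughout. By \cref{prop : sep10}, the hypothesis that $\bbdelta_2(T, \Delta)$ is generated by an $R$-regular element $\delta_2$ implies that $\cohom^2(\G, T, \Delta)$ is $R$-torsion, and in particular $\Hom_R(\cohom^2, R) = 0$.

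For each ideal $I$ such that $R/I$ is a $0$-dimensional Gorenstein local ring, I would apply \cref{dercat}(1) to obtain the base-change four-term exact sequence and dualize via $\Hom_{R/I}(-, R/I)$ (exact by self-injectivity of $R/I$), invoking the identification $\Hom_{R/I}(\Tor_i^R(\cohom^2, R/I), R/I) \cong \Ext_R^i(\cohom^2, R/I)$ coming from the collapse of the change-of-rings spectral sequence (valid since $R/I$ is injective over itself). This yields the dual four-term sequence
\[
0 \to \Ext_R^1(\cohom^2, R/I) \to \cohom^1(T/IT)^{\vee} \to \Hom_R(\cohom^1, R/I) \to \Ext_R^2(\cohom^2, R/I) \to 0,
\]
where $(-)^\vee := \Hom_{R/I}(-, R/I)$, and as in the proof of \cref{leading_term_rank1} I set $\xi_I := \im(\cohom^1(T/IT)^\vee \to \Hom_R(\cohom^1, R/I))$. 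Combining this with \cref{Fitt} and the canonical surjection $\bigwedge^2_{R/I}\cohom^1(T/IT)^\vee \twoheadrightarrow \Hom_{R/I}(\bbdelta_2(T/IT, \Delta), R/I)$ (again by self-injectivity of $R/I$), evaluation at the generator $\delta_2^I$ of $\bbdelta_2(T/IT, \Delta)$ identifies $\bbdelta_0(T/IT, \Delta)$ with $\Fitt^0_{R/I}\cohom^2(T/IT) = \Fitt^0_R \cohom^2 \cdot R/I$ for each such $I$.

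To finish, using a perfect complex representation $[P^1 \xrightarrow{\Phi} P^2]$ for $\dcom$, the explicit expression $\delta_2 = \Phi_a \circ \cdots \circ \Phi_1(e_1 \wedge \cdots \wedge e_{a+2}) \in \bigwedge^2 P^1$ shows that evaluating $\delta_2$ on $\bigwedge^2 \Hom_R(P^1, R)$ produces the ideal of maximal minors of $\Phi$, which equals $\Fitt^0_R \cohom^2$; hence $\Fitt^0_R \cohom^2 \subset \bbdelta_0(T, \Delta)$ via the canonical morphism $\Hom_R(P^1, R) \to \Hom_R(\cohom^1, R)$. Passing the dual four-term sequences above to the inverse limit over $I$ (exploiting $\Ext_R^i(\cohom^2, R) = \varprojlim_I \Ext_R^i(\cohom^2, R/I)$ for finitely generated $R$-modules, together with Mittag-Leffler vanishing of $\varprojlim^1$) yields the short exact sequence $0 \to \varprojlim_I \xi_I \to \Hom_R(\cohom^1, R) \to \Ext_R^2(\cohom^2, R) \to 0$, to which $\delta_2$-evaluation is applied term-by-term to recover the desired statement. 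The main obstacle is to verify the $r = 2$ analogue of the isomorphism $\Hom_R(\cohom^1, R) \cong \bbdelta_0$ used in \cref{leading_term_rank1}: in the exterior-square setting, the cokernel of $\bigwedge^2 \Hom_R(P^1, R) \to \bigwedge^2 \Hom_R(\cohom^1, R)$ a priori contains extraneous contributions from $\bigwedge^2 \Ext_R^2(\cohom^2, R)$ and from the tensor with the image of $\Hom_R(P^1, R)$ in $\Hom_R(\cohom^1, R)$, and the regularity of $\delta_2$ is exactly what collapses these to isolate the single $\Ext_R^2(\cohom^2, R)$-factor in the statement.
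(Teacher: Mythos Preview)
Your approach --- reduce modulo $I$, dualize the base-change four-term sequence, pass to the inverse limit, and evaluate at the regular generator --- is exactly what the paper intends; its proof reads in full ``The proof is similar to the proof of \cref{leading_term_rank1}.'' You even go further than the paper by naming the key difficulty: passing from $\Hom_R(\cohom^1,R)$ to its exterior square.

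That said, your closing claim that regularity of $\delta_2$ ``collapses'' the extraneous exterior-square contributions is unsubstantiated, and there is a genuine obstruction here that neither your sketch nor the paper's one-line proof addresses. The inverse-limit sequence $0 \to N \to \Hom_R(\cohom^1,R) \to \Ext^2_R(\cohom^2,R)\to 0$ (with $N=\varprojlim_I \xi_I$) lives in rank~$2$; evaluating at $\delta_2$ yields injections identifying the image of $\bigwedge^2 N$ with $\Fitt^0$ and that of $\bigwedge^2\Hom_R(\cohom^1,R)$ with $\bbdelta_0$, but the cokernel of $\bigwedge^2 N \hookrightarrow \bigwedge^2\Hom_R(\cohom^1,R)$ is in general strictly larger than $\Ext^2$. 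Concretely, take $R$ regular local of dimension~$2$ with $\mathfrak m=(\pi_1,\pi_2)$ and
\[
\Phi=\begin{pmatrix}\pi_1 & \pi_2 & 0 & 0\\ 0 & 0 & \pi_1 & \pi_2\end{pmatrix}\colon R^4\longrightarrow R^2.
\]
Then $\cohom^2\cong k^{\oplus 2}$, $\cohom^1\cong R^2$ is free with $\delta_2$ a generator of $\bigwedge^2\cohom^1$ (certainly $R$-regular), $\bbdelta_0=R$, $\Fitt^0_R(\cohom^2)=\mathfrak m^2$, and $\Ext^2_R(\cohom^2,R)\cong k^{\oplus 2}$; but $R/\mathfrak m^2$ has length~$3$, not~$2$. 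So ``applying $\delta_2$-evaluation term-by-term'' to the rank-one limit sequence cannot recover the asserted short exact sequence as stated. An additional hypothesis (for instance, that $\Ext^2_R(\cohom^2,R)$ is locally cyclic, in which case the $\bigwedge^2$-quotient does match) or a reformulation of the middle term appears to be needed.
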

		\begin{proof}
			The proof is similar to the proof of Theorem \ref{leading_term_rank1}.
		\end{proof}
		\begin{rmk}
			\label{koly_ralation}
			Let us assume the standard assumption of the theory of \emph{Euler-Kolyvagin systems}, like:
			\begin{enumerate}[(i)]
				\item The Galois representation $ \G \to \GL{R}(T) $ has big image.
				\item For each $ p \in \bP $, we have $H^0(G_{F_{\p}}, \scrF^{\pm}_{\p}\overline{T}) = 0 =H^2(G_{F_{\p}}, \scrF^{\pm}_{\p}\overline{T}) $ .
				\item The complex $ R\Gamma(G_{F_v},T) $ is acyclic for each $ v \in \Sigma \ba \bP $.
			\end{enumerate}
			The theory gives us that the module of Kolyvagin systems of rank $ r $, $ \mrm{KS}_r(T,\Delta) $ associated with the pair $ (T,\Delta) $ is free of rank one (cf. \cite[Theorem 5.2.10]{MR_kolysys}, \cite[Theorem A]{kazim_kolysys}, and \cite[Theorem 5.2]{BSS18}). Moreover, we have a homomorphism,
			\[ \mrm{KS}_r(T,\Delta) \xrightarrow{\operatorname{Reg}_r} \cohom^1(\G,T,\Delta); \quad \kappa \mapsto \kappa_1 \ \text{(the leading class of the Kolyvagin system $ \kappa $)}. \]
			The assumption that $ \cohom^2(\G,T,\Delta) $ is $ R $-torsion implies that $ \operatorname{Reg}_r $ is injective and 
			\[ \Char_R(\bigcap_R^r\cohom^1(\G,T,\Delta)/ \kappa_r(T,\Delta)) = \Char_R(\cohom^2(\G,T,\Delta)); \quad \kappa_r(T,\Delta)\coloneqq \operatorname{Reg}_r(\mrm{KS}_r(T,\Delta)). \]
			and hence $ \kappa_r(T,\Delta) = \bbdelta_r(T,\Delta) $ by Thorem \ref{H2tors}. In particular, we can say that the module $ \bbdelta_r(T,\Delta) $ is generated by the leading classes of Kolyvagin systems. We also state that the construction of $ \bbdelta_r(T,\Delta) $ is done with weaker assumptions concerning Tamagawa factors, than the standard assumptions of the theory of Kolyvagin systems. 
			
			We can expect that the module of leading terms is generated by suitable Euler system classes at the ground level. Hence via a suitable large Perrin-Riou logarithm map, it recovers analytic $ p $-adic $ L $-functions.
		\end{rmk}
		\subsubsection{}
		Let $ R $ be a regular local ring for the rest of the section. 
		Suppose for each $\p \mid p$, we consider a pair of $R[G_{F_{\p}}]$-submodules $\scrF^{+1}_{\p}T \subset \scrF^{+2}_{\p}T$ of $T$ such that the quotients $T/\scrF^{+1}_{\p}T $ and $T/\scrF^{+2}_{\p}T $ are free as $R$-modules. We then have the Greenberg local conditions
		$\Delta_1 \coloneqq \Delta(\{\scrF^{+1}_\p\}_\p)$ and $\Delta_2 \coloneqq \Delta(\{\scrF^{+2}_\p\}_\p)$. Also, assume that 
		\begin{equation*}
			\begin{split}
				r(T,\Delta_1) \coloneqq -\chi(\widetilde{R\Gamma}_{\mathrm{f}}(G_{F,\Sigma},T,\Delta_1)) = \sum_{w \mid \infty} \rank_R(T^{c_w = -1}) - \sum_{\p \mid p} \rank_R(T/\scrF^{+1}_{\p}T) =0 
			\end{split}
		\end{equation*}
		When $$ 	r(T,\Delta_2) \coloneqq -\chi(\widetilde{R\Gamma}_{\mathrm{f}}(G_{F,\Sigma},T,\Delta_2)) = \sum_{w \mid \infty} \rank_R(T^{c_w = -1}) - \sum_{\p \mid p} \rank_R(T/\scrF^{+2}_{\p}T) = 1 $$
		Let's also assume that for all $\p \mid p$
		\begin{itemize}
			\item[\mylabel{LT}{\textbf{(LT)}}] 	$H^0(G_{F_{\p}}, \scrF^{+2}_{\p}\overline{T}/\scrF^{+1}_{\p}\overline{T}) = 0 =H^2(G_{F_{\p}}, \scrF^{+2}_{\p}\overline{T}/\scrF^{+1}_{\p}\overline{T}). $ 
		\end{itemize}
		WLOG we assume that the $R$-module $H^1(G_{F_{\p_1}}, \scrF^{+2}_{\p_1}T/\scrF^{+1}_{\p_1}T)$ is free of rank one by the Euler characteristic formula and $H^1(G_{F_{\p_2}}, \scrF^{+2}_{\p_2}T/\scrF^{+1}_{\p_2}T)$ is trivial. Let us fix a trivialization
		$$ \LOG_{\Delta_2/\Delta_1, \p_1 } \colon H^1(G_{F_{\p_1}}, \scrF^{+2}_{\p_1}T/\scrF^{+1}_{\p_1}T) \xrightarrow{\sim} R $$
		We denote the composition of the following maps as $ \LOG^1_{\Delta_2/\Delta_1} $
		$$  \widetilde{H}^1_{\mathrm{f}}(G_{F,\Sigma},T,\Delta_2) \xrightarrow{ \res_{\p_1}}  H^1(G_{F_{\p_1}}, \scrF^{+2}_{\p_1}T) \rightarrow  H^1(G_{F_{\p_1}}, \scrF^{+2}_{\p_1}T/\scrF^{+1}_{\p_1}T) \xrightarrow{\LOG_{\Delta_2/\Delta_1 , \p_1} } R^2 $$
		\begin{lemma}
			\label{lemma_log_H^1_vanishes}
			If $\LOG^1_{\Delta_2/\Delta_1}(\bbdelta(T, \Delta_2))$ is generated by a regular element of $R$,  
			then $\widetilde{H}^1_{\rm f}(G_{F, \Sigma}, T, \Delta_1) = 0$. 
		\end{lemma}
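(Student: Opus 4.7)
\medskip
\noindent\textbf{Proof plan.} The strategy is to relate $\widetilde{H}^1_{\mathrm{f}}(G_{F,\Sigma},T,\Delta_1)$ to $\widetilde{H}^1_{\mathrm{f}}(G_{F,\Sigma},T,\Delta_2)$ through a transition exact triangle and then apply a generic-rank argument. First I would observe that since $\scrF^{+1}_{\p}T\subset \scrF^{+2}_{\p}T$, the local terms at primes $\p\mid p$ fit into a short exact sequence of complexes whose cone is $C^{\bullet}(G_{F_\p},\scrF^{+2}_\p T/\scrF^{+1}_\p T)$, and the local conditions at places $v\in\Sigma\ba\bP$ agree. This yields the exact triangle
\begin{equation*}
\widetilde{R\Gamma}_{\mathrm{f}}(G_{F,\Sigma},T,\Delta_1) \longrightarrow \widetilde{R\Gamma}_{\mathrm{f}}(G_{F,\Sigma},T,\Delta_2) \longrightarrow \bigoplus_{\p\mid p} R\Gamma\bigl(G_{F_\p},\scrF^{+2}_\p T/\scrF^{+1}_\p T\bigr) \xrightarrow{+1}.
\end{equation*}
Hypothesis \ref{LT} (with Nakayama applied to the perfect complex $R\Gamma(G_{F_\p},\scrF^{+2}_\p T/\scrF^{+1}_\p T)$) implies that $H^0(G_{F_\p},\scrF^{+2}_\p T/\scrF^{+1}_\p T)=0$ for $\p = \p_1,\p_2$. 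Combined with the rank computation at $\p_2$ (where $H^1$ vanishes by the Euler characteristic formula and the chosen normalisation), the induced long exact sequence collapses to the injection
\begin{equation*}
0 \longrightarrow \widetilde{H}^1_{\mathrm{f}}(G_{F,\Sigma},T,\Delta_1) \longrightarrow \widetilde{H}^1_{\mathrm{f}}(G_{F,\Sigma},T,\Delta_2) \xrightarrow{\LOG^1_{\Delta_2/\Delta_1}} R,
\end{equation*}
so that $\widetilde{H}^1_{\mathrm{f}}(G_{F,\Sigma},T,\Delta_1) = \ker(\LOG^1_{\Delta_2/\Delta_1})$.

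\medskip
\noindent Next I would exploit the hypothesis to control $\widetilde{H}^1_{\mathrm{f}}(G_{F,\Sigma},T,\Delta_2)$. Since $r(T,\Delta_2)=1$, the module $\bbdelta_1(T,\Delta_2)$ is a cyclic submodule of $\widetilde{H}^1_{\mathrm{f}}(G_{F,\Sigma},T,\Delta_2)$; any generator $\delta$ whose image $\LOG^1_{\Delta_2/\Delta_1}(\delta)$ is $R$-regular must itself be $R$-regular, for any annihilator of $\delta$ would annihilate its image. Proposition \ref{prop : sep10} then forces $\widetilde{H}^2_{\mathrm{f}}(G_{F,\Sigma},T,\Delta_2)$ to be $R$-torsion. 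Reading this off the representing two-term complex $[R^{a+1}\xrightarrow{\varPhi}R^a]$ of $\widetilde{R\Gamma}_{\mathrm{f}}(G_{F,\Sigma},T,\Delta_2)$ provided by Proposition \ref{dercat}, the module $\widetilde{H}^1_{\mathrm{f}}(G_{F,\Sigma},T,\Delta_2)$ embeds into $R^{a+1}$ and hence is $R$-torsion-free, while tensoring with $Q:=\operatorname{Frac}(R)$ shows that it has generic rank exactly $r(T,\Delta_2)=1$.

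\medskip
\noindent Finally, I would localise at the generic point. The composition
\begin{equation*}
\widetilde{H}^1_{\mathrm{f}}(G_{F,\Sigma},T,\Delta_2)\otimes_R Q \xrightarrow{\LOG^1_{\Delta_2/\Delta_1}\otimes Q} Q
\end{equation*}
is a $Q$-linear map from a one-dimensional $Q$-vector space to $Q$ whose image contains $\LOG^1_{\Delta_2/\Delta_1}(\delta)$, an element of $R$ that is a unit in $Q$. Hence this map is an isomorphism, and its kernel vanishes generically. But that kernel equals $\widetilde{H}^1_{\mathrm{f}}(G_{F,\Sigma},T,\Delta_1)\otimes_R Q$, so $\widetilde{H}^1_{\mathrm{f}}(G_{F,\Sigma},T,\Delta_1)$ is $R$-torsion; combined with its torsion-freeness (inherited from the ambient $\widetilde{H}^1_{\mathrm{f}}(G_{F,\Sigma},T,\Delta_2)$), this gives the desired vanishing. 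The main delicate point is setting up the transition triangle and verifying the truncation of the long exact sequence via \ref{LT} and the rank normalisation at $\p_2$; once that injection is in place, the remaining generic-rank computation is formal.
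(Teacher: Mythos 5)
Your proposal is correct, and it follows essentially the same approach as the paper's own proof: use the transition exact triangle to relate the Selmer complexes for $\Delta_1$ and $\Delta_2$, observe that the regularity hypothesis forces $\widetilde{H}^2_{\mathrm{f}}(G_{F,\Sigma},T,\Delta_2)$ and then $\widetilde{H}^2_{\mathrm{f}}(G_{F,\Sigma},T,\Delta_1)$ to be torsion, and conclude by the Euler characteristic condition $\chi(\widetilde{R\Gamma}_{\mathrm{f}}(G_{F,\Sigma},T,\Delta_1))=0$ together with the torsion-freeness of $\widetilde{H}^1_{\mathrm{f}}$. The main difference is one of presentation: the paper jumps directly from the torsionness of $R/\LOG^1_{\Delta_2/\Delta_1}(\bbdelta_1(T,\Delta_2))$ to the torsionness of $\widetilde{H}^2_{\mathrm{f}}(G_{F,\Sigma},T,\Delta_1)$ and from there to vanishing of $\widetilde{H}^1_{\mathrm{f}}(G_{F,\Sigma},T,\Delta_1)$, while you make the underlying five-term exact sequence explicit and run the generic-rank argument by localising at $Q=\operatorname{Frac}(R)$ instead of phrasing it via the two-term representing complex from Proposition \ref{dercat}. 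These are two faces of the same calculation. Your observation that a generator $\delta$ of $\bbdelta_1(T,\Delta_2)$ must itself be $R$-regular since its image under $\LOG^1_{\Delta_2/\Delta_1}$ is regular is a clean way to invoke Proposition \ref{prop : sep10}, and arguably more precise than the paper's appeal to Theorem \ref{H2tors}, which nominally only gives a characteristic-ideal identity.
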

		\begin{proof}
			By Theorem \ref{H2tors}, the $R$-module $\widetilde{H}^2_{\rm f}(G_{F, \Sigma}, T, \Delta_2)$ is torsion. 
			By assumption $R/\LOG^1_{\Delta_2/\Delta_1}(\bbdelta(T, \Delta_2))$ is torsion gets 
			$\widetilde{H}^2_{\rm f}(G_{F, \Sigma}, T, \Delta_1)$ is $ R $-torsion. 
			As $\chi(\widetilde{R\Gamma}_{\rm f}(G_{F, \Sigma}, T, \Delta_1)) = 0$, we have $\widetilde{H}^1_{\rm f}(G_{F, \Sigma}, T, \Delta_1) = 0$. 
		\end{proof}
		\begin{thm}
			\label{old_log_relation}
			Suppose that $ R $ is regular, then we have 
			$$ \LOG^1_{\Delta_2/\Delta_1} (\bbdelta_1(T, \Delta_2)) = \Char_R(\widetilde{H}^2_{\mathrm{f}}(G_{F,\Sigma},T,\Delta_1)) $$
			whenever $ \widetilde{H}^2_{\mathrm{f}}(G_{F,\Sigma},T,\Delta_1) $ is torsion.
		\end{thm}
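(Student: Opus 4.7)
The plan is to compare the two Selmer complexes through the natural exact triangle induced by the inclusion $\scrF^{+1}_\p T \hookrightarrow \scrF^{+2}_\p T$ of Greenberg local conditions, and then to reduce the desired equality to a bookkeeping of characteristic ideals.

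First, I would invoke Nekov\'a\v{r}'s functoriality to produce the distinguished triangle
\[\widetilde{R\Gamma}_{\mathrm{f}}(G_{F,\Sigma},T,\Delta_1)\longrightarrow \widetilde{R\Gamma}_{\mathrm{f}}(G_{F,\Sigma},T,\Delta_2)\longrightarrow \bigoplus_{\p\in\bP}R\Gamma\bigl(G_{F_\p},\scrF^{+2}_\p T/\scrF^{+1}_\p T\bigr)\xrightarrow{+1}.\]
Assumption \ref{LT} kills $H^0$ and $H^2$ of each graded piece at $\p\mid p$, while the chosen trivialisation $\LOG_{\Delta_2/\Delta_1,\p_1}$ together with the prescribed vanishing at $\p_2$ identifies the $H^1$ of the third term with $R$. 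Using moreover \ref{H=0}, the associated long exact sequence takes the shape
\[0\to \widetilde{H}^1_{\mathrm{f}}(G_{F,\Sigma},T,\Delta_1)\to \widetilde{H}^1_{\mathrm{f}}(G_{F,\Sigma},T,\Delta_2)\xrightarrow{\LOG^1_{\Delta_2/\Delta_1}} R\to \widetilde{H}^2_{\mathrm{f}}(G_{F,\Sigma},T,\Delta_1)\to \widetilde{H}^2_{\mathrm{f}}(G_{F,\Sigma},T,\Delta_2)\to 0,\]
once one verifies that the boundary map is nothing but $\LOG^1_{\Delta_2/\Delta_1}$ by unwinding the construction of the defining cone.

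Next, since $r(T,\Delta_1)=0$ and $\widetilde{H}^2_{\mathrm{f}}(G_{F,\Sigma},T,\Delta_1)$ is $R$-torsion by hypothesis (with $R$ regular, hence normal), Corollary \ref{cor : sep10} forces $\widetilde{H}^1_{\mathrm{f}}(G_{F,\Sigma},T,\Delta_1)=0$. The long exact sequence therefore collapses to a four-term exact sequence
\[0\to A\xrightarrow{\;\LOG^1_{\Delta_2/\Delta_1}\;} R\to \widetilde{H}^2_{\mathrm{f}}(G_{F,\Sigma},T,\Delta_1)\to \widetilde{H}^2_{\mathrm{f}}(G_{F,\Sigma},T,\Delta_2)\to 0,\]
where $A\coloneqq\widetilde{H}^1_{\mathrm{f}}(G_{F,\Sigma},T,\Delta_2)$. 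In particular $\LOG^1_{\Delta_2/\Delta_1}$ is injective on $A$.

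For the final step, set $B\coloneqq\bbdelta_1(T,\Delta_2)\subset A$, where the inclusion uses $\bigcap^1_R A=A$ from Proposition \ref{proposition_exterior_wedge}; by its very construction $B$ is cyclic, hence $\LOG^1_{\Delta_2/\Delta_1}(B)\subset R$ is a principal ideal. The $r=1$ case of Theorem \ref{H2tors} gives
\[\Char_R(A/B)=\Char_R\bigl(\widetilde{H}^2_{\mathrm{f}}(G_{F,\Sigma},T,\Delta_2)\bigr),\]
while multiplicativity of $\Char_R$ along the four-term sequence above provides
\[\Char_R\bigl(R/\LOG^1_{\Delta_2/\Delta_1}(A)\bigr)\cdot \Char_R\bigl(\widetilde{H}^2_{\mathrm{f}}(G_{F,\Sigma},T,\Delta_2)\bigr)=\Char_R\bigl(\widetilde{H}^2_{\mathrm{f}}(G_{F,\Sigma},T,\Delta_1)\bigr).\]
Combining these through the short exact sequence
\[0\to \LOG^1_{\Delta_2/\Delta_1}(A)/\LOG^1_{\Delta_2/\Delta_1}(B)\to R/\LOG^1_{\Delta_2/\Delta_1}(B)\to R/\LOG^1_{\Delta_2/\Delta_1}(A)\to 0\]
and invoking that a principal ideal in a regular local ring coincides with the characteristic ideal of its quotient yields the claimed equality
\[\LOG^1_{\Delta_2/\Delta_1}\bigl(\bbdelta_1(T,\Delta_2)\bigr)=\Char_R\bigl(\widetilde{H}^2_{\mathrm{f}}(G_{F,\Sigma},T,\Delta_1)\bigr).\]
The principal difficulty lies in the first step: precisely identifying the boundary morphism in the comparison triangle with the map $\LOG^1_{\Delta_2/\Delta_1}$ defined via $\res_{\p_1}$ followed by the fixed trivialisation. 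Once this identification is in place, the rest reduces to routine manipulations with Nekov\'a\v{r}'s formalism, the machinery of \textsection\ref{sec : module_leading}, and the multiplicativity of characteristic ideals.
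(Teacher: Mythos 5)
Your proof is correct and follows essentially the same route as the paper's: you derive the four-term exact sequence from the comparison triangle, use the torsion hypothesis with $r(T,\Delta_1)=0$ to kill $\widetilde H^1_{\mathrm f}(G_{F,\Sigma},T,\Delta_1)$, invoke the $r=1$ leading-term identity $\Char_R(A/B)=\Char_R(\widetilde H^2_{\mathrm f}(G_{F,\Sigma},T,\Delta_2))$, and finish by multiplicativity of characteristic ideals. The paper packages the same computation slightly more compactly by writing the exact sequence directly in the quotiented form $0\to A/B\to R/\LOG^1(B)\to \widetilde H^2_{\mathrm f}(\Delta_1)\to \widetilde H^2_{\mathrm f}(\Delta_2)\to 0$ and reaches $\widetilde H^1_{\mathrm f}(\Delta_1)=0$ via Lemma~\ref{lemma_log_H^1_vanishes}; your appeal to Corollary~\ref{cor : sep10} is an equivalent (in fact cleaner) way to use the stated torsion hypothesis.
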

		\begin{proof}
			If $\LOG^1_{\Delta_2/\Delta_1}(\bbdelta(T, \Delta_2)) \neq 0$, then Lemma \ref{lemma_log_H^1_vanishes} shows that 
			$\widetilde{H}^2_{\rm f}(G_{F, \Sigma}, T, \Delta_1) = 0$ and we have an exact sequence  
			\begin{align*}
				0 \longrightarrow \widetilde{H}^1_{\rm f}(G_{F, \Sigma}, T, \Delta_2)/\bbdelta(T, \Delta_2)  
				\longrightarrow R/ \LOG^1_{\Delta_2/\Delta_1}(\bbdelta(T, \Delta_2)) \qquad \qquad \qquad \qquad \qquad \\
				\longrightarrow \widetilde{H}^2_{\rm f}(G_{F, \Sigma}, T, \Delta_1)
				\longrightarrow \widetilde{H}^2_{\rm f}(G_{F, \Sigma}, T, \Delta_2) 
				\longrightarrow 0. 
			\end{align*}
			of torsion $R$-modules. Theorems \ref{r0} and \ref{leading_term_rank1}
			imply 
			\begin{align*}
				\LOG^1_{\Delta_2/\Delta_1}(\bbdelta(T, \Delta_2)) &= \Char_R(R/ \LOG^1_{\Delta_2/\Delta_1}(\bbdelta(T, \Delta_2)))
				\\
				&= \Char_R(\widetilde{H}^2_{\rm f}(G_{F, \Sigma}, T, \Delta_1)). 
			\end{align*}	
		\end{proof}
		\subsubsection{}
		Now suppose	
		$$
		r(T,\Delta_2) \coloneqq -\chi(\widetilde{R\Gamma}_{\mathrm{f}}(G_{F,\Sigma},T,\Delta_2)) = \sum_{w \mid \infty} \rank_R(T^{c_w = -1}) - \sum_{\p \mid p} \rank_R(T/\scrF^{+2}_{\p}T) =2 $$
		For all $\p= \p_1, \p_2 $ we assume the condition \ref{LT}. We also assume 
		\begin{itemize}
			\item[\mylabel{R1}{\textbf{(R1)}}] $ \scrF^{+2}_{\p}T/\scrF^{+1}_{\p}T $ is free $ R $-module of rank 1 for $ \p= \p_1, \p_2  $.
		\end{itemize}
		Then the $R$-module $H^1(G_{F_{\p}}, \scrF^{+2}_{\p}T/\scrF^{+1}_{\p}T)$ is free of rank one by the Euler characteristic formula. Let us fix a trivialization 
		$$ \LOG_{\Delta_2/\Delta_1 , \p} \colon H^1(G_{F_{\p}}, \scrF^{+2}_{\p}T/\scrF^{+1}_{\p}T) \xrightarrow{\sim} R $$
		Let the composition of the following maps be defined as $\LOG^2_{\Delta_2/\Delta_1}$
		$$  \widetilde{H}^1_{\mathrm{f}}(G_{F,\Sigma},T,\Delta_2) \xrightarrow{\bigoplus_{\p \mid p} \res_{\p}} \bigoplus_{\p \mid p} H^1(G_{F_{\p}}, \scrF^{+2}_{\p}T) \rightarrow \bigoplus_{\p \mid p} H^1(G_{F_{\p}}, \scrF^{+2}_{\p}T/\scrF^{+1}_{\p}T) \xrightarrow{\bigoplus_{\p \mid p}\LOG_{\Delta_2/\Delta_1 , \p} } R^2 $$
		We then have an exact sequence 
		\begin{multline*}
			0 \rightarrow \widetilde{H}^1_{\mathrm{f}}(G_{F,\Sigma},T,\Delta_1) \rightarrow \widetilde{H}^1_{\mathrm{f}}(G_{F,\Sigma},T,\Delta_2) \xrightarrow{\LOG^2_{\Delta_2/\Delta_1}} R^2
			\rightarrow \widetilde{H}^2_{\mathrm{f}}(G_{F,\Sigma},T,\Delta_1) \rightarrow \widetilde{H}^2_{\mathrm{f}}(G_{F,\Sigma},T,\Delta_2) \rightarrow 0.
		\end{multline*} 
		
		\begin{lemma}
			\label{logim}
			If for all $ \p \mid p $, $ \LOG^2_{\Delta_2/\Delta_1 , \p} (\bbdelta_1(T, \Delta_2)) $ is generated by some $ R $-regular element, then  $$ \widetilde{H}^1_{\mathrm{f}}(G_{F,\Sigma},T,\Delta_1) = 0.$$ 
		\end{lemma}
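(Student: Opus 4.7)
The plan is to adapt the argument of Lemma~\ref{lemma_log_H^1_vanishes} to the rank-two situation. From the long exact sequence immediately preceding the lemma statement, one has
\[
\widetilde{H}^1_{\mathrm{f}}(G_{F,\Sigma},T,\Delta_1) = \ker\!\left(\LOG^2_{\Delta_2/\Delta_1} \colon \widetilde{H}^1_{\mathrm{f}}(G_{F,\Sigma},T,\Delta_2) \longrightarrow R^2\right),
\]
so the task reduces to proving this kernel vanishes. The first step is to show that $\bbdelta_2(T, \Delta_2) = R\delta_0$ is generated by an $R$-regular element. If $\delta_0$ were annihilated by some regular $t \in R$, then every element of $\bbdelta_1 = \{\delta_0(\phi \wedge \cdot) : \phi \in \Hom_R(\widetilde{H}^1_{\mathrm{f}}(G_{F,\Sigma},T,\Delta_2), R)\}$ would also be killed by $t$, contradicting the hypothesis that $\LOG^2_{\Delta_2/\Delta_1, \p}(\bbdelta_1)$ is generated by a regular element. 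Proposition~\ref{prop : sep10} then forces $\widetilde{H}^2_{\mathrm{f}}(G_{F,\Sigma},T,\Delta_2)$ to be $R$-torsion; combined with $\chi(\widetilde{R\Gamma}_{\mathrm{f}}(G_{F,\Sigma},T,\Delta_2)) = -2$ and perfectness of the Selmer complex, the module $\widetilde{H}^1_{\mathrm{f}}(G_{F,\Sigma},T,\Delta_2)$ is torsion-free of generic $R$-rank two.

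The crux of the proof is then to analyse the element
\[
D \;:=\; \delta_0\!\left(\LOG^2_{\Delta_2/\Delta_1, \p_1} \wedge \LOG^2_{\Delta_2/\Delta_1, \p_2}\right) \;\in\; R,
\]
obtained by pairing $\delta_0 \in \bigcap^2_R \widetilde{H}^1_{\mathrm{f}}(G_{F,\Sigma},T,\Delta_2)$ with the wedge of the two logarithm functionals. Substituting $\phi = \LOG^2_{\Delta_2/\Delta_1, \p_j}$ in the definition of $\bbdelta_1$ and applying $\LOG^2_{\Delta_2/\Delta_1, \p_i}$ for $i \neq j$, one computes that $\pm D$ lies in $\LOG^2_{\Delta_2/\Delta_1, \p_i}(\bbdelta_1)$, hence $D$ lies in both regular principal ideals $(r_{\p_1})$ and $(r_{\p_2})$. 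Leveraging the hypothesis for both primes in tandem with the rank-two structure of $\bbdelta_1$ (which generates $\bigcap^1_R \widetilde{H}^1_{\mathrm{f}}(G_{F,\Sigma},T,\Delta_2) \otimes_R K$ via the non-degenerate pairing with $\delta_0$), I would argue that $D$ is a nonzero element of $R$; since $R$ is a regular local domain, $D$ is then $R$-regular. Non-vanishing of $D$ is precisely the $K$-linear independence of $\LOG^2_{\Delta_2/\Delta_1, \p_1}$ and $\LOG^2_{\Delta_2/\Delta_1, \p_2}$ as functionals on $\widetilde{H}^1_{\mathrm{f}}(G_{F,\Sigma},T,\Delta_2)_K$, where $K = \operatorname{Frac}(R)$, and therefore forces the $K$-linear map $\LOG^2_{\Delta_2/\Delta_1} \otimes_R K \colon K^2 \to K^2$ to be an isomorphism.

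Generic injectivity then gives that $\ker(\LOG^2_{\Delta_2/\Delta_1})$ is $R$-torsion; since this kernel sits inside the torsion-free module $\widetilde{H}^1_{\mathrm{f}}(G_{F,\Sigma},T,\Delta_2)$, it must vanish, completing the proof. The principal obstacle is the regularity of $D$: while each hypothesis gives that the projection of $\bbdelta_1$ to one coordinate of $R^2$ contains a regular element, one must still rule out the degenerate scenario in which $\LOG^2_{\Delta_2/\Delta_1, \p_1}$ and $\LOG^2_{\Delta_2/\Delta_1, \p_2}$ are $K$-proportional---in which case $\LOG^2_{\Delta_2/\Delta_1} \otimes K$ would collapse to a one-dimensional subspace of $K^2$. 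Precluding this degeneracy is the rank-two analogue of the injectivity argument used in the proof of Lemma~\ref{lemma_log_H^1_vanishes}, and is expected to follow from a careful comparison of the two principal ideals $(r_{\p_1})$, $(r_{\p_2})$ against the universal construction of $\bbdelta_1$ from $\delta_0$.
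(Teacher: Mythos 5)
Your idea of passing to the scalar $D := \delta_0\bigl(\LOG_{\Delta_2/\Delta_1,\p_1}\wedge \LOG_{\Delta_2/\Delta_1,\p_2}\bigr)$ correctly isolates what is at stake: once one knows $\widetilde{H}^1_{\mathrm{f}}(G_{F,\Sigma},T,\Delta_2)$ is torsion-free of rank two, $D$ is, up to a generator of $\bbdelta_2(T,\Delta_2)$, the determinant of the matrix of $\LOG^2_{\Delta_2/\Delta_1}$, and $D$ being a non-zerodivisor is equivalent to $\LOG^2_{\Delta_2/\Delta_1}\otimes_R\operatorname{Frac}(R)$ being an isomorphism, which then forces $\widetilde{H}^1_{\mathrm{f}}(G_{F,\Sigma},T,\Delta_1)=0$ via the five-term exact sequence preceding the lemma. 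The gap, which you flag yourself, is that you do not prove $D\neq 0$. Observing that $D$ lies in each ideal $\LOG_{\Delta_2/\Delta_1,\p_i}(\bbdelta_1(T,\Delta_2))$ gives no nonvanishing information, since $0$ lies in every ideal; and ``$\operatorname{Frac}(R)$-linear independence of the two coordinate functionals'' is literally the statement $D \neq 0$, so appealing to it is circular. From the coordinate-wise hypothesis alone (each $\LOG_{\Delta_2/\Delta_1,\p}(\bbdelta_1(T,\Delta_2))$ a regular ideal of $R$) one cannot, on purely module-theoretic grounds, rule out that the two coordinate maps become proportional over $\operatorname{Frac}(R)$ on $\bbdelta_1(T,\Delta_2)$, in which case $D=0$; so this is a genuine missing step.

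For contrast, the paper's own proof never introduces $D$ and is three lines long: $\widetilde{H}^2_{\mathrm{f}}(G_{F,\Sigma},T,\Delta_2)$ is torsion by Theorem~\ref{H2tors}; the quotient $R^2/\LOG^2_{\Delta_2/\Delta_1}(\bbdelta_1(T,\Delta_2))$ is asserted to be torsion, hence $\widetilde{H}^2_{\mathrm{f}}(G_{F,\Sigma},T,\Delta_1)$ is torsion; and then $\chi(\widetilde{R\Gamma}_{\mathrm{f}}(G_{F,\Sigma},T,\Delta_1))=0$ together with the exact sequence \eqref{ses} gives $\widetilde{H}^1_{\mathrm{f}}(G_{F,\Sigma},T,\Delta_1)=0$. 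The middle assertion (torsionness of the cokernel of $\LOG^2_{\Delta_2/\Delta_1}$ on $\bbdelta_1$) is precisely the non-degeneracy your $D$-analysis was trying to establish, so your route in fact makes explicit a point the paper silently folds into a single step, but since you leave it unresolved your proposal is incomplete. To close your argument you would need some additional input beyond the stated hypothesis --- self-duality, local Tate duality, a reciprocity law, or a strengthened assumption --- forcing $\LOG^2_{\Delta_2/\Delta_1}(\bbdelta_1(T,\Delta_2))$ to have full rank in $R^2$, not merely nonzero coordinate projections.
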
	
		\begin{proof}
			
			By the Theorem \ref{H2tors}, $ \widetilde{H}^2_{\mathrm{f}}(G_{F,\Sigma},T,\Delta_2) $ is torsion. Since the quotient $$ R^2/\LOG_{\Delta_2/\Delta_1}(\bbdelta_1(T, \Delta_2)) $$	is torsion, $ \widetilde{H}^2_{\mathrm{f}}(G_{F,\Sigma},T,\Delta_1) $ is torsion. As $ \chi(\widetilde{R\Gamma}_{\mathrm{f}}(G_{F,\Sigma},T,\Delta_1)) =0 $, from the exact sequence \eqref{ses} we get $ \widetilde{H}^1_{\mathrm{f}}(G_{F,\Sigma},T,\Delta_1) = 0$.
		\end{proof}

		\begin{thm}
			\label{logrln}
			Suppose that $R$ is regular. If $ \LOG^2_{\Delta_2/\Delta_1 , \p} (\bbdelta_1(T, \Delta_2)) \ne 0 $ for all $ \p \mid p $, then we have 
			$$ \bigotimes_{\p \mid p}  \LOG_{\Delta_2/\Delta_1 , \p} (\bbdelta_1(T, \Delta_2)) = \Char_R(\widetilde{H}^2_{\mathrm{f}}(G_{F,\Sigma},T,\Delta_1)) \Char_R(\widetilde{H}^2_{\mathrm{f}}(G_{F,\Sigma},T,\Delta_2)) $$
		\end{thm}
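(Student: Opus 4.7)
The plan is to follow the strategy of the proof of Theorem~\ref{old_log_relation}, adapting it to the rank-two Panchishkin setting.

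First I would invoke Lemma~\ref{logim}: the hypothesis that $\LOG^2_{\Delta_2/\Delta_1,\p}(\bbdelta_1(T,\Delta_2))$ is generated by an $R$-regular element for each $\p \mid p$ yields $\widetilde{H}^1_{\mathrm{f}}(G_{F,\Sigma},T,\Delta_1)=0$. Since the nonvanishing of those images also forces $\bbdelta_2(T,\Delta_2)\neq 0$, Proposition~\ref{prop : sep10} ensures that $\widetilde{H}^2_{\mathrm{f}}(G_{F,\Sigma},T,\Delta_2)$ is $R$-torsion; combined with $r(T,\Delta_1)=0$, the module $\widetilde{H}^2_{\mathrm{f}}(G_{F,\Sigma},T,\Delta_1)$ is $R$-torsion as well.

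Next, the long exact sequence relating the Selmer complexes for $\Delta_1$ and $\Delta_2$ (given just before Lemma~\ref{logim}) reduces to
\begin{equation*}
0 \longrightarrow \widetilde{H}^1_{\mathrm{f}}(G_{F,\Sigma},T,\Delta_2) \xrightarrow{\LOG^2_{\Delta_2/\Delta_1}} R^2 \longrightarrow \widetilde{H}^2_{\mathrm{f}}(G_{F,\Sigma},T,\Delta_1) \longrightarrow \widetilde{H}^2_{\mathrm{f}}(G_{F,\Sigma},T,\Delta_2) \longrightarrow 0.
\end{equation*}
Multiplicativity of characteristic ideals over the regular ring $R$ gives $\Char_R(R^2/\widetilde{H}^1_{\mathrm{f}}(G_{F,\Sigma},T,\Delta_2)) \cdot \Char_R(\widetilde{H}^2_{\mathrm{f}}(G_{F,\Sigma},T,\Delta_2)) = \Char_R(\widetilde{H}^2_{\mathrm{f}}(G_{F,\Sigma},T,\Delta_1))$. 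Combining this with Corollary~\ref{r2}, which yields $\Char_R(\widetilde{H}^1_{\mathrm{f}}(G_{F,\Sigma},T,\Delta_2)/\bbdelta_1(T,\Delta_2)) = \Char_R(\widetilde{H}^2_{\mathrm{f}}(G_{F,\Sigma},T,\Delta_2))^2$, via the filtration $\bbdelta_1(T,\Delta_2) \hookrightarrow \widetilde{H}^1_{\mathrm{f}}(G_{F,\Sigma},T,\Delta_2) \hookrightarrow R^2$ produces
\begin{equation*}
\Char_R\!\bigl(R^2/\LOG^2_{\Delta_2/\Delta_1}(\bbdelta_1(T,\Delta_2))\bigr) = \Char_R(\widetilde{H}^2_{\mathrm{f}}(G_{F,\Sigma},T,\Delta_1)) \cdot \Char_R(\widetilde{H}^2_{\mathrm{f}}(G_{F,\Sigma},T,\Delta_2)).
\end{equation*}

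The remaining step, which I expect to be the delicate point, is to identify this determinantal characteristic ideal with $\bigotimes_{\p \mid p} \LOG_{\Delta_2/\Delta_1,\p}(\bbdelta_1(T,\Delta_2))$. I would carry this out locally at each height-one prime $\mathfrak{P}$ of $R$: as in the proofs of Corollary~\ref{r2} and Proposition~\ref{prop : sep10}, after choosing a representative $\Phi$ with $\Phi_i = \pi^{n_i}\operatorname{pr}_i$ for a uniformizer $\pi$ of $R_{\mathfrak{P}}$, one has $\bbdelta_1(T,\Delta_2)\otimes R_{\mathfrak{P}} = \pi^{\sum n_i}\bigl(\widetilde{H}^1_{\mathrm{f}}(G_{F,\Sigma},T,\Delta_2)\otimes R_{\mathfrak{P}}\bigr)$, whose image under $\LOG^2_{\Delta_2/\Delta_1}$ is a full-rank sublattice of $R_{\mathfrak{P}}^2$ whose determinantal ideal matches the tensor product of its two coordinate projections. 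Assembling these local computations over all height-one primes completes the proof.
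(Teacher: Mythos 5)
Your first two paragraphs track the paper's own proof essentially line for line: Lemma~\ref{logim} kills $\widetilde{H}^1_{\mathrm{f}}(G_{F,\Sigma},T,\Delta_1)$, and the resulting four-term exact sequence of torsion $R$-modules together with multiplicativity of characteristic ideals and Corollary~\ref{r2} give
\begin{equation*}
\Char_R\bigl(R^2/\LOG^2_{\Delta_2/\Delta_1}(\bbdelta_1(T,\Delta_2))\bigr)= \Char_R\bigl(\widetilde{H}^2_{\mathrm{f}}(G_{F,\Sigma},T,\Delta_1)\bigr)\,\Char_R\bigl(\widetilde{H}^2_{\mathrm{f}}(G_{F,\Sigma},T,\Delta_2)\bigr).
\end{equation*}
You are also right to isolate the remaining identification, namely that $\Char_R\bigl(R^2/\LOG^2_{\Delta_2/\Delta_1}(\bbdelta_1(T,\Delta_2))\bigr)$ equals $\bigotimes_{\p\mid p}\LOG_{\Delta_2/\Delta_1,\p}(\bbdelta_1(T,\Delta_2))$, as the genuine content of the statement; the paper's two-line proof passes over it without comment.

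The localization argument you sketch for that last step does not close it, however. After passing to a height-one prime $\mathfrak{P}$ and diagonalizing the presentation $\varPhi$, you obtain a basis of $\widetilde{H}^1_{\mathrm{f}}(G_{F,\Sigma},T,\Delta_2)\otimes_R R_\mathfrak{P}$ adapted to $\varPhi$; but that basis lives on the Selmer side, while the target $R_\mathfrak{P}^2$ carries a \emph{fixed} coordinate splitting indexed by the two places above $p$, and $\LOG^2_{\Delta_2/\Delta_1}$ has no reason to carry the adapted basis into that splitting. Consequently $\LOG^2_{\Delta_2/\Delta_1}(\bbdelta_1(T,\Delta_2))\otimes_R R_\mathfrak{P}$ is merely some full-rank sublattice $M\subset R_\mathfrak{P}^2$, and the assertion that its ``determinantal ideal matches the tensor product of its two coordinate projections'' is false in that generality: for a uniformizer $\varpi$ of $R_\mathfrak{P}$, the lattice $M=\langle(\varpi,1),(0,\varpi)\rangle$ has $\Fitt^0_{R_\mathfrak{P}}(R_\mathfrak{P}^2/M)=(\varpi^2)$, while its coordinate projections are $(\varpi)$ and $R_\mathfrak{P}$, whose product is only $(\varpi)$. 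In general one only has the one-sided containment $\Fitt^0_{R_\mathfrak{P}}(R_\mathfrak{P}^2/M)\subseteq \mathrm{pr}_1(M)\,\mathrm{pr}_2(M)$. Some additional structural input is required to exclude this kind of off-diagonal mixing (for instance, that the image of $\widetilde{H}^1_{\mathrm{f}}(G_{F,\Sigma},T,\Delta_2)$ under $\LOG_{\Delta_2/\Delta_1,\p_2}$ coincides with the image of the kernel of $\LOG_{\Delta_2/\Delta_1,\p_1}$), and neither your proposal nor the paper's written proof supplies it.
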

		\begin{proof}
			If $ \LOG_{\Delta_2/\Delta_1 , \p} (\bbdelta_1(T, \Delta_2)) \ne 0 $ then Lemma \ref{logim}. shows that $ \widetilde{H}^1_{\mathrm{f}}(G_{F,\Sigma},T,\Delta_1) = 0 $. Hence we have the following exact sequence of torsion $ R $-modules 
			\begin{align*}
				0 \rightarrow \widetilde{H}^1_{\mathrm{f}}(G_{F,\Sigma},T,\Delta_2)/ \bbdelta_1(T,\Delta_2) \rightarrow R^2/\LOG^2_{\Delta_2/\Delta_1}(\bbdelta_1(T,\Delta_2)) \qquad \qquad \qquad\qquad \\
				\rightarrow \widetilde{H}^2_{\mathrm{f}}(G_{F,\Sigma},T,\Delta_1) \rightarrow \widetilde{H}^2_{\mathrm{f}}(G_{F,\Sigma},T,\Delta_2) \rightarrow 0
			\end{align*} 
			Then by Theorem \ref{r0}. and Corollary \ref{r2}. we conclude that\\
			$\bigotimes_{\p \mid p}  \LOG_{\Delta_2/\Delta_1 , \p} (\bbdelta_1(T, \Delta_2)) = \Char_R(\widetilde{H}^2_{\mathrm{f}}(G_{F,\Sigma},T,\Delta_1)) \Char_R(\widetilde{H}^2_{\mathrm{f}}(G_{F,\Sigma},T,\Delta_2)) $.
		\end{proof}
		\begin{rmk}
			\cref{old_log_relation} and \cref{logrln} are formal variants of implications of Perrin-Riou's conjecture about comparing Beillinson-Kato elements and Heegner points.
		\end{rmk}


		\section{Factorization of algebraic $p$-adic $L$-functions (The rank (1,1) case)}
		\label{section : factorization_1}
		In this section, our objective is to establish one of our main factorization results stated in \cref{theoremA}. 
		\subsection{Modules of leading terms in rank (1,1) case}
		The factorization formula will be proved by comparing the following modules of leading terms.
		\subsubsection{Hypotheses}
		\label{rank1_hypo}
		We will work throughout the \cref{section : factorization_1} under the hypotheses as follows. 
		
		The assumptions of Corollary \ref{cor_tam_assump} are enforced both with $ \f $ and $ \g $. Also, note that if the tame conductors of the Hida families $ \f $ and $ \g $ are square free then this condition trivially holds. Along with that, the condition \ref{DIST} holds for holds for both  $ \f $ and $ \g $. The Hida family $ \f $ satisfies \ref{Irr}, but for $ \g $ we assume the following stronger version i.e.
		\begin{itemize}
			\item[\mylabel{Irr+}{\textbf{(Irr+)}}] The residual representation $ \bar{\rho}_{\g} $ is absolutely irreducible when restricted to $ G_{F(\zeta_p)} $.
			\item[\mylabel{TAM}{\textbf{(TAM)}}] We further assume that the  $ p $-part of a Tamagawa factor for a single member of the Hida family $ \f $ and a single member of the family $ \g $ is trivial.
		\end{itemize}
		\begin{rmk}
			\label{hypo_rmk}
			These hypotheses ensure the validity of \ref{H=0} and \ref{Tam} for $ \T_{1},  \T_{2} $, and $ \T_{3} $. Therefore our main conclusions in \cref{section : Selmer_complexes} are applicable and the Selmer complexes associated with these Galois representations are perfect.
		\end{rmk}
		In this section we work in the scenario of \textsection\ref{rank1_scenarion}, so that we have 
		\begin{itemize}
			\item[\mylabel{Sign1}{\textbf{(S1)}}] $ \varepsilon(\f_P \otimes \lambda) = -1 $ and  $ \varepsilon(\f_P \otimes \ad^0(\g_Q)(\lambda) )= 
			-1$ whenever $(P,Q) \in \W_{\mrm{cl}}^{(\f,\b)}	$ for the global root number.
		\end{itemize}
		We also need the following conditions to compare the module of leading terms associated with $(T_{\f}(\lambda),\Delta_{(\emptyset, \pan)})$:
		\begin{itemize}
			\item[\mylabel{ord}{\textbf{(Ord)}}] $\f \otimes \lambda$ is ordinary at $ \p_1 $ and $ \p_2 $ both.
			\item[\mylabel{BI}{\textbf{(BI)}}] $\f \otimes \lambda$ satisfies the \enquote{big image} conditions of \cite[Definition 2.5.1]{loeffler2020iwasawatheoryquadratichilbert}. 
			\item[\mylabel{MC}{\textbf{(MC)}}] The family $\f \otimes \lambda $ verifies the conditions of \cite[Theorem 101]{Wan}.
			\item[\mylabel{NA}{\textbf{(NA)}}] $ \alpha_{\f \otimes \lambda,\p}(\mrm{Fr}_{\frak{p}}) -1 \in \R_{\f}^\times $ for each $ \p = \p_1, \p_2 $.
		\end{itemize}
		
		\begin{rmk}
			\label{remark_hypo}
			We note that the hypotheses \ref{ord}, \ref{BI} and \ref{MC} are required to validate the Iwasawa main conjecture for $ \f \otimes \lambda $ (cf. \cite{loeffler2020iwasawatheoryquadratichilbert}, \cite{Wan}). The \ref{NA} hypothesis is there to ensure certain local cohomology groups are finite.
		\end{rmk}

		\subsubsection{}
		\label{6.1.2}
		We have $ \chi(\widetilde{R\Gamma}_{\mathrm{f}}(G_{F, \Sigma}, T_\f(\lambda), \Delta_{\emptyset,\pan})) = -1 $. Hence there will be a cyclic submodule 
		$$\bbdelta_1(T_{\f \otimes \lambda}, \Delta_{\emptyset,\pan}) \subset \widetilde{H}^1_{\mathrm{f}}(G_{F,\Sigma},T_\f(\lambda),\Delta_{\emptyset,\pan})$$
		\begin{rmk}
			$\bbdelta_1(T_{\f \otimes \lambda}, \Delta_{(\emptyset,\pan)})$ is expected to be generated by Kolyvagin system $\mathbf{BK}^{\mrm LZ}_{\f\otimes \lambda}$ coming from the big Euler system classes constructed by Loeffler et al. and hence non zero. (cf. \cite[Section 8]{loeffler2020iwasawatheoryquadratichilbert}, \cite{loeffler2024blochkatoconjecturegsp4})
		\end{rmk}
		
		\begin{prop} We have
			$$\widetilde{R\Gamma}_{\mathrm{f}}(G_{F,\Sigma},T_\f(\lambda), \Delta_{(0,\pan)}), \; \widetilde{R\Gamma}_{\mathrm{f}}(G_{F,\Sigma},T_\f(\lambda), \Delta_{(\emptyset,\pan)}) \in D_{\mathrm{parf}}^{[1,2]}(\prescript{}{\R_\f}{\mathrm{Mod}})$$
			If $\bbdelta_1(T_\f(\lambda), \Delta_{(\emptyset,\pan)}) \neq 0$, then:
			\begin{enumerate}
				\item If the image of $\bbdelta_1(T_\f(\lambda), \Delta_{(\emptyset,\pan)})$ under the composite map 
				$$  \res_{\p_1}^-  \colon H^1(G_{F, \Sigma}, T_\f(\lambda)) \rightarrow H^1(F_{\p_1}, T_\f(\lambda)) \rightarrow  H^1(F_{\p_1}, \scrF^-_{\p_1} T_\f(\lambda)) $$ is zero then $$ \widetilde{H}^1_{\mathrm{f}}(G_{F,\Sigma},T_\f(\lambda),\Delta_{\pan}) \cong \widetilde{H}^1_{\mathrm{f}}(G_{F,\Sigma},T_\f(\lambda),\Delta_{(\emptyset,\pan)}) $$ are both free $ \R_{\f} $-modules of rank one. 
				\item $\widetilde{H}^2_{\mathrm{f}}(G_{F,\Sigma},T_\f(\lambda),\Delta_{(\emptyset,\pan)})$ is torsion, Moreover,
				\begin{align}
					\begin{aligned}
						\Char_R(\widetilde{H}^1_{\mathrm{f}}(G_{F,\Sigma},T_\f(\lambda),\Delta_{(\emptyset,\pan)})/\R_{\f}\bbdelta_1(T_\f(\lambda), \Delta_{(\emptyset,\pan)})) \\ \noindent = \Char_R(\widetilde{H}^2_{\mathrm{f}}(G_{F,\Sigma},T_\f(\lambda),\Delta_{(\emptyset,\pan)})) .
					\end{aligned}
				\end{align}
				\item If in addition $ \res_{\p_1}(\bbdelta_1(T_\f(\lambda), \Delta_{(\emptyset,\pan)})) \ne 0 $, then $ \widetilde{H}^1_{\mathrm{f}}(G_{F,\Sigma},T_\f(\lambda),\Delta_{(0,\pan)}) = 0 $ and the $ \R_{\f} $-module $  \widetilde{H}^2_{\mathrm{f}}(G_{F,\Sigma},T_\f(\lambda),\Delta_{(0,\pan)}) $ has rank one.
			\end{enumerate}
		\end{prop}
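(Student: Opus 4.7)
The plan is to extract perfectness directly from \cref{dercat}: the package of hypotheses gathered in \S\ref{rank1_hypo} (in particular \ref{Irr}, \ref{DIST}, together with \ref{TAM}) guarantee that \ref{H=0} and \ref{Tam} hold for $T_\f(\lambda)$, as noted in \cref{hypo_rmk}, so both Selmer complexes lie in $D^{[1,2]}_{\mrm{parf}}(\prescript{}{\R_\f}{\mrm{Mod}})$. A direct Euler-characteristic count via \cref{euler_characteristisc} yields $r(T_\f(\lambda), \Delta_{(\emptyset,\pan)}) = 1$, $r(T_\f(\lambda), \Delta_{\pan}) = 0$, and $r(T_\f(\lambda), \Delta_{(0,\pan)}) = -1$; these numbers will dictate the rank bookkeeping below.

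Part (2) is then essentially formal. Since $\R_\f$ is a regular local domain, the hypothesis $\bbdelta_1 \neq 0$ is equivalent to its generator being an $\R_\f$-regular element, so \cref{prop : sep10} applied at $r=1$ forces $\widetilde{H}^2_{\mrm{f}}(G_{F,\Sigma},T_\f(\lambda),\Delta_{(\emptyset,\pan)})$ to be torsion, and \cref{H2tors} immediately yields the claimed characteristic-ideal identity. As a by-product, $\widetilde{H}^1_{\mrm{f}}(G_{F,\Sigma},T_\f(\lambda),\Delta_{(\emptyset,\pan)})$ has generic rank one; being a reflexive rank-one module over the regular local ring $\R_\f$, it is free of rank one.

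For Part (1), the plan is to exploit the change-of-local-conditions exact triangle
\begin{equation*}
\widetilde{R\Gamma}_{\mrm{f}}(G_{F,\Sigma}, T_\f(\lambda), \Delta_{\pan}) \longrightarrow \widetilde{R\Gamma}_{\mrm{f}}(G_{F,\Sigma}, T_\f(\lambda), \Delta_{(\emptyset,\pan)}) \longrightarrow R\Gamma(F_{\p_1}, \scrF^-_{\p_1} T_\f(\lambda)) \xrightarrow{+1}
\end{equation*}
whose connecting map on $H^1$ is precisely $\res_{\p_1}^-$. Since Part (2) presents $\widetilde{H}^1_{\mrm{f}}(\Delta_{(\emptyset,\pan)})$ as a free rank-one $\R_\f$-module in which $\bbdelta_1$ sits with torsion cokernel, any $x$ in it satisfies $nx \in \bbdelta_1$ for some non-zero $n \in \R_\f$, and the hypothesis $\res_{\p_1}^-(\bbdelta_1) = 0$ forces $\res_{\p_1}^-(x)$ to be torsion. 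A separate local-cohomology computation---leveraging \ref{ord} together with \ref{NA} to show that $H^0$ and $H^2$ of the rank-one unramified twist $\scrF^-_{\p_1}T_\f(\lambda)$ generically vanish and that $H^1(F_{\p_1}, \scrF^-_{\p_1} T_\f(\lambda))$ is $\R_\f$-torsion-free---then promotes this to the identical vanishing of $\res_{\p_1}^-$, and the long exact sequence delivers $\widetilde{H}^1_{\mrm{f}}(\Delta_{\pan}) \cong \widetilde{H}^1_{\mrm{f}}(\Delta_{(\emptyset,\pan)})$, both free of rank one. For Part (3) the symmetric triangle
\begin{equation*}
\widetilde{R\Gamma}_{\mrm{f}}(G_{F,\Sigma}, T_\f(\lambda), \Delta_{(0,\pan)}) \longrightarrow \widetilde{R\Gamma}_{\mrm{f}}(G_{F,\Sigma}, T_\f(\lambda), \Delta_{\pan}) \longrightarrow R\Gamma(F_{\p_1}, \scrF^+_{\p_1} T_\f(\lambda)) \xrightarrow{+1}
\end{equation*}
does the job: the conditions $\res_{\p_1}(\bbdelta_1) \neq 0$ and (from Part (1)) $\res_{\p_1}^-(\bbdelta_1) = 0$ together place a non-zero image of $\bbdelta_1$ in $H^1(F_{\p_1}, \scrF^+_{\p_1}T_\f(\lambda))$, so the map $\widetilde{H}^1_{\mrm{f}}(\Delta_\pan) \to H^1(F_{\p_1}, \scrF^+_{\p_1}T_\f(\lambda))$ has non-trivial image in a rank-one target. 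Its kernel $\widetilde{H}^1_{\mrm{f}}(\Delta_{(0,\pan)})$ is then a torsion submodule of the free rank-one module produced in Part (1), hence zero; and the rank identity $\rank \widetilde{H}^2_{\mrm{f}}(\Delta_{(0,\pan)}) = \rank \widetilde{H}^1_{\mrm{f}}(\Delta_{(0,\pan)}) + 1 = 1$ coming from the Euler characteristic $\chi = 1$ concludes the proof.

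The main obstacle is the torsion-freeness of $H^1(F_{\p_1}, \scrF^-_{\p_1} T_\f(\lambda))$ needed in Part (1) to upgrade $\res_{\p_1}^-(\bbdelta_1) = 0$ to the vanishing of $\res_{\p_1}^-$ on the entire Selmer module; this is precisely where the non-anomaly assumption \ref{NA} plays the decisive role, through Euler-characteristic and local-duality arguments on the rank-one unramified twist by the character $\alpha_{\f\otimes\lambda,\p_1}$. Everything else is bookkeeping between the three Greenberg conditions and rank-counting against the Euler characteristic.
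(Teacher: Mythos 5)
Your proposal is correct and takes essentially the same overall approach as the paper (perfectness via \cref{dercat}, Euler--characteristic counting, and the leading--term machinery of \textsection\ref{sec : module_leading}), but it is appreciably more explicit where the paper is sketchy. The paper's own proof establishes perfectness and the freeness of $\cohom^1(\Delta_{(\emptyset,\pan)})$ via Ext-vanishing, then leaps from the reciprocity-law heuristic ($\varepsilon = -1$) directly to the isomorphism $\cohom^1(\Delta_\pan) \cong \cohom^1(\Delta_{(\emptyset,\pan)})$ without justifying how the hypothesis $\res^-_{\p_1}(\bbdelta_1) = 0$ propagates; you supply the missing step by combining the torsion-ness of $\cohom^1(\Delta_{(\emptyset,\pan)})/\bbdelta_1$ (from part (2)) with the torsion-freeness of $H^1(F_{\p_1}, \scrF^-_{\p_1}T_\f(\lambda))$, which forces $\res^-_{\p_1}$ to vanish identically. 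Likewise for (3), where the paper merely says it ``is a consequence of conclusion (1),'' your argument through the exact triangle for $\Delta_{(0,\pan)} \to \Delta_\pan$ and the non-vanishing of $\res^+_{\p_1}(\bbdelta_1)$ is the genuine content. Two places where you should tighten the wording: under \ref{NA} the vanishing $H^0(F_{\p_1}, \scrF^-_{\p_1}T_\f(\lambda)) = 0$ is \emph{identical}, not merely generic (this is what makes $H^1(F_{\p_1}, \scrF^-_{\p_1}T_\f(\lambda))$ a submodule of a free module and hence torsion-free, which your argument actually needs); and in (3) you additionally use $H^0(F_{\p_1}, \scrF^+_{\p_1}T_\f(\lambda)) = 0$ (automatic from the positive Hodge--Tate weight of $\scrF^+_{\p_1}$) both for the injectivity of $\cohom^1(\Delta_{(0,\pan)}) \hookrightarrow \cohom^1(\Delta_\pan)$ and to see that $\res_{\p_1}(\bbdelta_1)$ lands in $H^1(F_{\p_1},\scrF^+_{\p_1}T_\f(\lambda)) \subset H^1(F_{\p_1}, T_\f(\lambda))$. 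Finally, you correctly get $\chi(\Delta_{(0,\pan)}) = +1$ and hence $r = -1$; the paper's printed ``$\chi = -1$'' in this line is a sign typo, but the final rank-$1$ conclusion in (3) matches yours.
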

		\begin{proof}
			The fact that the Selmer complexes are perfect and that they are concentrated in the indicated degrees follows from Proposition \ref{dercat}. \\
			Since $\bbdelta_1(T_\f(\lambda), \Delta_{(\emptyset,\pan)}) \neq 0$, the $ \R_{\f} $-module $ \widetilde{H}^2_{\mathrm{f}}(G_{F,\Sigma},T_\f(\lambda),\Delta_{(\emptyset,\pan)}) $ is torsion by Theorem \ref{H2tors}. Therefor $ \widetilde{R\Gamma}_{\mathrm{f}}(G_{F,\Sigma},T_\f(\lambda), \Delta_{(\emptyset,\pan)}) \in D_{\mathrm{parf}}^{[1,2]}(\prescript{}{\R}{\mathrm{Mod}}) $ and $ \chi(\widetilde{R\Gamma}_{\mathrm{f}}(G_{F,\Sigma},T_\f(\lambda), \Delta_{(\emptyset,\pan)}))=-1 $ imply that rank of $ \widetilde{H}^1_{\mathrm{f}}(G_{F,\Sigma},T_\f(\lambda),\Delta_{(\emptyset,\pan)}) $ equals two. Moreover, since $ \R_{\f} $ is a finite-dimensional regular local ring, for any finitely generated $ \R_{\f} $-module $ M $ we have $$ \Ext^1_{\R_{\f}}(\widetilde{H}^1_{\mathrm{f}}(G_{F,\Sigma},T_\f(\lambda),\Delta_{(\emptyset,\pan)}), M) = 0 = \Ext^2_{\R_{\f}}(\widetilde{H}^2_{\mathrm{f}}(G_{F,\Sigma},T_\f(\lambda),\Delta_{(\emptyset,\pan)}), M). $$ 
			Thence, $ \widetilde{H}^1_{\mathrm{f}}(G_{F,\Sigma},T_\f(\lambda),\Delta_{(\emptyset,\pan)}) $ is free. Since $ \varepsilon(\f \otimes \lambda) =-1 $, the vanishing $ \res^-_{\p_1}(\bbdelta_1(T_\f(\lambda), \Delta_{(\emptyset,\pan)})) = 0 $ should come from comparison of $\bbdelta_1(T_\f, \Delta_{(\emptyset,\pan)})$ with $\mathbf{BK}^{\mrm LZ}_{\f\otimes \lambda}$. Hence we have the equality $$ \widetilde{H}^1_{\mathrm{f}}(G_{F,\Sigma},T_\f(\lambda),\Delta_{\pan}) \cong \widetilde{H}^1_{\mathrm{f}}(G_{F,\Sigma},T_\f(\lambda),\Delta_{(\emptyset,\pan)}). $$ 
			Part (2) is a consequence of Corollary \ref{old_log_relation}.\\
			In addition, if $ \res_{\p_1}(\bbdelta_1(T_\f(\lambda), \Delta_{(\emptyset,\pan)})) \ne 0 $, then $ \widetilde{H}^1_{\mathrm{f}}(G_{F,\Sigma},T_\f(\lambda),\Delta_{(0,\pan)}) = 0 $ is a consequence of conclusion (1). As $ \chi(\widetilde{R\Gamma}_{\mathrm{f}}(G_{F,\Sigma},T_\f(\lambda), \Delta_{(0,\pan)})) = -1  $, we get $ \R_{\f} $-rank of $ \widetilde{H}^2_{\mathrm{f}}(G_{F,\Sigma},T_\f(\lambda),\Delta_{(0,\pan)}) $ is one.
		\end{proof}
		By the theory of Perrin-Riou, we get an isomorphism (non-canonically), induced from a big Perrin-Riou logarithm (cf. \cite{perrin-riou_padic})
		\begin{align}
			\label{big_logarithm}
			\LOG_{(\f \otimes \lambda,\p_1)} \colon H^1(G_{F_{\p_1}}, \scrF^+_{\p_1} T_\f(\lambda)) \xrightarrow{\sim} \mbf D_{\rm crys} (\scrF^+_{\p_1} T_\f(\lambda\chi_{\mathrm{cycl}}\bbl{x}_\f^{-1}).
		\end{align}
		Thus we can choose an isomorphism
		\begin{align}
			\Log_{(\f \otimes \lambda,\p_1)}^1 \colon H^1(G_{F_{\p_1}}, \scrF^+_{\p_1} T_\f(\lambda)) \xrightarrow{\LOG_{(\f \otimes \lambda,\p_1)}} \mbf D_{\rm crys} (\scrF^+_{\p_1} T_\f(\lambda\chi_{\mathrm{cycl}}\bbl{x}_\f^{-1}) \xrightarrow{\sim} \R_\f.
		\end{align}
		Here the choice of isomorphism $ \mbf D_{\rm crys} (\scrF^+_{\p_1} T_\f(\lambda\chi_{\mathrm{cycl}}\bbl{x}_\f^{-1}) \xrightarrow{\sim} \R_\f $  is compatible with the choice of lattice $ T_\f $ (cf. \cite[Proposition 10.1.1]{kings_loeffler_zerbes17}). We fix this choice throughout the section. 
		\subsubsection{}
		\label{6.1.3}
		We now focus on $\T_{2} $. Since we have $ \chi(\widetilde{R\Gamma}_{\mathrm{f}}(G_{F,\Sigma}, \T_2, \operatorname{tr}^*\Delta_{(\g,\b)})) = -1 $, we get a cyclic submodule
		$$ \bbdelta_1(\T_2, \operatorname{tr}^*\Delta_{(\g,\b)}) \subset  \widetilde{H}^1_{\mathrm{f}}(G_{F,\Sigma}, \T_2, \operatorname{tr}^*\Delta_{(\g,\b)}) $$
		On the other hand, note that $ \chi(\widetilde{R\Gamma}_{\mathrm{f}}(G_{F,\Sigma},\T_2, \operatorname{tr}^*\Delta_{\mathrm{bal}})) = 0 $.
		Thus there exists a submodule 
		$$ \bbdelta_0(\T_2, \operatorname{tr}^*\Delta_{\mathrm{bal}}) \subset \R$$
		The short exact sequence \ref{ses_sep18} gives a map
		\begin{equation}
			\res_{/\mathrm{bal}, \p_1} \colon \widetilde{H}^1_{\mathrm{f}}(G_{F,\Sigma}, \T_2, \operatorname{tr}^*\Delta_{(\g,\b)})\rightarrow H^1(G_{F_{\p_1}},\scrF^-_{\p_1} T_{\f}(\lambda)) \otimes_O \R.
		\end{equation}
		Let us fix a trivialization\footnote{This choice of trivialization is given in terms of Perrin-Riou's big dual exponential map compatible with the choice of $ \LOG_{(\f \otimes \lambda,\p_1)} $ in \eqref{big_logarithm}.} 
		$$ \operatorname{EXP}^*_{\scrF^-_{\p_1} T_{\f}(\lambda)} \colon H^1(G_{F_{\p_1}},\scrF^-_{\p_1} T_{\f}(\lambda)) \xrightarrow{\sim} \R_\f.$$
		We define the following composition as
		\begin{align}
			\begin{aligned}
				\Exp_{\scrF^-_{\p_1}\f (\lambda)}^* \colon \widetilde{H}^1_{\mathrm{f}}(G_{F,\Sigma}, \T_2, \operatorname{tr}^*\Delta_{(\g,\b)})\xrightarrow{\res_{/\mathrm{bal}, \p_1}} H^1(G_{F_{\p_1}},\scrF^-_{\p_1} T_{\f}(\lambda)) \otimes_O \R \qquad \\
				\xrightarrow{\operatorname{EXP}^*_{\scrF^-_{\p_1} T_{\f}(\lambda)} \otimes \operatorname{id}} \R.		\end{aligned}
		\end{align}
		\begin{rmk} \label{rmk : Exp_T2_rank1}
			If $ \bbdelta_0(\T_2, \operatorname{tr}^*\Delta_{\mathrm{bal}}) \ne 0 $, Theorem \ref{old_log_relation}. tells 
			\begin{equation}
				\Exp^*_{\scrF^-_{\p_1} T_{\f}(\lambda)} (\bbdelta_1(\T_{2}, \tr \Delta_{(\g,\b)})) = \Char(\widetilde{H}^2_{\mathrm{f}}(G_{F,\Sigma}, \T_2, \operatorname{tr}^*\Delta_{\mathrm{bal}}))
			\end{equation}
			is the module of (algebraic) $ p $-adic $ L $-functions, which is expected to be generated by the $ p $-adic $ L $-function $ \scr{L}_p^{(\g,\b)}(\f \otimes \breve{\g} ) $. 
		\end{rmk}
		We note that $\widetilde{R\Gamma}_{\mathrm{f}}(G_{F,\Sigma},\T_2, \operatorname{tr}^*\Delta_?) \in D_{\mathrm{parf}}^{[1,2]}(\prescript{}{\R}{\mathrm{Mod}}) $ for $? \in \{(\g, \b), \b\} $ and hence 
		\begin{equation}
			\widetilde{H}^1_{\mathrm{f}}(G_{F,\Sigma}, \T_2, \operatorname{tr}^*\Delta_{\mathrm{bal}}) = \{0\}, \quad \widetilde{H}^2_{\mathrm{f}}(G_{F,\Sigma}, \T_2, \operatorname{tr}^*\Delta_{\mathrm{bal}}) \text{ is torsion,}
		\end{equation}
		\begin{align}
			\begin{aligned}
				&\widetilde{H}^1_{\mathrm{f}}(G_{F,\Sigma}, \T_2, \operatorname{tr}^*\Delta_{(\g,\b)}) \text{ is torsion-free of rank one,} \\ 
				&\widetilde{H}^2_{\mathrm{f}}(G_{F,\Sigma}, \T_2, \operatorname{tr}^*\Delta_{(\g,\b)}) \text{ is torsion,} \\
				&\widetilde{H}^1_{\mathrm{f}}(G_{F,\Sigma}, \T_2, \operatorname{tr}^*\Delta_{(\g,\b)})\rightarrow H^1(G_{F_{\p_1}},\scrF^-_{\p_1} T_{\f}(\lambda)) \otimes \R \xrightarrow{\sim} \R \text{ is injective}
			\end{aligned}
		\end{align}
		whenever $  \bbdelta_1(\T_{2}, \tr \Delta_{(\g,\b)})) \ne 0 $.

		\subsubsection{}
		Note that we have $\chi(\widetilde{R\Gamma}_{\mathrm{f}}(G_{F,\Sigma},\T_{3},\Delta_{(\g,\b)})) =0$. Therefore we get submodule of leading terms
		\begin{equation}
			\bbdelta_0(\T_{3},\Delta_{(\g,\b)})  \subset \R.
		\end{equation}
		Suppose $ \bbdelta_0(\T_{3},\Delta_{(\g,\b)}) \ne 0 $, then by \cref{r0} 
		\begin{align}
			\label{6.10}
			\bbdelta_0(\T_{3},\Delta_{(\g,\b)}) = \Char_R\left(\cohom^2(G_{F,\Sigma},\T_{3},\Delta_{(\g,\b)})\right).
		\end{align} 
		Also note that for the local conditions $ \Delta_{(+,\b)} $, we have $ \chi(\widetilde{R\Gamma}_{\mathrm{f}}(G_{F,\Sigma}, \T_{3}, \Delta_{(+,\b)})) =-1 $. Hence we get a submodule $$ \bbdelta_1(\T_{3},\Delta_{(+,\b)}) \subset \widetilde{H}^1(G_{F,\Sigma}, \T_{3}, \Delta_{(+,\b)}) $$
		Let us consider the natural map $ \res^{/\g}_{\p_1} $ as the composition of 
		\begin{equation*}
			\widetilde{H}^1(G_{F,\Sigma}, \T_{3}, \Delta_{(+,\b)}) \rightarrow  H^1(G_{F_{\p_1}}, \scrF^+_{\p_1}T_{\f} \otimes \scrF^-_{\p_1}T_{\g} \otimes \scrF^+_{\p_1}T_{\g}^*(\lambda)) \xrightarrow{\sim}  H^1(G_{F_{\p_1}}, \scrF^+_{\p_1}T_{\f}(\lambda)) \widehat{\otimes} \R ,
		\end{equation*}
		induced by the inclusion $ \scrF^{+\g}_{\p_1} \T_{3} \rightarrow \scrF^{+\mathrm{bal}^+}_{\p_1} \T_{3} $.
		\begin{rmk}
			\label{remark1_T3}
			If we assume that $ \bbdelta_0(\T_{3},\Delta_{\g}) \ne 0 $
			$$  \Log_{\f \otimes \lambda,\p_1}\circ\res^{/\g}_{\p_1}(\bbdelta_1(\T_{3},\Delta_{+,\b})) = \Char (\widetilde{H}^2_{\mathrm{f}}(G_{F,\Sigma}, \T_3, \Delta_{(\g,\b)}))  $$
			by Theorem \ref{old_log_relation}.
		\end{rmk}
		\begin{prop}
			If $ \bbdelta_0(\T_{3}.\Delta_{(\g,\b)}) \ne 0 $, then also 
			$$ \res_{\p_1}(\bbdelta_1(\T_{1}, \Delta_{(\emptyset,\pan)})) \ne 0 \ne \bbdelta_1(\T_{2}, \tr\Delta_{(\g,\b)}). $$
		\end{prop}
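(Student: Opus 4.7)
The argument proceeds in three short steps. The first exploits the long exact sequence associated with the triangle \eqref{comtriangle1} to extract information on the Selmer cohomology of $\T_2$ and $\T_1$; the second identifies the first non-vanishing claim; the third uses a change-of-local-conditions triangle at $\p_1$ to obtain the second non-vanishing claim.

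\textbf{Step 1 (consequences of $\bbdelta_0(\T_3,\Delta_{(\g,\b)})\ne0$).} Since $\R$ is a regular local domain, any nonzero element is $\R$-regular, so Theorem \ref{r0}(3) and Proposition \ref{prop : sep10} applied to $(\T_3,\Delta_{(\g,\b)})$ force
\[
\cohom^1(\G,\T_3,\Delta_{(\g,\b)})=0,\qquad \cohom^2(\G,\T_3,\Delta_{(\g,\b)}) \ \text{is $\R$-torsion}.
\]
All three Selmer complexes in \eqref{comtriangle1} lie in $D^{[1,2]}_{\mathrm{parf}}(\prescript{}{\R}{\mathrm{Mod}})$ by Proposition \ref{dercat}(2), so the long exact sequence of the triangle reads
\begin{align*}
0 \to \cohom^1(\G,\T_1,\Delta_{(0,\pan)}) \to 0 \to \cohom^1(\G,\T_2,\tr\Delta_{(\g,\b)})  \to \cohom^2(\G,\T_1,\Delta_{(0,\pan)}) \\
  \to \cohom^2(\G,\T_3,\Delta_{(\g,\b)}) \to \cohom^2(\G,\T_2,\tr\Delta_{(\g,\b)}) \to 0.
\end{align*}
I conclude that $\cohom^1(\G,\T_1,\Delta_{(0,\pan)}) = 0$ and that $\cohom^2(\G,\T_2,\tr\Delta_{(\g,\b)})$ is $\R$-torsion, being a quotient of an $\R$-torsion module.

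\textbf{Step 2 (non-vanishing of $\bbdelta_1(\T_2,\tr\Delta_{(\g,\b)})$).} Since $r(\T_2,\tr\Delta_{(\g,\b)})=1$ and $\cohom^2(\G,\T_2,\tr\Delta_{(\g,\b)})$ is $\R$-torsion, Proposition \ref{prop : sep10} implies that $\bbdelta_1(\T_2,\tr\Delta_{(\g,\b)})$ is generated by an $\R$-regular element, hence is nonzero.

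\textbf{Step 3 (non-vanishing of $\res_{\p_1}(\bbdelta_1(\T_1,\Delta_{(\emptyset,\pan)}))$).} The Greenberg conditions $\Delta_{(0,\pan)}$ and $\Delta_{(\emptyset,\pan)}$ on $\T_1$ differ only at $\p_1$ (strict vs.\ relaxed), so the standard change-of-conditions triangle in Nekov\'a\v{r}'s formalism yields
\[
\widetilde{R\Gamma}_{\mathrm{f}}(\G,\T_1,\Delta_{(0,\pan)}) \to \widetilde{R\Gamma}_{\mathrm{f}}(\G,\T_1,\Delta_{(\emptyset,\pan)}) \to R\Gamma(F_{\p_1},\T_1) \xrightarrow{+1}.
\]
Combining the associated long exact sequence with the vanishing $\cohom^1(\G,\T_1,\Delta_{(0,\pan)})=0$ from Step 1 shows that the induced map $\res_{\p_1}\colon \cohom^1(\G,\T_1,\Delta_{(\emptyset,\pan)}) \hookrightarrow H^1(F_{\p_1},\T_1)$ is injective. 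It therefore suffices to know that $\bbdelta_1(\T_1,\Delta_{(\emptyset,\pan)})\ne 0$, since injectivity of $\res_{\p_1}$ then implies the image is non-zero. This non-vanishing is obtained via flat base change: using $\T_1 \cong T_{\f\otimes\lambda}\otimes_{\R_\f}\R$ along the faithfully flat map $i_\f^*$, compatibility of the construction of $\bbdelta_r$ with the presentation \eqref{ses} gives
\[
\bbdelta_1(\T_1,\Delta_{(\emptyset,\pan)}) = \bbdelta_1(T_{\f\otimes\lambda},\Delta_{(\emptyset,\pan)})\cdot \R,
\]
and the right-hand side is non-zero under the hypotheses of \textsection \ref{rank1_hypo} (in particular \ref{MC}, together with the existence of the non-trivial Loeffler--Zerbes Kolyvagin system $\mathbf{BK}^{\mathrm{LZ}}_{\f\otimes\lambda}$, cf.\ \textsection \ref{6.1.2}).

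The main obstacles in the plan are (i) checking that the formation of $\bbdelta_r$ commutes with the flat base change $\R_\f\hookrightarrow \R$ at the level of the presentation \eqref{ses}, and (ii) correctly identifying the connecting map in the change-of-local-conditions triangle with the restriction $\res_{\p_1}$; both are essentially formal but must be made precise within Nekov\'a\v{r}'s Selmer complex formalism.
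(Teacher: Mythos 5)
Your Steps 1 and 2 are correct and essentially the paper's own argument: you extract $\cohom^1(\G,\T_1,\Delta_{(0,\pan)})=0$ and $\cohom^2(\G,\T_2,\tr\Delta_{(\g,\b)})$ torsion from the long exact sequence of \eqref{comtriangle1}, then invoke \cref{prop : sep10}. (The paper phrases the $\T_2$ step via a rank count on $\cohom^1$ combined with $\chi=-1$, but this is equivalent to observing that $\cohom^2$ is a surjective image of a torsion module.) The injectivity of $\res_{\p_1}$ in your Step 3 is also correct and matches the paper.

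The genuine gap is in how you establish $\bbdelta_1(\T_1,\Delta_{(\emptyset,\pan)})\ne 0$. You outsource this to the non-triviality of the Loeffler--Zerbes Kolyvagin system $\mathbf{BK}^{\rm LZ}_{\f\otimes\lambda}$ plus an unverified flat base-change compatibility for $\bbdelta_1$ along $i_\f^*\colon\R_\f\to\R$. Neither of these is available: the non-triviality of $\mathbf{BK}^{\rm LZ}_{\f\otimes\lambda}$ is explicitly flagged in \textsection\ref{6.1.2} as \emph{expected}, not assumed, and it is not among the standing hypotheses of \textsection\ref{rank1_hypo}; and the base-change statement for $\bbdelta_r$ is not established anywhere in the paper (\cref{dercat} only handles quotients $\R/I$, not flat extensions). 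The statement of the proposition is a clean implication from $\bbdelta_0(\T_3,\Delta_{(\g,\b)})\ne 0$ alone, and your proof does not deliver that. The paper instead deduces the non-vanishing \emph{internally}: from $\cohom^1(\G,\T_1,\Delta_{(0,\pan)})=0$ together with the self-orthogonality $\Delta_{(0,\pan)}^\perp = \Delta_{(\emptyset,\pan)}$ and Nekov\'a\v{r}'s Grothendieck-type duality for Selmer complexes, $\cohom^2(\G,\T_1,\Delta_{(\emptyset,\pan)})$ is identified with an $\Ext^1_\R$ of $\cohom^2(\G,\T_1,\Delta_{(0,\pan)})$, and is therefore $\R$-torsion; \cref{prop : sep10} then gives $\bbdelta_1(\T_1,\Delta_{(\emptyset,\pan)})\ne 0$. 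You should replace your Step 3's reliance on the Euler system and base change by this duality argument, which requires no additional hypotheses.
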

		\begin{proof}
			We have the natural map
			$$ \widetilde{H}^1_{\rm f}(G_{F,\Sigma}, \T_1, \Delta_{(0,\pan)}) \rightarrow \widetilde{H}^1_{\rm f}(G_{F,\Sigma}, \T_3, \Delta_{(\g,\b)}) $$
			is injective via \ref{comtriangle1}. Therefore by \eqref{6.10}, Remark \ref{remark1_T3} and \cref{r0} we get $ \widetilde{H}_{\rm f}^1(G_{F,\Sigma}, \T_1, \Delta_{(0,\pan)}) = 0$. The self-duality of Selmer complexes gives $ \widetilde{H}_{\rm f}^2(G_{F,\Sigma}, \T_1, \Delta_{(\emptyset,\pan)}) $ is torsion. Hence first part of the Theorem \ref{leading_term_rank1} tells us that $ \bbdelta_1(\T_{1},\Delta_{(\emptyset,\pan)}) \ne 0 $. Moreover, since 
			$$ \{0\} =  \widetilde{H}^1_{\rm f}(G_{F,\Sigma}, \T_1, \Delta_{(0,\pan)}) = \ker\left(  \widetilde{H}^1_{\rm f}(G_{F,\Sigma}, \T_1, \Delta_{(\emptyset,\pan)}) \xrightarrow{\res_p} \bigoplus_{\p \mid p} H^1(G_{F_\p}, \T_{1}) \right),  
			$$
			we conclude that $ \res_{\p_1}(\bbdelta_1(\T_{1},\Delta_{(\emptyset,\pan)})) \ne 0 $ as required.\\
			We note that \ref{comtriangle1} and the discussion above give rise to an exact sequence on $ \R $-modules
			\begin{equation}\label{exact1_sep28}
				0 \rightarrow 	\widetilde{H}^1_{\rm f}(G_{F,\Sigma}, \T_2, \tr \Delta_{(\g,\b)}) \rightarrow\underbrace{ \widetilde{H}^2_{\rm f}(G_{F,\Sigma}, \T_1, \Delta_{(0,\pan)})}_{\rm rank =1} \rightarrow \underbrace{\widetilde{H}^2_{\rm f}(G_{F,\Sigma}, \T_3, \Delta_{(\g,\b)})}_{\rm torsion}. 	
			\end{equation} 
			Hence \ref{exact1_sep28} shows that $ \widetilde{H}^1_{\rm f}(G_{F,\Sigma}, \T_2, \tr \Delta_{(\g,\b)}) $ is of rank one. therefore we conclude that $ \bbdelta_1(\T_{2}, \tr\Delta_{(\g,\b)}) \ne 0 $, which gives the second asserted non vanishing.
		\end{proof}
		
		\subsubsection{}
		Similarly we may also consider the natural map $  \res^{/\b}_{\p_1} $ as the composition of
		$$  \widetilde{H}^1_{\mathrm{f}}(G_{F,\Sigma}, \T_{3}, \Delta_+) \rightarrow  H^1(G_{F_{\p_1}}, \scrF^-_{\p_1}T_{\f} \otimes \scrF^+_{\p_1}T_{\g} \otimes \scrF^-_{\p_1}T_{\g}^*(\lambda)) \xrightarrow{\sim} H^1(G_{F_{\p_1}}, \scrF^-_{\p_1}T_{\f}(\lambda)) \widehat{\otimes}_O \R .$$
		\begin{rmk}
			If $ \bbdelta_0(\T_{3},\Delta_{\g}) \ne 0 $ then $ 	\operatorname{EXP}^*_{\scrF^-_{\p_1}} \circ  \res^{/\b}_{\p_1} (\bbdelta_1(\T_{3},\Delta_{+})) = 0 $ and expected that $\bbdelta_0(\T_{3},\Delta_{\mathrm{bal}}) = 0$ which is reflection of the fact balanced $ p $-adic $ L $-function is zero.
		\end{rmk}

		\subsection{Factorization}
		To establish the \cref{main_theorem_1}, we need to study the connecting morphism $ \delta^1 $ coming from \eqref{comtriangle1}.
		\subsubsection{The connecting morphisms}	
		We will explicitly describe the map 
		$$ \delta^1 \colon \widetilde{H}^1_{\mathrm{f}}(G_{F,\Sigma}, \T_2, \operatorname{tr}^*\Delta_{(\g,\b)}) \longrightarrow \widetilde{H}^2_{\mathrm{f}}(G_{F,\Sigma}, \T_1, \Delta_{(0,\pan)}) $$
		arising from \eqref{comtriangle1}. \\
		Let \begin{equation}
			\label{cocyclex_f_rank1}
			x_{\mathrm{f}} \coloneqq (x,(x_v), (\xi_v)) \in \widetilde{Z}^1_{\mathrm{f}}\left( G_{F, \Sigma}, \T_{2}, \tr \Delta_{(\g,\b)}  \right)  
		\end{equation} 
		be any cocycle: 
		\begin{align}
			\begin{aligned}
				x \in Z^1(G_{F,\Sigma}, \T_{2}), \ x_v \in U_v(\tr \Delta_{\g}, \T_{2}), \ \xi_v \in \T_{2},   \\
				dx_v = 0 , \quad \res_v(x)- i^+_v(x_v) = d\xi_v \quad \forall v \in \Sigma. 
			\end{aligned}
		\end{align}
		Since $ U^{\bullet}_{\p_1}(\tr \Delta_{(\g,\b)}, \T_{2}) \coloneqq C^{\bullet}(G_{F_{\p_1}}, F_{\p_1}^{+\g}\T_{2}) $ and $ F_{\p_1}^{+\g}\T_{2} $ is isomorphic image of $ F_{\p_1}^{+\g}\T_{3} $, we have  
		$$ U^{\bullet}_{\p_1}(\tr \Delta_{(\g,\b)}, \T_{2}) \cong U^{\bullet}_{\p_1}(\Delta_{(\g,\b)}, \T_{3}). $$
		Hence we can view $ x_{\p_1} =: y_{\p_1} $ as an element of $ U^1_{\p_1}(\Delta_{(\g,\b)}, \T_{3}) $.	But for $ \p_2 $, we have $ F_{\p_2}^{+\b}\T_{1} = \scrF^+_{\p_2} T_{\f} \hotimes_O \R_\g(\lambda) = \ker\{ F_{\p_2}^{+\b}\T_{3} \rightarrow \T_{2} \} $, which gives us 
		\begin{equation}\label{local_conditions_p2}
			F_{\p_2}^{+\b}\T_{1} \oplus F_{\p_2}^{+\b}\T_{2} \xrightarrow[\tr \oplus \iota]{\sim} F_{\p_2}^{+\b}\T_{3} .
		\end{equation} 
		Hence we get an embedding 
		\begin{align}
			U^{1}_{\p_2}(\tr \Delta_{(\g,\b)}, \T_{2}) \hookrightarrow U^{1}_{\p_2}(\Delta_{(\g,\b)}, \T_{3}).
		\end{align}
		For each $ v \in \Sigma\ba \bP $, we have a canonical identification 
		\begin{align}
			U^{1}_v(\tr \Delta_{(\g,\b)}, \T_{2}) \oplus U^{1}_v( \Delta_{(0,\pan)}, \T_{1}) \cong U^{1}_v(\Delta_{(\g,\b)}, \T_{3}) ,
		\end{align}
		which arises from the split short exact sequence \eqref{mainses} i.e.
		$ \T_{2} \oplus \T_{1} \xrightarrow{\sim} \T_{3}$. We also have the following identification 
		\begin{align}
			C^{\bullet}(G_{F, \Sigma}, \T_{2}) \oplus C^{\bullet}(G_{F, \Sigma}, \T_{1}) \cong C^{\bullet}(G_{F, \Sigma}, \T_{3}).
		\end{align}
		Put $ y = \iota(x) \; + \tr(0)  $, which is a cocycle in $ Z^1(G_{F,\Sigma}, \T_{3}) $ with $ dy =0 $. Similarly we can define $$ \mu_v = \iota(\xi_v) + \tr(0) \in \T_{3} \quad \forall v \in \Sigma .$$ Then the cochain $$ y_\mathrm{f} \coloneqq (y,(y_v),(\mu_v)) \in \widetilde{C}^1_{\mathrm{f}}\left( G_{F, \Sigma}, \T_{3}, \Delta_{\g}  \right) $$
		maps to $ x_\f $ under the morphism $ \pi_{\tr} $ since we have $ \pi_{\tr} \circ \iota = \operatorname{id}_{\T_{2}} $. Moreover, for all $ v \in \Sigma\backslash\mathbbl{P} $, we have 
		\begin{equation*}
			d\mu_v- \res_{v}(y)+ i_v^{+}(y_v) = \iota (d\xi_v - \res_{v}(x) + i_v^{+}(x_v))=0.
		\end{equation*}
		Also for $ \p_2 $ we have $ d\mu_{\p_2}- \res_{\p_2}(y)+ i_{\p_2}^{+}(y_{\p_2}) = \iota (d\xi_{\p_2} - \res_{\p_2}(x) + i_{\p_2}^{+}(x_{\p_2}))=0 $ thanks to \eqref{local_conditions_p2}.
		
		Now lets focus on $ \p_1 \in \mathbbl{P} $. Observe that the image of $ 	d\mu_{\p_1}- \res_{\p_1}(y)+ i_{\p_1}^{+}(y_{\p_1}) \in C^1(G_{F_{\p_1}}, \T_{3}) $ under the morphism 
		\begin{equation*}
			C^1(G_{F_{\p_1}}, \T_{3}) \xrightarrow{\pi_{\tr}}C^1(G_{F_{\p_1}}, \T_{2})
		\end{equation*}
		equals $ d\xi_{\p_1} - \res_{\p_1}(x) + i_{\p_1}^{+}(x_{\p_1}) =0  $, hence 
		\begin{equation*}
			\begin{split}
				d\mu_{\p_1}- \res_{\p_1}(y)+ i_{\p_1}^{+}(y_{\p_1}) & \in \ker\left( C^1(G_{F_{\p_1}}, \T_{3}) \xrightarrow{\pi_{\tr}} C^1(G_{F_{\p_1}}, \T_{2}) \right) \\
				& = \operatorname{im}\left( C^1(G_{F_{\p_1}}, \T_{1}) \xhookrightarrow{\tr} C^1(G_{F_{\p_1}}, \T_{3}) \right) .
			\end{split}
		\end{equation*}
		If we denote $ z_{\p_1} \in C^1(G_{F_{\p_1}}, \T_{1}) $ such that 
		$$ \tr(z_{\p_1}) = d\mu_{\p_1}- \res_{\p_1}(y)+ i_{\p_1}^{+}(y_{\p_1}) ,$$
		we get $$ dy_{\mathrm{f}} = \left( dy = 0, (dx_{v} = 0 )_{v \in \Sigma}, (\{0\}_{v \in \Sigma \backslash\{\p_1\}},\{\tr z_{\p_1}\} ) \right) \in \widetilde{C}^2_{\mathrm{f}}\left( G_{F, \Sigma}, \T_{3}, \Delta_{\g}  \right). $$
		As $ d\mu_{\p_1}- \res_{\p_1}(y)+ i_{\p_1}^{+}(y_{\p_1}) \in C^1(G_{F_{\p_1}}, \T_{3}) $ is a cocycle, we get $ d\tr (z_{\p_1}) =0  $ and so $ z_{\p_1} \in C^1(G_{F_{\p_1}}, \T_{1}) $ is also a cocycle. We conclude that 
		$$ \tr(0,0,z_{\p_1})_{\mathrm{f}}= (0, (0)_{v \in \Sigma}, (\{0\}_{v \in \Sigma \backslash\{\p_1\}},\{\tr z_{\p_1}\})) \in  \widetilde{Z}^2_{\mathrm{f}}\left( G_{F, \Sigma}, \T_{1}, \tr \Delta_{0}  \right), $$
		and $\tr(0,0,z_p)_{\mathrm{f}} = dy_{\mathrm{f}}$, where $ (0,0,z_p)_{\mathrm{f}} \coloneqq (0, (0)_{v \in \Sigma}, (\{0\}_{v \in \Sigma\backslash\{\p_1\}},\{z_{\p_1}\})) $. Therefore we deduce the following proposition:
		\begin{prop}
			For a cocycle given as in (\ref{cocyclex_f_rank1}), we have 
			$$ \delta^1([x_{\mathrm{f}}]) = [(0,0,z_{\p_1})_{\mathrm{f}}] \in  \widetilde{H}^2_{\mathrm{f}}(G_{F,\Sigma}, \T_1, \Delta_{0}). $$
		\end{prop}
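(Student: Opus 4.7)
My plan is to verify directly that the formula $\delta^1([x_{\mathrm{f}}]) = [(0,0,z_{\p_1})_{\mathrm{f}}]$ is the standard recipe for the connecting map associated to the exact triangle \eqref{comtriangle1}. Recall that this exact triangle is induced by the short exact sequence of Selmer complexes
\[
0 \to \widetilde{C}^{\bullet}_{\mathrm{f}}(G_{F,\Sigma},\T_1,\Delta_{(0,\pan)}) \xrightarrow{\mathrm{id}\otimes\tr} \widetilde{C}^{\bullet}_{\mathrm{f}}(G_{F,\Sigma},\T_3,\Delta_{(\g,\b)}) \xrightarrow{\pi_{\tr}} \widetilde{C}^{\bullet}_{\mathrm{f}}(G_{F,\Sigma},\T_2,\tr\Delta_{(\g,\b)}) \to 0,
\]
which is termwise split because \eqref{mainses} is split and because of the compatibility of local conditions at the places $v\in\Sigma\setminus\bP$, at $\p_2$ (via \eqref{local_conditions_p2}), and at $\p_1$ (where $\scrF^{+\g}_{\p_1}\T_2$ and $\scrF^{+\g}_{\p_1}\T_3$ are canonically identified). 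The standard yoga of connecting morphisms then says: given the cohomology class $[x_{\mathrm{f}}]$, pick any lift $y_{\mathrm{f}}$ to the middle term, apply the differential $d$, and identify the result as coming from the kernel term (where it is automatically a cocycle).

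The preceding paragraphs of the excerpt construct precisely such a lift $y_{\mathrm{f}}$: namely, $y=\iota(x)$, $y_v = x_v$ viewed in the appropriate component, and $\mu_v = \iota(\xi_v)$. The first step of my plan is therefore just to record that $y_{\mathrm{f}}$ is a well-defined element of $\widetilde{C}^1_{\mathrm{f}}(G_{F,\Sigma},\T_3,\Delta_{(\g,\b)})$ and that $\pi_{\tr}(y_{\mathrm{f}}) = x_{\mathrm{f}}$; both statements follow immediately from the identifications already isolated in the discussion above the proposition, together with the trivial fact $\pi_{\tr}\circ\iota = \mathrm{id}_{\T_2}$.

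The second (and main) step is to compute $dy_{\mathrm{f}}$ component-by-component. The global term $dy = d\iota(x) = \iota(dx) = 0$, and the local terms $dx_v = 0$ by assumption. For the mixed terms $d\mu_v - \res_v(y) + i_v^+(y_v)$, the excerpt has already carried out the verification at $v \in \Sigma\setminus\bP$ and at $\p_2$ (vanishing from $\iota$ applied to the cocycle relations for $x_{\mathrm{f}}$, plus \eqref{local_conditions_p2}), and at $\p_1$ has shown that the component lies in the image of $\tr$ and defines $z_{\p_1}\in C^1(G_{F_{\p_1}},\T_1)$. The only thing left to check is that $z_{\p_1}$ is a $1$-cocycle, but this is forced: since $dy_{\mathrm{f}}$ is itself a cocycle in $\widetilde{C}^{\bullet}_{\mathrm{f}}$, applying $d$ to $\tr(z_{\p_1})$ yields $0$, and the injectivity of $\tr$ on $C^2(G_{F_{\p_1}},\T_1)\hookrightarrow C^2(G_{F_{\p_1}},\T_3)$ (which is split) gives $dz_{\p_1}=0$. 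Together with the fact that $z_{\p_1}$ lands in the $\p_1$-component of the kernel local condition $\tr\Delta_0$ (i.e.\ the zero local condition at $\p_1$, which imposes no constraint on the third-slot cochain), this exhibits $(0,0,z_{\p_1})_{\mathrm{f}}$ as an element of $\widetilde{Z}^2_{\mathrm{f}}(G_{F,\Sigma},\T_1,\Delta_0)$ whose image under $\mathrm{id}\otimes \tr$ equals $dy_{\mathrm{f}}$.

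The final step is to note that independence of the cohomology class from the choice of lift is automatic from the general mapping-cone construction of the connecting morphism, so the formula $\delta^1([x_{\mathrm{f}}]) = [(0,0,z_{\p_1})_{\mathrm{f}}]$ follows. I expect no serious obstacle: the proof is a bookkeeping exercise in the cone construction, and the only subtlety—that the local conditions are compatible with the splittings used to define $y_{\mathrm{f}}$—has already been extracted into the identifications recorded just above the proposition.
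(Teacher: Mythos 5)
Your proposal is correct and follows essentially the same route as the paper: lift $x_{\mathrm{f}}$ to $y_{\mathrm{f}}=(\iota(x),(y_v),(\iota(\xi_v)))$ using the termwise splitting of the short exact sequence of Selmer complexes, compute $dy_{\mathrm{f}}$, observe that the only nonvanishing contribution comes from the third slot at $\p_1$ and lies in $\operatorname{im}(\tr)$, and read off $z_{\p_1}$; you spell out the cocycle property of $z_{\p_1}$ via injectivity of $\tr$ in a slightly more explicit form than the paper, but this is the same computation.
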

		\begin{rmk}
			We note that the diagram
			\[\xymatrix{
				&\scrF_{\p_1}^{+\g}\T_3  \ar@{^{(}->}[r]^{i_{\p_1}^+} \ar[rd]_{\sim}^{\pi_{\tr}} &\T_3 \\
				& &\scrF_{\p_1}^{+\g}\T_2 \ar@{^{(}->}[u]_{\iota}
			}\]
			does not commute in contrast to \eqref{local_conditions_p2}. In consequence, $ z_{\p_1} $ comes about to account for this failure.
		\end{rmk}
		\subsubsection{}
		Now we will describe the cocycle $ z_{\p_1} $ in a more explicit manner.
		Let $\{ u^+_{\p_1} \} \subset \scrF^+_{\p_1} T_\g$ denote an $\R_\g$-basis of and let $\{ u^+_{\p_1} , u^-_{\p_1}  \} \subset  T_\g $ be an extended basis. Let us denote by $ \{ \breve{u}^+_{\p_1} , \breve{u}^-_{\p_1}  \} \subset  T_\g^*  $ the dual basis. Recall that 
		$$ \mathrm{Fil}^2_{\p_1}\ad(T_\g)(\lambda) \coloneqq \Hom(T_\g,\scrF^{+}_{\p_1} T_\g)(\lambda) = \scrF^{+}_{\p_1} T_\g \widehat{\otimes} T_\g^* (\lambda) =\left\langle u^+_{\p_1} \otimes \breve{u}^-_{\p_1}, u^+_{\p_1} \otimes \uu^+_{\p_1} \right\rangle $$
		\begin{equation*}
			\begin{split}
				\mathrm{Fil}^{+\g}_{\p_1} \ad^0(T_\g)(\lambda) \coloneqq & \ker \{\ad^0(T_\g) \hookrightarrow \ad(T_\g) \rightarrow \Hom(F_{\p_1}^+ T_\g,F_{\p_1}^- T_\g) \}(\lambda) \\
				= & \left\langle u^+_{\p_1} \otimes \breve{u}^-_{\p_1}, u^+_{\p_1} \otimes \uu^+_{\p_1}  - u^-_{\p_1} \otimes \uu^-_{\p_1} \right\rangle
			\end{split}
		\end{equation*}
		$$ \scrF^{+\g}_{\p_1} \T_{2} = T_\f \widehat{\otimes} \Fil^{+\g}_{\p_1} \ad^0(T_\g)(\lambda)  $$
		$$ \Fil^{\g}_{v}\ad^0(T_\g)(\lambda) \coloneqq \Hom(F_{\p_1}^- T_\g,F_{\p_1}^+  T_\g)(\lambda) = \left\langle u^+_{\p_1} \otimes \breve{u}^-_{\p_1} \right\rangle \subset 	\mathrm{Fil}^{+\g}_{\p_1} \ad^0(T_\g)(\lambda) $$
		all are $G_{F_{\p_1}}$-stable submodule such that the $G_{F_{\p_1}}$ action on the quotient  
		$$ \mathrm{gr}^{+\g}_{\p_1} \ad^0(T_\g) \coloneqq \mathrm{Fil}^{+\g}_{\p_1} \ad^0(T_\g)(\lambda) / \Fil^{\g}_{v}\ad^0(T_\g)(\lambda) $$
		is via $\lambda$ (c.f. the proof of Lemma \ref{localses}). \\
		Let us fix a basis $\{ w_1,w_2 \}$ of $T_\f$. Given $g \in G_{F_{\p_1}}$, let us define $a_i(g), b_i(g)  \in \R$ ($i = 1,2$) such that 
		$$ i_{\p_1}^+(x_{\p_1})(g) = \sum_{i=1}^{2} a_i(g)(u^+_{\p_1} \otimes \breve{u}^-_{\p_1})\otimes w_i + \sum_{i=1}^{2} b_i(g)( u^+_{\p_1} \otimes \uu^+_{\p_1}  - u^-_{\p_1} \otimes \uu^-_{\p_1})\otimes w_i . $$
		Hence the cocycle $i_{\p_1}^+(y_{\p_1}) \in C^1(G_{F_{\p_1},\T_{3}}) $ is then given explicitly by  
		$$ i_{\p_1}^+(y_{\p_1})(g) = \sum_{i=1}^{2} a_i(g)(u^+_{\p_1} \otimes \breve{u}^-_{\p_1})\otimes w_i + \sum_{i=1}^{2} b_i(g)( u^+_{\p_1} \otimes \uu^+_{\p_1} )\otimes w_i . $$
		By definition, the cocycle \( \operatorname{tr} \circ i_{\p_1}^+(y_{\p_1}) \in C^1(G_{F_{\p_1}}, \T_{1}) \) is the cocycle 
		$$ \operatorname{tr} \circ i_{\p_1}^+(y_{\p_1})(g) =  \sum_{i=1}^{2} b_i(g)w_i . $$
		Hence we get 
		\begin{prop}
			The cocycle $z_{\p_1} = \operatorname{tr} \circ i_{\p_1}^+(y_{\p_1}) $ agrees with the image of $i_{\p_1}^+(x_{\p_1}) $	under the natural projection map
			\begin{equation}
				C^1(G_{F_{\p_1}},\scrF^{+\g}_{\p_1} \T_{2}) \xrightarrow{\operatorname{pr}_{/\g,\p_1}} C^1(G_{F_{\p_1}}, T_\f \widehat{\otimes}\mathrm{gr}^{+\g}_{\p_1} \ad^0(T_\g)(\lambda)) \xrightarrow[\mrm{Lemma}\ \ref{localses}]{\sim} C^1(G_{F_{\p_1}}, T_\f(\lambda)) \otimes \R,
			\end{equation}
			In particular the morphism $\delta^1$ factors as
			\begin{equation}
				\begin{tikzcd}[row sep= 1ex, column sep=small]
					[x_\mathrm{f}]=[(x,(x_{v}), (\xi_{v}))]\arrow[rr,mapsto] \arrow[d, symbol = \in ] \arrow[ddddddd,mapsto, bend right =75]  &          &\delta^1([x_\mathrm{f}])=[(0,0,\operatorname{pr}_{/\g}\circ i_{\p}^+(x_{\p}))] \arrow[d, symbol = \in ]\\
					\widetilde{H}^1_{\mathrm{f}}(G_{F,\Sigma}, \T_2, \operatorname{tr}^*\Delta_{\g})\arrow[rr, "\delta^1"]\arrow[ddddd]                              &            &\widetilde{H}^2_{\mathrm{f}}(G_{F,\Sigma}, \T_1, \Delta_{0})\\
					& & \\
					& & \\
					& & \\
					& & \\
					H^1(G_{F_{\p_1}},\scrF^{+\g}_{\p_1} \T_{2})\arrow[r,"\operatorname{pr}_{/\g}" ']                                      &	H^1(G_{F_{\p_1}}, T_\f \widehat{\otimes}\mathrm{gr}^{+\g}_{\p_1} \ad^0(T_\g)(\lambda))\arrow[r, "\sim" ']  &	H^1(G_{F_{\p_1}}, \T_{1})\arrow[uuuuu, "\partial^1_{\f \otimes \lambda} " ']\\
					\left[ (i_{\p_1}^+(x_{\p_1}))_{\p_1} \right] =\left[ (\res_{\p_1}(x))_{\p_1} \right]  \arrow[rr,mapsto]  \arrow[u, symbol = \in ]                   &            &\left[ (\operatorname{pr}_{/\g,\p_1}(i_{\p_1}^+(x_{\p_1})))_{\p_1} \right] \arrow[u, symbol = \in ]\arrow[uuuuuuu,mapsto, bend right= 67]  
				\end{tikzcd}
			\end{equation}
			where the morphism $\partial_{\f \otimes \lambda}^1$ is induced from the exact sequence
			$$ \widetilde{R\Gamma}_{\mathrm{f}}(G_{F,\Sigma},\T_{1}, \Delta_{(0,\pan)}) \rightarrow \widetilde{R\Gamma}_{\mathrm{f}}(G_{F,\Sigma},\T_{1}, \Delta_{(\emptyset,\pan)}) \xrightarrow{\res_{\p_1}}   R\Gamma(G_{F_{\p_1}}, \T_{1}) \xrightarrow{\partial_{\f \otimes \lambda}}[+1] $$
			In particular 
			$$ H^1(G_{F_{\p_1}}, \T_{1}) \ni [a_{\p_1}] \xmapsto{\partial_{\f \otimes \lambda}^1} [(0,0,(\{0\}_{v \in \Sigma\backslash\{\p_1\}},\{a_{\p_1}\}))] \in \widetilde{H}_{\mathrm{f}}^2(G_{F,\Sigma},\T_{1}, \Delta_{(0,\pan)}). $$
		\end{prop}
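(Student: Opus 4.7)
My plan is to prove the proposition by an explicit cocycle computation that identifies $z_{\p_1}$ as the image of $i_{\p_1}^+(x_{\p_1})$ under the projection $\operatorname{pr}_{/\g,\p_1}$, and then reads off the factorization of $\delta^1$ from the resulting formula. I would begin by recalling from the preceding discussion that $z_{\p_1}$ is uniquely characterized (via the injectivity of $\tr\colon\T_1\hookrightarrow\T_3$) by the equation
\begin{equation*}
\tr(z_{\p_1}) \;=\; d\mu_{\p_1} - \res_{\p_1}(y) + i_{\p_1}^+(y_{\p_1}),
\end{equation*}
where $y=\iota(x)$, $\mu_{\p_1}=\iota(\xi_{\p_1})$, and $y_{\p_1}$ is $x_{\p_1}$ viewed in $U^1_{\p_1}(\Delta_{(\g,\b)},\T_3)$ under the canonical isomorphism $\scrF^{+\g}_{\p_1}\T_2\cong\scrF^{+\g}_{\p_1}\T_3$.

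Using that $\iota$ is $G_{F_{\p_1}}$-equivariant and the cocycle condition $d\xi_{\p_1}-\res_{\p_1}(x)+i_{\p_1}^+(x_{\p_1})=0$ satisfied by $x_{\mathrm{f}}$, one immediately obtains
\begin{equation*}
d\mu_{\p_1}-\res_{\p_1}(y) \;=\; \iota\bigl(d\xi_{\p_1}-\res_{\p_1}(x)\bigr) \;=\; -\,\iota\bigl(i_{\p_1}^+(x_{\p_1})\bigr),
\end{equation*}
so $\tr(z_{\p_1}) = i_{\p_1}^+(y_{\p_1}) - \iota(i_{\p_1}^+(x_{\p_1}))$. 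The substance of the statement is therefore to identify this difference, in the chosen bases, with the $\T_1$-component of $i_{\p_1}^+(x_{\p_1})$ after passing to $\mathrm{gr}^{+\g}_{\p_1}\ad^0(T_\g)(\lambda)$.

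For this, I would work in the explicit basis displayed just before the proposition. Writing $i_{\p_1}^+(x_{\p_1})(g)=\sum_i a_i(g)(u^+_{\p_1}\otimes\breve{u}^-_{\p_1})\otimes w_i + \sum_i b_i(g)(u^+_{\p_1}\otimes\breve{u}^+_{\p_1}-u^-_{\p_1}\otimes\breve{u}^-_{\p_1})\otimes w_i$ inside $T_\f\hotimes\Fil^{+\g}_{\p_1}\ad^0(T_\g)(\lambda)$, observe that the identification $\scrF^{+\g}_{\p_1}\T_2\xrightarrow{\sim}\scrF^{+\g}_{\p_1}\T_3$ sends the trace-zero element $u^+_{\p_1}\otimes\breve{u}^+_{\p_1}-u^-_{\p_1}\otimes\breve{u}^-_{\p_1}$ to its image in the full tensor $\ad(T_\g)(\lambda)$, while $\iota$ simply embeds $\ad^0\hookrightarrow\ad$. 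Their difference in $\T_3$ equals $\sum_i b_i(g)(u^+_{\p_1}\otimes\breve{u}^+_{\p_1})\otimes w_i$, which lies in $\tr(\T_1)$ and corresponds under $\tr^{-1}$ to $\sum_i b_i(g)w_i$. This is precisely the cocycle produced by $\operatorname{pr}_{/\g,\p_1}$ followed by the isomorphism of Lemma \ref{localses} (which identifies $\mathrm{gr}^{+\g}_{\p_1}\ad^0(T_\g)(\lambda)$ with $\R_\g(\lambda)$ via the class of $u^+_{\p_1}\otimes\breve{u}^+_{\p_1}-u^-_{\p_1}\otimes\breve{u}^-_{\p_1}$, cf.\ \eqref{eqn_0709}).

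Finally, the factorization of $\delta^1$ through $\partial^1_{\f\otimes\lambda}$ is formal once $z_{\p_1}$ has been identified: it is just unraveling the definition of the connecting map attached to the exact triangle $\widetilde{R\Gamma}_{\mathrm{f}}(G_{F,\Sigma},\T_1,\Delta_{(0,\pan)})\to\widetilde{R\Gamma}_{\mathrm{f}}(G_{F,\Sigma},\T_1,\Delta_{(\emptyset,\pan)})\xrightarrow{\res_{\p_1}}R\Gamma(G_{F_{\p_1}},\T_1)$, whose image of $[a_{\p_1}]$ is represented by the cocycle supported at $\p_1$. I expect the main obstacle to be bookkeeping the three distinct identifications at $\p_1$ — the isomorphism $\scrF^{+\g}_{\p_1}\T_2\cong\scrF^{+\g}_{\p_1}\T_3$, the embedding $\iota$, and the isomorphism of Lemma \ref{localses} — and verifying that their failure to form a commutative triangle (as flagged in the remark) is captured exactly by the trace-component one reads off from the basis computation above.
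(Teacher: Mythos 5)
Your approach is essentially the paper's: use the defining characterization $\tr(z_{\p_1})=d\mu_{\p_1}-\res_{\p_1}(y)+i^+_{\p_1}(y_{\p_1})$, kill the first two terms via the cocycle condition to reduce to $\tr(z_{\p_1})=i^+_{\p_1}(y_{\p_1})-\iota\bigl(i^+_{\p_1}(x_{\p_1})\bigr)$, and then compute in the displayed basis. The paper reaches the same explicit formula for $z_{\p_1}(g)=\sum_i b_i(g)w_i$ by directly evaluating $\operatorname{tr}\circ i^+_{\p_1}(y_{\p_1})$ in the basis (equivalent, since $\operatorname{tr}\circ\iota=0$).

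There is, however, a concrete slip in your basis computation that should be fixed. You claim the difference $i^+_{\p_1}(y_{\p_1})-\iota\bigl(i^+_{\p_1}(x_{\p_1})\bigr)$ equals $\sum_i b_i(g)\,(u^+_{\p_1}\otimes\breve{u}^+_{\p_1})\otimes w_i$ and lies in $\tr(\T_1)$. But $\tr(\T_1)\subset\T_3$ is $T_\f\hotimes\{\text{scalar multiples of the identity}\}(\lambda)$, and $u^+_{\p_1}\otimes\breve{u}^+_{\p_1}$ is not a multiple of $\operatorname{id}_{T_\g}=u^+_{\p_1}\otimes\breve{u}^+_{\p_1}+u^-_{\p_1}\otimes\breve{u}^-_{\p_1}$, so your claimed difference does not land in $\tr(\T_1)$ and $\tr^{-1}$ cannot be applied to it. The issue is your description of the identification $\scrF^{+\g}_{\p_1}\T_2\xrightarrow{\sim}\scrF^{+\g}_{\p_1}\T_3$: it is the inverse of the quotient-by-scalars map $\pi_{\tr}$, not the inclusion $\iota$, and in the basis it sends $u^+_{\p_1}\otimes\breve{u}^+_{\p_1}-u^-_{\p_1}\otimes\breve{u}^-_{\p_1}$ to a unit multiple of $u^+_{\p_1}\otimes\breve{u}^+_{\p_1}$ (not to itself — that lies outside $\scrF^{+\g}_{\p_1}\T_3$). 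With this corrected, the difference is a unit multiple of $\sum_i b_i(g)\,\bigl(u^+_{\p_1}\otimes\breve{u}^+_{\p_1}+u^-_{\p_1}\otimes\breve{u}^-_{\p_1}\bigr)\otimes w_i$, which does lie in $\tr(\T_1)$ and whose preimage under $\tr$ is $\sum_i b_i(g)w_i$, as claimed. The slip is local: your set-up, your identification of the failure of commutativity in the remark's triangle as the source of $z_{\p_1}$, and your reading-off of the factorization through $\partial^1_{\f\otimes\lambda}$ are all correct, and the argument goes through once the basis arithmetic is repaired.
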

		\begin{cor}
			\label{cor : delta_and_delta_rank1}
			Suppose $\bbdelta_0(\T_{3},\Delta_{(\g,\b)}) \ne 0 $. then the map $ \delta^1 $ factors as 
			\begin{equation}
				\begin{tikzcd}[column sep = small]
					\widetilde{H}^1_{\mathrm{f}}(G_{F,\Sigma}, \T_2, \operatorname{tr}^*\Delta_{(\g,\b)}) \arrow[rr, hookrightarrow, "\delta^1"] \arrow[dr, hookrightarrow,"\res_{/\mathrm{pan},\p_1}" ']&&\widetilde{H}^2_{\mathrm{f}}(G_{F,\Sigma}, \T_1, \Delta_{(0,\pan)})\\
					&\dfrac{H^1(G_{F_{\p_1}}, \T_{1})}{\res_{\p_1}(\widetilde{H}^1_{\mathrm{f}}(G_{F,\Sigma},\T_{1}, \Delta_{(\emptyset, \pan)}))} 
					\arrow[ur, hookrightarrow, "\partial_{\f \otimes \lambda}^1" '] &
				\end{tikzcd}
			\end{equation}
			and all the $ \R $-modules that appear are of rank one.
		\end{cor}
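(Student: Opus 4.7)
The plan is to derive the corollary directly from the cocycle-level identity
$\delta^1([x_{\mathrm{f}}]) = [(0,0,\operatorname{pr}_{/\g}\circ i_{\p_1}^+(x_{\p_1}))]$ established in the preceding proposition, combined with the structural results for the Selmer complexes collected in \textsection\ref{6.1.2} and \textsection\ref{6.1.3}. First, under the canonical identification furnished by Lemma~\ref{localses}, the composition $\operatorname{pr}_{/\g,\p_1}\circ \res_{\p_1}$, when restricted to $\widetilde{H}^1_{\mathrm{f}}(G_{F,\Sigma}, \T_2, \operatorname{tr}^*\Delta_{(\g,\b)})$, agrees on the nose with the map $\res_{/\b,\p_1}$ of \eqref{resbal_1}. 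Next, the boundary morphism $\partial^1_{\f\otimes\lambda}$ is, by construction, the connecting homomorphism in the long exact sequence
\begin{equation*}
\widetilde{H}^1_{\mathrm{f}}(G_{F,\Sigma},\T_{1}, \Delta_{(\emptyset,\pan)}) \xrightarrow{\res_{\p_1}} H^1(G_{F_{\p_1}}, \T_{1}) \xrightarrow{\partial^1_{\f\otimes\lambda}} \widetilde{H}^2_{\mathrm{f}}(G_{F,\Sigma},\T_{1}, \Delta_{(0,\pan)}),
\end{equation*}
so that $\partial^1_{\f\otimes\lambda}$ factors canonically through the quotient appearing in the diagram; combined with the previous observation, this yields the asserted factorization of $\delta^1$.

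The injectivity of both arrows is then extracted from the hypothesis $\bbdelta_0(\T_3,\Delta_{(\g,\b)}) \neq 0$. By the proposition immediately preceding the corollary, this hypothesis forces the two non-vanishings $\res_{\p_1}(\bbdelta_1(\T_1,\Delta_{(\emptyset,\pan)})) \neq 0$ and $\bbdelta_1(\T_2,\tr\Delta_{(\g,\b)}) \neq 0$. Applying Theorem~\ref{H2tors} (or equivalently Lemma~\ref{lemma_log_H^1_vanishes}) to the pair $(\T_1,\Delta_{(0,\pan)})$ via the first non-vanishing gives $\widetilde{H}^1_{\mathrm{f}}(G_{F,\Sigma},\T_1,\Delta_{(0,\pan)}) = 0$; the long exact sequence of the distinguished triangle \eqref{comtriangle1} then makes $\delta^1$ itself injective. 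Since the factorization $\delta^1 = \partial^1_{\f\otimes\lambda}\circ \res_{/\b,\p_1}$ is injective, both factor maps must be injective as well (for $\partial^1_{\f\otimes\lambda}$ this uses that $\res_{/\b,\p_1}$ hits its own image in the quotient surjectively).

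Finally, the rank-one assertion on all three modules follows from assembling what has already been established. The target $\widetilde{H}^2_{\mathrm{f}}(G_{F,\Sigma},\T_1,\Delta_{(0,\pan)})$ is of rank one by the proposition in \textsection\ref{6.1.2} (given that $\chi = -1$ and $\widetilde{H}^1_{\mathrm{f}}$ vanishes), while the source $\widetilde{H}^1_{\mathrm{f}}(G_{F,\Sigma},\T_2,\operatorname{tr}^*\Delta_{(\g,\b)})$ is of rank one by the analysis in \textsection\ref{6.1.3} under the hypothesis $\bbdelta_1(\T_2,\tr\Delta_{(\g,\b)})\neq 0$. The intermediate quotient, being sandwiched between two rank-one $\R$-modules by injections, is therefore itself of rank one. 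The only subtle step---and the main conceptual content---is the matching of the cocycle-level formula from the preceding proposition with the clean geometric map $\res_{/\b,\p_1}$ modulo the image of $\res_{\p_1}$ on the $\Delta_{(\emptyset,\pan)}$-Selmer group; everything else is formal diagram chase together with the non-vanishing inputs.
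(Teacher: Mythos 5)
Your overall strategy follows the paper's own route (cocycle formula from the preceding proposition, factoring $\partial^1_{\f\otimes\lambda}$ through the quotient, rank-one bookkeeping from \textsection\ref{6.1.2} and \textsection\ref{6.1.3}), but there are two points that do not go through as written.

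First, a conceptual confusion: the diagonal arrow in the corollary's diagram is $\res_{/\pan,\p_1}$, and the composition $\operatorname{pr}_{/\g,\p_1}\circ\res_{\p_1}$ lands in $H^1(G_{F_{\p_1}},\T_1)$ and agrees with $\res_{/\pan,\p_1}$ after passing to the quotient; it does \emph{not} agree with $\res_{/\b,\p_1}$ of \eqref{resbal_1}. The map $\res_{/\b,\p_1}$ involves an additional projection to the $\scrF^-_{\p_1}T_\f$-component inside $\T_1$ and is a further collapse of $\res_{/\pan,\p_1}$; it is what one uses \emph{after} the corollary, in \eqref{exact_seq_rank1} and \cref{prop : final_rank1}, not in the corollary itself.

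Second, and more substantively, the injectivity argument for $\delta^1$ has a gap. From \eqref{comtriangle1} the relevant portion of the long exact sequence is
\begin{equation*}
\widetilde{H}^1_{\mathrm{f}}(\T_1,\Delta_{(0,\pan)}) \longrightarrow \widetilde{H}^1_{\mathrm{f}}(\T_3,\Delta_{(\g,\b)}) \longrightarrow \widetilde{H}^1_{\mathrm{f}}(\T_2,\tr\Delta_{(\g,\b)}) \xrightarrow{\ \delta^1\ } \widetilde{H}^2_{\mathrm{f}}(\T_1,\Delta_{(0,\pan)}),
\end{equation*}
so that $\ker\delta^1$ is the image of $\widetilde{H}^1_{\mathrm{f}}(\T_3,\Delta_{(\g,\b)})$. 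Establishing $\widetilde{H}^1_{\mathrm{f}}(\T_1,\Delta_{(0,\pan)})=0$, as you do, only shows that the map into $\widetilde{H}^1_{\mathrm{f}}(\T_3,\Delta_{(\g,\b)})$ is injective; it does not make that image zero. What is needed is $\widetilde{H}^1_{\mathrm{f}}(\T_3,\Delta_{(\g,\b)})=0$, which follows directly from the hypothesis $\bbdelta_0(\T_3,\Delta_{(\g,\b)})\neq 0$ via \cref{r0}(3) (a nonzero element of the regular domain $\R$ is regular). This is precisely the one-line opening of the paper's proof, which you should invoke instead.
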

		
		\begin{proof}
			$ \delta^1 $ is Injective because $ \widetilde{H}^1_{\mathrm{f}}(G_{F,\Sigma}, \T_3, \Delta_{(\g,\b)}) =0 $ since $\bbdelta_0(\T_{3},\Delta_{(\g,\b)}) \ne 0 $ via Theorem \ref{r0}. \\
			Previous Proposition tells that $ \delta^1 $ factors as 
			\begin{equation}
				\begin{tikzcd}
					\widetilde{H}^1_{\mathrm{f}}(G_{F,\Sigma}, \T_2, \operatorname{tr}^*\Delta_{(\g,\b)}) \arrow[r, hook, "\delta^1"]
					\arrow[d, hook, "\operatorname{pr}_{/\g} \circ \res_{\p_1} " '] & \widetilde{H}^2_{\mathrm{f}}(G_{F,\Sigma}, \T_1, \Delta_{(0,\pan)}) \\
					H^1(G_{F_{\p_1}}, \T_{1}) \arrow[ur, "\partial_{\f \otimes \lambda}^1 " '] & 
				\end{tikzcd}
			\end{equation}
			Also $ \partial_{\f \otimes \lambda}^1$ factors as 
			\begin{equation}
				\begin{tikzcd}
					H^1(G_{F_{\p}}, \T_{1}) \arrow[r, "\partial_{\f \otimes \lambda}^1 "] \arrow[dr, twoheadrightarrow] & \widetilde{H}^2_{\mathrm{f}}(G_{F,\Sigma}, \T_1, \Delta_{(0,\pan)}) \\
					&  \dfrac{H^1(G_{F_{\p_1}}, \T_{1})}{\res_{\p_1}(\widetilde{H}^1_{\mathrm{f}}(G_{F,\Sigma},\T_{1}, \Delta_{(\emptyset,\pan)}))} \arrow[u, hook, " \partial_{\f \otimes \lambda}^1 " ']
				\end{tikzcd}
			\end{equation}
			Hence we have a injective homomorphism $ \res_{/\pan, \p_1} $ which is the composition of 
			$$ \widetilde{H}^1_{\mathrm{f}}(G_{F,\Sigma}, \T_2, \operatorname{tr}^*\Delta_{\g}) \xrightarrow{\operatorname{pr}_{/\g} \circ \res_{\p_1}} H^1(G_{F_{\p_1}}, \T_{1})\twoheadrightarrow \frac{H^1(G_{F_{\p_1}}, \T_{1})}{\res_{\p_1}(\widetilde{H}^1_{\mathrm{f}}(G_{F,\Sigma},\T_{1}, \Delta_{(\emptyset,\pan)}))} .$$
			Since $ \widetilde{H}^1_{\mathrm{f}}(G_{F,\Sigma},\T_{1}, \Delta_{(\emptyset,\pan)}) = \widetilde{H}^1_{\mathrm{f}}(G_{F,\Sigma},\T_{1}, \Delta_{\pan}) $ we have the  required result.
		\end{proof}
		Recall our map $ \res_{/\b, \p_1} $ (see \ref{resbal_1}) given as composite of 
		$$ \widetilde{H}^1_{\mathrm{f}}(G_{F,\Sigma}, \T_2, \operatorname{tr}^*\Delta_{(\g,\b)}) \xrightarrow{\operatorname{pr}_{/\g} \circ \res_{\p_1}} H^1(G_{F_{\p_1}}, \T_{1}) \rightarrow  H^1(G_{F_{\p_1}},\scrF^-_{\p_1} T_\f( \lambda)) \hotimes_O \R .$$
		Hence $  \res_{/\b, \p_1} $ factors as 
		\begin{align*}
			\res_{/\b, \p_1} \colon \widetilde{H}^1_{\mathrm{f}}(G_{F,\Sigma}, \T_2, \operatorname{tr}^*\Delta_{(\g,\b)}) \xrightarrow{\res_{/\pan,\p_1}} \frac{H^1(G_{F_{\p_1}}, \T_{1})}{\res_{\p_1}(\widetilde{H}^1_{\mathrm{f}}(G_{F,\Sigma},\T_{1}, \Delta_{(\emptyset,\pan)}))} \qquad \qquad \qquad \\
			\rightarrow  H^1(G_{F_{\p_1}},\scrF^-_{\p_1} T_\f \hotimes_O \R_{\g}(\lambda)) .
		\end{align*}
		Since $ \widetilde{H}^1_{\mathrm{f}}(G_{F,\Sigma},\T_{1}, \Delta_{(\emptyset,\pan)}) = \widetilde{H}^1_{\mathrm{f}}(G_{F,\Sigma},\T_{1}, \Delta_{\pan}) $ in our present set-up, we have the following exact sequence for $ \p_1 $
		$$ 0 \rightarrow \frac{H^1(G_{F_\p},\scrF^+_{\p_1} \T_{1})}{\res_{\p_1}(\widetilde{H}^1_{\mathrm{f}}(G_{F,\Sigma},\T_{1}, \Delta_{(\emptyset,\pan)})) } \rightarrow \frac{H^1(G_{F_{\p_1}},\T_{1})}{\res_{\p_1}(\widetilde{H}^1_{\mathrm{f}}(G_{F,\Sigma},\T_{1}, \Delta_{(\emptyset,\pan)})) } \rightarrow H^1(G_{F_{\p_1}},\scrF^-_{\p_1} \T_{1}) \rightarrow 0 .$$
		As the map $ \res_{/\pan} $ is injective and $ \widetilde{H}^1_{\mathrm{f}}(G_{F,\Sigma}, \T_2, \operatorname{tr}^*\Delta_{\g}) $ is torsion-free, 
		$$ \res_{/\pan, \p_1}(\bbdelta_1(\T_2, \operatorname{tr}^*\Delta_{\g})) \subset \frac{H^1(G_{F_{\p_1}},\scrF^+_{\p_1} \T_{1})}{\res_{\p_1}(\widetilde{H}^1_{\mathrm{f}}(G_{F,\Sigma},\T_{1}, \Delta_{(\emptyset,\pan)})) } $$ 
		is also torsion-free.
		Since $$ \frac{H^1(G_{F_{\p_1}},\scrF^+_{\p_1} \T_{1})}{\res_{\p_1}(\widetilde{H}^1_{\mathrm{f}}(G_{F,\Sigma},\T_{1}, \Delta_{(\emptyset,\pan)})) } $$ is torsion
		$\res_{/\pan, \p_1}(\bbdelta_1(\T_2, \operatorname{tr}^*\Delta_{(\g,\b)}))$ maps isomorphically onto its image 
		$$ \res_{/\b, \p_1}(\bbdelta_1(\T_2, \operatorname{tr}^*\Delta_{(\g,\b)})) \subset H^1(G_{F_{\p_1}},\scrF^-_{\p_1} \T_{1}). $$
		Hence we have an exact sequence
		\begin{align}
			\label{exact_seq_rank1}
			\begin{aligned}
				0 \rightarrow \frac{H^1(G_{F_{\p_1}},\scrF^+_{\p_1} \T_{1})}{\res_{\p_1}(\widetilde{H}^1_{\mathrm{f}}(G_{F,\Sigma},\T_{1}, \Delta_{(\emptyset,\pan)})) } \rightarrow \dfrac{\frac{H^1(G_{F_{\p_1}},\T_{1})}{\res_{\p_1}(\widetilde{H}^1_{\mathrm{f}}(G_{F,\Sigma},\T_{1}, \Delta_{(\emptyset,\pan)})) }}{\res_{/\pan, \p_1}(\bbdelta_1(\T_2, \operatorname{tr}^*\Delta_{(\g,\b)}))} \qquad \qquad \qquad \\
				\rightarrow \frac{H^1(G_{F_{\p_1}},\scrF^-_{\p_1} \T_{1})}{ \res_{/\b, \p_1}(\bbdelta_1(\T_2, \operatorname{tr}^*\Delta_{(\g,\b)}))} \rightarrow 0.
			\end{aligned}
		\end{align} 
		\begin{prop}\label{prop : final_rank1}
			In the setting of Corollary \ref{cor : delta_and_delta_rank1}, we have 
			\begin{align*}
				&\Char\left( \dfrac{\widetilde{H}^2_{\mathrm{f}}(G_{F,\Sigma}, \T_1, \Delta_{(0,\pan)})}{\delta^1(\bbdelta_1(\T_{2}, \tr\Delta_{(\g,\b)}))} \right) \\
				&= \Char\left(  \frac{H^1(G_{F_{\p_1}},\scrF^+_{\p_1} \T_{1})}{\res_{\p_1}\bbdelta_1(\T_{1},\Delta_{(\emptyset,\pan)}) }\right) \Char\left( \frac{H^1(G_{F_{\p_1}},\scrF^-_{\p_1} \T_{1})}{ \res_{/\b, \p_1}(\bbdelta_1(\T_2, \operatorname{tr}^*\Delta_{(\g,\b)}))} \right).
			\end{align*} 
			
		\end{prop}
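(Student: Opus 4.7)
The plan is to combine the factorization of $\delta^1$ recorded in Corollary~\ref{cor : delta_and_delta_rank1} with the exact sequence~\eqref{exact_seq_rank1}, and to track characteristic ideals through several short exact sequences. Since both $\partial^1_{\f\otimes\lambda}$ and $\res_{/\pan,\p_1}$ are injective, $\delta^1(\bbdelta_1(\T_{2}, \tr\Delta_{(\g,\b)}))$ is contained in $\mathrm{im}(\partial^1_{\f\otimes\lambda})$, and one obtains a short exact sequence of torsion $\R$-modules
\begin{equation*}
0 \longrightarrow M \longrightarrow \frac{\widetilde{H}^2_{\mathrm{f}}(G_{F,\Sigma},\T_1, \Delta_{(0,\pan)})}{\delta^1(\bbdelta_1(\T_{2}, \tr\Delta_{(\g,\b)}))} \longrightarrow \frac{\widetilde{H}^2_{\mathrm{f}}(G_{F,\Sigma},\T_1, \Delta_{(0,\pan)})}{\mathrm{im}(\partial^1_{\f\otimes\lambda})} \longrightarrow 0,
\end{equation*}
in which $M$ is the middle term of~\eqref{exact_seq_rank1}. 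Multiplicativity of $\Char$ along this sequence reduces the problem to computing $\Char$ of the two outer terms.

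For $\Char(M)$ I will appeal directly to~\eqref{exact_seq_rank1}, which expresses it as the product of the $\Char$ of its two flanking terms. This nearly matches the right-hand side of the proposition, except that the first factor features $\res_{\p_1}(\widetilde{H}^1_{\mathrm{f}}(\T_{1},\Delta_{(\emptyset,\pan)}))$ in the denominator rather than $\res_{\p_1}(\bbdelta_1(\T_{1},\Delta_{(\emptyset,\pan)}))$. To bridge the gap I will use that $\res_{\p_1}$ is injective on $\widetilde{H}^1_{\mathrm{f}}(\T_{1},\Delta_{(\emptyset,\pan)})$, a consequence of the vanishing $\widetilde{H}^1_{\mathrm{f}}(\T_{1},\Delta_{(0,\pan)})=0$ established in part~(3) of the first proposition of~\textsection\ref{6.1.2}, yielding the short exact sequence
\begin{equation*}
0 \to \widetilde{H}^1_{\mathrm{f}}(\T_{1},\Delta_{(\emptyset,\pan)})/\R\bbdelta_1 \to \frac{H^1(G_{F_{\p_1}},\scrF^+_{\p_1}\T_{1})}{\res_{\p_1}(\bbdelta_1(\T_{1},\Delta_{(\emptyset,\pan)}))} \to \frac{H^1(G_{F_{\p_1}},\scrF^+_{\p_1}\T_{1})}{\res_{\p_1}(\widetilde{H}^1_{\mathrm{f}}(\T_{1},\Delta_{(\emptyset,\pan)}))} \to 0.
\end{equation*}
Part~(2) of the same proposition identifies $\Char$ of the leftmost term with $\Char(\widetilde{H}^2_{\mathrm{f}}(\T_{1},\Delta_{(\emptyset,\pan)}))$, so that up to the factor $\Char(\widetilde{H}^2_{\mathrm{f}}(\T_{1},\Delta_{(\emptyset,\pan)}))$ the first flanking term of \eqref{exact_seq_rank1} is replaced by the desired $H^1(G_{F_{\p_1}},\scrF^+_{\p_1}\T_{1})/\res_{\p_1}(\bbdelta_1(\T_{1},\Delta_{(\emptyset,\pan)}))$.

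For the right-hand quotient $\widetilde{H}^2_{\mathrm{f}}(\T_{1},\Delta_{(0,\pan)})/\mathrm{im}(\partial^1_{\f\otimes\lambda})$, I will invoke the long exact cohomology sequence associated with the exact triangle relating $\widetilde{R\Gamma}_{\mathrm{f}}(\T_{1},\Delta_{(0,\pan)})$ and $\widetilde{R\Gamma}_{\mathrm{f}}(\T_{1},\Delta_{(\emptyset,\pan)})$ (whose cone is $R\Gamma(G_{F_{\p_1}},\T_{1})$), which produces
\begin{equation*}
0 \to \widetilde{H}^2_{\mathrm{f}}(\T_{1},\Delta_{(0,\pan)})/\mathrm{im}(\partial^1_{\f\otimes\lambda}) \to \widetilde{H}^2_{\mathrm{f}}(\T_{1},\Delta_{(\emptyset,\pan)}) \to H^2(G_{F_{\p_1}},\T_{1}) \to 0.
\end{equation*}
Using the self-duality $\T_{1}^{*}(1)\cong \T_{1}$ (which relies on the self-duality of $T_\f$ and $\lambda$ being quadratic), local Tate duality converts the triviality of $\Char(H^2(G_{F_{\p_1}},\T_{1}))$ into the vanishing of $H^0(G_{F_{\p_1}},\T_{1})$, which follows from the ramifiedness of $\delta_{\f,\p_1}$ combined with hypothesis~\ref{NA} applied to $\alpha_{\f\otimes\lambda,\p_1}$. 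Plugging back, the right-hand quotient has the same $\Char$ as $\widetilde{H}^2_{\mathrm{f}}(\T_{1},\Delta_{(\emptyset,\pan)})$, and this factor cancels the stray one introduced in the previous paragraph, producing the asserted equality. The principal obstacle is this local-duality step: one must handle $H^2(G_{F_{\p_1}},\T_{1})$ for the big Galois representation $\T_{1}$ with coefficients in the complete local ring $\R$, and carefully dovetail the ramification behaviour of $\delta_{\f,\p_1}$, $\alpha_{\f,\p_1}$ with the non-anomalous hypothesis~\ref{NA} so as to exclude every $G_{F_{\p_1}}$-fixed vector in $\T_{1}^{*}(1)$.
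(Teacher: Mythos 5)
Your proposal follows the same route as the paper's proof: quotient the four-term exact sequence from $\Delta_{(0,\pan)}$ to $\Delta_{(\emptyset,\pan)}$ by $\delta^1(\bbdelta_1(\T_2, \tr\Delta_{(\g,\b)}))$, compute $\Char$ of the middle term via the two-term filtration \eqref{exact_seq_rank1}, and finally trade $\res_{\p_1}(\widetilde{H}^1_{\mathrm{f}}(\T_1, \Delta_{(\emptyset,\pan)}))$ for $\res_{\p_1}(\bbdelta_1(\T_1,\Delta_{(\emptyset,\pan)}))$ at the cost of a $\Char(\widetilde{H}^2_{\mathrm{f}}(\T_1,\Delta_{(\emptyset,\pan)}))$ factor that exactly cancels (Theorem \ref{leading_term_rank1}). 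The only notable difference is presentational: you flag explicitly that truncating the long exact sequence at $\widetilde{H}^2_{\mathrm{f}}(\T_1, \Delta_{(\emptyset,\pan)}) \to 0$ requires $H^2(G_{F_{\p_1}},\T_1)$ to vanish (or at least be pseudo-null), and you identify local Tate duality, self-duality of $\T_1$, and hypothesis \ref{NA} as the ingredients; the paper's proof writes the truncated sequence without comment, implicitly relying on the same point (cf.\ Remark \ref{remark_hypo}). So your proof is correct and your care on that step is an improvement in exposition, but not a different argument.
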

		\begin{proof}
			We have an exact sequence
			\begin{align}
				\begin{aligned}
					0 \rightarrow \widetilde{H}^1_{\mathrm{f}}(G_{F,\Sigma}, \T_1, \Delta_{(\emptyset,\pan)}) \xrightarrow{\res_{\p_1}} H^1(G_{F_{\p_1}},\T_{1}) \xrightarrow{\partial_{\f \otimes \lambda}^1} \widetilde{H}^2_{\mathrm{f}}(G_{F,\Sigma}, \T_1, \Delta_{(0,\pan)}) \qquad \qquad \\ \rightarrow \widetilde{H}^2_{\mathrm{f}}(G_{F,\Sigma}, \T_1, \Delta_{(\emptyset,\pan)}) \rightarrow 0.
				\end{aligned}
			\end{align}
			Hence we get via Corollary \ref{cor : delta_and_delta_rank1}
			\begin{align}
				0 \rightarrow \dfrac{\frac{H^1(G_{F_{\p_1}},\T_{1})}{\res_{\p_1}(\widetilde{H}^1_{\mathrm{f}}(G_{F,\Sigma},\T_{1}, \Delta_{{\emptyset, \pan}})) }}{\res_{/\pan, \p_1}(\bbdelta_1(\T_2, \operatorname{tr}^*\Delta_{(\g,\b)}))} \xrightarrow{\partial_{\f \otimes \lambda}^1} \dfrac{\widetilde{H}^2_{\mathrm{f}}(G_{F,\Sigma}, \T_1, \Delta_{(0,\pan)})}{\delta^1(\bbdelta_1(\T_{2}, \tr\Delta_{(\g,\b)}))} \rightarrow \widetilde{H}^2_{\mathrm{f}}(G_{F,\Sigma}, \T_1, \Delta_{(\emptyset,\pan)}) \rightarrow 0.
			\end{align}
			Thanks to equation \ref{exact_seq_rank1} and Theorem \ref{leading_term_rank1}, we have 
			\begin{align}
				\begin{aligned}
					&\Char\left( \dfrac{\widetilde{H}^2_{\mathrm{f}}(G_{F,\Sigma}, \T_1, \Delta_{0})}{\delta^1(\bbdelta_1(\T_{2}, \tr\Delta_{\g}))} \right)\qquad \qquad \qquad \qquad \qquad \qquad \qquad \qquad \\
					&= \Char \left(\dfrac{\frac{H^1(G_{F_{\p_1}},\T_{1})}{\res_{\p_1}(\widetilde{H}^1_{\mathrm{f}}(G_{F,\Sigma},\T_{1}, \Delta_{{\emptyset, \pan}})) }}{\res_{/\pan, \p_1}(\bbdelta_1(\T_2, \operatorname{tr}^*\Delta_{(\g,\b)}))} \right) \Char(\widetilde{H}^2_{\mathrm{f}}(G_{F,\Sigma}, \T_1, \Delta_{(\emptyset,\pan)})) \\
					&= \Char\left(  \frac{H^1(G_{F_{\p_1}},\scrF^+_{\p_1} \T_{1})}{\res_{\p_1}(\widetilde{H}^1_{\mathrm{f}}(G_{F,\Sigma},\T_{1}, \Delta_{(\emptyset,\pan)})) }\right) \Char\left( \frac{H^1(G_{F_{\p_1}},\scrF^-_{\p_1} \T_{1})}{ \res_{/\b, \p_1}(\bbdelta_1(\T_2, \operatorname{tr}^*\Delta_{(\g,\b)}))} \right) \\
					&\qquad \qquad \qquad \qquad \qquad \qquad \qquad \qquad \qquad \qquad \times \Char\left( \dfrac{\widetilde{H}^1_{\mathrm{f}}(G_{F,\Sigma}, \T_1, \Delta_{(\emptyset,\pan)})}{\R\bbdelta_1(\T_{1},\Delta_{(\emptyset,\pan)})} \right) \\
					&=\Char\left(  \frac{H^1(G_{F_{\p_1}},\scrF^+_{\p_1} \T_{1})}{\res_{\p_1}\bbdelta_1(\T_{1},\Delta_{(\emptyset,\pan)}) }\right) \Char\left( \frac{H^1(G_{F_{\p_1}},\scrF^-_{\p_1} \T_{1})}{ \res_{/\b, \p_1}(\bbdelta_1(\T_2, \operatorname{tr}^*\Delta_{(\g,\b)}))} \right)
				\end{aligned}
			\end{align}	
			as required.
		\end{proof}
		\begin{thm}
			\label{main_theorem_1}
			Suppose $\bbdelta_0(\T_{3}, \Delta_{(\g, \b)}) \ne 0$. Then:
			\begin{align*}
				\bbdelta_0(\T_{3}, \Delta_{(\g, \b)})  =  \Exp^*_{\scrF^-_{\p_1} T_{\f}(\lambda)}\left( \bbdelta_1(\T_2, \operatorname{tr}^*\Delta_{(\g,\b)})\right)\cdot i^*_\f\left(\Log_{\f \otimes \lambda}^1(\mathbf{BK}^{\rm LZ}_{\f\otimes \lambda})\right).
			\end{align*}	
		\end{thm}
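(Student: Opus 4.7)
The plan is to use the triangle \eqref{comtriangle1} to reduce $\bbdelta_0(\T_{3}, \Delta_{(\g, \b)})$ to the quotient studied in \cref{prop : final_rank1}, and then to interpret the two resulting local characteristic ideals through Perrin-Riou's big logarithm and dual exponential. First, under the running hypothesis $\bbdelta_0(\T_{3}, \Delta_{(\g, \b)}) \ne 0$, \cref{r0}(3) yields the vanishing $\widetilde{H}^1_{\mathrm{f}}(G_{F,\Sigma}, \T_3, \Delta_{(\g,\b)}) = 0$ together with $\bbdelta_0(\T_{3}, \Delta_{(\g, \b)}) = \Char_\R\bigl(\widetilde{H}^2_{\mathrm{f}}(G_{F,\Sigma}, \T_3, \Delta_{(\g,\b)})\bigr)$. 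The proposition recorded in \textsection\ref{6.1.2} also supplies $\widetilde{H}^1_{\mathrm{f}}(G_{F,\Sigma}, \T_1, \Delta_{(0,\pan)}) = 0$, so that the long exact sequence attached to \eqref{comtriangle1} degenerates to the four-term exact sequence
\[
0 \to \widetilde{H}^1_{\mathrm{f}}(\T_2, \tr\Delta_{(\g,\b)}) \xrightarrow{\delta^1} \widetilde{H}^2_{\mathrm{f}}(\T_1, \Delta_{(0,\pan)}) \to \widetilde{H}^2_{\mathrm{f}}(\T_3, \Delta_{(\g,\b)}) \to \widetilde{H}^2_{\mathrm{f}}(\T_2, \tr\Delta_{(\g,\b)}) \to 0.
\]

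Next, since $\delta^1$ is injective, a short diagram chase presents $\widetilde{H}^2_{\mathrm{f}}(\T_1, \Delta_{(0,\pan)})/\delta^1(\bbdelta_1(\T_2, \tr\Delta_{(\g,\b)}))$ as an extension of $\coker(\delta^1)$ by $\widetilde{H}^1_{\mathrm{f}}(\T_2, \tr\Delta_{(\g,\b)})/\bbdelta_1(\T_2, \tr\Delta_{(\g,\b)})$. Invoking \cref{H2tors} (which identifies the second factor with $\Char_\R(\widetilde{H}^2_{\mathrm{f}}(\T_2, \tr\Delta_{(\g,\b)}))$) and the multiplicativity of characteristic ideals along the four-term sequence above, one obtains
\[
\Char_\R\!\left(\frac{\widetilde{H}^2_{\mathrm{f}}(\T_1, \Delta_{(0,\pan)})}{\delta^1(\bbdelta_1(\T_2, \tr\Delta_{(\g,\b)}))}\right) = \bbdelta_0(\T_{3}, \Delta_{(\g, \b)}).
\]
I would then feed the left-hand side into \cref{prop : final_rank1}, which factors it as the product of two local pieces: one controlled by $\res_{\p_1}\bbdelta_1(\T_1, \Delta_{(\emptyset,\pan)}) \subset H^1(G_{F_{\p_1}}, \scrF^+_{\p_1}\T_1)$, and the other by $\res_{/\b,\p_1}\bbdelta_1(\T_2, \tr\Delta_{(\g,\b)}) \subset H^1(G_{F_{\p_1}}, \scrF^-_{\p_1}\T_1)$.

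The remaining task is to identify each of these two factors with its analytic counterpart. The second factor equals $\Exp^*_{\scrF^-_{\p_1}T_\f(\lambda)}(\bbdelta_1(\T_2, \tr\Delta_{(\g,\b)}))$ directly from the construction of the trivialisation introduced in \textsection\ref{6.1.3}. For the first factor, the canonical isomorphism $\T_1 \simeq T_{\f\otimes\lambda} \hotimes_O \R_\g$ together with \cref{dercat}(1) provides a derived base-change isomorphism $\widetilde{R\Gamma}_{\mathrm{f}}(\T_1, \Delta_{(\emptyset,\pan)}) \simeq \widetilde{R\Gamma}_{\mathrm{f}}(T_{\f\otimes\lambda}, \Delta_{(\emptyset,\pan)}) \dtensor_{\R_\f} \R$, from which tracing the construction of the leading-term submodule gives $\bbdelta_1(\T_1, \Delta_{(\emptyset,\pan)}) = i^*_\f(\bbdelta_1(T_{\f\otimes\lambda}, \Delta_{(\emptyset,\pan)}))$; the hypotheses \ref{ord}, \ref{BI}, \ref{MC}, and \ref{NA} then identify the latter, via the Iwasawa main conjecture for $T_{\f\otimes\lambda}$ proved in \cite{Wan} and the Euler system of \cite{loeffler2020iwasawatheoryquadratichilbert}, with the cyclic module generated by $\mathbf{BK}^{\mathrm{LZ}}_{\f\otimes\lambda}$. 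Applying $\Log^1_{\f\otimes\lambda,\p_1}$ then produces the remaining factor $i^*_\f(\Log^1_{\f\otimes\lambda}(\mathbf{BK}^{\mathrm{LZ}}_{\f\otimes\lambda}))$. The hardest step will be the last identification: verifying that the module of leading terms $\bbdelta_1$ genuinely commutes with the derived base change along $\R_\f \to \R$, and confirming that the Loeffler-Zerbes class generates $\bbdelta_1(T_{\f\otimes\lambda}, \Delta_{(\emptyset,\pan)})$ on the nose, rather than merely up to a controllable error, under our set of hypotheses.
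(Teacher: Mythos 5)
Your proposal follows essentially the same route as the paper: form the four-term exact sequence from \eqref{comtriangle1}, compute $\Char_\R\bigl(\widetilde{H}^2_{\mathrm{f}}(\T_1,\Delta_{(0,\pan)})/\delta^1(\bbdelta_1(\T_2,\tr^*\Delta_{(\g,\b)}))\bigr)$ in two ways, and feed it into \cref{prop : final_rank1} before identifying the factors via $\Exp^*$ and the Loeffler--Zerbes class. Two small notational caveats: when you quotient $\widetilde{H}^1_{\mathrm{f}}(\T_2,\tr^*\Delta_{(\g,\b)})$ (rank one but a priori only torsion-free) by $\bbdelta_1$, the clean statement you want is \cref{leading_term_rank1} rather than \cref{H2tors} (which concerns $\bigcap^r_R\widetilde{H}^1_{\mathrm{f}}$ and only coincides with $\widetilde{H}^1_{\mathrm{f}}$ after checking freeness); and \cref{dercat}(1) is stated for quotients $R/I$, not for the flat extension $\R_\f\to\R=\R_\f\hotimes_O\R_\g$ you invoke, so the derived base-change step should instead lean on \cref{lemma : 2.9}(3) and \cref{prop_unramified_complex}(3), which do cover arbitrary ring maps. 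Neither of these affects the substance of the argument.
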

		\begin{proof}
			From the equation \ref{comtriangle1}, we get an exact sequence
			\begin{align}
				\begin{aligned}
					0 \rightarrow \widetilde{H}^1_{\mathrm{f}}(G_{F,\Sigma}, \T_2, \operatorname{tr}^*\Delta_{(\g,\b)}) \xrightarrow{\delta^1} \widetilde{H}^2_{\mathrm{f}}(G_{F,\Sigma}, \T_1, \Delta_{0,\pan}) \rightarrow \widetilde{H}^2_{\mathrm{f}}(G_{F,\Sigma}, \T_3, \Delta_{(\g,\b)}) \\
					\noindent \rightarrow  \widetilde{H}^2_{\mathrm{f}}(G_{F,\Sigma}, \T_2, \operatorname{tr}^*\Delta_{(\g,\b)}) \rightarrow 0
				\end{aligned}
			\end{align}
			Using Theorem \ref{old_log_relation}  and Proposition \ref{prop : final_rank1} shows that 
			\begin{align}
				\begin{aligned}
					&\Char \left( \widetilde{H}^2_{\mathrm{f}}(G_{F,\Sigma}, \T_3, \Delta_{(\g,\b)})  \right) \\
					&=  \Char\left(  \frac{H^1(G_{F_{\p_1}},\scrF^+_{\p_1} \T_{1})}{\res_{\p_1}\bbdelta_1(\T_{1},\Delta_{(\emptyset,\pan)}) }\right) \Char\left( \frac{H^1(G_{F_{\p_1}},\scrF^-_{\p_1} \T_{1})}{ \res_{/\b, \p_1}(\bbdelta_1(\T_2, \operatorname{tr}^*\Delta_{(\g,\b)}))} \right)
				\end{aligned}
			\end{align}
			Hence we have via \eqref{6.10}
			\begin{align}
				\bbdelta_0(\T_{3}, \Delta_{(\g, \b)})  =  \Exp^*_{\scrF^-_{\p_1} T_{\f}(\lambda)}\left( \bbdelta_1(\T_2, \operatorname{tr}^*\Delta_{(\g,\b)})\right)\cdot i^*_\f\left(\Log_{\f \otimes \lambda}^1(\mathbf{BK}^{\rm LZ}_{\f\otimes \lambda})\right).
			\end{align}	
			We also get $ \Exp^*_{\scrF^-_{\p_1} T_{\f}(\lambda)}\left( \bbdelta_1(\T_2, \operatorname{tr}^*\Delta_{(\g,\b)})\right) \ne0 \ne i^*_\f\left(\Log_{\f \otimes \lambda}^1(\mathbf{BK}^{\rm LZ}_{\f\otimes \lambda})\right) $.
		\end{proof}
		\begin{rmk}
			As previously noted, we expect that (in view of Iwasawa–Greenberg main conjectures) $ \bbdelta_0(\T_{3}, \Delta_{(\g, \b)}) $ is generated by $ 	\scr{L}_p^{(\g,\b)}(\f \otimes \As(\g) )(P,Q)  $ and $ \Exp^*_{\scrF^-_{\p_1} T_{\f}(\lambda)}\left( \bbdelta_1(\T_2, \operatorname{tr}^*\Delta_{(\g,\b)})\right)  $ is generated by $ \scr{L}_p^{(\g,\b)}(\f \otimes \breve{\g} )(P,Q) $. Whereas $ \Log_{\f \otimes \lambda}^1(\mathbf{BK}^{\rm LZ}_{\f\otimes \lambda}) $ is expected to interpolate the derivatives of Perrin-Riou's $ \mbf D_{\rm cris}(V_{f \otimes \lambda}) \otimes \cal{H}(\Gamma_{\mrm{cycl}}) $-valued $ p $-adic $ L $-function of $ f \otimes \lambda $, a crystalline specialization of $ \f \otimes\lambda $ (cf. \cite{PR93,perrin-riou_padic}). The image $ \Log_{\f \otimes \lambda}^1(\mathbf{BK}^{\rm LZ}_{\f\otimes \lambda}) $ coincide with the $ p $-adic $ L $-function in \cite[Theorem 0.1]{Dimitrov13} by the main work of \cite{loeffler2020iwasawatheoryquadratichilbert}. 
			
			Moreover, we also remark that \enquote*{super-factorization} (in the sense of \cite{BC25}) of our main results in \cref{section : factorization_1} is a project in progress with considering CM forms. This would give analogous results of Theorem 3.4 of \cite{BC25} in our setup. But the main challenges arise about construction of triple product $ p $-adic $ L $-functions for quadratic Hilbert modular forms in full generalities and getting the \enquote*{BDP}-principles. The authors are currently pursuing in this direction.
			
			Please note that this situation is a real quadratic version of \emph{rank (1,1)} case stated in \cite[\textsection4.5.2]{Darmon16GenralisedKatoClasses}.
			
		\end{rmk}
		\section{Factorization of algebraic $p$-adic $L$-functions (The rank (0,2) case)}
		\label{section : factorization_2}
		\subsection{Modules of leading terms in rank (0,2) case}
		As in \cref{section : factorization_1}, we will list down all the hypotheses at the beginning.
		\subsubsection{Hypotheses}
		Throughout \cref{section : factorization_2} we will work under the following assumption. Both $ \f $ and $ \g $ satisfy \ref{DIST}. While we assume \ref{Irr} for $ \f $, the family $ \g $ will satisfy the stronger version \ref{Irr+}. Along with those, the condition \ref{TAM} holds for $ \f $ and $ \g $ to ensure \ref{H=0} and \ref{Tam} for $ \T_{1} $, $ \T_{2} $ and $ \T_{3} $. In this section, we work in the scenario of \cref{rank2_scenario} i.e. 
		\begin{itemize}
			\item[\mylabel{sign2}{\textbf{(S2)}}] $ \varepsilon(\f_P \otimes \lambda) = -1 $ and  $ \varepsilon(\f_P \otimes \ad^0(\g_Q)(\lambda) )= 
			1$ whenever $(P,Q) \in \W_{\mrm{cl}}^{\f}	$ for the global root number.
		\end{itemize}
		For $ \f \otimes \lambda $ we assume the following plectic conjecture to get a rank two Euler system:
		\begin{itemize}
			\item[\mylabel{plectic}{\textbf{(Plec)}}] There exist two distinct nontrivial plectic Siegel classes $ \frak{g}_{f\otimes \lambda,1}^{\rm plec } $ and $ \frak{g}_{f\otimes \lambda,2}^{\rm plec } $ in the the first plectic-etale cohomology of Hilbert-Blumenthal abelian surface associated with $ f \otimes \lambda $ for any crystalline specialization $ f $ of $ \f $. (cf. \cite[\textsection 15]{nekovar16plectic})
		\end{itemize}
		We also assume the hypotheses \ref{ord}, \ref{MC}, \ref{NA}, and \ref{BI} for the reason stated in Remark \ref{remark_hypo}.
		\subsubsection{}
		\label{7.1.2}
		We have \begin{align}
			\label{7.1}
			\chi(\widetilde{R\Gamma}_{\mathrm{f}}(G_{F, \Sigma}, T_\f(\lambda), \Delta_\emptyset)) = -2.
		\end{align} 
		Hence there will be a cyclic submodule 
		$$\bbdelta_2(T_\f, \Delta_\emptyset) \subset \bigcap^2_{\R_f}\widetilde{H}^1_{\mathrm{f}}(G_{F,\Sigma},T_\f(\lambda),\Delta_\emptyset)$$
		\begin{rmk}
			\label{gen_kato_class}
			$\bbdelta_2(T_\f, \Delta_\emptyset)$ expected to be related with rank two Kolyvagin system $\mathbf{BK}^{\rm plec}_{\f \otimes \lambda, 1} \wedge \mathbf{BK}^{\rm plec}_{\f \otimes \lambda, 2} $ (cf. \cite[Section 5]{BSS18}) related to nontrivial rank two Euler system. The existence of this rank two Euler system is conjectural (cf. \cite[\textsection 2]{LZ20local}), but the hypothesis \ref{plectic} assures the existence and non-vanishing. Hence $\mathbf{BK}^{\rm plec}_{\f \otimes \lambda, 1} \wedge \mathbf{BK}^{\rm plec}_{\f \otimes \lambda, 2} $ is non trivial.
			
			We also like to remark that, though the existence of $ \mathbf{BK}^{\rm Plec}_{\f\otimes \lambda,1} \wedge \mathbf{BK}^{\rm Plec}_{\f\otimes \lambda,2}$ is conjectural, but there are some recent works towards the evidence of the higher rank Euler system. In \cite{fornea23plectic}, the authors introduced \emph{plectic} Stark-Heegner points for an elliptic curve $ A/F $ given that $ \lim_{s\to 1 }L(A/F,s) = 2 $ which is related to the local image of $ \mathbf{BK}^{\rm Plec}_{\f\otimes \lambda,1} \wedge \mathbf{BK}^{\rm Plec}_{\f\otimes \lambda,2}$ in a parallel weight 2 specialization. Also the \enquote{mock plectic points} in \cite{Darmon_Fornea_2025}, may be seen an oblique evidence for the plectic philosophy of Nekovář and Scholl, using the mock analogue of Hilbert modular forms. In the Section 3.8 of \cite{Darmon_Fornea_2025} authors provide some conjectural analogy between mock plectic points and the plectic p-adic
			invariants. In \cite{hernandez2022plectic}, the authors defined two variable anti-cyclotomic $ p $-adic $ L $-functions via Stark-Heegner points.
		\end{rmk}
		\begin{prop} 
			\label{7.2}
			We have
			$$\widetilde{R\Gamma}_{\mathrm{f}}(G_{F,\Sigma},T_\f(\lambda), \Delta_0), \; \widetilde{R\Gamma}_{\mathrm{f}}(G_{F,\Sigma},T_\f(\lambda), \Delta_\emptyset) \in D_{\mathrm{parf}}^{[1,2]}(\prescript{}{\R_\f}{\mathrm{Mod}})$$
			If $\bbdelta_2(T_\f(\lambda), \Delta_\emptyset) \neq 0$, then:
			\begin{enumerate}
				\item If the image of $\bbdelta_1(T_\f(\lambda), \Delta_\emptyset)$ under the composite map 
				$$ \res^-_p\coloneqq \bigoplus_{\p \mid p} \res_{\p}^-  \colon H^1(G_{F, \Sigma}, T_\f(\lambda)) \rightarrow \bigoplus_{\p \mid p} H^1(F_{\p}, T_\f(\lambda)) \rightarrow \bigoplus_{\p \mid p} H^1(F_{\p}, \scrF^-_{\p} T_\f(\lambda)) $$ is zero then $$ \widetilde{H}^1_{\mathrm{f}}(G_{F,\Sigma},T_\f(\lambda),\Delta_{\pan}) \cong \widetilde{H}^1_{\mathrm{f}}(G_{F,\Sigma},T_\f(\lambda),\Delta_\emptyset) $$ are both free $ \R_{\f} $-modules of rank two. 
				\item $\widetilde{H}^2_{\mathrm{f}}(G_{F,\Sigma},T_\f(\lambda),\Delta_\emptyset)$ is torsion, Moreover,
				\begin{equation*}
					\det(\widetilde{H}^1_{\mathrm{f}}(G_{F,\Sigma},T_\f(\lambda),\Delta_\emptyset)/\R_{\f}\bbdelta_1(T_\f(\lambda), \Delta_\emptyset)) = \left( \det(\widetilde{H}^2_{\mathrm{f}}(G_{F,\Sigma},T_\f(\lambda),\Delta_\emptyset))\right)^2 .
				\end{equation*}
				\item If in addition for each $ \p = \p_1,\p_2 $, $ \res_{\p}(\bbdelta_1(T_\f(\lambda), \Delta_{\emptyset})) \ne 0 $, then $ \widetilde{H}^1_{\mathrm{f}}(G_{F,\Sigma},T_\f(\lambda),\Delta_{0}) = 0 $ and the $ \R_{\f} $-module $  \widetilde{H}^2_{\mathrm{f}}(G_{F,\Sigma},T_\f(\lambda),\Delta_{0}) $ has rank two.
			\end{enumerate}
		\end{prop}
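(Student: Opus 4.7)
The plan is to mirror the argument used for the analogous rank-$1$ proposition stated just before \cref{main_theorem_1}, adapted to the higher-rank setting dictated by \ref{sign2}. Perfectness of both complexes and their concentration in degrees $[1,2]$ follow directly from \cref{dercat}, applicable under the hypotheses listed in \cref{hypo_rmk}. The Euler characteristic formula of \cref{euler_characteristisc} yields $\chi(\Delta_\emptyset) = 2 - 4 = -2$, $\chi(\Delta_{\pan}) = 0$, and $\chi(\Delta_0) = 2$, since each of the two archimedean places of $F$ contributes rank one to $\sum_{v\mid\infty} \rank(T_\f(\lambda)^{G_v})$ (the $(+1)$-eigenspace of complex conjugation on $T_\f(\lambda)$).

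For part (2), the non-vanishing of $\bbdelta_2(T_\f(\lambda), \Delta_\emptyset)$ combined with \cref{prop : sep10} forces $\widetilde{H}^2_{\mathrm{f}}(G_{F,\Sigma}, T_\f(\lambda), \Delta_\emptyset)$ to be $\R_\f$-torsion, and hence $\chi = -2$ gives that $\widetilde{H}^1_{\mathrm{f}}(G_{F,\Sigma}, T_\f(\lambda), \Delta_\emptyset)$ has rank $2$. Freeness over the regular local ring $\R_\f$ is extracted from the two-term free resolution $0 \to \widetilde{H}^1 \to \R_\f^{a+2} \to \R_\f^a \to \widetilde{H}^2 \to 0$ supplied by \cref{dercat}, via the same Ext-vanishing argument used in the rank-$1$ proposition: the kernel of a map of finite free modules is reflexive, and a reflexive module of projective dimension $\le 1$ over a regular local ring is free once the rank is accounted for. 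The determinant identity in (2) is then precisely \cref{r2} with $r = 2$.

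For part (1), I will make the dependence of $\bbdelta_1$ on $\bbdelta_2$ explicit. Since $\widetilde{H}^1(\Delta_\emptyset)$ is free of rank $2$ and $\widetilde{H}^2(\Delta_\emptyset)$ is torsion, $\bbdelta_2(T_\f(\lambda), \Delta_\emptyset) \subset \bigcap_{\R_\f}^2 \widetilde{H}^1(\Delta_\emptyset) \cong \R_\f$ is generated by an $\R_\f$-regular element $\eta$; unpacking $\delta_\phi(\psi) = \delta(\phi \wedge \psi)$ and using the natural identification $\widetilde{H}^1(\Delta_\emptyset) \cong \Hom(\Hom(\widetilde{H}^1(\Delta_\emptyset), \R_\f), \R_\f)$ yields $\bbdelta_1(T_\f(\lambda), \Delta_\emptyset) = \eta \cdot \widetilde{H}^1(\Delta_\emptyset)$. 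Since $\eta$ is a non-zero-divisor, $\res^-_p(\bbdelta_1) = 0$ if and only if $\res^-_p(\widetilde{H}^1(\Delta_\emptyset)) = 0$, which is precisely the condition for the natural inclusion $\widetilde{H}^1_{\mathrm{f}}(G_{F,\Sigma}, T_\f(\lambda), \Delta_{\pan}) \hookrightarrow \widetilde{H}^1_{\mathrm{f}}(G_{F,\Sigma}, T_\f(\lambda), \Delta_\emptyset)$ to be an equality. Rank-$2$ freeness of both is then inherited from (2).

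For part (3), the plan is to invoke the two-prime extension of \cref{logim} formulated in the setup of \cref{logrln}, applied to the pair $(\Delta_1, \Delta_2) = (\Delta_{\pan}, \Delta_\emptyset)$. The rank data $r(T_\f(\lambda), \Delta_{\pan}) = 0$ and $r(T_\f(\lambda), \Delta_\emptyset) = 2$ match the required hypotheses, each quotient $T_\f(\lambda)/\scrF^+_\p T_\f(\lambda) = \scrF^-_\p T_\f(\lambda)$ is free of rank one, and \ref{LT} is inherited from \cref{rank1_hypo}. After fixing trivializations, $\LOG^2_{\Delta_\emptyset/\Delta_{\pan}, \p}$ coincides (up to units) with $\res^-_\p$. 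The identification $\bbdelta_1 = \eta \cdot \widetilde{H}^1(\Delta_\emptyset)$ together with the non-vanishing hypothesis $\res_\p(\bbdelta_1) \neq 0$ for each $\p$, and hypothesis \ref{NA} ensuring that the restriction does not collapse onto the Panchishkin subquotient trivially, force the non-vanishing of $\LOG^2_{\Delta_\emptyset/\Delta_{\pan}, \p}(\bbdelta_1)$ for each $\p$; \cref{logim} then gives $\widetilde{H}^1_{\mathrm{f}}(G_{F,\Sigma}, T_\f(\lambda), \Delta_{\pan}) = 0$. Since the canonical map $\widetilde{H}^1_{\mathrm{f}}(G_{F,\Sigma}, T_\f(\lambda), \Delta_0) \hookrightarrow \widetilde{H}^1_{\mathrm{f}}(G_{F,\Sigma}, T_\f(\lambda), \Delta_{\pan})$ is injective, the former vanishes as well, and $\chi(\Delta_0) = 2$ forces $\rank_{\R_\f}\widetilde{H}^2_{\mathrm{f}}(G_{F,\Sigma}, T_\f(\lambda), \Delta_0) = 2$. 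The main obstacle will be the freeness argument in part (2), combined with bridging the gap between the hypothesis $\res_\p(\bbdelta_1) \neq 0$ and its projection onto the $\scrF^-_\p$-component in part (3), which is the one genuinely new technical point beyond the rank-$1$ case and relies crucially on the non-anomalous hypothesis \ref{NA}.
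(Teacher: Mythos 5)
Your handling of perfectness and Part (2) tracks the paper's approach closely: torsionness of $\widetilde{H}^2_{\mathrm{f}}(G_{F,\Sigma},T_\f(\lambda),\Delta_\emptyset)$ from \cref{prop : sep10}/\cref{H2tors}, rank bookkeeping from $\chi=-2$, freeness via the two-term free presentation of \cref{dercat}, and the determinant identity from \cref{r2}. (Incidentally your $\chi(\Delta_0)=2$ is the correct output of \cref{euler_characteristisc}; the paper writes $-2$ at the corresponding point, which is a sign typo.) For Part (1) you actually give a \emph{cleaner} argument than the paper: you unpack the identity $\bbdelta_1(T_\f(\lambda),\Delta_\emptyset)=\eta\cdot\widetilde{H}^1_{\mathrm{f}}(G_{F,\Sigma},T_\f(\lambda),\Delta_\emptyset)$ with $\eta$ a generator of $\bbdelta_2$, and observe that multiplication by the regular element $\eta$ shows $\res^-_p$ kills $\bbdelta_1$ if and only if it kills the whole of $\widetilde{H}^1(\Delta_\emptyset)$. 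The paper instead appeals to the rank-two Kato reciprocity law and the plectic Euler-system class of Remark \ref{gen_kato_class} at this step, which is overkill since the statement of (1) is conditional; your purely algebraic reduction is a genuine simplification and should be preferred.

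Part (3), however, contains an internal contradiction. You propose to apply \cref{logim} to the pair $(\Delta_1,\Delta_2)=(\Delta_{\pan},\Delta_\emptyset)$, for which $\LOG^2_{\Delta_\emptyset/\Delta_{\pan},\p}$ \emph{is} $\res^-_\p$ (the quotient $\scrF^{+\emptyset}_\p T/\scrF^{+\pan}_\p T=\scrF^-_\p T_\f(\lambda)$). But the standing hypothesis of Part (1), which Part (3) keeps (``in addition''), is precisely $\res^-_p(\bbdelta_1)=0$. So $\LOG^2_{\Delta_\emptyset/\Delta_{\pan},\p}(\bbdelta_1)=0$ and the nonvanishing you assert cannot hold; and the conclusion you would draw from \cref{logim}, namely $\widetilde{H}^1_{\mathrm{f}}(G_{F,\Sigma},T_\f(\lambda),\Delta_{\pan})=0$, contradicts Part (1)'s conclusion that $\widetilde{H}^1_{\mathrm{f}}(G_{F,\Sigma},T_\f(\lambda),\Delta_{\pan})\cong\widetilde{H}^1_{\mathrm{f}}(G_{F,\Sigma},T_\f(\lambda),\Delta_\emptyset)$ is free of rank two. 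Hypothesis \ref{NA} does not rescue this; the issue is a misidentification of which pair of local conditions to compare. The relevant extension for Part (3) is $\Delta_0\subset\Delta_{\pan}$, whose graded pieces are $\scrF^+_\p T_\f(\lambda)$, and one then wants to show that $\res_p\colon\widetilde{H}^1(\Delta_{\pan})\to\bigoplus_{\p\mid p}H^1(G_{F_\p},\scrF^+_\p T_\f(\lambda))$ is injective; but there \cref{logim} does not apply directly since $r(T_\f(\lambda),\Delta_0)=-\chi=-2$ falls outside its framework, and the hypothesis that each $\res_\p(\bbdelta_1)\ne0$ only gives that each coordinate map $\res_\p$ is individually nonzero on the rank-two module, not that their kernels intersect trivially. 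The paper, for its part, only says Part (3) ``is a consequence of conclusion (1)'' without supplying the missing rank-two argument; your proposed route does not supply it either.
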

		\begin{proof}
			The fact that the Selmer complexes are perfect and that they are concentrated in the indicated degrees follows from Proposition \ref{dercat}. \\
			Since $\bbdelta_2(T_\f(\lambda), \Delta_\emptyset) \neq 0$, the $ \R_{\f} $-module $ \widetilde{H}^2_{\mathrm{f}}(G_{F,\Sigma},T_\f(\lambda),\Delta_\emptyset) $ is torsion by Theorem \ref{H2tors}. Therefor $ \widetilde{R\Gamma}_{\mathrm{f}}(G_{F,\Sigma},T_\f(\lambda), \Delta_\emptyset) \in D_{\mathrm{parf}}^{[1,2]}(\prescript{}{\R}{\mathrm{Mod}}) $ and $ \chi(\widetilde{R\Gamma}_{\mathrm{f}}(G_{F,\Sigma},T_\f(\lambda), \Delta_\emptyset))=-2 $ implie that rank of $ \widetilde{H}^1_{\mathrm{f}}(G_{F,\Sigma},T_\f(\lambda),\Delta_\emptyset) $ equals two. Moreover, since $ \R_{\f} $ is a finite dimensional regular local ring, for any finitely generated $ \R_{\f} $-module $ M $ we have $$ \Ext^1_{\R_{\f}}(\widetilde{H}^1_{\mathrm{f}}(G_{F,\Sigma},T_\f(\lambda),\Delta_\emptyset), M) = 0 = \Ext^3_{\R_{\f}}(\widetilde{H}^2_{\mathrm{f}}(G_{F,\Sigma},T_\f(\lambda),\Delta_\emptyset), M). $$ 
			Thence, $ \widetilde{H}^1_{\mathrm{f}}(G_{F,\Sigma},T_\f(\lambda),\Delta_\emptyset) $ is free. \\
			Since $ \varepsilon(\f \otimes \lambda) =-1 $, 
			\begin{equation}\label{res^-_p}
				\res^-_p(\bbdelta_1(T_\f(\lambda), \Delta_\emptyset)) = 0
			\end{equation} follows from the comparison of the class $ \bbdelta_2(T_\f(\lambda), \Delta) $ with the Kolyvagin system stated in Remark \ref{gen_kato_class}. Hence by (rank 2) Kato's reciprocity law for Hilbert modular form
			we have the equality $$ \widetilde{H}^1_{\mathrm{f}}(G_{F,\Sigma},T_\f(\lambda),\Delta_{\pan}) \cong \widetilde{H}^1_{\mathrm{f}}(G_{F,\Sigma},T_\f(\lambda),\Delta_\emptyset). $$
			which is ensured by the non triviality of $ \mathbf{BK}^{\rm plec}_{\f \otimes \lambda, 1} \wedge \mathbf{BK}^{\rm plec}_{\f \otimes \lambda, 2} $ and the fact $ \widetilde{H}^1_{\mathrm{f}}(G_{F,\Sigma},T_\f(\lambda),\Delta_\emptyset) $ is free. This proves the results in (1).		\\
			Statement in (2) is direct consequences of Corollary \ref{r2}. \\
			In addition, if $ \res_{p}(\bbdelta_1(T_\f(\lambda), \Delta_{\emptyset})) \ne 0 $, then $ \widetilde{H}^1_{\mathrm{f}}(G_{F,\Sigma},T_\f(\lambda),\Delta_{0}) = 0 $ is a consequence of conclusion (1). As $ \chi(\widetilde{R\Gamma}_{\mathrm{f}}(G_{F,\Sigma},T_\f(\lambda), \Delta_{0})) = -2  $, we get $ \R_{\f} $-rank of $ \widetilde{H}^2_{\mathrm{f}}(G_{F,\Sigma},T_\f(\lambda),\Delta_{0}) $ is two.
		\end{proof}
		We also choose trivializations for $\p = \p_1, \p_2$ (cf. discussion in \textsection \ref{6.1.2})
		\begin{equation}
			\label{7.3}
			\LOG_{\scrF^+_{\p} T_\f(\lambda)} \colon H^1(G_{F_{\p}}, \scrF^+_{\p} T_\f(\lambda)) \rightarrow \R_\f.
		\end{equation}
		and let us put $ \Log_{\scrF^+_{\p} T_\f(\lambda)} \coloneqq \LOG_{\scrF^+_{\p} T_\f(\lambda)} \otimes_{i_\f^*} {\rm id} 
		\colon H^1(G_{F_{\p}}, \scrF^+_{\p} \T_1) \rightarrow \R $.\\
		By results in Proposition \ref{7.2}.1, we can choose an isomorphism (compatible with \eqref{7.3})
		\begin{align}
			\Log^2_{\f \otimes \lambda} \colon \bigcap^2_{\R_f}\widetilde{H}^1_{\mathrm{f}}(G_{F,\Sigma},T_\f(\lambda),\Delta_\emptyset) \rightarrow \R_{\f},
		\end{align} 
		such that $ \Log^2_{\f \otimes \lambda}(\mathbf{BK}^{\rm plec}_{\f \otimes \lambda, 1} \wedge \mathbf{BK}^{\rm plec}_{\f \otimes \lambda, 2} ) $ is a generator of $ \bbdelta_2(T_\f, \Delta_\emptyset) $.
		
		\subsubsection{}
		Let us focus on $\T_{2} $, we have $$ \chi(\widetilde{R\Gamma}_{\mathrm{f}}(G_{F,\Sigma}, \T_2, \operatorname{tr}^*\Delta_{\g})) = -2 $$
		Hence we have a cyclic submodule
		$$ \bbdelta_2(\T_2, \operatorname{tr}^*\Delta_{\g}) \subset \bigcap^2_R \widetilde{H}^1_{\mathrm{f}}(G_{F,\Sigma}, \T_2, \operatorname{tr}^*\Delta_{\g}) $$
		On the other hand, we have
		$$ \chi(\widetilde{R\Gamma}_{\mathrm{f}}(G_{F,\Sigma},\T_2, \operatorname{tr}^*\Delta_{\mathrm{bal}})) = 0 $$
		Hence we have a submodule 
		$$ \bbdelta_0(\T_2, \operatorname{tr}^*\Delta_{\mathrm{bal}}) \subset \R$$
		The exact sequence \eqref{ses_sep18} in Lemma \ref{localses} gives a map
		\begin{equation}
			\res_{/\mathrm{bal}} \coloneqq \bigoplus_{\p \mid p}\res_{/\mathrm{bal}, \p} \colon \widetilde{H}^1_{\mathrm{f}}(G_{F,\Sigma}, \T_2, \operatorname{tr}^*\Delta_{\g})\rightarrow \bigoplus_{\p \mid p}H^1(G_{F_{\p}},\scrF^-_{\p} T_{\f}(\lambda)) \hotimes_O \R.
		\end{equation}
		Let us fix trivializations for $ \p = \p_1, \p_2 $ compatible with \eqref{7.3} (cf. \textsection \ref{6.1.3})
		$$\EXP^*_{\scrF^-_{\p} T_{\f} (\lambda)} \colon H^1(G_{F_{\p}},\scrF^-_{\p} T_{\f} (\lambda)) \xrightarrow{\sim} \R_\f $$
		and define $ \Exp^*_{\f \otimes \lambda,\p}$ as the following composition \begin{align*}
			\widetilde{H}^1_{\mathrm{f}}(G_{F,\Sigma}, \T_2, \operatorname{tr}^*\Delta_{\g}) \xrightarrow{\res_{/\mathrm{bal}, \p}} H^1(G_{F_{\p}},\scrF^-_{\p} T_{\f}(\lambda)) \hotimes_O \R 
			\xrightarrow{\EXP^*_{\scrF^-_{\p} T_{\f} (\lambda)} \otimes \id } \R.
		\end{align*}
		\begin{rmk} \label{rmk : ExpT2}
			If $ \Exp^*_{\f \otimes \lambda,\p}(\bbdelta_1(\T_{2}, \tr \Delta_{\g})) $ for all $ \p = \p_1, \p_2 $ is not trivial, then Theorem \ref{logrln}. tells 
			\begin{equation}
				\bigotimes_{\p \mid p} \Exp^*_{\f \otimes \lambda,\p} (\bbdelta_1(\T_{2}, \tr \Delta_{\g})) = \Char (\widetilde{H}^2_{\mathrm{f}}(G_{F,\Sigma}, \T_2, \operatorname{tr}^*\Delta_{\g})) \Char(\widetilde{H}^2_{\mathrm{f}}(G_{F,\Sigma}, \T_2, \operatorname{tr}^*\Delta_{\mathrm{bal}})).
			\end{equation}
			
		\end{rmk}
		We note that $\widetilde{R\Gamma}_{\mathrm{f}}(G_{F,\Sigma},\T_2, \operatorname{tr}^*\Delta_?) \in D_{\mathrm{parf}}^{[1,2]}(\prescript{}{\R}{\mathrm{Mod}}) $ for $?=\g, \b $ and hence 
		\begin{equation}
			\widetilde{H}^1_{\mathrm{f}}(G_{F,\Sigma}, \T_2, \operatorname{tr}^*\Delta_{\mathrm{bal}}) = \{0\}, \quad \widetilde{H}^2_{\mathrm{f}}(G_{F,\Sigma}, \T_2, \operatorname{tr}^*\Delta_{\mathrm{bal}}) \text{ is torsion,}
		\end{equation}
		\begin{equation}
			\begin{split}
				& \widetilde{H}^1_{\mathrm{f}}(G_{F,\Sigma}, \T_2, \operatorname{tr}^*\Delta_{\g}) \text{ is torsion free of rank two,} \quad \widetilde{H}^2_{\mathrm{f}}(G_{F,\Sigma}, \T_2, \operatorname{tr}^*\Delta_{\g}) \text{ is torsion,} \\
				&  \widetilde{H}^1_{\mathrm{f}}(G_{F,\Sigma}, \T_2, \operatorname{tr}^*\Delta_{\g})\rightarrow \bigoplus_{\p \mid p}H^1(G_{F_{\p}},\scrF^-_{\p} T_{\f}( \lambda)) \hotimes_O \R \xrightarrow{\sim} \R^2 \text{ is injective}
			\end{split}
		\end{equation}
		whenever $  \bbdelta_2(\T_{2}, \tr \Delta_{\g})) \ne 0 $.

		\subsubsection{}
		Note that we have $$\chi(\widetilde{R\Gamma}_{\mathrm{f}}(G_{F,\Sigma},\T_{3},\Delta_{\g})) =0$$. Therefore we get a submodule of leading terms
		\begin{equation}
			\bbdelta_0(\T_{3},\Delta_{\g})  \subset \R.
		\end{equation}
		
		Recall also the local conditions $ \Delta_+ $, we have $ \chi(\widetilde{R\Gamma}_{\mathrm{f}}(G_{F,\Sigma}, \T_{3}, \Delta_+)) =-2 $. Hence we get submodule $$ \bbdelta_2(\T_{3},\Delta_{+}) \subset \bigcap^2_R \widetilde{H}^1(G_{F,\Sigma}, \T_{3}, \Delta_+) $$
		Let us consider the natural map $ \res^{/\g}_\p $ as the composition of (induced by the inclusion $ \scrF^{+\g}_{\p} \T_{3} \rightarrow \scrF^{+\mathrm{bal}^+}_{\p} \T_{3} $) for $ \p = \p_1,\p_2  $
		\begin{equation*}
			\widetilde{H}^1(G_{F,\Sigma}, \T_{3}, \Delta_+) \rightarrow H^1(G_{F_{\p}}, \scrF^+_{\p}T_{\f} \otimes_O \scrF^-_{\p}T_{\g} \otimes_O \scrF^+_{\p}T_{\g}^*(\lambda)) \xrightarrow{\sim}  H^1(G_{F_{\p}}, \scrF^+_{\p}T_{\f}(\lambda)) \widehat{\otimes}_O \R,
		\end{equation*}
		Then define 
		\[\res^{/\g}_p \coloneqq \bigoplus_{\p \mid p} \res^{/\g}_\p.\]
		Lets assume that $ \bbdelta_0(\T_{3},\Delta_{\g}) \ne 0 $
		\begin{rmk}
			\label{remark_T3}
			Theorem \ref{logrln} gives us
			$$ \bigotimes_{\p \mid p}\Log_{\scrF^+_{\p} T_\f(\lambda)} (\bbdelta_1(\T_{3},\Delta_{+})) = \Char (\widetilde{H}^2_{\mathrm{f}}(G_{F,\Sigma}, \T_3, \Delta_{\g})) \Char(\widetilde{H}^2_{\mathrm{f}}(G_{F,\Sigma}, \T_3, \Delta_{+})) .$$
			We further remark that $ \widetilde{H}_{\rm f}^1(G_{F,\Sigma},\T_{3}, \Delta_{\g}) =0 $ by Theorem \ref{r0}.
		\end{rmk}
		\begin{prop}
			If $ \bbdelta_0(\T_{3}.\Delta_\g) \ne 0 $, then also 
			$$ \res_{\p}(\bbdelta_1(\T_{1}, \Delta_\emptyset)) \ne 0 \ne \bbdelta_1(\T_{2}, \tr\Delta_\g). $$
		\end{prop}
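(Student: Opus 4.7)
The plan is to mirror closely the argument used in the rank $(1,1)$ setting (the analogous proposition earlier in the paper), upgraded to accommodate the rank-two phenomena and the plectic input \ref{plectic}. First, I would exploit the exact triangle \eqref{comtriangle}: the connecting homomorphism gives an injection $\widetilde{H}^1_{\mathrm{f}}(G_{F,\Sigma}, \T_{1}, \Delta_0) \hookrightarrow \widetilde{H}^1_{\mathrm{f}}(G_{F,\Sigma}, \T_3, \Delta_\g)$. Since $\bbdelta_0(\T_3,\Delta_\g) \neq 0$, Theorem \ref{r0} together with Corollary \ref{cor : sep10} force $\widetilde{H}^1_{\mathrm{f}}(G_{F,\Sigma}, \T_3, \Delta_\g) = 0$ and $\widetilde{H}^2_{\mathrm{f}}(G_{F,\Sigma}, \T_3, \Delta_\g)$ to be $R$-torsion. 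Thus $\widetilde{H}^1_{\mathrm{f}}(G_{F,\Sigma}, \T_1, \Delta_0)=0$, and by Nekov\'a\v r's global duality for Selmer complexes the $R$-module $\widetilde{H}^2_{\mathrm{f}}(G_{F,\Sigma}, \T_1, \Delta_\emptyset)$ is torsion. Combined with $\chi(\widetilde{R\Gamma}_{\mathrm{f}}(G_{F,\Sigma}, \T_1, \Delta_\emptyset))=-2$, Proposition \ref{7.2} yields that $\widetilde{H}^1_{\mathrm{f}}(G_{F,\Sigma}, \T_1, \Delta_\emptyset)$ is free of rank two.

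Next I would feed in the plectic hypothesis \ref{plectic} to ensure $\bbdelta_2(\T_1,\Delta_\emptyset)\neq 0$ (it is generated by $i_\f^*\Log^2_{\f\otimes\lambda}(\mathbf{BK}^{\mathrm{Plec}}_{\f\otimes\lambda,1}\wedge \mathbf{BK}^{\mathrm{Plec}}_{\f\otimes\lambda,2})$). In particular, writing $\widetilde{H}^1_{\mathrm{f}}(G_{F,\Sigma}, \T_1, \Delta_\emptyset) \cong R e_1\oplus R e_2$, one obtains $\bbdelta_2(\T_1,\Delta_\emptyset) = R\cdot r (e_1\wedge e_2)$ for a regular element $r\in R$, and hence $\bbdelta_1(\T_1,\Delta_\emptyset) = r\cdot \widetilde{H}^1_{\mathrm{f}}(G_{F,\Sigma}, \T_1, \Delta_\emptyset)$ by the contraction formula defining $\bbdelta_i$. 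Injectivity of $\res_p=\bigoplus_{\p\mid p}\res_\p$ follows from the vanishing of $\widetilde{H}^1_{\mathrm{f}}(G_{F,\Sigma}, \T_1, \Delta_0)$. To upgrade this to non-vanishing of each individual $\res_\p$, I would verify that the kernel of a single $\res_{\p}$ is contained in $\widetilde{H}^1_{\mathrm{f}}(G_{F,\Sigma}, \T_1, \Delta^{\p}_{0,\emptyset})$ (strict at $\p$, relaxed at $\p'$) and show that this intermediate Selmer group cannot contain the rank-two free module $\widetilde{H}^1_{\mathrm{f}}(G_{F,\Sigma}, \T_1, \Delta_\emptyset)$ in full: otherwise it would contain $\bbdelta_1$, contradicting the rank-two Kolyvagin reciprocity of the classes $\mathbf{BK}^{\mathrm{Plec}}_{\f\otimes\lambda,i}$, which (by the plectic conjecture) are expected to restrict non-trivially to $H^1(G_{F_\p},\scrF^+_\p T_\f(\lambda))$ at \emph{both} $p$-adic primes simultaneously. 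Once $\res_\p(\widetilde{H}^1_{\mathrm{f}}(G_{F,\Sigma}, \T_1, \Delta_\emptyset))\neq 0$, the identity $\bbdelta_1=r\cdot\widetilde{H}^1_{\mathrm{f}}$ with $r$ regular delivers $\res_\p(\bbdelta_1(\T_1,\Delta_\emptyset))\neq 0$ for each $\p$.

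For the non-vanishing $\bbdelta_1(\T_2,\tr \Delta_\g)\neq 0$, I would continue the long exact sequence coming from \eqref{comtriangle}:
\[
0\to \widetilde{H}^1_{\mathrm{f}}(G_{F,\Sigma}, \T_2, \tr\Delta_\g) \to \widetilde{H}^2_{\mathrm{f}}(G_{F,\Sigma}, \T_1, \Delta_0) \to \widetilde{H}^2_{\mathrm{f}}(G_{F,\Sigma}, \T_3, \Delta_\g) \to \widetilde{H}^2_{\mathrm{f}}(G_{F,\Sigma}, \T_2, \tr\Delta_\g)\to 0,
\]
where $\widetilde{H}^2_{\mathrm{f}}(G_{F,\Sigma}, \T_3, \Delta_\g)$ is torsion by Step~1 and $\widetilde{H}^2_{\mathrm{f}}(G_{F,\Sigma}, \T_1, \Delta_0)$ has rank two (from the Euler characteristic computation $\chi=2$ and vanishing $\widetilde{H}^1_{\mathrm{f}}(\T_1,\Delta_0)=0$). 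Consequently $\widetilde{H}^1_{\mathrm{f}}(G_{F,\Sigma}, \T_2, \tr\Delta_\g)$ has rank two as well. Since $\chi(\widetilde{R\Gamma}_{\mathrm{f}}(\T_2,\tr\Delta_\g))=-2$, the module $\widetilde{H}^2_{\mathrm{f}}(\T_2,\tr\Delta_\g)$ is torsion; by Proposition \ref{prop : sep10}, $\bbdelta_2(\T_2,\tr\Delta_\g)$ is generated by a regular element, and so $\bbdelta_1(\T_2,\tr\Delta_\g)\neq 0$ as a non-trivial contraction.

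The main obstacle I anticipate is Step~5 -- splitting the injectivity of $\res_p$ into non-vanishing of each $\res_\p$ separately. This requires leveraging the specific shape of the plectic classes; unlike the rank $(1,1)$ case, where one residue suffices, here one must ensure that the two-dimensional module $\bbdelta_1$ is not annihilated by any individual $\res_\p$, which is precisely where the non-degeneracy built into \ref{plectic} must enter. The remaining steps are straightforward long exact sequence chases using the machinery developed in \cref{sec : module_leading} and Proposition \ref{7.2}.
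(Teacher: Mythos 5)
Your proposal follows the same overall architecture as the paper's proof — exact triangle \eqref{comtriangle} gives the injection $\widetilde{H}^1_{\rm f}(\T_1,\Delta_0)\hookrightarrow\widetilde{H}^1_{\rm f}(\T_3,\Delta_\g)$, the non-vanishing of $\bbdelta_0(\T_3,\Delta_\g)$ forces $\widetilde{H}^1_{\rm f}(\T_3,\Delta_\g)=0$ and the $H^2$ to be torsion, self-duality then makes $\widetilde{H}^2_{\rm f}(\T_1,\Delta_\emptyset)$ torsion, and the long exact sequence in degree two gives the $\T_2$ part — but it has a circularity problem at the step in the middle. You cite Proposition \ref{7.2} to conclude that $\widetilde{H}^1_{\rm f}(\T_1,\Delta_\emptyset)$ is free of rank two, and only afterwards try to establish $\bbdelta_2(\T_1,\Delta_\emptyset)\neq 0$ via the plectic hypothesis \ref{plectic}. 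But Proposition \ref{7.2} has $\bbdelta_2(T_\f(\lambda),\Delta_\emptyset)\neq 0$ as a standing hypothesis, so you are using the conclusion before establishing the hypothesis. The paper avoids this entirely: once $\widetilde{H}^2_{\rm f}(\T_1,\Delta_\emptyset)$ is shown to be $\R$-torsion by self-duality, the non-vanishing (indeed regular-generation) of $\bbdelta_2(\T_1,\Delta_\emptyset)$ follows directly from the torsion-iff-regular characterization in Proposition \ref{prop : sep10} (cited in the paper as Theorem \ref{nontorsion}), with no appeal to plectic classes at that point. You should replace the plectic detour with this clean implication; the plectic hypothesis plays no logical role in establishing this particular proposition.

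Your instinct to probe the passage from $\ker(\res_p)=\widetilde{H}^1_{\rm f}(\T_1,\Delta_0)=0$ (where $\res_p=\oplus_{\p\mid p}\res_\p$) to the non-vanishing of the individual components $\res_{\p_i}(\bbdelta_1(\T_1,\Delta_\emptyset))$ is reasonable — the paper is terse here, and vanishing of the joint kernel does not formally force either component to be nonzero on a rank-one submodule of $\widetilde{H}^1_{\rm f}(\T_1,\Delta_\emptyset)$. However, the sketch you offer (containment of $\ker(\res_\p)$ in a one-sided-strict Selmer group, ruled out by non-degeneracy of plectic Kolyvagin reciprocity) is not fleshed out into a rigorous argument and would itself need the plectic input to be stated in a much stronger, more precise form than \ref{plectic} provides. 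As written the proposal neither reproduces the paper's (terse) argument nor completes its own; the fix for this step should be stated carefully or flagged explicitly as depending on a refinement of the standing hypotheses.
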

		\begin{proof}
			We have the natural map
			$$ \widetilde{H}^1_{\rm f}(G_{F,\Sigma}, \T_1, \Delta_0) \rightarrow \widetilde{H}^1_{\rm f}(G_{F,\Sigma}, \T_3, \Delta_\g) $$
			is injective via \ref{comtriangle}. Therefore by Remark \ref{remark_T3} we get $ \widetilde{H}_{\rm f}^1(G_{F,\Sigma}, \T_1, \Delta_0) = 0$. The self-duality of Selmer complexes gives $ \widetilde{H}_{\rm f}^2(G_{F,\Sigma}, \T_1, \Delta_\emptyset) $ is torsion. Hence Theorem \ref{nontorsion} tells us that $ \bbdelta_2(\T_{1},\Delta_\emptyset) \ne 0 $. Moreover, since 
			$$ \{0\} =  \widetilde{H}^1_{\rm f}(G_{F,\Sigma}, \T_1, \Delta_0) = \ker\left(  \widetilde{H}^1_{\rm f}(G_{F,\Sigma}, \T_1, \Delta_\emptyset) \xrightarrow{\res_p} \bigoplus_{\p \mid p} H^1(G_{F_\p}, \T_{1}) \right),  
			$$
			we conclude that $ \res_\p(\bbdelta_1(\T_{1},\Delta_\emptyset)) \ne 0 $ as required.\\
			We note that \ref{comtriangle} and the discussion above gives rise to an exact sequence on $ \R $-modules
			\begin{equation}\label{exact_sep28}
				0 \rightarrow 	\widetilde{H}^1_{\rm f}(G_{F,\Sigma}, \T_2, \tr \Delta_\g) \rightarrow\underbrace{ \widetilde{H}^2_{\rm f}(G_{F,\Sigma}, \T_1, \Delta_0)}_{\rm rank =2} \rightarrow \underbrace{\widetilde{H}^2_{\rm f}(G_{F,\Sigma}, \T_3, \Delta_\g)}_{\rm torsion}\rightarrow  \widetilde{H}^2_{\rm f}(G_{F,\Sigma}, \T_2, \tr \Delta_\g) \rightarrow 0 . 	
			\end{equation} 
			Hence \ref{exact_sep28} shows that $ \widetilde{H}^1_{\rm f}(G_{F,\Sigma}, \T_2, \tr \Delta_\g) $ is of rank two and $ \widetilde{H}^2_{\rm f}(G_{F,\Sigma}, \T_2, \tr \Delta_\g) $ is torsion. therefore we conclude that $ \bbdelta_2(\T_{2}, \tr\Delta_\g) \ne 0 $, which gives the second asserted non vanishing. 
		\end{proof}
		We may also consider the natural map $ \res^{/\b}_p \coloneqq \res^{/\b}_\p $ as the composition of (induced by the inclusion $ \scrF^{+\b}_{\p} \T_{3} \rightarrow \scrF^{+\mathrm{bal}^+}_{\p} \T_{3}, \: \p \mid p $)
		$$  \widetilde{H}^1_{\mathrm{f}}(G_{F,\Sigma}, \T_{3}, \Delta_+) \rightarrow \bigoplus_{\p \mid p} H^1(G_{F_{\p}}, \scrF^-_{\p}T_{\f} \otimes_O \scrF^+_{\p}T_{\g} \otimes_O \scrF^-_{\p}T_{\g}^*(\lambda)) \xrightarrow{\sim} \bigoplus_{\p \mid p} H^1(G_{F_{\p}}, \scrF^-_{\p}T_{\f} (\lambda)) \widehat{\otimes}_O \R $$
		\begin{rmk}
			$ \Exp^*_{\f \otimes \lambda, \p}(\bbdelta_1(\T_{3},\Delta_{+})) = 0 $ and we also get that $\bbdelta_0(\T_{3},\Delta_{\mathrm{bal}}) = 0$.
		\end{rmk}
		\subsection{Factorization}
		We will explicitly describe the map 
		$$ \delta^1 \colon \widetilde{H}^1_{\mathrm{f}}(G_{F,\Sigma}, \T_2, \operatorname{tr}^*\Delta_{\g}) \longrightarrow \widetilde{H}^2_{\mathrm{f}}(G_{F,\Sigma}, \T_1, \Delta_{0}) $$
		arising from \ref{comtriangle}. \\
		Let \begin{equation}
			\label{cocyclex_f}
			x_{\mathrm{f}} \coloneqq (x,(x_v), (\xi_v)) \in \widetilde{Z}^1_{\mathrm{f}}\left( G_{F, \Sigma}, \T_{2}, \tr \Delta_{\g}  \right)  
		\end{equation} 
		be any cocycle: 
		$$ x \in Z^1(G_{F,\Sigma}, \T_{2}), \: x_v \in U_v(\tr \Delta_{\g}, \T_{2}), \: \xi_v \in \T_{2},   $$
		$$ dx_v = 0 , \quad \res_v(x)- i^+_v(x_v) = d\xi_v \quad \forall v \in \Sigma. $$
		Since $ U^{\bullet}_{\p}(\tr \Delta_{\g}, \T_{2}) \coloneqq C^{\bullet}(G_{F_{\p}}, F_{\p}^{+\g}\T_{2}) $ and $ F_{\p}^{+\g}\T_{2} $ is isomorphic image of $ F_{\p}^{+\g}\T_{3} $, we have  
		$$ U^{\bullet}_{\p}(\tr \Delta_{\g}, \T_{2}) \cong U^{\bullet}_{\p}(\Delta_{\g}, \T_{3}) $$
		for all $  \p \mid p $. Hence we can view $ x_{\p} =: y_{\p} $ as an element of $ U^1_{\p}(\Delta_{\g}, \T_{3}) $ for all $  \p \mid p $.
		For each $ v \in \Sigma \ba \bP $, we have a canonical identification 
		$$ U^{1}_v(\tr \Delta_{\g}, \T_{2}) \oplus U^{1}_v( \Delta_{0}, \T_{1}) \cong U^{1}_v(\Delta_{\g}, \T_{3}) $$
		which arises from the split short exact sequence \ref{mainses}. We also have the following identification 
		$$ C^{\bullet}(G_{F, \Sigma}, \T_{2}) \oplus C^{\bullet}(G_{F, \Sigma}, \T_{1}) \cong C^{\bullet}(G_{F, \Sigma}, \T_{3}) $$
		Put $ y = \iota(x) \; + \tr(0)  $, which is a cocycle in $ Z^1(G_{F,\Sigma}, \T_{3}) $ with $ dy =0 $. Similarly we can define $$ \mu_v = \iota(\xi_v) + \tr(0) \in \T_{3} \quad \forall v \in \Sigma .$$ Then the cochain $$ y_\mathrm{f} \coloneqq (y,(y_v),(\mu_v)) \in \widetilde{C}^1_{\mathrm{f}}\left( G_{F, \Sigma}, \T_{3}, \Delta_{\g}  \right) $$
		maps to $ x_\f $ under the morphism $ \pi_{\tr} $ since we have $ \pi_{\tr} \circ \iota = \operatorname{id}_{\T_{2}} $. Moreover, for all $ v \in \Sigma^{p} $, we have 
		\begin{equation*}
			d\mu_v- \res_{v}(y)+ i_v^{+}(y_v) = \iota (d\xi_v - \res_{v}(x) + i_v^{+}(x_v))=0.
		\end{equation*}
		
		Now let's focus on $ \p \in \mathbbl{P} $. Observe that the image of $ 	d\mu_{\p}- \res_{\p}(y)+ i_{\p}^{+}(y_{\p}) \in C^1(G_{F_{\p}}, \T_{3}) $ under the morphism 
		\begin{equation*}
			C^1(G_{F_{\p}}, \T_{3}) \xrightarrow{\pi_{\tr}}C^1(G_{F_{\p}}, \T_{2})
		\end{equation*}
		equals $ d\xi_{\p} - \res_{\p}(x) + i_{\p}^{+}(x_{\p}) =0  $, hence 
		\begin{equation*}
			\begin{split}
				d\mu_{\p}- \res_{\p}(y)+ i_{\p}^{+}(y_{\p}) & \in \ker\left( C^1(G_{F_{\p}}, \T_{3}) \xrightarrow{\pi_{\tr}} C^1(G_{F_{\p}}, \T_{2}) \right) \\
				& = \operatorname{im}\left( C^1(G_{F_{\p}}, \T_{1}) \xhookrightarrow{\tr} C^1(G_{F_{\p}}, \T_{3}) \right) .
			\end{split}
		\end{equation*}
		If we denote $ z_{\p} \in C^1(G_{F_{\p}}, \T_{1}) $ such that 
		$$ \tr(z_{\p}) = d\mu_{\p}- \res_{\p}(y)+ i_{\p}^{+}(y_{\p}) ,$$
		we get $$ dy_{\mathrm{f}} = \left( dy = 0, (dx_{v} = 0 )_{v \in \Sigma}, (\{0\}_{v \in \Sigma \ba \bP},\{\tr z_{\p}\}_{\p \in \mathbbl{P}} ) \right) \in \widetilde{C}^2_{\mathrm{f}}\left( G_{F, \Sigma}, \T_{3}, \Delta_{\g}  \right) $$
		As $ d\mu_{\p}- \res_{\p}(y)+ i_{\p}^{+}(y_{\p}) \in C^1(G_{F_{\p}}, \T_{3}) $ is a cocycle, we get $ d\tr (z_{\p}) =0  $ and so $ z_{\p} \in C^1(G_{F_{\p}}, \T_{1}) $ is also a cocycle. We conclude that 
		$$ \tr(0,0,z_p)_{\mathrm{f}}= (0, (0)_{v \in \Sigma}, (\{0\}_{v \in \Sigma \ba \bP},\{\tr z_{\p}\}_{\p \in \mathbbl{P}})) \in  \widetilde{Z}^2_{\mathrm{f}}\left( G_{F, \Sigma}, \T_{1}, \tr \Delta_{0}  \right) $$
		and $\tr(0,0,z_p)_{\mathrm{f}} = dy_{\mathrm{f}}$, where $ (0,0,z_p)_{\mathrm{f}} \coloneqq (0, (0)_{v \in \Sigma}, (\{0\}_{v \in \Sigma \ba \bP},\{z_{\p}\}_{\p \in \mathbbl{P}})) $. Hence we get the following proposition:
		\begin{prop}
			For a cocycle given as in (\ref{cocyclex_f}), we have 
			$$ \delta^1([x_{\mathrm{f}}]) = [(0,0,z_p)_{\mathrm{f}}] \in  \widetilde{H}^2_{\mathrm{f}}(G_{F,\Sigma}, \T_1, \Delta_{0}) $$
		\end{prop}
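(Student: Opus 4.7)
The plan is to unwind the standard derived-category recipe for the connecting morphism $\delta^1$ attached to the exact triangle \eqref{comtriangle}, namely: given a cocycle $c$ representing a class in $\widetilde{H}^1_{\mathrm{f}}(G_{F,\Sigma},\T_2,\tr\Delta_\g)$, lift it to a cochain $b$ in the cone complex computing $\widetilde{R\Gamma}_{\mathrm{f}}(G_{F,\Sigma},\T_3,\Delta_\g)$ whose image under $\pi_{\tr}$ equals $c$; then $db$ automatically has trivial image under $\pi_{\tr}$, hence lifts uniquely through $\operatorname{id}\otimes\tr$ to a $2$-cocycle representing $\delta^1([c])$. The whole content of the proposition is to carry out this lifting explicitly at the level of \v{C}ech cochains while respecting the Greenberg-local data.

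First, I would construct the global cochain $y=\iota(x)$ and the local-trivialization cochains $\mu_v=\iota(\xi_v)$ for all $v\in\Sigma$, and for the local components $y_v$ I would exploit the identification $U^\bullet_\p(\tr\Delta_\g,\T_2)\cong U^\bullet_\p(\Delta_\g,\T_3)$ (because $\scrF^{+\g}_\p\T_3\xrightarrow{\pi_{\tr}}\scrF^{+\g}_\p\T_2$ is an isomorphism) at the primes $\p\in\bP$, together with the splitting $U^1_v(\Delta_\g,\T_3)=U^1_v(\tr\Delta_\g,\T_2)\oplus U^1_v(\Delta_0,\T_1)$ at the auxiliary places $v\in\Sigma\setminus\bP$, to set $y_v$ to be the image of $x_v$ in the first summand. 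This produces the candidate lift $y_\f=(y,(y_v),(\mu_v))\in \widetilde{C}^1_{\mathrm{f}}(G_{F,\Sigma},\T_3,\Delta_\g)$, and its image under $\pi_{\tr}$ is visibly $x_\f$.

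Next I would compute $dy_\f$ componentwise. At every auxiliary place and at $\p_2$ (using \eqref{local_conditions_p2}-type splittings when relevant) the defect $d\mu_v-\res_v(y)+i_v^+(y_v)$ is the $\iota$-image of the zero cocycle $d\xi_v-\res_v(x)+i_v^+(x_v)$, so it vanishes. At each $\p\in\bP$ the same argument shows that the defect lies in $\ker\!\bigl(C^1(G_{F_\p},\T_3)\xrightarrow{\pi_{\tr}}C^1(G_{F_\p},\T_2)\bigr)=\tr(C^1(G_{F_\p},\T_1))$, and this defines $z_\p\in C^1(G_{F_\p},\T_1)$ uniquely. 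A brief check that $\tr$ is injective on cochains and that $d\bigl(d\mu_\p-\res_\p(y)+i_\p^+(y_\p)\bigr)=0$ forces $dz_\p=0$, so $(0,0,z_p)_\f\in \widetilde{Z}^2_{\mathrm{f}}(G_{F,\Sigma},\T_1,\Delta_0)$, and by construction $\tr(0,0,z_p)_\f=dy_\f$. By the recipe for $\delta^1$ we conclude $\delta^1([x_\f])=[(0,0,z_p)_\f]$.

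The main obstacle — and the genuinely non-trivial point compared to the $v\in\Sigma\setminus\bP$ situation — is precisely at the primes $\p\in\bP$: the filtration $\scrF^{+\g}_\p\T_3$ does not split as $\scrF^{+\g}_\p\T_1\oplus \scrF^{+\g}_\p\T_2$ (one sees this because $\scrF^{+\g}_\p\T_1=0$ while $\pi_{\tr}$ is already an isomorphism on $\scrF^{+\g}_\p$), so the naive lift of $x_\p$ into $\T_3$ via $\iota\circ i_\p^+$ disagrees with $i_\p^+\circ(\pi_{\tr}^{-1}\!\mid_{\scrF^{+\g}_\p})$, and this discrepancy is exactly what gets absorbed into $z_\p$. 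The remainder of the argument is the routine bookkeeping of Nekov\'a\v{r}'s cone complex; care must only be taken that the choice of $y_\p$ above really is allowed, i.e.\ that it lies in the submodule $\scrF^{+\g}_\p\T_3\subset\T_3$, which is guaranteed by $\pi_{\tr}\colon\scrF^{+\g}_\p\T_3\xrightarrow{\sim}\scrF^{+\g}_\p\T_2$ noted in Lemma \ref{localses}.
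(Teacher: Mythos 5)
Your proof follows the paper's method: lift $x_\f$ to $y_\f$ along $\pi_{\tr}$ using $\iota$ and the identification $U^\bullet_\p(\tr\Delta_\g,\T_2)\cong U^\bullet_\p(\Delta_\g,\T_3)$, then read off the $2$-cocycle $(0,0,z_p)_\f$ satisfying $\tr(0,0,z_p)_\f=dy_\f$. That outline, and the check that each $z_\p$ is a cocycle because $\tr$ is injective on cochains, is exactly what the paper does.

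However, there is an internal inconsistency at $\p_2$ that you should fix. You claim the defect $d\mu_{\p_2}-\res_{\p_2}(y)+i_{\p_2}^+(y_{\p_2})$ vanishes via an \eqref{local_conditions_p2}-type splitting; that splitting is the decomposition $\scrF_{\p_2}^{+\b}\T_1\oplus\scrF_{\p_2}^{+\b}\T_2\xrightarrow{\sim}\scrF_{\p_2}^{+\b}\T_3$ and belongs to the rank-$(1,1)$ computation of \cref{section : factorization_1}, where the Greenberg condition at $\p_2$ is the balanced one. This proposition, however, sits inside the rank-$(0,2)$ factorization and the triangle \eqref{comtriangle}: the local condition is $\Delta_\g$, so both $\p_1$ and $\p_2$ carry the $\g$-dominated filtration $\scrF_\p^{+\g}$, for which $\scrF_\p^{+\g}\T_1=\{0\}$ and $\pi_{\tr}\colon\scrF_\p^{+\g}\T_3\xrightarrow{\sim}\scrF_\p^{+\g}\T_2$ is already an isomorphism. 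There is therefore no analogue of \eqref{local_conditions_p2} at either prime, and both $z_{\p_1}$ and $z_{\p_2}$ are nontrivial in general, as the paper's notation $z_p=\{z_\p\}_{\p\in\bP}$ indicates. Your subsequent sentence ``At each $\p\in\bP$\dots'' handles both primes correctly and supersedes the erroneous vanishing claim; simply delete the parenthetical about $\p_2$ to make the argument coherent.
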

		\begin{rmk}
			We note that for $ \p = \p_1, \p_2 $ the diagram
			\[\xymatrix{
				&\scrF_{\p}^{+\g}\T_3  \ar@{^{(}->}[r]^{i_{\p}^+} \ar[rd]_{\sim}^{\pi_{\tr}} &\T_3 \\
				& &\scrF_{\p}^{+\g}\T_2 \ar@{^{(}->}[u]_{\iota}
			}\]
			does not commute. In consequence, $ z_{\p} $ comes about to account for this failure.
		\end{rmk}
		\subsubsection{}
		Now we will describe the cocycle $ z_{\p} $ in terms of $ y_{\p} $.
		Let $\{ u^+_{\p} \} \subset \scrF^+_{\p} T_\g$ denote an $\R_\g$-basis of and let $\{ u^+_{\p} , u^-_{\p}  \} \subset  T_\g $ be an extended basis. Let us denote by $ \{ \breve{u}^+_{\p} , \breve{u}^-_{\p}  \} \subset  T_\g^*  $ the dual basis. Recall that 
		$$ \mathrm{Fil}^2_{\p}\ad(T_\g)(\lambda) \coloneqq \Hom(T_\g,\scrF^{+}_{\p} T_\g)(\lambda) = \scrF^{+}_{\p} T_\g \widehat{\otimes} T_\g^* (\lambda) =\left\langle u^+_{\p} \otimes \breve{u}^-_{\p}, u^+_{\p} \otimes \uu^+_{\p} \right\rangle $$
		\begin{equation*}
			\begin{split}
				\mathrm{Fil}^{+\g}_{\p} \ad^0(T_\g)(\lambda) \coloneqq & \ker \{\ad^0(T_\g) \hookrightarrow \ad(T_\g) \rightarrow \Hom(F_{\p}^+ T_\g,F_{\p}^- T_\g) \}(\lambda) \\
				= & \left\langle u^+_{\p} \otimes \breve{u}^-_{\p}, u^+_{\p} \otimes \uu^+_{\p}  - u^-_{\p} \otimes \uu^-_{\p} \right\rangle
			\end{split}
		\end{equation*}
		$$ \scrF^{+\g}_{\p} \T_{2} = T_\f \widehat{\otimes} \Fil^{+\g}_{\p} \ad^0(T_\g)(\lambda)  $$
		$$ \Fil^{\g}_{v}\ad^0(T_\g)(\lambda) \coloneqq \Hom(F_{\p}^- T_\g,F_{\p}^+  T_\g)(\lambda) = \left\langle u^+_{\p} \otimes \breve{u}^-_{\p} \right\rangle \subset 	\mathrm{Fil}^{+\g}_{\p} \ad^0(T_\g)(\lambda) $$
		all are $G_{F_{\p}}$-stable submodule such that the $G_{F_{\p}}$ action on the quotient  
		$$ \mathrm{gr}^{+\g}_{\p} \ad^0(T_\g) \coloneqq \mathrm{Fil}^{+\g}_{\p} \ad^0(T_\g)(\lambda) / \Fil^{\g}_{v}\ad^0(T_\g)(\lambda) $$
		is via $\lambda$ (c.f. the proof of Lemma \ref{localses}). \\
		Let us fix a basis $\{ w_1,w_2 \}$ of $T_\f$. Given $g \in G_{F_{\p}}$, let us define $a_i(g), b_i(g)  \in \R$ ($i = 1,2$) such that 
		$$ i_{\p}^+(x_{\p})(g) = \sum_{i=1}^{2} a_i(g)(u^+_{\p} \otimes \breve{u}^-_{\p})\otimes w_i + \sum_{i=1}^{2} b_i(g)( u^+_{\p} \otimes \uu^+_{\p}  - u^-_{\p} \otimes \uu^-_{\p})\otimes w_i . $$
		Hence the cocycle $i_{\p}^+(y_{\p}) \in C^1(G_{F_{\p},\T_{3}}) $ is then given explicitly by  
		$$ i_{\p}^+(y_{\p})(g) = \sum_{i=1}^{2} a_i(g)(u^+_{\p} \otimes \breve{u}^-_{\p})\otimes w_i + \sum_{i=1}^{2} b_i(g)( u^+_{\p} \otimes \uu^+_{\p} )\otimes w_i . $$
		By definition, the cocycle \( \operatorname{tr} \circ i_{\p}^+(y_{\p}) \in C^1(G_{F_{\p}}, \T_{1}) \) is the cocycle 
		$$ \operatorname{tr} \circ i_{\p}^+(y_{\p})(g) =  \sum_{i=1}^{2} b_i(g)w_i . $$
		Hence we get 
		\begin{prop}
			The cocycle $z_{\p} = \operatorname{tr} \circ i_{\p}^+(y_{\p}) $ agrees with the image of $i_{\p}^+(x_{\p}) $	under the natural projection map
			\begin{equation}
				C^1(G_{F_{\p}},\scrF^{+\g}_{\p} \T_{2}) \xrightarrow{\operatorname{pr}_{/\g,\p}} C^1(G_{F_{\p}}, T_\f \widehat{\otimes}\mathrm{gr}^{+\g}_{\p} \ad^0(T_\g)(\lambda)) \xrightarrow[\rm Lemma \ \ref{localses}]{\sim} C^1(G_{F_{\p}}, T_\f(\lambda)) \otimes \R,
			\end{equation}
			In particular the morphism $\delta^1$ factors as
			\begin{equation}
				\begin{tikzcd}[row sep= 1ex, column sep=small]
					[x_\mathrm{f}]=[(x,(x_{v}), (\xi_{v}))]\arrow[rr,mapsto] \arrow[d, symbol = \in ] \arrow[ddddddd,mapsto, bend right =75]  &          &\delta^1([x_\mathrm{f}])=[(0,0,\operatorname{pr}_{/\g}\circ i_{\p}^+(x_{\p}))] \arrow[d, symbol = \in ]\\
					\widetilde{H}^1_{\mathrm{f}}(G_{F,\Sigma}, \T_2, \operatorname{tr}^*\Delta_{\g})\arrow[rr, "\delta^1"]\arrow[ddddd]                              &            &\widetilde{H}^2_{\mathrm{f}}(G_{F,\Sigma}, \T_1, \Delta_{0})\\
					& & \\
					& & \\
					& & \\
					& & \\
					\smashoperator[r]{\bigoplus_{\p \mid p}}	H^1(G_{F_{\p}},\scrF^{+\g}_{\p} \T_{2})\arrow[r,"\operatorname{pr}_{/\g}" ']                                      &\smashoperator[r]{\bigoplus_{\p \mid p}}	H^1(G_{F_{\p}}, T_\f \widehat{\otimes}\mathrm{gr}^{+\g}_{\p} \ad^0(T_\g)(\lambda))\arrow[r, "\sim" ']  &\smashoperator[r]{\bigoplus_{\p \mid p}}	H^1(G_{F_{\p}}, \T_{1})\arrow[uuuuu, "\partial^1_{\f} " ']\\
					\left[ (i_{\p}^+(x_{\p}))_{\p} \right] =\left[ (\res_{\p}(x))_{\p} \right]  \arrow[rr,mapsto]  \arrow[u, symbol = \in ]                   &            &\left[ (\operatorname{pr}_{/\g,\p}(i_{\p}^+(x_{\p})))_{\p} \right] \arrow[u, symbol = \in ]\arrow[uuuuuuu,mapsto, bend right= 67]  
				\end{tikzcd}
			\end{equation}
			where the morphism $\partial_{\f \otimes \lambda}^1$ is induced from the exact sequence
			$$ \widetilde{R\Gamma}_{\mathrm{f}}(G_{F,\Sigma},\T_{1}, \Delta_{0}) \rightarrow \widetilde{R\Gamma}_{\mathrm{f}}(G_{F,\Sigma},\T_{1}, \Delta_{\emptyset}) \xrightarrow{\res_p}  \bigoplus_{\p \mid p} R\Gamma(G_{F_{\p}}, \T_{1}) \xrightarrow{\partial_{\f \otimes \lambda}}[+1] $$
			In particular 
			$$\bigoplus_{\p \mid p} H^1(G_{F_{\p}}, \T_{1}) \ni ([a_{\p}])_{\p} \xmapsto{\partial_{\f \otimes \lambda}^1} [(0,0,(\{0\}_{v \in \Sigma \ba \bP},\{a_{\p}\}_{\p \in \mathbbl{P}}))] \in \widetilde{H}_{\mathrm{f}}^2(G_{F,\Sigma},\T_{1}, \Delta_{0}). $$
		\end{prop}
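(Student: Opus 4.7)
\medskip

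\noindent\textbf{Proof proposal.} The bulk of the argument has in fact been set up by the explicit cocycle manipulation in the paragraph immediately preceding the statement, so my plan is simply to organize those calculations and then deduce the factorization diagram by combining them with the formula $\delta^1([x_{\mathrm{f}}])=[(0,0,z_p)_{\mathrm{f}}]$ from the previous proposition.

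First, to check that $z_\p$ coincides with $\operatorname{pr}_{/\g,\p}(i_\p^+(x_\p))$ under the identification in Lemma \ref{localses}, I would work in the explicit basis $\{u_\p^+\otimes\breve u_\p^+,\,u_\p^+\otimes\breve u_\p^-,\,u_\p^-\otimes\breve u_\p^+,\,u_\p^-\otimes\breve u_\p^-\}$ of $\ad(T_\g)$. The trace map $\operatorname{tr}\colon\ad(T_\g)\to \R_\g$ sends $u_\p^{\epsilon}\otimes\breve u_\p^{\epsilon'}$ to $\delta_{\epsilon,\epsilon'}$, and applied to the expression already obtained for $i_\p^+(y_\p)(g)$ it kills the $a_i(g)$-terms (which sit in $\operatorname{Fil}^{\g}_\p\ad^0(T_\g)(\lambda)=\langle u_\p^+\otimes\breve u_\p^-\rangle$) and returns $\sum_{i=1}^2 b_i(g)\,w_i$. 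On the other hand, by the proof of Lemma \ref{localses} the generator of $\operatorname{gr}^{+\g}_\p\ad^0(T_\g)(\lambda)\cong \R_\g(\lambda)$ is the class of $u_\p^+\otimes\breve u_\p^+-u_\p^-\otimes\breve u_\p^-$, so $\operatorname{pr}_{/\g,\p}\bigl(i_\p^+(x_\p)(g)\bigr)$ also equals $\sum_{i=1}^2 b_i(g)\,w_i$. This matches $z_\p$ at the cocycle level.

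Second, for the commutative diagram factorising $\delta^1$, the horizontal arrow is given by the formula $\delta^1([x_{\mathrm{f}}])=[(0,0,z_p)_{\mathrm{f}}]$ already established, and the vertical arrow on the left is the composite of $\res_\p$ with the projection onto the quotient $T_\f\otimes\operatorname{gr}^{+\g}_\p\ad^0(T_\g)(\lambda)\cong\T_1$. The identification of $z_\p$ just obtained means that traversing the diagram down then right then up agrees with the direct application of $\delta^1$. The vertical arrow on the right is the boundary map $\partial^1_{\f\otimes\lambda}$ coming from the exact triangle
\[\widetilde{R\Gamma}_{\mathrm{f}}(G_{F,\Sigma},\T_1,\Delta_{(0,\pan)})\to \widetilde{R\Gamma}_{\mathrm{f}}(G_{F,\Sigma},\T_1,\Delta_{(\emptyset,\pan)})\xrightarrow{\res_{\p_1}} R\Gamma(G_{F_{\p_1}},\T_1)\xrightarrow{+1},\]
whose standard mapping-cone description gives precisely the formula $[a_{\p_1}]\mapsto[(0,0,(\{0\},\{a_{\p_1}\}))]$ recorded in the statement. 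Chasing definitions therefore yields the required factorization.

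The only potential subtlety — and what I'd consider the heart of the argument — is the non-commutativity of
\[
\xymatrix@R=1.4em{ & \scrF^{+\g}_{\p_1}\T_3\ar@{^{(}->}[r]^{i^+_{\p_1}}\ar[rd]_{\sim}^{\pi_{\tr}} & \T_3 \\ & & \scrF^{+\g}_{\p_1}\T_2\ar@{^{(}->}[u]_{\iota}}
\]
which is what forces $z_\p$ to be nonzero. This is handled by recognising that the failure of the diagram to commute is measured precisely by the component in the kernel of $\pi_{\tr}$, i.e.\ by the piece landing in $\T_1$; this piece is exactly $\tr\circ i_\p^+(y_\p)$ by the explicit computation above, completing the proof. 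No genuinely hard step remains: the argument is entirely local and linear-algebraic once the explicit bases are in place.
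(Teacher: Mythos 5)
Your argument is correct and reproduces the paper's computation essentially step for step: you write $i_\p^+(x_\p)$ in the explicit $\{u_\p^\pm\otimes\breve u_\p^\pm\}$-basis, pass to $i_\p^+(y_\p)$ via the $\pi_{\tr}$-identification of $\scrF_\p^{+\g}\T_2$ with $\scrF_\p^{+\g}\T_3$, observe that the trace kills the $\Fil^{\g}_\p$-component (which is nilpotent, hence traceless) and extracts $\sum_i b_i(g)w_i$, and match this against the projection $\operatorname{pr}_{/\g,\p}$ onto $\operatorname{gr}^{+\g}_\p\ad^0(T_\g)(\lambda)$; the supplementary note that $\operatorname{tr}(u_\p^\varepsilon\otimes\breve u_\p^{\varepsilon'})=\delta_{\varepsilon,\varepsilon'}$ is a helpful way of making the cancellation transparent. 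The only slip worth flagging: in the final paragraph you cite the exact triangle with $\Delta_{(0,\pan)}$, $\Delta_{(\emptyset,\pan)}$ and $\res_{\p_1}$, which belongs to the rank $(1,1)$ case of \cref{section : factorization_1}, whereas the proposition at hand is the rank $(0,2)$ variant in which both $\p_1$ and $\p_2$ carry the $\g$-dominated condition and the relevant triangle reads $\widetilde{R\Gamma}_{\mathrm{f}}(G_{F,\Sigma},\T_1,\Delta_0)\to\widetilde{R\Gamma}_{\mathrm{f}}(G_{F,\Sigma},\T_1,\Delta_\emptyset)\xrightarrow{\res_p}\bigoplus_{\p\mid p}R\Gamma(G_{F_\p},\T_1)$. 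Since your per-prime cocycle computation is symmetric in $\p$, this is a notational mix-up between the two parallel sections rather than a gap in the logic, but in a final write-up you should carry the $\bigoplus_{\p\mid p}$ throughout.
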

		\begin{cor}
			\label{cor : delta_and_delta}
			Suppose $\bbdelta_0(\T_{3},\Delta_{\g}) \ne 0 $. then the map $ \delta^1 $ factors as 
			\begin{equation}
				\begin{tikzcd}[column sep = small]
					\widetilde{H}^1_{\mathrm{f}}(G_{F,\Sigma}, \T_2, \operatorname{tr}^*\Delta_{\g}) \arrow[rr, hookrightarrow, "\delta^1"] \arrow[dr, hookrightarrow,"\res_{/\mathrm{pan}}" ']&&\widetilde{H}^2_{\mathrm{f}}(G_{F,\Sigma}, \T_1, \Delta_{0})\\
					&\smashoperator[r]{\bigoplus_{\p \mid p}}\frac{H^1(G_{F_{\p}}, \T_{1})}{\res_{\p}(\widetilde{H}^1_{\mathrm{f}}(G_{F,\Sigma},\T_{1}, \Delta_{\pan}))} 
					\arrow[ur, hookrightarrow, "\partial_{\f \otimes \lambda}^1" '] &
				\end{tikzcd}
			\end{equation}
			and all the $ \R $-modules that appear are of rank two.
		\end{cor}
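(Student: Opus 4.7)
The strategy is to mimic the proof of Corollary \ref{cor : delta_and_delta_rank1} in the rank $(0,2)$ setting, using the preceding cocycle-level description of the connecting morphism $\delta^1$ arising from the exact triangle \eqref{comtriangle}.

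First, I would establish that $\delta^1$ is injective. The hypothesis $\bbdelta_0(\T_3, \Delta_\g) \ne 0$ combined with Theorem \ref{r0} shows that $\widetilde{H}^2_{\mathrm{f}}(G_{F,\Sigma}, \T_3, \Delta_\g)$ is $\R$-torsion, and since $\chi(\widetilde{R\Gamma}_{\mathrm{f}}(G_{F,\Sigma}, \T_3, \Delta_\g)) = 0$, Corollary \ref{cor : sep10} yields $\widetilde{H}^1_{\mathrm{f}}(G_{F,\Sigma}, \T_3, \Delta_\g) = 0$. The long exact sequence associated with \eqref{comtriangle} then forces $\delta^1$ to be injective.

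Next, I would invoke the previous Proposition (describing $\delta^1$ at cocycle level) which exhibits a factorisation
\[ \delta^1 = \partial_{\f \otimes \lambda}^1 \circ \Bigl(\operatorname{pr}_{/\g} \circ \textstyle\bigoplus_{\p \mid p} \res_{\p}\Bigr), \]
where $\partial_{\f\otimes\lambda}^1$ is the connecting morphism in the exact triangle relating $\widetilde{R\Gamma}_{\mathrm{f}}(G_{F,\Sigma},\T_1,\Delta_0)$ and $\widetilde{R\Gamma}_{\mathrm{f}}(G_{F,\Sigma},\T_1,\Delta_\emptyset)$. Since $\partial_{\f\otimes\lambda}^1$ vanishes on the image of $\res_p$ coming from $\widetilde{H}^1_{\mathrm{f}}(G_{F,\Sigma},\T_1,\Delta_\emptyset)$, it descends to an injection
\[ \bigoplus_{\p \mid p}\frac{H^1(G_{F_{\p}},\T_1)}{\res_{\p}(\widetilde{H}^1_{\mathrm{f}}(G_{F,\Sigma},\T_1,\Delta_\emptyset))} \hookrightarrow \widetilde{H}^2_{\mathrm{f}}(G_{F,\Sigma}, \T_1, \Delta_{0}). \]
Combined with the injectivity of $\delta^1$ established above, this forces $\operatorname{pr}_{/\g} \circ \bigoplus_\p \res_\p$ composed with the canonical projection — that is, exactly $\res_{/\pan}$ — to be injective. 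Under hypothesis \ref{sign2} and Proposition \ref{7.2}(1) we have the identification $\widetilde{H}^1_{\mathrm{f}}(G_{F,\Sigma},\T_1,\Delta_\emptyset) = \widetilde{H}^1_{\mathrm{f}}(G_{F,\Sigma},\T_1,\Delta_{\pan})$, justifying the quotient appearing in the target of $\res_{/\pan}$.

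Finally, for the rank assertion I would put together the various Euler characteristic computations: Proposition \ref{7.2}(3) gives that $\widetilde{H}^2_{\mathrm{f}}(G_{F,\Sigma}, \T_1, \Delta_0)$ is of $\R$-rank two; the rank $(0,2)$ analogue of the discussion in \textsection\ref{6.1.3} (using that $\bbdelta_2(\T_2,\tr \Delta_\g) \ne 0$, established in the previous proposition) shows that $\widetilde{H}^1_{\mathrm{f}}(G_{F,\Sigma}, \T_2, \tr\Delta_\g)$ is torsion-free of rank two; and the middle term has rank two because each local cohomology $H^1(G_{F_\p},\T_1)$ is free of rank $2$ (by the local Euler–Poincaré formula for a $2$-dimensional ordinary representation at a split place), while $\res_\p$ applied to $\widetilde{H}^1_{\mathrm{f}}(G_{F,\Sigma},\T_1,\Delta_\emptyset)$ is of rank two (it is a subquotient of an $\R$-torsion-free rank-two module that maps injectively by Proposition \ref{7.2}(3) because $\widetilde{H}^1_{\mathrm{f}}(G_{F,\Sigma},\T_1,\Delta_0) = 0$). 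The only subtle point is verifying that the composite map $\res_{/\pan}$ into the quotient remains injective after passing from $\bigoplus_\p H^1$ to the quotient by $\res_\p(\widetilde{H}^1_{\mathrm{f}})$; this is forced by the injectivity of $\delta^1$ together with the factorisation. The main obstacle, as in the rank $(1,1)$ case, is keeping careful track of the identifications $U^\bullet_\p(\tr \Delta_\g, \T_2) \cong U^\bullet_\p(\Delta_\g, \T_3)$ at the two primes above $p$ when rewriting cocycles, but this is essentially already handled by the cocycle description in the preceding proposition.
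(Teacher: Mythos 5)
Your proposal is correct and follows essentially the same route as the paper: injectivity of $\delta^1$ from $\widetilde{H}^1_{\mathrm{f}}(G_{F,\Sigma},\T_3,\Delta_\g)=0$ (via \cref{r0}), the cocycle-level factorisation through $\operatorname{pr}_{/\g}\circ\res_p$ from the preceding proposition, descent of $\partial^1_{\f\otimes\lambda}$ to the quotient by $\res_p(\widetilde{H}^1_{\mathrm{f}}(\T_1,\Delta_\emptyset))$, and the identification $\widetilde{H}^1_{\mathrm{f}}(\T_1,\Delta_\emptyset)=\widetilde{H}^1_{\mathrm{f}}(\T_1,\Delta_{\mathrm{pan}})$ from \cref{7.2}. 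Your closing rank count (local Euler--Poincaré plus injectivity of $\res_p$ since $\widetilde{H}^1_{\mathrm{f}}(\T_1,\Delta_0)=0$) spells out what the paper only asserts, and is a welcome addition.
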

		
		\begin{proof}
			$ \delta^1 $ is Injective because $ \widetilde{H}^1_{\mathrm{f}}(G_{F,\Sigma}, \T_3, \Delta_{\g}) =0 $ since $\bbdelta_0(\T_{3},\Delta_{\g}) \ne 0 $ via Theorem \ref{r0}. \\
			Previous Proposition tells that $ \delta^1 $ factors as 
			\begin{equation}
				\begin{tikzcd}
					\widetilde{H}^1_{\mathrm{f}}(G_{F,\Sigma}, \T_2, \operatorname{tr}^*\Delta_{\g}) \arrow[r, hook, "\delta^1"]
					\arrow[d, hook, "\operatorname{pr}_{/\g} \circ \res_{p} " '] & \widetilde{H}^2_{\mathrm{f}}(G_{F,\Sigma}, \T_1, \Delta_{0}) \\
					\smashoperator[r]{\bigoplus_{\p \mid p}}H^1(G_{F_{\p}}, \T_{1}) \arrow[ur, "\partial_{\f \otimes \lambda}^1 " '] & 
				\end{tikzcd}
			\end{equation}
			Also $ \partial_{\f \otimes \lambda}^1$ factors as 
			\begin{equation}
				\begin{tikzcd}
					\smashoperator[r]{\bigoplus_{\p \mid p}}H^1(G_{F_{\p}}, \T_{1}) \arrow[r, "\partial_{\f \otimes \lambda}^1 "] \arrow[dr, twoheadrightarrow] & \widetilde{H}^2_{\mathrm{f}}(G_{F,\Sigma}, \T_1, \Delta_{0}) \\
					& \smashoperator[r]{\bigoplus_{\p \mid p}} \frac{H^1(G_{F_{\p}}, \T_{1})}{\res_{\p}(\widetilde{H}^1_{\mathrm{f}}(G_{F,\Sigma},\T_{1}, \Delta_{\emptyset}))} \arrow[u, hook, " \partial_{\f \otimes \lambda}^1 " ']
				\end{tikzcd}
			\end{equation}
			Hence we have a injective homomorphism $ \res_{/\pan}\coloneqq \bigoplus_{\p \mid p}\res_{/\pan, \p} $ which is the composition of 
			$$ \widetilde{H}^1_{\mathrm{f}}(G_{F,\Sigma}, \T_2, \operatorname{tr}^*\Delta_{\g}) \xrightarrow{\operatorname{pr}_{/\g} \circ \res_p} \bigoplus_{\p \mid p}H^1(G_{F_{\p}}, \T_{1})\twoheadrightarrow \bigoplus_{\p \mid p} \frac{H^1(G_{F_{\p}}, \T_{1})}{\res_{\p}(\widetilde{H}^1_{\mathrm{f}}(G_{F,\Sigma},\T_{1}, \Delta_{\emptyset}))} $$
			Since $ \widetilde{H}^1_{\mathrm{f}}(G_{F,\Sigma},\T_{1}, \Delta_{\emptyset}) = \widetilde{H}^1_{\mathrm{f}}(G_{F,\Sigma},\T_{1}, \Delta_{\pan}) $ we have the  required result.
		\end{proof}
		Recall our map $ \res_{/\b}\coloneqq\bigoplus_{\p \mid p}\res_{/\b, \p} $ (see \ref{resbal_2}) given as composite of 
		$$ \widetilde{H}^1_{\mathrm{f}}(G_{F,\Sigma}, \T_2, \operatorname{tr}^*\Delta_{\g}) \xrightarrow{\operatorname{pr}_{/\g} \circ \res_p} \bigoplus_{\p \mid p}H^1(G_{F_{\p}}, \T_{1}) \rightarrow \bigoplus_{\p \mid p} H^1(G_{F_\p},\scrF^-_\p T_\f \otimes \lambda) \otimes \R $$
		Hence $ \res_{/\b} $ factors as 
		$$ \res_{/\b} \colon \widetilde{H}^1_{\mathrm{f}}(G_{F,\Sigma}, \T_2, \operatorname{tr}^*\Delta_{\g}) \xrightarrow{\res_{/\pan}} \bigoplus_{\p \mid p} \frac{H^1(G_{F_{\p}}, \T_{1})}{\res_{\p}(\widetilde{H}^1_{\mathrm{f}}(G_{F,\Sigma},\T_{1}, \Delta_{\emptyset}))} \rightarrow \bigoplus_{\p \mid p} H^1(G_{F_\p},\scrF^-_\p T_\f \otimes \lambda) \otimes \R $$
		Since $ \widetilde{H}^1_{\mathrm{f}}(G_{F,\Sigma},\T_{1}, \Delta_{\emptyset}) = \widetilde{H}^1_{\mathrm{f}}(G_{F,\Sigma},\T_{1}, \Delta_{\pan}) $ in our present set-up, we have the following exact sequence for all $ \p \mid p $ 
		$$ 0 \rightarrow \frac{H^1(G_{F_\p},\scrF^+_\p \T_{1})}{\res_{\p}(\widetilde{H}^1_{\mathrm{f}}(G_{F,\Sigma},\T_{1}, \Delta_{\pan})) } \rightarrow \frac{H^1(G_{F_\p},\T_{1})}{\res_{\p}(\widetilde{H}^1_{\mathrm{f}}(G_{F,\Sigma},\T_{1}, \Delta_{\pan})) } \rightarrow H^1(G_{F_\p},\scrF^-_\p \T_{1}) \rightarrow 0 $$
		As the map $ \res_{/\pan} $ is injective and $ \widetilde{H}^1_{\mathrm{f}}(G_{F,\Sigma}, \T_2, \operatorname{tr}^*\Delta_{\g}) $ is torsion-free, 
		$$ \res_{/\pan, \p}(\bbdelta_1(\T_2, \operatorname{tr}^*\Delta_{\g})) \subset \frac{H^1(G_{F_\p},\scrF^+_\p \T_{1})}{\res_{\p}(\widetilde{H}^1_{\mathrm{f}}(G_{F,\Sigma},\T_{1}, \Delta_{\pan})) } $$ 
		is also torsion-free.
		Since $$ \frac{H^1(G_{F_\p},\scrF^+_\p \T_{1})}{\res_{\p}(\widetilde{H}^1_{\mathrm{f}}(G_{F,\Sigma},\T_{1}, \Delta_{\pan})) } $$ is torsion
		$\res_{/\pan, \p}(\bbdelta_1(\T_2, \operatorname{tr}^*\Delta_{\g}))$ maps isomorphically onto its image 
		$$ \res_{/\b, \p}(\bbdelta_1(\T_2, \operatorname{tr}^*\Delta_{\g})) \subset H^1(G_{F_\p},\scrF^-_\p \T_{1}) $$
		Hence we have an exact sequence
		\begin{equation}
			0 \rightarrow \frac{H^1(G_{F_\p},\scrF^+_\p \T_{1})}{\res_{\p}(\widetilde{H}^1_{\mathrm{f}}(G_{F,\Sigma},\T_{1}, \Delta_{\pan})) } \rightarrow \dfrac{\frac{H^1(G_{F_\p},\T_{1})}{\res_{\p}(\widetilde{H}^1_{\mathrm{f}}(G_{F,\Sigma},\T_{1}, \Delta_{\pan})) }}{\res_{/\pan, \p}(\bbdelta_1(\T_2, \operatorname{tr}^*\Delta_{\g}))} \rightarrow \frac{H^1(G_{F_\p},\scrF^-_\p \T_{1})}{ \res_{/\b, \p}(\bbdelta_1(\T_2, \operatorname{tr}^*\Delta_{\g}))} \rightarrow 0
		\end{equation} 
		\begin{prop}
			In the setting of Corollary \ref{cor : delta_and_delta}, we have 
			\begin{multline}
				\Char\left( \dfrac{\widetilde{H}^2_{\mathrm{f}}(G_{F,\Sigma}, \T_1, \Delta_{0})}{\delta^1(\bbdelta_1(\T_{2}, \tr\Delta_{\g}))} \right) \\
				= \Char\left( \bigoplus_{\p \mid p}\frac{H^1(G_{F_\p},\scrF^+_\p \T_{1})}{\res_{\p}(\widetilde{H}^1_{\mathrm{f}}(G_{F,\Sigma},\T_{1}, \Delta_{\pan})) }\right) \Char\left( \bigoplus_{\p \mid p} \frac{H^1(G_{F_\p},\scrF^-_\p \T_{1})}{ \res_{/\b, \p}(\bbdelta_1(\T_2, \operatorname{tr}^*\Delta_{\g}))} \right) \\ \times \Char(\widetilde{H}^2_{\mathrm{f}}(G_{F,\Sigma}, \T_1, \Delta_{\emptyset}))
			\end{multline} 
			
		\end{prop}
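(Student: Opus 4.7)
The plan is to mirror the strategy of Proposition~\ref{prop : final_rank1}, now working simultaneously over both primes $\p_1, \p_2$ of $F$ above $p$ and in the rank-two setting. Note that, in contrast to the rank-$(1,1)$ case, the right-hand side of the statement retains the full $\res_\p(\widetilde{H}^1_{\mrm f}(G_{F,\Sigma},\T_1, \Delta_\pan))$ rather than the cyclic submodule $\res_\p\bbdelta_1(\T_1, \Delta_\emptyset)$; consequently no rank-two analogue of Theorem~\ref{leading_term_rank1} needs to be invoked, and the characteristic ideal $\Char(\widetilde{H}^2_{\mrm f}(\T_1, \Delta_\emptyset))$ will appear as a standalone factor, coming directly from the cokernel term of the long exact $\T_1$-cohomology sequence furnished by Proposition~\ref{7.2}.

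Concretely, the first step is to form the long exact sequence connecting $\widetilde{H}^1_{\mrm f}(\T_1, \Delta_\emptyset)$, $\bigoplus_{\p\mid p}H^1(G_{F_\p},\T_1)$, $\widetilde{H}^2_{\mrm f}(\T_1, \Delta_0)$ and $\widetilde{H}^2_{\mrm f}(\T_1, \Delta_\emptyset)$ (using the identification $\widetilde{H}^1_{\mrm f}(\T_1, \Delta_\emptyset)=\widetilde{H}^1_{\mrm f}(\T_1, \Delta_\pan)$ from Proposition~\ref{7.2}(1)), and then to quotient by $\bbdelta_1(\T_2, \tr\Delta_\g)$ in the appropriate positions via Corollary~\ref{cor : delta_and_delta}. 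This yields the four-term exact sequence
\begin{equation*}
0 \to \bigoplus_{\p\mid p}\dfrac{H^1(G_{F_\p},\T_1)/\res_\p(\widetilde{H}^1_{\mrm f}(\T_1, \Delta_\pan))}{\res_{/\pan,\p}(\bbdelta_1(\T_2, \tr\Delta_\g))} \xrightarrow{\partial^1_{\f\otimes\lambda}} \dfrac{\widetilde{H}^2_{\mrm f}(\T_1, \Delta_0)}{\delta^1(\bbdelta_1(\T_2, \tr\Delta_\g))} \to \widetilde{H}^2_{\mrm f}(\T_1, \Delta_\emptyset) \to 0.
\end{equation*}
The second step is, prime by prime, to split each $\p$-summand on the left via the three-term exact sequence with extremes $H^1(G_{F_\p},\scrF^+_\p\T_1)/\res_\p(\widetilde{H}^1_{\mrm f}(\T_1, \Delta_\pan))$ and $H^1(G_{F_\p},\scrF^-_\p\T_1)/\res_{/\b,\p}(\bbdelta_1(\T_2, \tr\Delta_\g))$, whose exactness is forced by the torsion-freeness of $\widetilde{H}^1_{\mrm f}(\T_2, \tr\Delta_\g)$ together with the fact that the $\scrF^+_\p$-subquotient is torsion, exactly as in the argument immediately preceding Proposition~\ref{prop : final_rank1}. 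Multiplicativity of characteristic ideals through both sequences and summation over $\p\mid p$ then delivers the asserted identity.

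The main obstacle I anticipate is not algebraic but of a prime-by-prime bookkeeping nature: one must verify that for each individual $\p\mid p$ the component $\res_{/\pan,\p}(\bbdelta_1(\T_2, \tr\Delta_\g))$ lands inside the torsion subquotient $H^1(G_{F_\p},\scrF^+_\p\T_1)/\res_\p(\widetilde{H}^1_{\mrm f}(\T_1, \Delta_\pan))$, and not merely diagonally in the direct sum. This is the rank-two analogue of the argument between~\eqref{exact_seq_rank1} and Proposition~\ref{prop : final_rank1}; it should fall out of Lemma~\ref{localses} combined with the vanishing $\widetilde{H}^1_{\mrm f}(\T_1, \Delta_0)=0$ from Proposition~\ref{7.2}(3), but the step requires care because global injectivity of $\res_{/\pan}$ does not a priori control the $\p$-wise behaviour of $\bbdelta_1(\T_2, \tr\Delta_\g)$.
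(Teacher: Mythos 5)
Your proposal reproduces the paper's argument step for step: the same long exact sequence for $\T_1$ with $\Delta_\emptyset$, $\Delta_0$, $\Delta_\pan$; the same passage to the quotient by $\bbdelta_1(\T_2,\tr\Delta_\g)$ via Corollary~\ref{cor : delta_and_delta}; and the same prime-by-prime three-term splitting using torsion-freeness of $\widetilde{H}^1_{\mathrm{f}}(\T_2,\tr\Delta_\g)$ against the torsion subquotient $H^1(G_{F_\p},\scrF^+_\p\T_1)/\res_\p(\widetilde{H}^1_{\mathrm{f}}(\T_1,\Delta_\pan))$, with $\Char(\widetilde{H}^2_{\mathrm{f}}(\T_1,\Delta_\emptyset))$ left as a standalone factor. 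The bookkeeping worry you flag is exactly the point the paper addresses in the displayed short exact sequence immediately preceding the proposition, so your proposal is correct and essentially identical to the paper's proof.
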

		\begin{proof}
			We have exact sequence
			\begin{equation}
				0 \rightarrow \widetilde{H}^1_{\mathrm{f}}(G_{F,\Sigma}, \T_1, \Delta_{\emptyset}) \xrightarrow{\res_p} \bigoplus_{\p \mid p} H^1(G_{F_\p},\T_{1}) \xrightarrow{\partial_{\f \otimes \lambda}^1} \widetilde{H}^2_{\mathrm{f}}(G_{F,\Sigma}, \T_1, \Delta_{0}) \rightarrow \widetilde{H}^2_{\mathrm{f}}(G_{F,\Sigma}, \T_1, \Delta_{\emptyset}) \rightarrow 0.
			\end{equation}
			Hence we get another exact sequence
			\begin{equation}
				0 \rightarrow \bigoplus_{\p \mid p} \dfrac{\frac{H^1(G_{F_\p},\T_{1})}{\res_{\p}(\widetilde{H}^1_{\mathrm{f}}(G_{F,\Sigma},\T_{1}, \Delta_{\pan})) }}{\res_{/\pan, \p}(\bbdelta_1(\T_2, \operatorname{tr}^*\Delta_{\g}))} \xrightarrow{\partial_{\f \otimes \lambda}^1} \dfrac{\widetilde{H}^2_{\mathrm{f}}(G_{F,\Sigma}, \T_1, \Delta_{0})}{\delta^1(\bbdelta_1(\T_{2}, \tr\Delta_{\g}))}\rightarrow \widetilde{H}^2_{\mathrm{f}}(G_{F,\Sigma}, \T_1, \Delta_{\emptyset}) \rightarrow 0.
			\end{equation}
			Therefore we have 
			\begin{multline}
				\Char\left( \dfrac{\widetilde{H}^2_{\mathrm{f}}(G_{F,\Sigma}, \T_1, \Delta_{0})}{\delta^1(\bbdelta_1(\T_{2}, \tr\Delta_{\g}))} \right) \\
				= \Char \left( \bigoplus_{\p \mid p} \dfrac{\frac{H^1(G_{F_\p},\T_{1})}{\res_{\p}(\widetilde{H}^1_{\mathrm{f}}(G_{F,\Sigma},\T_{1}, \Delta_{\pan})) }}{\res_{/\pan, \p}(\bbdelta_1(\T_2, \operatorname{tr}^*\Delta_{\g}))} \right) \Char(\widetilde{H}^2_{\mathrm{f}}(G_{F,\Sigma}, \T_1, \Delta_{\emptyset})) \\
				= \Char\left( \bigoplus_{\p \mid p}\frac{H^1(G_{F_\p},\scrF^+_\p \T_{1})}{\res_{\p}(\widetilde{H}^1_{\mathrm{f}}(G_{F,\Sigma},\T_{1}, \Delta_{\pan})) }\right) \Char\left( \bigoplus_{\p \mid p} \frac{H^1(G_{F_\p},\scrF^-_\p \T_{1})}{ \res_{/\b, \p}(\bbdelta_1(\T_2, \operatorname{tr}^*\Delta_{\g}))} \right) \\
				\times \Char(\widetilde{H}^2_{\mathrm{f}}(G_{F,\Sigma}, \T_1, \Delta_{\emptyset}))
			\end{multline}	
		\end{proof}
		\begin{thm}
			\label{main_theorem_2}
			Suppose $  \bbdelta_0(\T_{3}.\Delta_\g) \ne 0 $ and the restriction $ \res_{/\b, \p}(\bbdelta_1(\T_2, \operatorname{tr}^*\Delta_{\g})) \ne 0 $ for all $ \p = \p_1 $, and $ \p_2 $. Then we get
			\begin{multline*}
				\Char \left( \widetilde{H}^2_{\mathrm{f}}(G_{F,\Sigma}, \T_3, \Delta_{\g})  \right) \\
				= \Char(\widetilde{H}^2_{\mathrm{f}}(G_{F,\Sigma}, \T_2, \operatorname{tr}^*\Delta_{\mathrm{bal}}))
				\left\lbrace \Char\left( \bigoplus_{\p \mid p}\frac{H^1(G_{F_\p},\scrF^+_\p \T_{1})}{\res_{\p}(\widetilde{H}^1_{\mathrm{f}}(G_{F,\Sigma},\T_{1}, \Delta_{\pan})) }\right) \Char(\widetilde{H}^2_{\mathrm{f}}(G_{F,\Sigma}, \T_1, \Delta_{\emptyset})) \right\rbrace .
			\end{multline*}
		\end{thm}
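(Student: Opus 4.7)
The plan is to extract $\Char\bigl(\widetilde{H}^2_{\mathrm{f}}(G_{F,\Sigma}, \T_3, \Delta_\g)\bigr)$ from the long exact sequence in cohomology produced by the exact triangle \eqref{comtriangle}. The assumption $\bbdelta_0(\T_3, \Delta_\g) \neq 0$ combined with Theorem \ref{r0} forces $\widetilde{H}^1_{\mathrm{f}}(G_{F,\Sigma}, \T_3, \Delta_\g) = 0$, so one obtains a four-term exact sequence
\begin{equation*}
0 \to \cohom^1(G_{F,\Sigma}, \T_2, \tr\Delta_\g) \xrightarrow{\delta^1} \cohom^2(G_{F,\Sigma}, \T_1, \Delta_0) \to \cohom^2(G_{F,\Sigma}, \T_3, \Delta_\g) \to \cohom^2(G_{F,\Sigma}, \T_2, \tr\Delta_\g) \to 0
\end{equation*}
with $\R$-ranks $(2, 2, 0, 0)$. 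Splitting this into two short exact sequences and taking characteristic ideals yields
\begin{equation*}
\Char\bigl(\cohom^2(G_{F,\Sigma},\T_3, \Delta_\g)\bigr) = \Char\bigl(\cohom^2(G_{F,\Sigma},\T_1, \Delta_0)/\delta^1(\cohom^1(G_{F,\Sigma},\T_2, \tr\Delta_\g))\bigr) \cdot \Char\bigl(\cohom^2(G_{F,\Sigma},\T_2, \tr\Delta_\g)\bigr).
\end{equation*}

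The next step is to pass from $\cohom^1(G_{F,\Sigma},\T_2, \tr\Delta_\g)$ to its submodule $\bbdelta_1(\T_2, \tr\Delta_\g)$. Since $\delta^1$ is injective on the torsion-free module $\cohom^1(G_{F,\Sigma},\T_2, \tr\Delta_\g)$, a short diagram-chase gives
\begin{equation*}
\Char\Bigl(\cohom^2(G_{F,\Sigma},\T_1, \Delta_0)/\delta^1(\bbdelta_1(\T_2,\tr\Delta_\g))\Bigr) = \Char\Bigl(\cohom^2(G_{F,\Sigma},\T_1, \Delta_0)/\delta^1(\cohom^1(G_{F,\Sigma},\T_2,\tr\Delta_\g))\Bigr) \cdot \Char\bigl(\cohom^1(G_{F,\Sigma},\T_2,\tr\Delta_\g)/\bbdelta_1\bigr),
\end{equation*}
and Corollary \ref{r2} identifies the last factor as $\Char(\cohom^2(G_{F,\Sigma},\T_2,\tr\Delta_\g))^2$. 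In parallel, the preceding proposition computes
\begin{equation*}
\Char\Bigl(\cohom^2(G_{F,\Sigma},\T_1, \Delta_0)/\delta^1(\bbdelta_1)\Bigr) = \Char\left( \bigoplus_{\p \mid p}\frac{H^1(G_{F_\p},\scrF^+_\p \T_{1})}{\res_{\p}(\cohom^1(G_{F,\Sigma},\T_{1}, \Delta_{\pan}))}\right) \cdot \Char\left(\bigoplus_{\p \mid p} \frac{H^1(G_{F_\p},\scrF^-_\p \T_{1})}{\res_{/\b, \p}(\bbdelta_1)}\right) \cdot \Char(\cohom^2(G_{F,\Sigma},\T_1, \Delta_\emptyset)).
\end{equation*}
The hypothesis $\res_{/\b,\p}(\bbdelta_1(\T_2,\tr\Delta_\g)) \neq 0$ for each $\p$ ensures each $\Exp^*_{\f\otimes\lambda,\p}(\bbdelta_1)$ is a regular element of $\R$, so Remark \ref{rmk : ExpT2} (via Theorem \ref{logrln}) identifies the middle factor on the right-hand side with $\Char(\cohom^2(G_{F,\Sigma},\T_2,\tr\Delta_\g)) \cdot \Char(\cohom^2(G_{F,\Sigma},\T_2,\tr\Delta_\b))$.

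Combining these identities, the factor $\Char(\cohom^2(G_{F,\Sigma},\T_2,\tr\Delta_\g))^2$ picked up from Corollary \ref{r2} cancels against one copy of $\Char(\cohom^2(G_{F,\Sigma},\T_2,\tr\Delta_\g))$ coming out of the $\Exp^*$-identification and one coming from the first displayed equation, leaving exactly $\Char(\cohom^2(G_{F,\Sigma},\T_2,\tr\Delta_\b))$ alongside the $\scrF^+$-term and $\Char(\cohom^2(G_{F,\Sigma},\T_1,\Delta_\emptyset))$. The main subtlety I anticipate is bookkeeping: one must verify that all the hypotheses justifying each invocation of Theorems \ref{r0}, \ref{logrln}, Corollary \ref{r2}, and the preceding proposition are actually in force at every step — in particular that $\bbdelta_1(\T_2,\tr\Delta_\g)$ and $\bbdelta_2(\T_3,\Delta_+)$ are non-zero (which is forced by $\bbdelta_0(\T_3,\Delta_\g) \neq 0$ through the injectivity of $\delta^1$ and Remark \ref{remark_T3}) and that the exactness of the four-term sequence is preserved after passing to characteristic ideals, which requires all intervening cokernels to be $\R$-torsion — a fact secured by the rank computations available from Proposition \ref{7.2} and the paragraph following it.
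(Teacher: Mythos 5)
Your proof follows the same route as the paper's: build the four-term exact sequence from the triangle \eqref{comtriangle}, compare $\Char(\widetilde{H}^2_{\mathrm{f}}(G_{F,\Sigma},\T_1,\Delta_0)/\delta^1(\bbdelta_1(\T_2,\tr\Delta_\g)))$ in two ways --- once via the four-term sequence together with Corollary \ref{r2}, once via the local-condition comparison of the preceding proposition and Remark \ref{rmk : ExpT2}/Theorem \ref{logrln} --- and cancel the surplus $\Char(\widetilde{H}^2_{\mathrm{f}}(G_{F,\Sigma},\T_2,\tr\Delta_\g))$ factor. The bookkeeping you describe, and the places you flag as needing verification (non-vanishing of the relevant $\bbdelta$-modules and torsion of all intervening cokernels), match what Proposition 7.6/Remark \ref{remark_T3} supply in the paper; the argument is sound.
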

		\begin{proof}
			From \eqref{comtriangle}, we get an exact sequence
			\begin{equation}
				0 \rightarrow \widetilde{H}^1_{\mathrm{f}}(G_{F,\Sigma}, \T_2, \operatorname{tr}^*\Delta_{\g}) \xrightarrow{\delta^1} \widetilde{H}^2_{\mathrm{f}}(G_{F,\Sigma}, \T_1, \Delta_{0}) \rightarrow \widetilde{H}^2_{\mathrm{f}}(G_{F,\Sigma}, \T_3, \Delta_{\g}) \rightarrow \widetilde{H}^2_{\mathrm{f}}(G_{F,\Sigma}, \T_2, \operatorname{tr}^*\Delta_{\g}) \rightarrow 0
			\end{equation}
			This shows that 
			\begin{multline}
				\Char \left( \dfrac{\widetilde{H}^1_{\mathrm{f}}(G_{F,\Sigma}, \T_2, \operatorname{tr}^*\Delta_{\g})}{\bbdelta_1(\T_{2}, \tr\Delta_{\g})} \right) \Char \left( \widetilde{H}^2_{\mathrm{f}}(G_{F,\Sigma}, \T_3, \Delta_{\g})  \right) \\
				= \Char \left( \dfrac{\widetilde{H}^2_{\mathrm{f}}(G_{F,\Sigma}, \T_1, \Delta_{0})}{\delta^1(\bbdelta_1(\T_{2}, \tr\Delta_{\g}))} \right) \Char \left( \widetilde{H}^2_{\mathrm{f}}(G_{F,\Sigma}, \T_2, \operatorname{tr}^*\Delta_{\g}) \right)  
			\end{multline}
			Hence we get
			\begin{multline}
				\Char \left( \widetilde{H}^2_{\mathrm{f}}(G_{F,\Sigma}, \T_2, \operatorname{tr}^*\Delta_{\g}) \right)	\Char \left( \widetilde{H}^2_{\mathrm{f}}(G_{F,\Sigma}, \T_3, \Delta_{\g})  \right) =	\Char \left( \dfrac{\widetilde{H}^2_{\mathrm{f}}(G_{F,\Sigma}, \T_1, \Delta_{0})}{\delta^1(\bbdelta_1(\T_{2}, \tr\Delta_{\g}))} \right) \\	
				=  \Char\left( \bigoplus_{\p \mid p} \frac{H^1(G_{F_\p},\scrF^-_\p \T_{1})}{ \res_{/\b, \p}(\bbdelta_1(\T_2, \operatorname{tr}^*\Delta_{\g}))} \right) \Char\left( \bigoplus_{\p \mid p}\frac{H^1(G_{F_\p},\scrF^+_\p \T_{1})}{\res_{\p}(\widetilde{H}^1_{\mathrm{f}}(G_{F,\Sigma},\T_{1}, \Delta_{\pan})) }\right) \\
				\hfill \times \quad \Char(\widetilde{H}^2_{\mathrm{f}}(G_{F,\Sigma}, \T_1, \Delta_{\emptyset})) \qquad \qquad \\
				=\left\lbrace \bigotimes_{\p \mid p} \Exp^*_{\f\otimes \lambda, \p} (\bbdelta_1(\T_{2}, \tr \Delta_{\g}))\right\rbrace \Char\left( \bigoplus_{\p \mid p}\frac{H^1(G_{F_\p},\scrF^+_\p \T_{1})}{\res_{\p}(\widetilde{H}^1_{\mathrm{f}}(G_{F,\Sigma},\T_{1}, \Delta_{\pan})) }\right) \\
				\hfill \times \quad \Char(\widetilde{H}^2_{\mathrm{f}}(G_{F,\Sigma}, \T_1, \Delta_{\emptyset})) 
			\end{multline}
			By Remark \ref{rmk : ExpT2} we get
			\begin{multline}
				\Char \left( \widetilde{H}^2_{\mathrm{f}}(G_{F,\Sigma}, \T_3, \Delta_{\g})  \right) \\
				= \Char(\widetilde{H}^2_{\mathrm{f}}(G_{F,\Sigma}, \T_2, \operatorname{tr}^*\Delta_{\mathrm{bal}}))
				\left\lbrace \Char\left( \bigoplus_{\p \mid p}\frac{H^1(G_{F_\p},\scrF^+_\p \T_{1})}{\res_{\p}(\widetilde{H}^1_{\mathrm{f}}(G_{F,\Sigma},\T_{1}, \Delta_{\pan})) }\right) \Char(\widetilde{H}^2_{\mathrm{f}}(G_{F,\Sigma}, \T_1, \Delta_{\emptyset})) \right\rbrace 
			\end{multline}
			
		\end{proof}
		\begin{cor}
			\label{cor_main_2} 
			If $ \widetilde{H}^2_{\mathrm{f}}(G_{F,\Sigma}, \T_1, \Delta_{\emptyset})_{\R\rm -torsion} $ is a pseudo-null then we have 
			\begin{align}
				\label{7.26}
				\bbdelta_0(T_3, \Delta_{\g})= \bbdelta_0(\T_2, \operatorname{tr}^*\Delta_{\mathrm{bal}}) 
				i_\f^*\left( \Log^2_{\f \otimes \lambda}(\mathbf{BK}^{\rm plec}_{\f \otimes \lambda, 1} \wedge \mathbf{BK}^{\rm plec}_{\f \otimes \lambda, 2} ) \right) .
			\end{align}
		\end{cor}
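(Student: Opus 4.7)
The plan is to derive Corollary \ref{cor_main_2} as a formal consequence of Theorem \ref{main_theorem_2} together with the structural results on modules of leading terms from Section \ref{sec : module_leading}. First, the hypothesis $\bbdelta_0(\T_{3}, \Delta_{\g}) \neq 0$ combined with Theorem \ref{r0} forces $\widetilde{H}^1_\mathrm{f}(G_{F,\Sigma}, \T_{3}, \Delta_{\g}) = 0$, giving $\bbdelta_0(\T_{3}, \Delta_{\g}) = \Char_\R(\widetilde{H}^2_\mathrm{f}(G_{F,\Sigma}, \T_{3}, \Delta_{\g}))$; the analogous argument applied to $(\T_2, \tr\Delta_{\mathrm{bal}})$, whose $\widetilde{H}^1$ vanishes as recorded just before Remark \ref{rmk : ExpT2}, yields $\bbdelta_0(\T_2, \tr\Delta_{\mathrm{bal}}) = \Char_\R(\widetilde{H}^2_\mathrm{f}(G_{F,\Sigma}, \T_2, \tr\Delta_{\mathrm{bal}}))$. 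Substituting these identities into the conclusion of Theorem \ref{main_theorem_2} reduces the corollary to establishing
\[
\Char_\R(X)\cdot \Char_\R\bigl(\widetilde{H}^2_\mathrm{f}(\T_1, \Delta_{\emptyset})\bigr) = i^*_\f\bigl(\Log^2_{\f\otimes\lambda}(\mathbf{BK}^{\rm plec}_{\f\otimes\lambda,1} \wedge \mathbf{BK}^{\rm plec}_{\f\otimes\lambda,2})\bigr),
\]
where $X := \bigoplus_{\p\mid p} H^1(G_{F_\p}, \scrF^+_\p\T_1)/\res_\p(\widetilde{H}^1_\mathrm{f}(\T_1, \Delta_{\pan}))$.

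Next I would identify the right-hand side by working upstairs over $\R_\f$. Proposition \ref{7.2}(1) gives that $\widetilde{H}^1_\mathrm{f}(T_{\f\otimes\lambda}, \Delta_{\emptyset})$ is free of rank two, so the lemma preceding Corollary \ref{r2}, combined with Corollary \ref{r2} itself (the $r=2$ relations among $\bbdelta_1$, $\bbdelta_2$, and $\Char(\widetilde{H}^2)^2$), gives the ideal identity $\Char_{\R_\f}\bigl(\bigcap^2 \widetilde{H}^1_\mathrm{f}(T_{\f\otimes\lambda}, \Delta_{\emptyset}) / \bbdelta_2\bigr) = \Char_{\R_\f}(\widetilde{H}^2_\mathrm{f}(T_{\f\otimes\lambda}, \Delta_{\emptyset}))$. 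By the normalization chosen in \textsection\ref{7.1.2}, $\Log^2_{\f\otimes\lambda}$ is an isomorphism sending $\mathbf{BK}^{\rm plec}_1 \wedge \mathbf{BK}^{\rm plec}_2$ to a generator of $\bbdelta_2(T_{\f\otimes\lambda}, \Delta_{\emptyset})$, so the identity translates to $\Log^2_{\f\otimes\lambda}(\mathbf{BK}^{\rm plec}_1 \wedge \mathbf{BK}^{\rm plec}_2)\,\R_\f = \Char_{\R_\f}(\widetilde{H}^2_\mathrm{f}(T_{\f\otimes\lambda}, \Delta_{\emptyset}))$. Flatness of $\R$ over $\R_\f$ together with Proposition \ref{dercat}(1) provides the clean base-change $\widetilde{H}^2_\mathrm{f}(\T_1, \Delta_{\emptyset}) \cong \widetilde{H}^2_\mathrm{f}(T_{\f\otimes\lambda}, \Delta_{\emptyset}) \otimes_{\R_\f} \R$, and since characteristic ideals commute with flat base change we obtain $i^*_\f\bigl(\Log^2_{\f\otimes\lambda}(\mathbf{BK}^{\rm plec}_1 \wedge \mathbf{BK}^{\rm plec}_2)\bigr) = \Char_\R(\widetilde{H}^2_\mathrm{f}(\T_1, \Delta_{\emptyset}))$.

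It remains to show $\Char_\R(X) = \R$, which is where the pseudo-nullity hypothesis enters. The exact triangle attached to the inclusion of local conditions $\Delta_0 \subset \Delta_{\pan}$ produces
\[
0 \longrightarrow X \longrightarrow \widetilde{H}^2_\mathrm{f}(\T_1, \Delta_0) \longrightarrow \widetilde{H}^2_\mathrm{f}(\T_1, \Delta_{\pan}) \longrightarrow \bigoplus_{\p\mid p} H^2(G_{F_\p}, \scrF^+_\p \T_1) \longrightarrow 0,
\]
realizing $X$ as a submodule of the torsion part of $\widetilde{H}^2_\mathrm{f}(\T_1, \Delta_0)$ (the latter has generic $\R$-rank two by Proposition \ref{7.2}(3) base-changed to $\R$, while $X$ is always torsion). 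Global Nekov\'{a}\v{r}--Poitou--Tate duality for the self-dual lattice $T_{\f\otimes\lambda}$ (and the orthogonality $\Delta_{\emptyset}^\perp = \Delta_0$), extended to $\T_1$ by flat base change, pairs the torsion of $\widetilde{H}^2_\mathrm{f}(\T_1, \Delta_0)$ with that of $\widetilde{H}^2_\mathrm{f}(\T_1, \Delta_{\emptyset})$ up to pseudo-null error, with the local terms $\bigoplus_\p H^2(G_{F_\p}, \scrF^+_\p \T_1)$ pseudo-null thanks to hypothesis \ref{NA}. The assumption that $\widetilde{H}^2_\mathrm{f}(\T_1, \Delta_{\emptyset})_{\R\text{-tors}}$ is pseudo-null therefore forces $X$ to be pseudo-null, yielding $\Char_\R(X) = \R$; combining with the previous paragraph completes the proof.

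The principal obstacle lies in the third step: making the duality and the change of local conditions $\Delta_{\emptyset} \leftrightsquigarrow \Delta_0$ precise enough to transfer pseudo-nullity, while keeping careful track of the local contributions $\bigoplus_\p H^2(G_{F_\p}, \scrF^+_\p \T_1)$ under hypothesis \ref{NA}. A clean alternative is to invoke the two-variable Iwasawa Main Conjecture for $\f \otimes \lambda$ (valid under \ref{MC} and \ref{BI}) to rewrite $\Char_{\R_\f}(\widetilde{H}^2_\mathrm{f}(T_{\f\otimes\lambda}, \Delta_{\emptyset}))$ directly in terms of $\Log^2_{\f\otimes\lambda}(\mathbf{BK}^{\rm plec}_1 \wedge \mathbf{BK}^{\rm plec}_2)$ up to units and thereby bypass the duality argument.
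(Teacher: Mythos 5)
Your proposal follows the same overall strategy as the paper's proof, but the paper's own argument is considerably terser: it records a single isomorphism
\[
\bigoplus_{\p \mid p}\frac{H^1(G_{F_\p},\scrF^+_\p \T_{1})}{\res_{\p}\bigl(\widetilde{H}^1_{\mathrm{f}}(G_{F,\Sigma},\T_{1}, \Delta_{\pan})\bigr) } \;\cong\; \widetilde{H}^2_{\mathrm{f}}(G_{F,\Sigma}, \T_1, \Delta_{\emptyset})_{\R\text{-torsion}}
\]
deduced from the exact triangle $\dcom(\G, \T_{1}, \Delta_0) \to \dcom(\G, \T_{1}, \Delta_\pan) \to \bigoplus_{\p \mid p} R\Gamma (G_{F_\p}, \scrF_{\p}^+ \T_{1}) \xrightarrow{+1}$, after which the pseudo-nullity hypothesis kills the $\Char$ of both correction factors in Theorem \ref{main_theorem_2} simultaneously. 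What you do differently, and correctly, is to make explicit the intermediate identification $i^*_\f\bigl(\Log^2_{\f\otimes\lambda}(\mathbf{BK}^{\rm plec}_{\f\otimes\lambda,1}\wedge\mathbf{BK}^{\rm plec}_{\f\otimes\lambda,2})\bigr) \cdot \R = \Char_\R(\widetilde{H}^2_\mathrm{f}(\T_1, \Delta_\emptyset))$ via Corollary \ref{r2} and the normalization of $\Log^2_{\f\otimes\lambda}$ from \textsection\ref{7.1.2}, together with flat base change through Proposition \ref{dercat}(1); the paper treats this as implicit. On the other hand, your third step re-derives what the paper packages into the displayed isomorphism. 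Your instinct that this requires more than the bare long exact sequence is sound — the module $X$ lands naturally as $\ker\bigl(\widetilde{H}^2_\mathrm{f}(\T_1,\Delta_0)\to\widetilde{H}^2_\mathrm{f}(\T_1,\Delta_\pan)\bigr)$, and passing from there to the torsion of $\widetilde{H}^2_\mathrm{f}(\T_1,\Delta_\emptyset)$ genuinely uses Nekov\'a\v{r} duality (with $\Delta_0^\perp=\Delta_\emptyset$) and the vanishing of $\bigoplus_\p H^2(G_{F_\p},\scrF^+_\p\T_1)$ under \ref{NA}. You could shorten your write-up by simply establishing that isomorphism (rather than arguing pseudo-nullity transfers through a chain of containments) and then concluding as the paper does. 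Your closing suggestion to route through the Iwasawa Main Conjecture is a valid alternative and in the spirit of the paper's Remark \ref{koly_ralation}, but it is not what the paper does and it imports a stronger input than needed.
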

		\begin{proof}
			From the exact triangle 
			\begin{align}
				\dcom(\G, \T_{1}, \Delta_0) \rightarrow \dcom(\G, \T_{1}, \Delta_\pan) \rightarrow \bigoplus_{\p \mid p} R\Gamma (G_{F_\p}, \scrF_{\p}^+ \T_{1}) \xrightarrow{+1}
			\end{align}
			we get that 
			\[\bigoplus_{\p \mid p}\frac{H^1(G_{F_\p},\scrF^+_\p \T_{1})}{\res_{\p}(\widetilde{H}^1_{\mathrm{f}}(G_{F,\Sigma},\T_{1}, \Delta_{\pan})) } \cong \widetilde{H}^2_{\mathrm{f}}(G_{F,\Sigma}, \T_1, \Delta_{\emptyset})_{\R\rm -torsion}.  \]
			Hence from the given condition and \cref{main_theorem_2}, the required result follows immediately.
		\end{proof}
		\begin{rmk}
			As previously noted, we expect that (in view of Iwasawa–Greenberg main conjectures) $ \bbdelta_0(\T_{3}, \Delta_{\g}) $ is generated by $ 	\scr{L}_p^{\g}(\f \otimes \As(\g) )(P,Q)  $ and $ \bbdelta_0(\T_2, \operatorname{tr}^*\Delta_{\mathrm{bal}}) $ is generated by $ \scr{L}_p^{\b}(\f \otimes \breve{\g} )(P,Q) $. Whereas $  \Log^2_{\f \otimes \lambda}(\mathbf{BK}^{\rm plec}_{\f \otimes \lambda, 1} \wedge \mathbf{BK}^{\rm plec}_{\f \otimes \lambda, 2} )$ is expected to interpolate the second order derivatives of Perrin-Riou's $ \mbf D_{\rm cris}(V_{f \otimes \lambda}) \otimes \cal{H}(\Gamma_{\mrm{cycl}}) $-valued $ p $-adic $ L $-function of $ f \otimes \lambda $, a crystalline specialization of $ \f \otimes\lambda $ (cf. \cite{PR93,perrin-riou_padic}). 
			
			We also want to remark that this situation is a real quadratic version of the case stated in \cite[\textsection4.5.3]{Darmon16GenralisedKatoClasses}.
		\end{rmk}

		\bibliographystyle{amsalpha}
		
		\bibliography{ref.bib}
		

	\end{document}